\newtheorem*{rep@thm}{\rep@title}
\newcommand{\newrepthm}[2]{%
\newenvironment{rep#1}[1]{%
 \def\rep@title{#2 \ref{##1}}%
 \begin{rep@thm}}%
 {\end{rep@thm}}}
\newtheorem{thm}{Theorem}[section]
\newtheorem{prop}[thm]{Proposition}
\newtheorem{cor}[thm]{Corollary}
\newtheorem{lemma}[thm]{Lemma}
\theoremstyle{theorem}
\newtheorem{maintheorem}{Theorem}	
\crefname{maintheorem}{theorem}{theorems}
\theoremstyle{definition}
\newtheorem{ex}[thm]{Example}
\newtheorem{definition}[thm]{Definition}
\newtheorem{qst}[thm]{Question}
\newtheorem{rmk}[thm]{Remark}
\crefname{rmk}{remark}{remarks}
\newenvironment{subdefinition}[1]{%
  \def\subdefinitioncounter{#1}%
  \refstepcounter{#1}%
  \protected@edef\theparentnumber{\csname the#1\endcsname}%
  \setcounter{parentnumber}{\value{#1}}%
  \setcounter{#1}{0}%
  \expandafter\def\csname the#1\endcsname{\theparentnumber.\Alph{#1}}%
  \ignorespaces
}{%
  \setcounter{\subdefinitioncounter}{\value{parentnumber}}%
  \ignorespacesafterend
}
\newcounter{parentnumber}
\newcommand{\rk}{\operatorname{rk}}
\newcommand{\cl}{\operatorname{cl}}
\renewcommand{\dim}{\operatorname{dim}}
\newcommand{\pol}{\Pi^\uparrow}
\newcommand{\proj}{\Pi^\downarrow}
\newcommand{\vp}{\varphi}
\newcommand{\td}[1]{\,\widetilde{#1}\,}
\newcommand{\ol}[1]{\overline{#1}}
\newcommand{\into}{\hookrightarrow}
\newcommand{\onto}{\twoheadrightarrow}
\newcommand{\Rbar}{\overline{\mathbb{R}}}
\newcommand{\C}{\mathbb{C}}
\newcommand{\Tr}{\mathrm{Tr}}
\newcommand{\stcap}{\cap_{st}}
\newcommand{\Trop}{\operatorname{Trop}}
\newcommand{\trop}{\operatorname{trop}}
\newcommand{\val}{\operatorname{val}}
\newcommand{\vol}{\operatorname{vol}}
\newcommand{\Vol}{\operatorname{Vol}}
\newcommand{\nk}[2]{\binom{[#1]}{#2}}
\newcommand{\supp}{\operatorname{supp}}
\newcommand{\inv}{\operatorname{inv}}
\newcommand{\rec}{\operatorname{rec}}
\newcommand{\cone}{{\operatorname{cone}}}
\newcommand{\sym}{{\operatorname{sym}}}
\renewcommand{\deg}{\operatorname{deg}}
\newcommand{\Z}{\mathbb{Z}}
\newcommand{\N}{\mathbb{N}}
\newcommand{\R}{\mathbb{R}}
\newcommand{\bF}{\mathbb{F}}
\newcommand{\K}{\mathbb{K}}
\newcommand{\PP}{\mathbb{P}}
\newcommand{\PT}[1]{\mathbb{P}\mathbb{T}^{#1}}
\newcommand{\cHH}{\mathcal{H}}
\renewcommand{\cR}{\mathcal{R}}
\newcommand{\cP}{\mathcal{P}}
\newcommand{\cG}{\mathcal{G}}
\newcommand{\cF}{\mathcal{F}}
\newcommand{\cJ}{\mathcal{J}}
\newcommand{\cA}{\mathcal{A}}
\newcommand{\cS}{\mathcal{S}}
\newcommand{\cB}{\mathcal{B}}
\newcommand{\Dr}{\mathbf{Dr}}
\newcommand{\cLL}{\mathcal{L}}
\newcommand{\sF}{\mathscr{F}}
\newcommand{\tD}{\mathtt{D}}
\newcommand{\Gr}{\mathbf{Gr}}
\newcommand{\Qt}{\mathbf{Qt}}
\newcommand{\bC}{\mathbf{C}}
\newcommand{\be}{\mathbf e}
\newcommand{\bv}{\mathbf v}
\newcommand{\bV}{\mathbf V}
\newcommand{\bu}{\mathbf u}
\newcommand{\ba}{\mathbf a}
\newcommand{\bb}{\mathbf b}
\newcommand{\bc}{\mathbf c}
\newcommand{\bx}{\mathbf x}
\newcommand{\by}{\mathbf y}
\newcommand{\bw}{\mathbf w}
\newcommand{\bh}{\mathbf h}
\newcommand{\br}{\mathbf r}
\newcommand{\rmL}{\mathrm{L}}
\newcommand{\rmS}{\mathrm{S}}
\newcommand{\rmV}{\mathrm{V}}
\newcommand{\address}[1]{\gdef\@address{#1}}
\newcommand{\email}[1]{\gdef\@email{\url{#1}}}
\newcommand{\@endstuff}{\par\vspace{\baselineskip}\noindent\small
\begin{tabular}{@{}l}\scshape\@address\\\textit{E-mail address:} \@email\end{tabular}}
\author{Jidong Wang}
\address{Department of Mathematics, University of Michigan, Ann Arbor, Michigan, 48109}
\email{jidongw@umich.edu}
\title{Lorentzian polynomials and the incidence geometry of tropical linear spaces}
\date{}
\begin{document}

\maketitle

\begin{abstract}
We introduce a notion of Lorentzian proper position in close analogy to proper position of stable polynomials. Using this notion, we give a new characterization of elementary quotients of M-convex function that parallels the Lorentzian characterization of M-convex functions. We thereby use Lorentzian proper position to study the incidence geometry of tropical linear spaces, and vice versa. In particular, we prove new structural results on the moduli space of codimension-1 tropical linear subspaces of a given tropical linear space.

Applying these results, we show that some properties of classical linear incidence geometry fail for tropical linear spaces. For instance, we show that the poset of all matroids on $[n]$, partially ordered by matroid quotient, is not submodular when $n\geq 8$. On the other hand, we introduce a notion of adjoints for tropical linear spaces, generalizing adjoints of matroids, and show that certain incidence properties expected from classical geometry hold for tropical linear spaces that have adjoints.
\end{abstract}

\section{Introduction}
\subsection{Lorentzian proper position}
A polynomial $f\in\R_{\geq 0}[w_1,...,w_n]$ with nonnegative coefficients is \textit{stable} if for all $\bu \in \R^n_{\geq0}$ and $\bv\in \R^n$, the univariate polynomial $f(t \bu +\bv)$ is real-rooted. The theory of stable polynomials features a key notion called \textit{proper position}: given two stable polynomials $f$ and $g$ in $\R_{\geq0}[w_1,...,w_n]$, $f$ is in \textit{proper position} with respect to $g$, denoted $f\ll g$, if $g+w_{n+1}f\in \R_{\geq0}[w_1,...,w_{n+1}]$ is stable. Homogeneous stable polynomials with nonnegative coefficients are Lorentzian. Here we introduce a Lorentzian analog of proper position.

\begin{definition}\label{def:lorentzian-proper-position}
    Let $f$ and $g$ be nonzero Lorentzian polynomials of degree $d$ and $d+1$, respectively. Say $f$ is in \textit{Lorentzian proper position} with respect to $g$, denoted $f\ll_L g$, if $g+w_{n+1}f$ is Lorentzian.
\end{definition}

Proper position of stable polynomials has two convexity properties that are pivotal in the whole theory. 
\begin{prop}\label{prop:convexity-stable-proper-position} \cite[Lemma 2.6]{borcea2010multivariate}
    Let $f$ be a nonzero stable polynomial. Then the subsets
\begin{center} \vspace{5pt}
    $\big\{g \text{ stable}\mid f\ll g\big\}$ and $\big\{h \text{ stable}\mid h\ll f\big\}$
\vspace{5pt} \end{center} 
are closed convex cones.
\end{prop}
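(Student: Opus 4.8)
The plan is to pass to the standard reformulation of stability: a polynomial with nonnegative coefficients is stable exactly when it has no zeros in $\cHH^n$, where $\cHH=\{z\in\C:\Im z>0\}$. Under this reformulation $f\ll g$ says precisely that $g(\mathbf{w})+w_{n+1}f(\mathbf{w})\neq0$ for all $\mathbf{w}\in\cHH^n$ and all $w_{n+1}\in\cHH$. With stability recast this way I would check three things for each of the two sets: closure under multiplication by positive scalars, topological closedness, and convexity; only the last carries real content.

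The cone property is immediate from how stability interacts with positive affine changes of the last variable: if $g+w_{n+1}f$ is stable and $c>0$, then $cg+w_{n+1}f=c\bigl(g+(w_{n+1}/c)f\bigr)$, and the substitution $w_{n+1}\mapsto w_{n+1}/c$ carries $\cHH^{n+1}$ into itself (as $\cHH$ is invariant under multiplication by positive reals) while scaling by $c>0$ alters neither the zero set nor the signs of the coefficients, so $cg+w_{n+1}f$ is stable; the substitution $w_{n+1}\mapsto cw_{n+1}$ treats $h\ll f\Rightarrow ch\ll f$ the same way. Topological closedness follows from Hurwitz's theorem in several variables: if $f\ll g_k$ and $g_k\to g$, then the $g_k+w_{n+1}f$ are nowhere zero on $\cHH^{n+1}$ and converge (locally uniformly) to $g+w_{n+1}f$, so the limit is either nowhere zero on $\cHH^{n+1}$ or identically zero; the latter is impossible because its coefficient of $w_{n+1}$ is $f\neq0$, whence $f\ll g$, and $g$ itself is stable as a limit of stable polynomials. (Proper position bounds the degrees, so these sets live in finite-dimensional spaces and ``closed'' is meant there; together with the cone property this also puts $0$ in each set, so each is a genuine convex cone.) The dual statements are proved identically.

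Convexity is the heart of the matter, and the enabling observation is that $f$, being stable, has \emph{no} zero in $\cHH^n$ at all, so $f(\mathbf{w})$ is a nonzero complex number for every $\mathbf{w}\in\cHH^n$ and we may divide by it. Suppose $f\ll g_1$ and $f\ll g_2$; fix $\mathbf{w}\in\cHH^n$ and put $p=g_1(\mathbf{w})$, $q=g_2(\mathbf{w})$, $r=f(\mathbf{w})\neq0$. Stability of $g_j+w_{n+1}f$ forces the unique root $-g_j(\mathbf{w})/r$ of the affine map $w_{n+1}\mapsto g_j(\mathbf{w})+w_{n+1}r$ to lie outside $\cHH$, i.e.\ $\Im\bigl(g_j(\mathbf{w})/r\bigr)\geq0$; adding the two inequalities gives $\Im\bigl((p+q)/r\bigr)\geq0$, so the root $-(p+q)/r$ of $w_{n+1}\mapsto(p+q)+w_{n+1}r$ likewise lies outside $\cHH$. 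Since $\mathbf{w}$ and $w_{n+1}\in\cHH$ were arbitrary, $g_1+g_2+w_{n+1}f$ (which has nonnegative coefficients) is nowhere zero on $\cHH^{n+1}$, that is $f\ll g_1+g_2$. The second cone is dual: writing $a=h_1(\mathbf{w})$, $b=h_2(\mathbf{w})$, $c=f(\mathbf{w})\neq0$ with $h_1\ll f$ and $h_2\ll f$, stability of $f+w_{n+1}h_j$ gives $\Im\bigl(c/h_j(\mathbf{w})\bigr)\geq0$ when $h_j(\mathbf{w})\neq0$ --- equivalently $\Im\bigl(h_j(\mathbf{w})/c\bigr)\leq0$, which is also trivially true when $h_j(\mathbf{w})=0$ --- so $\Im\bigl((a+b)/c\bigr)\leq0$, forcing the root $-c/(a+b)$ of $w_{n+1}\mapsto c+w_{n+1}(a+b)$ outside $\cHH$ when $a+b\neq0$, while if $a+b=0$ that map is the nonzero constant $c$; either way $f+w_{n+1}(h_1+h_2)$ is nowhere zero on $\cHH^{n+1}$.

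I do not anticipate a deep obstacle; the only real decision is the choice of framework. Worked through directly with the real-rootedness definition, convexity is opaque and one is driven to the univariate interlacing theory (Obreschkoff, or the Hermite--Kakeya--Obreschkoff theorem) --- which gives that $\{P\ \text{real-rooted}:Q\ll P\}$ is convex for fixed real-rooted $Q$, after which multivariate proper position is reduced to it line by line --- whereas in the upper-half-plane picture convexity collapses to the remark that imaginary parts add. The two background inputs, namely the equivalence of the two formulations of stability and Hurwitz's theorem, are standard.
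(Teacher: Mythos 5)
Your proof is correct. The paper itself does not prove this statement --- it quotes it from Borcea--Br\"{a}nd\'{e}n (Lemma 2.6 of the cited reference) --- and your argument is essentially the standard one behind that citation: recast stability as nonvanishing on $\cHH^{n}$, observe that $f\ll g$ forces $\Im\bigl(g(\mathbf{w})/f(\mathbf{w})\bigr)\geq 0$ pointwise (possible since the stable $f$ never vanishes there), note that imaginary parts add, and finish with positive scaling and Hurwitz for the cone and closedness claims.
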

This has a partial Lorentzian analog which has not appeared in the literature, but is known to experts. 
\begin{prop}\label{prop:A}
    Let $f$ be a nonzero Lorentzian polynomial. Then the subset $\big\{g \text{ Lorentzian}\mid f\ll_L g\big\}\cup \{0\}$ is a closed convex cone.
\end{prop}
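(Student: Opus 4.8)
The plan is to establish the three defining properties of a closed convex cone in turn — closedness, stability under positive scaling, and stability under addition — with the additive part being the substantive one. Write $\Phi(g):=g+w_{n+1}f$ and $S:=\{g\text{ Lorentzian}\mid f\ll_L g\}\cup\{0\}$, and let $L$ denote the set of Lorentzian polynomials of degree $d+1$ in $w_1,\dots,w_{n+1}$. First I would record that $S=\Phi^{-1}(L)$: if $g\neq0$ and $\Phi(g)\in L$ then $g=\Phi(g)|_{w_{n+1}=0}$ is Lorentzian because restriction to a coordinate subspace preserves the Lorentzian property when the output is nonzero, while $\Phi(0)=w_{n+1}f\in L$ since $w_{n+1}$ is Lorentzian and products of Lorentzian polynomials are Lorentzian. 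As $L$ is closed and $\Phi$ is continuous, $S$ is closed. For stability under positive scaling, given $f\ll_L g$ and $\lambda>0$, apply the coordinate substitution $w_{n+1}\mapsto\lambda w_{n+1}$ — a linear map with nonnegative coefficient matrix, hence preserving $L$ — to $g+w_{n+1}f\in L$, obtaining $g+\lambda w_{n+1}f\in L$, and then rescale by $\lambda^{-1}$; this gives $f\ll_L\lambda^{-1}g$, so $\mu g\in S$ for all $\mu>0$.

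For additivity it suffices, by the scaling property, to prove that $g_1,g_2\in S\setminus\{0\}$ forces $g_1+g_2\in S$. Writing $h_i:=g_i+w_{n+1}f$ and rescaling $w_{n+1}$ as above, $g_1+g_2\in S$ is equivalent to $h_1+h_2=(g_1+g_2)+2w_{n+1}f\in L$. So everything reduces to showing that the sum of two Lorentzian polynomials that are each linear in $w_{n+1}$ and have the same $w_{n+1}$-derivative $f$ is again Lorentzian. (For $d=0$ this is immediate, $S$ being the nonnegative orthant of linear forms in $w_1,\dots,w_n$.) For $d+1\geq2$ I would use the characterization whereby a homogeneous polynomial with nonnegative coefficients and M-convex support is Lorentzian precisely when its Hessian matrix has at most one positive eigenvalue at every point of the open positive orthant.

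The heart of the matter is the Hessian condition for $h_1+h_2$. Since $h_i$ is linear in $w_{n+1}$ we have $\partial_{w_{n+1}}^2h_i=0$, so for $\by=(\bx,s)$ with $\bx\in\R^n_{>0}$ and $s>0$,
\[
\nabla^2h_i(\by)=\begin{pmatrix}\nabla^2g_i(\bx)+s\,\nabla^2f(\bx)&\nabla f(\bx)\\\nabla f(\bx)^{\top}&0\end{pmatrix},
\]
a bordered symmetric matrix whose lower-right entry vanishes and whose border vector $\nabla f(\bx)$ is nonzero — because $f\neq0$ has nonnegative coefficients and $\bx>0$ — and is the same for $i=1,2$. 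Here I would invoke the elementary fact that, for $c\neq0$, a symmetric matrix of the form $\bigl(\begin{smallmatrix}C&c\\c^{\top}&0\end{smallmatrix}\bigr)$ has exactly $1$ plus the positive index of the restricted form $C|_{c^{\perp}}$ positive eigenvalues (after a unipotent change of variables the form splits as a hyperbolic plane plus the form $C|_{c^{\perp}}$). Hence $\nabla^2h_i(\by)$ has at most one positive eigenvalue iff $\bigl(\nabla^2g_i(\bx)+s\nabla^2f(\bx)\bigr)|_{\nabla f(\bx)^{\perp}}\preceq0$, and adding these two negative semidefinite forms gives $\bigl(\nabla^2(g_1+g_2)(\bx)+2s\nabla^2f(\bx)\bigr)|_{\nabla f(\bx)^{\perp}}\preceq0$, i.e.\ $\nabla^2(h_1+h_2)(\by)$ has at most one positive eigenvalue. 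This is the one place the special shape of the $h_i$ enters: the common border vector converts the non-convex eigenvalue-count condition into a semidefiniteness condition on restrictions to a fixed hyperplane, and the latter is preserved by addition.

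What remains is to verify that $\supp(h_1+h_2)=\supp(g_1)\cup\supp(g_2)\cup(e_{n+1}+\supp f)$ is M-convex, given that $\supp(h_1)$ and $\supp(h_2)$ are. This is a purely combinatorial statement about two M-convex sets agreeing along the $w_{n+1}$-degree-one slice $e_{n+1}+\supp f$; I would deduce it from the symmetric exchange property, or from the structure of elementary quotients of M-convex sets developed later in the paper. I expect this to be the only obstacle left after the analytic core: the bordered-matrix mechanism makes the spectral part essentially automatic, but one must still confirm that the combinatorial and analytic halves of the Lorentzian criterion are compatible for $h_1+h_2$ — equivalently, with the alternative formulation of Lorentzianness via ``all order-$(d-1)$ partial derivatives are Lorentzian quadratics'', one must separately control those partials $\partial^{\beta}h_i$ whose $w_{n+1}$-coefficient $\partial^{\beta}f$ vanishes, which is again governed by $\supp(h_i)$. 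Granting this, $h_1+h_2\in L$, so rescaling yields $(g_1+g_2)+w_{n+1}f\in L$; since $g_1+g_2$ is a nonzero restriction of it, it is Lorentzian, hence $f\ll_L(g_1+g_2)$. Together with the scaling property this shows $S$ is a convex cone, and combined with the closedness established above this completes the proof.
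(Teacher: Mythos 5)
Your bordered-Hessian argument for the signature half is correct and genuinely different from the paper's route: the paper never touches Hessians directly, but instead reduces to the quadratic case by applying $(d-1)$-fold directional derivatives $\partial_{\bv_1}\cdots\partial_{\bv_{d-1}}$ (its Lemma on Lorentzian proper position) and then imports the cone property of \emph{stable} proper position from Borcea--Br\"and\'en (\Cref{prop:convexity-stable-proper-position}). Your observation that the common border vector $\nabla f(\bx)$ converts ``at most one positive eigenvalue'' into negative semidefiniteness of $(\nabla^2 g_i(\bx)+s\nabla^2 f(\bx))|_{\nabla f(\bx)^{\perp}}$, which is additive, is a clean substitute for that reduction (you would need to cite the Hessian-at-positive-points characterization of Lorentzian polynomials, which is in Br\"and\'en--Huh but not in this paper). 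The closedness and scaling steps are fine.

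The genuine gap is exactly the step you defer: M-convexity of $\supp(h_1+h_2)$. Neither of your proposed fixes works as stated. The symmetric exchange axiom for $\supp h_1$ and $\supp h_2$ separately does not formally give it: the hard case is $\bx\in\supp g_1$, $\by\in\supp g_2$, where you must produce $j$ with both $\bx-\be_i+\be_j$ and $\by-\be_j+\be_i$ in $\supp g_1\cup\supp g_2$, and no exchange inside a single $\supp h_i$ supplies this; this is precisely the nontrivial ``perspective M-convex sets have M-convex union'' statement. And appealing to ``the structure of elementary quotients developed later in the paper'' is circular: the relevant results (\Cref{cor:codim-1-tropical-convex}, \Cref{cor:union-basis-families}) are deduced \emph{from} \Cref{prop:A}. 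The paper sidesteps the issue by a perturbation-and-limit argument: it shows $\supp g_i\subset\supp(\ell f)$ for $\ell$ with all-positive coefficients (\Cref{lem:support-of-lorentzian-above}, which only uses $\supp g_i\onto\supp f$, not \Cref{prop:A}), so that $t\,\ell f+(1-t)(g_1+g_2)+w_{n+1}f$ has support equal to $\supp\big((\ell+w_{n+1})f\big)$, manifestly M-convex, for every $t\in(0,1]$; one then lets $t\to0$ and uses closedness (Br\"and\'en's variant substitutes $w_i\mapsto w_i+c\ell$ to force full support). This repair is compatible with your Hessian argument --- since $f\ll_L\ell f$, the restricted Hessian of $\ell f+s f$-type terms is also negative semidefinite on $\nabla f(\bx)^{\perp}$, and nonnegative combinations preserve that --- so inserting the perturbation step completes your proof; without it, the proof is incomplete.
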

Our first main result provides a framework for applying
\Cref{prop:A} to the geometry of tropical linear spaces and their tropical linear subspaces, which we explain now. Let $\Rbar=\R\cup\{\infty\}$, $[n]=\{1,...,n\}$, and $\mathbf{1}=(1,...,1)\in \R^n$. Let $\PT{[n]}$ denote the $(n-1)$-dimensional \textit{tropical projective space} and $T_{[n]}$ denote the $(n-1)$-dimensional \textit{tropical projective torus}:
\begin{equation}
    \begin{split}
        \PT{[n]}:=\left(\Rbar^n\backslash\{(\infty,...,\infty)\}\right)/\R\mathbf{1},\quad T_{[n]}:=\R^n/\R\mathbf{1}.
    \end{split}
\end{equation}
A valuated matroid $\mu $ of rank $d$ on $[n]$ is a function $\nk{n}{d}\to \Rbar$ that satisfies the \textit{tropical Pl\"{u}cker relations}. Analogous to the Pl\"{u}cker coordinates of linear spaces in classical geometry, $\mu$ represents a tropical linear space $\Trop \mu\subset\PT{[n]}$ of dimension $d-1$. Let $[\mu]$ be the class of $\mu$ modulo scalar addition. Then the correspondence $[\mu]\leftrightarrow \Trop\mu$ is bijective between equivalence classes of valuated matroids and tropical linear spaces. A valuated matroid $\theta$ of rank $s$ on $[n]$ is a \textit{quotient} of $\mu$, denoted $\mu\onto\theta$, if they satisfy the \textit{tropical incidence-Pl\"{u}cker relations}, and $\Trop\theta\subset\Trop\mu$ if and only if $\mu\onto \theta$. A quotient is \textit{elementary} if $s=d-1$. Elementary quotients of $\mu$ correspond to codimension-1 tropical linear subspaces of $\Trop \mu$. See \Cref{subsec:prelim-tropical-linear-spaces} for a review on these notions.

Associate to each function $\mu:\nk{n}{d}\to \Rbar$ the polynomial 
\begin{equation}
f_q^\mu(w_1,...,w_n) = \sum_{|B|=d} q^{\mu(B)}w^B 
\end{equation}
where $w^B = \prod_{i\in B}w_i$ and $0<q\leq 1$ is a parameter. 

\begin{maintheorem}\label{main:A}
    Let $\mu$ and $\theta$ be valuated matroids of rank $d$ and $d-1$ on $[n]$, respectively. Then 

\begin{center} \vspace{5pt}
    $\mu\onto\theta$ \quad  if and only if \quad  $f_q^\theta \ll_L f_q^\mu$ for all $0<q\leq 1$.
\vspace{5pt} \end{center} 
\end{maintheorem}

By tropicalization and valuated matroid duality, \Cref{main:A} and \Cref{prop:A} together recover the following result by by Fink and Moci \cite[Corollary 6.6]{Fink_2019}.

\begin{cor}\label{cor:A}
    Let $\mu$ be a valuated matroid, then
    \begin{equation}
     \Dr^1(\mu) := \big\{\text{codimension-1 tropical linear subspaces of }\Trop\mu\big\}
\end{equation}
is tropically convex. Namely, if $\Trop\theta_1$ and $\Trop\theta_2$ are codimension-1 tropical linear subspaces of $\Trop\mu$ and $\theta$ is any tropical linear combination of $\theta_1$ and $\theta_2$, then $\theta$ is a valuated matroid, and $\Trop\theta\subset \Trop\mu$.
\end{cor}

In particular, $\Dr^1(\mu)$ is connected. For an interesting consequence of \Cref{cor:A} for matroids, see \Cref{cor:union-basis-families}. In \Cref{thm:dressian-cut-out-by-three-term}, we improve \Cref{cor:A} and describe $\Dr^1(\mu)$ as a tropical prevariety cut out by a nice set of tropical hyperplanes.

Our next main result uses \Cref{main:A} in the other direction. By studying the geometry of $\Dr^1(\mu)$, we obtain a partial analog for Lorentzian proper position of the convexity of $\{h\text{ stable }\mid h\ll f\}$. We also provide numerical examples in \Cref{ex:L1-not-convex} showing the full Lorentzian analog is false.

\begin{maintheorem}\label{main:B}
    Let $\mu$ be a valuated matroid. Then for each $0<q\leq 1$, $\{h \text{ Lorentzian}\mid h\ll_L f^\mu_q\}\cup \{0\}$ contains the convex cone spanned by $\{f^\theta_q\mid [\theta]\in \Dr^1(\mu)\}$.
\end{maintheorem}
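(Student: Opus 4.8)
To prove \Cref{main:B} I would pass to the dual valuated matroid. The guiding observation is that the full analog --- that $\{h\text{ Lorentzian}\mid h\ll_L f^\mu_q\}\cup\{0\}$ is convex --- is false (\Cref{ex:L1-not-convex}), but the \emph{lower} cone of $f^\mu_q$ corresponds, under valuated matroid (Cremona) duality, to the \emph{upper} cone of $f^{\mu^*}_q$, which \emph{is} convex by \Cref{prop:A}; moreover convex combinations of the distinguished generators $f^{\theta^*}_q$, transported back through duality, land in the lower cone of $f^\mu_q$ precisely because they remain multiaffine, hence remain basis-generating polynomials of valuated matroids. Fix $0<q<1$. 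A typical element of the convex cone in \Cref{main:B} is $g=\sum_{i=1}^k c_i f^{\theta_i}_q$ with $c_i\ge 0$ and $[\theta_i]\in\cD^1(\mu)$ (the choice of representatives $\theta_i$ only rescales each $f^{\theta_i}_q$ by a power of $q$, so it does not affect the cone); we must show $g\in\{h\text{ Lorentzian}\mid h\ll_L f^\mu_q\}\cup\{0\}$, and we may assume $g\ne 0$, i.e.\ some $c_i>0$.

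The first step is to dualize. Since $[\theta_i]\in\cD^1(\mu)$ means $\mu\onto\theta_i$ and valuated matroid duality reverses quotients, $\theta_i^*\onto\mu^*$ with $\rk\theta_i^*=\rk\mu^*+1$; by the characterization of elementary quotients displayed above this is equivalent to $f^{\mu^*}_q\ll_L f^{\theta_i^*}_q$. Hence each $f^{\theta_i^*}_q$ lies in the closed convex cone $\mathcal C:=\{p\text{ Lorentzian}\mid f^{\mu^*}_q\ll_L p\}\cup\{0\}$ of \Cref{prop:A}, and so does $\sum_i c_i f^{\theta_i^*}_q$. Since some $c_i>0$ this sum is nonzero, so by definition of $\mathcal C$ the polynomial
\[
\Phi\;:=\;\sum_{i=1}^k c_i\,f^{\theta_i^*}_q\;+\;w_{n+1}\,f^{\mu^*}_q
\]
is Lorentzian.

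Now the crucial point: $\Phi$ is \emph{multiaffine}, being a nonnegative combination of the squarefree polynomials $f^{\theta_i^*}_q$ together with $w_{n+1}f^{\mu^*}_q$. By the Lorentzian characterization of valuated matroids (M-convex functions), a multiaffine homogeneous polynomial is Lorentzian if and only if it equals the basis-generating polynomial $f^\rho_q$ of a valuated matroid $\rho$, with $\rho(C)=\log_q(\text{coefficient of }w^C)$. Thus $\Phi=f^\rho_q$ for a valuated matroid $\rho$ on $[n+1]$ of rank $n-d+1$; then $\rho^*$ is a valuated matroid, so $f^{\rho^*}_q$ is Lorentzian. Normalizing the duals by $\theta_i^*(X)=\theta_i([n]\setminus X)$, $\mu^*(B)=\mu([n]\setminus B)$ and $\rho^*(C)=\rho([n+1]\setminus C)$, and unwinding the monomial substitution $w^S\mapsto w^{[n+1]\setminus S}$ that realizes duality on $[n+1]$ (we write $\Phi^\perp$ for its effect on $\Phi$), a direct computation yields
\[
f^{\rho^*}_q\;=\;\Phi^\perp\;=\;f^\mu_q\;+\;w_{n+1}\sum_{i=1}^k c_i\,f^{\theta_i}_q .
\]
Since the left-hand side is Lorentzian, $\sum_i c_i f^{\theta_i}_q\ll_L f^\mu_q$, which is exactly \Cref{main:B}.

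The step needing genuine care is the last displayed identity: one must track the additive normalizing constants in valuated matroid duality and verify that $\Phi^\perp$ reproduces $f^\mu_q$ on the monomials free of $w_{n+1}$ and \emph{exactly} the combination $\sum_i c_i f^{\theta_i}_q$ (with the same coefficients $c_i$) on the monomials divisible by $w_{n+1}$; with the normalizations above all constants cancel, but this bookkeeping is where an error could hide. Everything else is formal, relying only on \Cref{prop:A}, valuated matroid duality, and the Lorentzian characterization of valuated matroids. A more computational route is also available --- deducing M-convexity of $\supp\bigl(f^\mu_q+w_{n+1}\sum_i c_i f^{\theta_i}_q\bigr)$ from the tropical convexity of $\cD^1(\mu)$ in \Cref{main:A} (cf.\ \Cref{cor:union-basis-families}), and the required Hessian inequalities for the partial derivatives not involving $w_{n+1}$ from the bordered-matrix block structure imposed by the common $f^\mu_q$-summand (using that a nonnegative vector in the forward light cone of a form of signature $(1,\ast)$ stays there under convex combination) --- but the duality argument above is shorter and needs no case analysis.
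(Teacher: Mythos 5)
Your reduction to \Cref{prop:A} via duality breaks at the step you label ``the crucial point.'' The claim that a multiaffine homogeneous Lorentzian polynomial (at a \emph{fixed} $q$) must equal $f^\rho_q$ for a valuated matroid $\rho$ with $\rho(C)=\log_q(\text{coefficient of }w^C)$ is false. The characterization recalled in \Cref{subsec:prelim-lorentzian} (\cite[Theorem 3.14]{branden2020lorentzian}) says that a \emph{fixed} function $\vp$ is M-convex if and only if $f^\vp_q$ is Lorentzian for \emph{all} $0<q<1$; it does not say that the $\log_q$-coefficients of a single multiaffine Lorentzian polynomial satisfy the tropical Pl\"ucker relations. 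In your situation the coefficients of $\Phi$ on the monomials free of $w_{n+1}$ are $\sum_i c_i q^{\theta_i^*(X)}$, and $\log_q$ of such a sum is generically \emph{not} a valuated matroid (its tropicalization over $\K$ would be, which is exactly how \Cref{main:A} is obtained, but that limit statement does not descend to a fixed real $q$). Concretely, the quadratic $h_1=3.9\,w_1w_2+w_1w_3+w_1w_4+w_2w_3+w_2w_4+w_3w_4$ of \Cref{ex:L1-not-convex} is multiaffine, Lorentzian, with M-convex support, yet $\log_q$ of its coefficients violates the three-term Pl\"ucker relation for every $0<q<1$. So your $\rho$ need not be a valuated matroid, $\rho^*$ is not available, and the final identity $f^{\rho^*}_q=f^\mu_q+w_{n+1}\sum_i c_if^{\theta_i}_q$ (whose bookkeeping is in fact fine) has no Lorentzian left-hand side to quote.

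More structurally, your argument is attempting to transport Lorentzianity through the inversion $w^S\mapsto w^{[n+1]\setminus S}$, and the paper shows precisely that inversion does \emph{not} preserve the Lorentzian property (\Cref{subsec:history}, \Cref{ex:L1-not-convex}: $g_2^{\inv}$ is not Lorentzian); if the duality step were formal, it would yield convexity of $\{h\mid h\ll_L f\}$ in general, contradicting that example. This is why \Cref{main:B} is not a corollary of \Cref{prop:A} alone: the paper's proof must establish the signature condition of the convex combination $\sum_i c_i f^{\theta_i}_q$ directly, and it does so by contracting down to degree $2$ and invoking the incidence result \Cref{main:C} (two tropical lines in a tropical plane meet) together with the common-interlacer machinery (\Cref{prop:interlacing}, \Cref{lem:stable-poly-cone}) and \Cref{prop:convexity-stable-proper-position}. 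Your sketched ``computational route'' skips exactly this point as well: convex combinations of Lorentzian quadratics are not Lorentzian in general (again \Cref{ex:L1-not-convex}), so the Hessian step needs the geometric input, not a light-cone convexity argument.
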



\subsection{Tropical linear incidence geometry}
The proof of \Cref{main:B} relies on understanding the incidence geometry of tropical linear spaces. We study the tropical analogs of the following properties of the incidence geometry in classical projective spaces. Let $V\subset \mathbb{P}^{n-1}$ be a linear subspace of dimension $d-1$:
\begin{enumerate}[label=\textbf{(P\arabic*)},leftmargin=0.5in]
    \item\label{itm:IG-intersection} If $W_1$ and $W_2$ are codimension-1 linear subspaces of $V$, then $W_1\cap W_2$ contains a codimension-2 linear subspace of $V$.
    \item\label{itm:IG-interpolation} Any set of $d-1$ points in $V$ is contained in a subspace $W\subset V$ of codimension at most 1.
    \item\label{itm:IG-flag} If $V_k$ is a subspace of codimension $k$ in $V_0=V$, then the partial flag $V_k\subset    V_0$ can be completed, i.e., there is a chain of subspaces $V_k\subset V_{k-1}\subset\cdots \subset V_0$ such that $\dim V_i - \dim V_{i+1}=1$ for all $i$.
\end{enumerate}

\begin{maintheorem}\label{main:C}
    Let $\Trop\mu$ be a tropical plane. Then any two tropical lines in $\Trop\mu$ intersect.
\end{maintheorem}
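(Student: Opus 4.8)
The plan is to pass to valuated matroids and then to intersect $\Trop\theta_1$ and $\Trop\theta_2$ inside the tropical surface $\Trop\mu$ using tropical intersection theory, with \Cref{main:A} supplying the rigidity that makes the intersection nonempty. Write $\mu$ for the rank-$3$ valuated matroid whose tropical linear space is the given plane, and let $\Trop\theta_1,\Trop\theta_2$ be the two tropical lines, so $\theta_1,\theta_2$ are elementary quotients of $\mu$ of rank $2$. A point of $\PT{[n]}$ corresponds to a rank-$1$ valuated matroid $\rho$, and it lies on $\Trop\theta_i$ exactly when $\theta_i\onto\rho$; thus the statement is equivalent to producing a rank-$1$ valuated matroid $\rho$ that is a common quotient of $\theta_1$ and $\theta_2$.

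It is convenient to first record a coherence lemma: if $p\in\Trop\theta_1\cap\Trop\theta_2$, then $p$ lies on $\Trop\theta$ for every tropical linear combination $\theta$ of $\theta_1$ and $\theta_2$. This is an elementary computation with the rank-$2$ tropical Pl\"ucker relations — for a triple $S\subseteq[n]$ the minima witnessing $p\in\Trop\theta_1$ and $p\in\Trop\theta_2$ are each attained on subsets of $S$ of size $\geq 2$, which must meet, and one checks that the corresponding minimum for $\theta=\min(\lambda+\theta_1,\nu+\theta_2)$ is again attained at least twice. Since by \Cref{main:A} the whole tropical segment from $\theta_1$ to $\theta_2$ consists of tropical lines in $\Trop\mu$, the intersection we are after equals $\bigcap_\lambda\Trop\theta_\lambda$ and is in particular stable along this pencil; this is one place where \Cref{main:A} enters the argument.

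The heart of the proof is a positivity (B\'ezout-type) statement. Regard $\Trop\theta_1$ and $\Trop\theta_2$ as weight-one codimension-$1$ tropical cycles in the locally matroidal tropical surface $\Trop\mu$, and form their stable intersection $\Trop\theta_1\stcap\Trop\theta_2$ inside $\Trop\mu$ using the intersection product on a tropical linear space (built, as usual, from its diagonal). This is a tropical $0$-cycle supported on $\Trop\theta_1\cap\Trop\theta_2$, and it suffices to show it has positive total weight. I would compute this degree by deformation invariance of stable intersection: using the connecting family from \Cref{main:A} — and, if needed, a further degeneration of $\mu$ that keeps $\Trop\theta_1,\Trop\theta_2$ inside the plane, which is possible because the flats of a quotient are flats of $\mu$, so the recession fan of each $\Trop\theta_i$ is a subfan of that of $\Trop\mu$ — reduce to a transverse configuration in which the intersection is a single point of weight $1$.

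The main obstacle is precisely this last step: showing the intersection number is genuinely nonzero, i.e. that no cancellation occurs when passing from the transverse model back to the given configuration and that the two tropical lines cannot be pulled apart while remaining inside $\Trop\mu$. This is exactly where the tropical convexity of $\cD^1(\mu)$ does the work: it yields a path of codimension-$1$ cycles in $\Trop\mu$ — the tropical segment of \Cref{main:A}, refined if necessary — along which the stable-intersection degree is constant, forcing it to equal the value computed in the transverse model. (A possible alternative, more hands-on route avoids intersection theory and instead uses the explicit description of $\cD^1(\mu)$ by three-term relations to write down a candidate rank-$1$ quotient $\rho$ as a function of the Pl\"ucker data of $\theta_1$ and $\theta_2$ and then verifies $\theta_1\onto\rho$ and $\theta_2\onto\rho$ directly; there the obstacle becomes the case analysis over the combinatorial types of the two lines and their relative position.)
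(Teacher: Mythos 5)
Your reduction of the statement to finding a common rank-$1$ quotient of $\theta_1$ and $\theta_2$ is fine, and your ``coherence lemma'' is indeed a consequence of the tropical convexity results already in the paper. The problem is the heart of your argument: the B\'ezout-type step. You propose to form a stable intersection of the two lines \emph{inside} the surface $\Trop\mu$, assert that it is a $0$-cycle supported on $\Trop\theta_1\cap\Trop\theta_2$, and then conclude positivity of its degree by ``deformation invariance'' along the tropical segment of \Cref{main:A}. None of this machinery is established, and the positivity is precisely the content of the theorem, not something that can be imported. An intersection product of cycles inside a tropical linear space (a non-fan object with boundary strata in $\PT{[n]}$) would have to be constructed, shown to be supported on the set-theoretic intersection, and shown to be invariant under the deformations you use; on tropical matroidal surfaces intersection numbers of curves are in general not positive, so there is no ambient Bézout principle to appeal to. Moreover, the segment from \Cref{main:A} deforms $\theta_2$ into $\theta_1$ and never passes through a ``transverse configuration'' whose intersection number you could compute to be $1$ (at the endpoint the two cycles coincide), and the auxiliary ``further degeneration of $\mu$ that keeps $\Trop\theta_1,\Trop\theta_2$ inside the plane'' is not justified: one cannot in general move $\mu$ while retaining two prescribed quotients. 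Note also that the lines may meet only at boundary points of $\PT{[n]}$ (points with some coordinates $\infty$), which any torus-based stable-intersection count does not see; and the paper explicitly leaves open the analogous question of whether a codimension-$1$ subspace and a line in $\Trop\mu$ always meet (\Cref{subsec:history}), which a working intersection-theoretic positivity argument of the kind you sketch would likely settle as well --- a sign that no such principle is currently available.

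For comparison, the paper's proof is elementary and dynamical rather than intersection-theoretic: it inducts on $n$ via the coordinate projection $\pi_1$, uses \Cref{lem:injective-outside-of-contraction} to control where the projection fails to be injective (governed by whether $1$ is a coloop of the relevant initial matroids), first treats the coloop case (\Cref{prop:coloop-case}), and then sweeps a hyperplane $H_t$ through the plane, tracking the stable intersections $[\ba_t]$, $[\bb_t]$ of the two lines with $H_t$ and using the balancing/degree-$1$ condition to force the lines to meet, with a separate careful analysis of the boundary cases where the projected intersection lies in the boundary of $P_\infty$. If you want to salvage your approach, you would need to develop the intersection theory on tropical linear spaces in $\PT{[n]}$ (including boundary behavior) and prove the positivity statement independently; as written, the proposal assumes the conclusion at its key step. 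Your parenthetical alternative (explicitly constructing $\rho$ from the three-term data) is closer to something that could be made rigorous, but the case analysis it requires is essentially the hard work the paper does by other means.
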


This provides an affirmative case to the tropical analog of \ref{itm:IG-intersection} when $d=3$. This theorem, together with the technique of common interlacers developed in \cite{chudnovsky2007roots}, allows us to deduce \Cref{main:B} from \Cref{prop:convexity-stable-proper-position}. However, we construct tropical linear spaces of each dimension $d\geq 4$ for which the analog of \ref{itm:IG-intersection} fails. See \Cref{ex:counter-to-submodular} and \Cref{prop:intersection-fail-at-every-d-larger-than-4}. These examples also show that the poset of all matroids on $[n]$, partially ordered by matroid quotient, is not upper semimodular when $n\geq 8$.

We investigate the tropical analog to \ref{itm:IG-interpolation} as follows. Let $M$ be a matroid of rank $d$ on $[n]$, $\cLL(M)$ be its lattice of flats, and $\cLL^1(M)$ be its hyperplanes, i.e., corank-1 flats. An \textit{adjoint} of $M$ is a matroid $W$ on $\cLL^1(M)$ of rank $d$ such that $\cLL(M)$ embeds in the order dual of $\cLL(W)$. We give a new characterization of adjoints in terms of the geometry of $\Dr^1(\mu)$. Let $\Trop M$ be the Bergman fan of $M$ and put
\begin{equation}
     \Dr^1(M) := \big\{\text{codimension-1 tropical linear subspaces of }\Trop M\big\}. 
\end{equation}
Then an adjoint of $M$ corresponds to a tropical linear space of dimension $d-1$ inside $\Dr^1(M)$. See \Cref{prop:characterization-of-adjoint} for the precise statement. Using this idea, we generalize adjoints to valuated matroids in Definition \ref{def:valuated-adjoint} and use this notion to formulate \Cref{main:D}. 

\begin{maintheorem}\label{main:D}
    Let $\mu$ be a valuated matroid of rank $d$ on $[n]$. If $\mu$ has an adjoint, then any set of $d-1$ points in $\Trop\mu$ is contained in a codimension-1 tropical linear subspace $\Trop\theta\subset \Trop\mu$.
\end{maintheorem}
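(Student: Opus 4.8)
The plan is to transport to tropical geometry the classical proof of property \ref{itm:IG-interpolation} via adjoints. Classically, if $W$ is an adjoint of the matroid $M$ representing $V$, then under the order-reversing embedding $\cLL(M)\hookrightarrow\cLL(W)^{\op}$ a rank-one flat (point) of $M$ goes to a coatom (hyperplane) of $\cLL(W)$; given $d-1$ points of $V$ we get $d-1$ hyperplanes of $W$, whose meet has rank at least $d-(d-1)=1$ in $\cLL(W)$, hence lies below an atom of $\cLL(W)$, i.e. below (the image of) a hyperplane of $M$ passing through all $d-1$ points. To tropicalize, let $\cA$ be a valuated adjoint of $\mu$ as in \Cref{def:valuated-adjoint}. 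By \Cref{def:valuated-adjoint} (which is modelled on \Cref{prop:characterization-of-adjoint}), $\Trop\cA$ is a tropical linear space of dimension $d-1$ contained in $\cD^1(\mu)$, so each point $[\nu]\in\Trop\cA$ names a codimension-one tropical linear subspace $\Trop\nu\subset\Trop\mu$, and we may regard $\Trop\cA$ as a kind of tropical dual of $\Trop\mu$.

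For a point $p\in\Trop\mu$ consider the incidence locus
\[
H_p:=\big\{\,[\nu]\in\Trop\cA \ \big|\ p\in\Trop\nu\,\big\}.
\]
The crux of the argument is to show that $H_p$ is a tropical linear subspace of $\Trop\cA$ of codimension at most one which, moreover, represents the hyperplane (degree-one) class of the $(d-1)$-dimensional tropical linear space $\Trop\cA$; equivalently, the elementary quotients of $\mu$ that are parametrized by $\Trop\cA$ and pass through $p$ form $\Trop\nu_p$ for a single elementary quotient $\nu_p$ of $\cA$, cut out inside $\Trop\cA$ by one tropical hyperplane. Heuristically, the order-reversing structure built into a valuated adjoint is exactly what collapses the (a priori numerous) tropical Pl\"ucker conditions encoding ``$p\in\Trop\nu$'' into one tropical-linear condition on $[\nu]$; when $p$ is a loop-free rank-one flat of the underlying matroid this is the valuated upgrade of ``atom of $\cLL(M)\mapsto$ coatom of $\cLL(W)$'', and an arbitrary point $p$ is treated by passing to the star (localization) of $\mu$ at $p$, which is again a valuated matroid carrying an induced adjoint, and reducing to the rank-one case there.

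Granting this, \Cref{main:D} follows from tropical intersection theory on $\Trop\cA$. Given $p_1,\dots,p_{d-1}\in\Trop\mu$, the previous step yields divisors $H_{p_1},\dots,H_{p_{d-1}}$ on $\Trop\cA$, each in the hyperplane class. Since a tropical linear space has degree one, the product $H_{p_1}\cdots H_{p_{d-1}}$ is a zero-cycle of degree $1\cdot1\cdots1=1$, in particular nonempty, and it is supported inside $\bigcap_i H_{p_i}$. (Contrast with property \ref{itm:IG-intersection}: two codimension-one tropical linear subspaces of $\Trop\cA$ need not meet in the expected codimension once $d\geq 4$, but divisors in the hyperplane class always meet with the expected degree, which is all that is used here.) Choosing any $[\theta]$ in this support, the tropical linear space $\Trop\theta$ has codimension one in $\Trop\mu$ because $[\theta]\in\cD^1(\mu)$, and $p_i\in\Trop\theta$ for every $i$, which is precisely the conclusion.

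The main obstacle is the middle step. The rank-one-flat, matroidal case is essentially \Cref{prop:characterization-of-adjoint}, but showing that ``$p\in\Trop\nu$'' is a single tropical-linear condition for an arbitrary point $p$ of the tropical linear space, and in full valuated generality, requires manipulating the valuated-adjoint axioms directly and, via the star of $\mu$ at $p$, controlling minors of valuated matroids together with their adjoints. A secondary technical point arises in the last step: one must confirm that $H_p$ genuinely carries the hyperplane class on $\Trop\cA$, not merely some effective codimension-one class, because nonemptiness of the intersection is deduced from the degree computation and therefore hinges on this.
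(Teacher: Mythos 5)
Your overall strategy is the right one, and it is in fact the same strategy the paper uses: view the adjoint $\Trop\Sigma$ as a $(d-1)$-dimensional tropical linear space sitting inside $\cD^1(\mu)$, produce for each point one ``hyperplane-type'' condition on it, and intersect $d-1$ such conditions, using degree/stable-intersection considerations to guarantee nonemptiness. But the crux that you explicitly defer --- showing that ``$p\in\Trop\nu$'' is realized inside $\Trop\cA$ by a single hyperplane-class divisor $H_p$ --- is exactly the mathematical content of the theorem, and your proposal gives no proof of it. The paper does not prove that the full incidence locus is a hyperplane section (and it need not be: it is cut out by many incidence relations and is only known to be tropically convex); instead it exhibits one explicit tropical hyperplane, namely the hyperplane $H_{\bV}$ centered at $[\bV]=[\bw^{\wedge(d-1)}]$, and proves the one containment $H_{\bV}\stcap\Trop\Sigma\subset\cD^1(\mu;\bw)$ (Lemma \ref{lem:adjoint-and-linear-interpolation}). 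That containment is verified coordinate by coordinate using the cofactor formulas of Lemma \ref{lem:Sigma-basis-valuation}, the description of valuated circuits of a truncation (Lemma \ref{lem:valuation-of-truncation}), and the point-membership criterion (Lemma \ref{lem:point-membership}); with it in hand, the stable intersection $H_{\bV_1}\stcap\cdots\stcap H_{\bV_{d-1}}\stcap\Trop\Sigma$ is a nonempty zero-cycle supported in $\cD^1(\mu;\bw_1,\dots,\bw_{d-1})$, which is your degree argument made precise without ever needing $H_p$ itself to carry the hyperplane class.

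Two further points where your plan would need repair. First, your proposed route to the crux --- reduce to the rank-one-flat case by passing to the star/initial degeneration of $\mu$ at $p$, ``which is again a valuated matroid carrying an induced adjoint'' --- is unsupported: nothing in the paper (or in general matroid theory) guarantees that adjoints are inherited by stars, minors, or initial matroids, and existence of adjoints is notoriously not closed under such operations, so this reduction cannot be taken for granted. Second, your argument implicitly works with points having finite coordinates; the paper first proves the statement for $[\bw_1],\dots,[\bw_{d-1}]\in\Trop\mu\cap T_{[n]}$ (where $\bw^{\wedge(d-1)}$ makes sense) and then handles boundary points of $\Trop\mu$ by a limiting argument using compactness of $\cD^1(\mu)$ and closedness of the incidence relations; your proposal omits this case entirely.
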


Since all rank-3 matroids have adjoints, \Cref{main:D} implies 

\begin{cor}
    Let $M$ be a rank-3 matroid. Then through any two points on $\Trop M$ there is a tropical line that is contained in $\Trop M$.
\end{cor}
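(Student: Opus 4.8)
The plan is to obtain this as an immediate specialization of \Cref{main:D}. First I would regard $M$ as the trivially valuated matroid $\mu$ of rank $d=3$ on its ground set $[n]$, given by $\mu(B)=0$ for every basis $B$ of $M$; then $\Trop\mu$ is exactly the Bergman fan $\Trop M$, which is a tropical linear space of dimension $d-1=2$, i.e.\ a tropical plane. To invoke \Cref{main:D} I must verify its hypothesis, namely that $\mu$ has an adjoint in the sense of \Cref{def:valuated-adjoint}. Here I would appeal to the classical fact that every matroid of rank at most $3$ admits an adjoint $W$ in the lattice-theoretic sense recalled just before \Cref{prop:characterization-of-adjoint}, and then observe that the trivially valuated matroid on $\cLL^1(M)$ whose bases are the bases of $W$ is a valuated adjoint of $\mu$; this is a matter of unwinding \Cref{def:valuated-adjoint} with all valuations set to $0$.

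With the hypothesis in hand, the rest is formal. Given two points $p_1,p_2\in\Trop M$, apply \Cref{main:D} to the set $\{p_1,p_2\}$, which has $d-1=2$ elements: this produces a valuated matroid $\theta$ with $\mu\onto\theta$, with $p_1,p_2\in\Trop\theta$, and with $\Trop\theta$ a codimension-$1$ tropical linear subspace of $\Trop M$. Since $\dim\Trop M=2$, the space $\Trop\theta$ has dimension $1$, hence is a tropical line; and by construction it is contained in $\Trop M$ and passes through $p_1$ and $p_2$, which is exactly the claim.

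I expect essentially all of the content to live in the two ingredients flagged above: that rank-$3$ matroids have adjoints (a classical result on geometric lattices), and that such a lattice-theoretic adjoint yields a valuated adjoint of the trivially valuated $\mu$ in the precise sense of \Cref{def:valuated-adjoint}. The first is standard; the second is pure bookkeeping, the only mild subtlety being to confirm that the (possibly stronger) conditions packaged into \Cref{def:valuated-adjoint} are genuinely met once every valuation is taken to be $0$. Everything else is just reading \Cref{main:D} in the case $d=3$, where ``codimension-$1$ tropical linear subspace of a tropical plane'' and ``tropical line contained in that plane'' are synonymous.
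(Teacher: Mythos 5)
Your proposal is correct and follows the paper's own route: the paper derives this corollary directly from \Cref{main:D} together with the classical fact that every rank-3 matroid has an adjoint, with the passage from an ordinary adjoint $W$ of $M$ to a valuated adjoint of the trivially valuated $\mu$ being exactly the bookkeeping you describe (it is covered by \Cref{prop:characterization-of-adjoint} and \Cref{prop:valuated-adjoint-simplified}). No gaps.
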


However, not all valuated matroids have adjoints, and we show that the tropical analog of \ref{itm:IG-interpolation} fails for $\Trop M$ if $M$ does not have the \textit{Levi intersection property}. This property is described by \textit{linear subclasses}, which are subsets $\cHH$ of $\cLL^1(M)$ satisfying the following property:
\begin{center} \vspace{5pt}
    If $H_i,H_j\in \cHH$ and they cover $H_i\cap H_j$, then all hyperplanes covering $H_i\cap H_j$ are in $\cHH$.
\vspace{5pt} \end{center} 
The whole set $\cLL^1(M)$ is the \textit{trivial linear subclass}. A matroid $M$ has the \textit{Levi intersection property} if any $d-1$ hyperplanes are contained in a nontrivial linear subclass. For instance, the V\'{a}mos matroid $V_8$ does not have the Levi intersection property \cite{cheung1974adjoints}. 
\begin{maintheorem}\label{main:E}
    Let $M$ be a matroid of rank $d$ on $[n]$. If $M$ does not have the Levi intersection property, then there is a set of $d-1$ points in $\Trop M\cap T_{[n]}$ not contained in any codimension-1 tropical linear subspace of $\Trop M$.
\end{maintheorem}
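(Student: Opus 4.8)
The plan is to argue by contraposition, converting the statement into the combinatorics of linear subclasses. Linear subclasses of $M$ are closed under intersection, so ``$M$ does not have the Levi intersection property'' means there are hyperplanes $H_1,\dots,H_{d-1}\in\cLL^1(M)$ for which the smallest linear subclass containing $\{H_1,\dots,H_{d-1}\}$ is all of $\cLL^1(M)$; fix such hyperplanes. For a flat $F$ of $M$ set $e_F=\sum_{i\in F}e_i$, viewed in $T_{[n]}$; since $e_F$ generates a ray of the Bergman fan, $e_F\in\Trop M\cap T_{[n]}$. I claim the $d-1$ points $e_{H_1},\dots,e_{H_{d-1}}$ work. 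Suppose not: say $e_{H_i}\in\Trop\theta$ for all $i$, with $[\theta]\in\cD^1(M)$, and put $\cHH(\theta)=\{H\in\cLL^1(M):e_H\in\Trop\theta\}\supseteq\{H_1,\dots,H_{d-1}\}$. It then suffices to prove: (a) $\cHH(\theta)$ is a linear subclass of $M$, and (b) $\cHH(\theta)\ne\cLL^1(M)$; together these contradict the choice of the $H_i$.

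Statement (b) is the easier half and uses only that a tropical linear space is tropically convex, hence closed under coordinatewise minima. If $\cHH(\theta)=\cLL^1(M)$, then $e_H\in\Trop\theta$ for every hyperplane $H$; since every flat $F$ is the intersection of the hyperplanes containing it, $e_F=\bigwedge_{H\supseteq F}e_H\in\Trop\theta$ for all flats $F$. Thus $\Trop\theta$ contains the tropical convex hull of all ray generators of $\Trop M$, which is a full-dimensional (dimension $d-1$) subcomplex of $\Trop M$, contradicting $\dim\Trop\theta=d-2$.

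Statement (a) is the substantive part. Because two distinct hyperplanes through a corank-$2$ flat $F$ meet exactly in $F$, it is enough to show: if $F$ is a corank-$2$ flat and $H,H'\supseteq F$ with $e_H,e_{H'}\in\Trop\theta$, then $e_{H''}\in\Trop\theta$ for every hyperplane $H''\supseteq F$. Tropical convexity gives $e_F=e_H\wedge e_{H'}\in\Trop\theta$, and in fact the tropical segment $[e_H,e_{H'}]$ — the concatenation of the Euclidean segments $[e_F,e_H]$ and $[e_F,e_{H'}]$ — lies in $\Trop\theta$. I would then pass to the local structure of $\Trop\theta$ at $e_F$: it is a rank-$(d-1)$ tropical linear space inside the star of $\Trop M$ at $e_F$, which (up to a linear subspace) is the Bergman fan of $M|_F\oplus M/F$ with $M/F$ of rank $2$, and the two segments above sit over two of the rays of the $\Trop(M/F)$-factor. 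What remains is to propagate membership to the remaining hyperplanes through $F$, i.e.\ to show $e_{H''}=e_F+e_{H''\setminus F}\in\Trop\theta$. When $\theta$ happens to be a matroid this is immediate — $\cHH(\theta)$ is then exactly the linear subclass of the single-element extension corresponding to the modular cut of the matroid quotient $\theta$ — so the whole difficulty is concentrated in the valuated case.

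The main obstacle is precisely this propagation, and the subtle point is that it is genuinely a statement about the lattice points $e_{H''}$ and not just about the local fan: a codimension-$1$ tropical linear space inside $\Trop(\mu_{M|_F}\oplus\mu_{M/F})$ can contain several, but not all, of the rays of the rank-$2$ factor, so a naive fan-theoretic count does not close the argument. I expect the way in is to use that $\theta$ itself (not merely its star at $e_F$) is a quotient of $\mu_M$, which reduces the propagation to an incidence statement between a codimension-$1$ tropical linear space and the fan tropical line $\Trop(\mu_{M/F})$; this should be accessible through the description of $\cD^1$ by three-term tropical Pl\"ucker relations in \Cref{thm:dressian-cut-out-by-three-term}, combined with the common-interlacer technique of \cite{chudnovsky2007roots} and the planar incidence result \Cref{main:C}, rather than by direct manipulation of Pl\"ucker coordinates.
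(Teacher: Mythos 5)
Your strategy (take the indicator points of the hyperplanes $H_1,\dots,H_{d-1}$ witnessing the failure of the Levi property, and derive a contradiction from a hypothetical $[\theta]\in\cD^1(M)$ through them) starts the same way as the paper, but the proposal has a genuine gap exactly where you flag it: claim (a), that $\cHH(\theta)=\{H\in\cLL^1(M)\mid e_H\in\Trop\theta\}$ is a linear subclass, is never proved, and the plan you sketch for it is not convincing. Membership of the \emph{exact} lattice point $e_H$ in $\Trop\theta$ is a scale-sensitive condition: for $M=U_{3,4}$ and $\theta=a\,\be_{\{12,13,14\}}$ (a legitimate point of $\cD^1_s(U_{3,4})$ for every $a>0$), a direct check with \Cref{lem:point-membership} shows $e_{12}\in\Trop\theta$ only when $a=1$; so $\cHH(\theta)$ is a delicate, non-polyhedral-looking invariant, and propagating membership across a concurrent triple is not a local fan statement, as you yourself observe. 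The tools you propose to invoke do not address this: \Cref{main:C} is a statement about two tropical lines inside a tropical plane (two coperspective rank-$2$ quotients), not about a point of the form $e_{H''}$ lying on a fixed codimension-$1$ subspace, and the common-interlacer technique enters the paper only in \Cref{sec:from-incidence-to-lorentzian}, for Lorentzian polynomials, not for this kind of propagation. Also, in (b) the assertion that the tropical convex hull of all ray generators is $(d-1)$-dimensional is plausible but left unproved (and it is not a subcomplex of $\Trop M$); it needs an argument, e.g.\ via tropical rank of the points $e_{F_1},\dots,e_{F_{d-1}},\mathbf{0}$ along a maximal chain of flats.

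The paper avoids your hard step by extracting from the hypothesis $[\bw_i]\in\Trop\theta$ (with $\bw_i=c\,e_{H_i}$) only an \emph{inequality}: normalizing $\min_B\theta(B)=0$ and using that the initial matroid $\theta^{\bw_i}$ is loopless (so it has a basis $D_i+a_i$ with $D_i\subset H_i$, $a_i\notin H_i$), one gets $\theta_{H_i}\ge c$ for each $i$, where $\theta_H$ is the common value of $\theta$ on bases of $H$ given by the parallel hyperplane relations. The propagation is then immediate from the three-term concurrent-hyperplane relations defining $\cD_s^1(M)$: if two members of a concurrent triple have $\theta$-value $\ge c$, so does the third, hence $\{H:\theta_H\ge c\}$ is a linear subclass containing $H_1,\dots,H_{d-1}$, hence all of $\cLL^1(M)$, contradicting $\min_B\theta(B)=0$. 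In other words, the right object to propagate is the superlevel set of the hyperplane values, not the set of hyperplanes whose lattice points lie on $\Trop\theta$; replacing your (a)--(b) with this argument closes the proof, while your current write-up leaves the essential step open.
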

The tropical analog of \ref{itm:IG-flag} is known to hold for Bergman fans of matroids and for flags $\emptyset \subset \mu$ and $\mu\subset \PT{[n]}$. We add one more affirmative case.
 \begin{maintheorem}\label{main:F}
 Let $\mu$ be a valuated matroid. For any point $[\bw]\in \Trop\mu$, the partial flag $\{[\bw]\}\subset \Trop\mu$ can be completed.
 \end{maintheorem}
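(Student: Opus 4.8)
The plan is to build the completed flag from the top down by iterating a single operation — truncation of valuated matroids — while tracking the point $[\bw]$ via the relations that cut out $\Trop\mu$ at $[\bw]$. First I would normalize: translation by a vector $\bv\in\R^n$ acts on valuated matroids by $\mu(B)\mapsto\mu(B)+\sum_{i\in B}v_i$ and induces an automorphism of $\PT{[n]}$ that carries tropical linear spaces to tropical linear spaces, preserves containments, and sends $\{[\bw]\}\subset\Trop\mu$ to the correspondingly translated flag; so we may assume $[\bw]=[\mathbf{0}]$.

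Recall the truncation: for a nonzero valuated matroid $\nu$ of rank $r\geq 1$ on $[n]$, the function on $\binom{[n]}{r-1}$ defined by $\tau\nu(S):=\min_{i\in[n]\setminus S}\nu(S\cup i)$ is again a nonzero valuated matroid, now of rank $r-1$, and $\nu\onto\tau\nu$. (This is standard; one can also verify $\nu\onto\tau\nu$ directly from the tropical incidence-Pl\"{u}cker relations, or via \Cref{main:B}, since $\sum_i\partial_{w_i}f^\nu_q$ is a sum of polynomials $f^\theta_q$ with $[\theta]\in\cD^1(\nu)$ and tropicalizes to $f^{\tau\nu}_q$.) As $\tau\nu$ is an elementary quotient, $\Trop(\tau\nu)$ has one dimension less than $\Trop\nu$, hence $\Trop(\tau\nu)\subsetneq\Trop\nu$. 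Writing $d=\rk\mu$ and $\theta_k:=\tau^k\mu$, this produces a chain $\Trop\mu=\Trop\theta_0\supsetneq\Trop\theta_1\supsetneq\cdots\supsetneq\Trop\theta_{d-1}$ of tropical linear subspaces whose dimensions descend by $1$ from $d-1$ to $0$; since $\theta_{d-1}$ has rank $1$, its tropical linear space is a single point.

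The heart of the matter — the step I expect to be the main obstacle — is that truncation preserves membership of the basepoint: if $\rk\nu\geq 2$ and $[\mathbf{0}]\in\Trop\nu$, then $[\mathbf{0}]\in\Trop(\tau\nu)$. To prove this I would fix $T\subseteq[n]$ with $|T|=\rk\nu$ and show $\min_{i\in T}\tau\nu(T\setminus i)$ is attained at least twice (or is $\infty$), using $\tau\nu(T\setminus i)=\min\bigl(\nu(T),\,\min_{j\notin T}\nu((T\setminus i)\cup j)\bigr)$. If this minimum over $i\in T$ equals $\nu(T)$, then it equals $\nu(T)$ for \emph{every} $i\in T$, hence is attained $\rk\nu\geq 2$ times. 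Otherwise it is strictly below $\nu(T)$, witnessed by $\nu((T\setminus i_0)\cup j_0)<\nu(T)$ for some $i_0\in T$, $j_0\notin T$; applying the defining relation of $\Trop\nu$ at $[\mathbf{0}]$ to the $(\rk\nu+1)$-element set $T\cup\{j_0\}$ forces $i\mapsto\nu((T\setminus i)\cup j_0)$ to attain its minimum at two distinct indices $i_1,i_2\in T$, and a short computation then gives $\tau\nu(T\setminus i_1)=\tau\nu(T\setminus i_2)=\min_{i\in T}\tau\nu(T\setminus i)$, the required second occurrence.

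Granting the claim, induction on $k$ — base case $[\mathbf{0}]\in\Trop\theta_0=\Trop\mu$, inductive step applying the claim to $\theta_{k-1}$, which has rank $d-k+1\geq 2$ when $k\leq d-1$ — shows $[\mathbf{0}]\in\Trop\theta_k$ for every $0\leq k\leq d-1$. In particular the one-point space $\Trop\theta_{d-1}$ equals $\{[\mathbf{0}]\}$, so the chain displayed above is a completion of $\{[\mathbf{0}]\}\subset\Trop\mu$, and translating back completes $\{[\bw]\}\subset\Trop\mu$. Geometrically, $\tau\nu$ corresponds to the (stable) intersection of $\Trop\nu$ with the coordinate tropical hyperplane with vertex $[\mathbf{0}]$, and the claim says this section retains the vertex — the tropical counterpart of the trivial classical fact that through any point of a linear space there passes a hyperplane section of it; the only input beyond routine facts about truncation is this claim, and it is exactly where the hypothesis $[\bw]\in\Trop\mu$ is used.
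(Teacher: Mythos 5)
Your central claim---that the standard truncation $\tau\nu(S)=\min_{i\notin S}\nu(S\cup i)$ retains a basepoint at which all coordinates are finite and equal---is correct, and your proof of it via the relations on $(\rk\nu+1)$-sets is sound; on $\Trop\mu\cap T_{[n]}$ your argument is essentially the paper's (successive hyperplane sections centered at the point, i.e.\ Case I of \Cref{prop:complete-towards-point}, where the same persistence is checked with \Cref{lem:point-membership}). The genuine gap is the opening normalization. Translation by $\bv\in\R^n$ preserves the support of a point of $\PT{[n]}$, so ``we may assume $[\bw]=[\mathbf{0}]$'' is legitimate only when $\bw\in\R^n$. But in this paper $\Trop\mu$ is the closure in $\PT{[n]}$, and the theorem is asserted for every $[\bw]\in\Trop\mu$, including boundary points with $\supp(\bw)\subsetneq[n]$ (such points occur throughout, e.g.\ the valuated cocircuits). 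For these your construction fails outright: take $\nu$ the trivial valuation on $U_{2,3}$ and $[\bw]=[(0,0,\infty)]\in\Trop\nu$; then $\tau\nu$ is the all-zero rank-$1$ valuation, so $\Trop(\tau\nu)=\{[(0,0,0)]\}$ does not contain $[\bw]$, and no translation repairs this.

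To handle boundary points one has to section by hyperplanes adapted to the flat $F=[n]\setminus\supp(\bw)$ rather than by the single operator $\tau$. This is what the paper's proof of \Cref{prop:complete-towards-point} does: it interleaves your generic truncation with principal truncations $\Tr_{\nu'}\mu$, where $\nu'$ is the degenerate corank-$1$ valuation supported on $F$ (the trivial valuation on $U_{|F|-1,F}\oplus U_{n-|F|,[n]\backslash F}$), switching to the latter once $F$ becomes a hyperplane of the current underlying matroid. The persistence of the point under these $F$-directed sections (Case II of that proof) is a separate computation and is not a consequence of your claim, since the truncating weight vector is no longer finite. So as written your proposal proves \Cref{main:F} only for points of the tropical projective torus; to close the gap you would need to formulate and prove the analogue of your persistence lemma for the principal truncation toward $F$ and organize the two kinds of sections as the paper does.
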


  No counter-example to the tropical analog of \ref{itm:IG-flag} is known. However, we show that there are flags of tropical linear spaces that cannot be completed with a \textit{prescribed} flag of underlying matroids. Recall that a flag of tropical linear spaces $\Trop \mu_k\subset \Trop \mu_{k-1}\subset\cdots\subset \Trop\mu_0$ induces a flag of matroids $M_0\onto M_2\onto \cdots \onto M_k$, where $M_i$ is the underling matroid of $\mu_i$ for each $i$. 
   \begin{maintheorem}\label{main:G}
   Let $p$ be any prime power and $M$ be the matroid of the finite projective plane over $\mathbb{F}_q$. Let $\Trop\theta$ be a generic tropical line whose underlying matroid is the uniform matroid $U_{2,q^3-1}$. Then $U_{2,q^3-1}$ is a quotient of $M$, but there is no tropical plane containing $\Trop\theta$ with underlying matroid $M$.
  \end{maintheorem}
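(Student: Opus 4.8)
The plan is to argue entirely through the dictionary between tropical linear spaces and valuated matroids: the assertion is that a generic rank-$2$ tropical Pl\"{u}cker vector $\theta$ (so $\underline\theta = U_{2,q^3-1}$) is not a quotient of any rank-$3$ valuated matroid $\mu$ with $\underline\mu = M$. First, the combinatorial quotient $M\onto U_{2,q^3-1}$ is automatic, because $M$ is simple of rank $3$ and so the flats of $U_{2,q^3-1}$ (the empty set, the singletons, the whole ground set) are all flats of $M$. Second, I reduce to a single-extension problem: by the standard factorization of valuated-matroid quotients through an extension followed by a contraction, a $\mu$ with $\underline\mu=M$ and $\mu\onto\theta$ exists if and only if there is a rank-$3$ valuated matroid $\mu^+$ on $E(M)\sqcup\{e\}$ with $\mu^+\setminus e = \mu$ and $\mu^+/e = \theta$; and since $M$ is simple of rank $3$, the only single-element extension of $M$ whose contraction by the new element has underlying matroid $U_{2,q^3-1}$ is the free extension $M+e$. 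So it suffices to show: for generic $\theta$ there is no valuated matroid $\mu^+$ with $\underline{\mu^+}=M+e$ and $\mu^+(\{i,j,e\})=\theta(\{i,j\})$ for all $i,j$.

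The next step is to extract a rigid identity from the three-term Pl\"{u}cker relations of $\mu^+$. Write $\mu=\mu^+\setminus e$. For an element $s$ and a triple $\{x,y,z\}$ avoiding $s$, the relation with pivot $s$ and quadruple $\{x,y,z,e\}$ says that $\min\bigl(\mu(\{s,x,y\})+\theta(\{s,z\}),\ \mu(\{s,x,z\})+\theta(\{s,y\}),\ \mu(\{s,y,z\})+\theta(\{s,x\})\bigr)$ is attained at least twice. Choosing $s,x,y$ collinear on a line $\ell$ of $M$ and $z=b\notin\ell$ makes the first term $\infty$, so the two remaining finite terms are equal: $\mu(\{s,x,b\})-\theta(\{s,x\})=\mu(\{s,y,b\})-\theta(\{s,y\})$. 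Letting $y$ range over $\ell\setminus\{s\}$ and using the symmetry of $\mu$, one concludes that for every line $\ell$ of $M$ and every $b\notin\ell$ there is a scalar $h_\ell(b)$ with $\mu(\{u,v,b\})=\theta(\{u,v\})+h_\ell(b)$ for all distinct $u,v\in\ell$. Thus $\mu$ is almost entirely pinned down by $\theta$ together with the auxiliary functions $\{h_\ell\}$, and the whole point of the obstruction is that these data cannot be made consistent for generic $\theta$.

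The heart of the argument is to make this inconsistency explicit, which I would do by contracting a point. Fix $b\in E(M)$ and let $\nu=\mu/b$, a rank-$2$ valuated matroid whose underlying matroid $M/b$ has parallel classes $C_1,\dots,C_{q+1}$, where $C_i$ is the set of non-$b$ points of the $i$-th line $\ell_i$ of $M$ through $b$. Any line $\ell'$ of $M$ with $b\notin\ell'$ is a transversal of this pencil, meeting each $\ell_i$ in one point $u'_i$, and the identity above reads $\nu(\{u'_i,u'_j\})=\theta(\{u'_i,u'_j\})+h_{\ell'}(b)$. For a second transversal $\ell''$ with $\ell''\cap\ell_i=u''_i$, the elements $u'_i$ and $u''_i$ are parallel in $M/b$, so $\nu(\{u'_i,w\})-\nu(\{u''_i,w\})$ is a constant $\sigma_i$ independent of $w$; substituting, one gets, for all $i\neq j$, $\theta(\{u'_i,u'_j\})-\theta(\{u''_i,u''_j\})=C+\sigma_i+\sigma_j$ for scalars $C,\sigma_1,\dots,\sigma_{q+1}$ depending only on $b,\ell',\ell''$. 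This linear system is solvable precisely when, for every four of the indices $i,j,k,l$, the four-point quantity $\theta(\{u'_i,u'_j\})+\theta(\{u'_k,u'_l\})-\theta(\{u'_i,u'_k\})-\theta(\{u'_j,u'_l\})$ equals its analogue for the $u''$'s; for a tree metric this quantity is $0$ or $\pm2$ times an internal edge length. For $q\geq3$ one can choose a quadruple of indices on which the two transversals carry distinct point sets (they coincide in at most one index, that of the pencil-line through $\ell'\cap\ell''$), and for a generic tree metric the $u'$-value differs from the $u''$-value, the two quadruples meeting generically different internal edges. Hence the system has no solution, no $\mu^+$ exists, and the theorem follows for $q\geq3$; for $q=2$ (the Fano matroid) the displayed system is under-determined, and one instead uses that the identity $\mu(\{u,v,b\})=\theta(\{u,v\})+h_\ell(b)$ together with the full set of Pl\"{u}cker relations of $\mu$ already forces an over-determined linear system on the $h_\ell$, again inconsistent for generic $\theta$.

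I expect the main obstacle to be exactly this passage from the identity $\mu(\{u,v,b\})=\theta(\{u,v\})+h_\ell(b)$ — which is harmless in isolation, and indeed consistent across any small sub-configuration of $M$ — to a genuine relation among the entries of $\theta$. The inconsistency surfaces only after propagating the data through a contraction $M/b$ and exploiting that a pencil of $q+1\geq4$ lines of $M$ admits many transversals, which is a manifestation of the modularity of the projective plane: the same ``rich intersection'' behaviour that lets $M$ satisfy the Levi intersection property is what makes the constraints over-determine $\theta$ here, in contrast to matroids with ``free'' line arrangements. Carrying out the genericity step rigorously also requires checking that the relevant four-point quantities are not forced to coincide by the incidences of $M$, which holds because $\theta$ is chosen generically in the Dressian of $U_{2,q^3-1}$, with no reference to $M$.
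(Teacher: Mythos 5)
Your reduction and your derivation of necessary conditions are correct as far as they go: the identity $\mu(\{u,v,b\})=\theta(\{u,v\})+h_\ell(b)$ is exactly the paper's parallel/concurrent-hyperplane incidence relations in disguise, the parallel-class constants $\sigma_i$ in $\mu/b$ are the rank-$2$ instance of the same relations, and the solvability criterion $D_{ij}+D_{kl}=D_{ik}+D_{jl}$ for the system $D_{ij}=C+\sigma_i+\sigma_j$ is right. The genuine gap is the final, decisive step: you never prove that a generic $[\theta]\in\cD(2,q^3-1)$ violates one of these four-point compatibility equalities. Each such equality is an ordinary linear equation in the entries of $\theta$, and the Dressian $\cD(2,q^3-1)$ is a union of full-dimensional cones indexed by tree topologies; a linear equation can perfectly well hold identically on a cone (indeed all of your equalities hold identically on the locus $\theta(ij)=\bw(i)+\bw(j)$, where every four-point quantity vanishes), so ``for a generic tree metric the $u'$-value differs from the $u''$-value'' is not automatic. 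To close the argument you would need to show that for \emph{every} maximal cone (trivalent tree topology on $q^3-1$ leaves) there is a choice of $b$, transversals $\ell',\ell''$ and a quadruple of pencil indices whose two four-point functionals, as linear forms in the internal edge lengths, are distinct; this is a nontrivial combinatorial statement about how the incidence structure of the projective plane interacts with an arbitrary tree, and it is precisely the point at which you appeal to genericity instead of proving it. In addition, the $q=2$ (Fano) case is not handled: the pencil through $b$ has only three lines, so the four-point criterion is vacuous, and the sketched alternative (``an over-determined linear system on the $h_\ell$, again inconsistent for generic $\theta$'') is an assertion rather than an argument.

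For contrast, the paper's proof sidesteps all pointwise constraint-chasing: by Dress--Wenzel rigidity of finite projective geometries, every tropical plane with underlying matroid $M$ is a translate of $\Trop M$, and by modularity of the projective plane $\cD^1(M)\cong \Trop M$ has dimension $2$ (\Cref{prop:properties-adjoint}(3)); hence the locus of tropical lines lying on such a plane has dimension at most $2+(q^3-2)=q^3$, which has codimension at least $2$ in $\cD(2,q^3-1)$ (\Cref{thm:projective-plane}), so a generic line avoids it. If you want to salvage your route, rigidity is the natural replacement for your missing genericity lemma: once $\mu$ is known to be a translate of the trivial valuation on $M$, your linear identities become explicit and the contradiction with a generic $\theta$ reduces to the same dimension count, rather than having to be verified cone by cone in the Dressian.
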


\subsection{History}\label{subsec:history}

After this paper was finished, we learned from Petter Br\"{a}nd\'{e}n that \Cref{prop:A} is known to experts. He communicated to us a simple proof which we include in \Cref{rmk:branden-proof}. The introduction of this paper differs slightly from its first arXiv version, as we were informed of more relevant literature which we didn't give enough credit to: The tropical convexity of $\Dr^1(\mu)$ was first stated in Frenk's thesis \cite[Theorem 4.2.9]{frenk2013tropical}, where the proof was incomplete. It was restated in Fink and Moci \cite[Corollary 6.6]{Fink_2019}. Later, a special case where $\mu$ was uniform was proved in \cite{joswig2023generalized}. We wrote the arXiv version knowing about only \cite{joswig2023generalized}.

\Cref{prop:A} and \Cref{main:B} concern the local structure of spaces of Lorentzian polynomials. They can be compared to \cite{branden2021spaces}, which shows that spaces of Lorentzian polynomials of given degrees are homeomorphic to Euclidean balls, and to \cite{bakerhuh}, which shows that the logarithmic image of the set of Lorentzian polynomials supported on any matroid is a manifold with boundary. The failure of the convexity of $\{h \text{ Lorentzian}\mid h\ll_L f\}$ deserves an explanation. Recall that stability of polynomials is preserved under the inversion operation analogous to matroid duality:
\begin{equation}
    f(w_1,...,w_n)\mapsto w_1^{d_1}\cdots w_n^{d_n}f(w_1^{-1},...,w_n^{-1}),
\end{equation}
where for each $i$, the degree of $f$ in $w_i$ is $d_i$. Therefore, the convexity property in \Cref{prop:convexity-stable-proper-position} is symmetric. The asymmetry in \Cref{prop:A} implies that inversion does not preserve the Lorentzian property. We give explicit examples in \Cref{ex:L1-not-convex}.

The Dressian $\Dr(d,n)$ that parametrizes all tropical linear spaces of dimension $d-1$ in $\PT{n}$ is notoriously hard to study. One of our major discoveries is that $\Dr^1(\mu)$, in contrast, has much simpler structure. We call these moduli spaces that parametrize tropical linear subspaces in a given tropical linear space the \textit{relative Dressians}, a name suggested by June Huh. \Cref{cor:A} is the tropical analog of the following observation in classical geometry. Let $V$ be any $d$-dimensional vector subspace in $E\cong \bF^n$ for some field $\bF$. The inclusion $V\hookrightarrow E$ induces inclusion $\Gr(d-1,V)\hookrightarrow \Gr(d-1,E)$ and inclusion $\bigwedge^{d-1}V\hookrightarrow \bigwedge^{d-1}E$. The following diagram commutes
 \begin{center} \vspace{5pt}
      \begin{tikzcd}
     \Gr(d-1,V) \ar[r,hook] \ar[d,hook,"\text{Pl\"{u}cker}"] & \Gr(d-1,E) \ar[d,hook,"\text{Pl\"{u}cker}"] \\
     \mathbb{P}(\bigwedge^{d-1}V) \ar[r,hook] & \mathbb{P}(\bigwedge^{d-1}E)
 \end{tikzcd}
 \vspace{5pt} \end{center} 
because the Pl\"{u}cker map on $E$ restricts to the Pl\"{u}cker map on $V$. Since $\Gr(d-1,V)\cong \mathbb{P}(\bigwedge^{d-1}V)\cong \mathbb{P}^{d-1}$, we see that $\Gr(d-1,V)$ is a copy of $\mathbb{P}^{d-1}$ inside the Pl\"{u}cker embedding of $\Gr(d-1,E)$. 
The linearity of $\Gr(d-1,V)$ inside $\mathbb{P}(\wedge^{d-1}E)$ is proved using coordinates in \cite{jell2022moduli}. There, the authors study the space of \textit{tropicalized} codimension-1 linear subspaces of a \textit{tropicalized} linear space.
\Cref{thm:dressian-cut-out-by-three-term}, as a strengthening of \Cref{cor:A}, extends \cite[Theorem 3.10]{joswig2023generalized}, which treats the case when $\mu$ is uniform.

The notion of adjoints of valuated matroids is new, but the underlying geometry in the classical setting is simple: there is an embedding
\begin{equation}
    \Gr(d,E) \hookrightarrow \Gr(d,\wedge^{d-1}E),\quad V\mapsto \wedge^{d-1}V.
\end{equation}
Each point in the image of this embedding is a linear subspace of $\PP(\wedge^{d-1}E)$ parametrizing hyperplanes of some $(d-1)$-dimensional linear subspace $\PP V$ of $\PP E$. It turns out there are determinantal identities expressing the Pl\"{u}cker coordinate of $\PP(\wedge^{d-1}V)$ in terms of the Pl\"{u}cker coordinate of $\PP V$. We did not find this in the literature, so we include this material in \Cref{subsec:cofactor-grassman}. Many of our results on adjoints of valuated matroids are tropical analogs of this phenomenon.

Dress and Wenzel's original definition of valuated matroids was motivated by \textit{oriented matroids}. Therefore, it is instructive to compare our results with what is known for oriented matroids.
\begin{itemize}
    \item For an oriented matroid $M^{o}$, the analog of the relative Dressian $\Dr^1(\mu)$ is the \textit{oriented elementary quotient lattice} $\widehat{\Qt}_{OM}^1(M^o)$, which parametrizes elementary \textit{oriented} matroid quotients of $M^o$. In contrast to the connectivity of $\Dr^1(\mu)$, the order complex of $\widehat{\Qt}_{OM}^1(M^o)$ may be disconnected \cite{mnev1993two,liu2020counterexample}. In \Cref{thm:dressian-is-order-complex}, we show that the order complex of the (non-oriented) elementary quotient lattice $\widehat{\Qt}^1(M)$ equals $\Dr^1(M)$. This suggests that the disconnectivity of $\Qt_{OM}^1(M^o)$ comes from orientations.
    \item The existence of adjoints and the Levi intersection properties are the \textit{intersection properties} extensively studied for oriented matroids, as an effort to generalize polar duality of polytopes to oriented matroids. See \cite[Section 7.5]{bjorner1999oriented} for a full account. Here for valuated matroids, these properties naturally describe the incidence properties of tropical linear spaces.
\end{itemize} 

Our study on the incidence geometry of tropical linear spaces was motivated by two recent advancements.
The first is the theory of matroids over hyperstructures \cite{baker2019matroids,baker2020foundations,baker2023foundations}. This theory predicts that the set of all quotients of a given matroid, partially ordered by the quotient relation, might resemble a geometric lattice. See \Cref{sec:incidence-problems} for an explanation on how our choice of the incidence problems is related to this prediction. Our results provide both evidence and counter-evidence. Among the counter-evidence, the failure of the upper semimodularity of the poset of matroids seems most essential, as many nice properties of $\cLL(M)$ stem from its semimodularity.

The second relevant advancement is the study of linear series on tropical curves, which has led to new results and new proofs of classical theorems about the geometry of algebraic curves \cite{jensen2014tropical,jensen2016tropical,farkas2020kodaira,farkas2024nonabelian}. In developing a theory of tropical linear series that is fully grounded in tropical geometry, it seems necessary to understand tropical linear incidence geometry. Murota's constructions \cite{murota1997matroid} of truncation and elongation address the tropical analog of \ref{itm:IG-flag} for flags $\emptyset\subset\Trop\mu$ and $\Trop\mu\subset \PT{[n]}$. Other than that, little was known about those incidence problems before our attempt. This paper is only the start of a program in this direction. There are other incidence problems we do not address within the scope of this paper. For instance, do a codimension-1 tropical linear subspace and a tropical line in $\Trop\mu$ always intersect?

\subsection{Organization}\label{subsection:organization}
Here is a roadmap to each section.

\Cref{sec:prelim}: background materials on matroid quotients, tropical linear spaces, M-convex functions, and Lorentzian polynomials.

\Cref{sec:Lorentzian-proper-position}: properties of Lorentzian proper position, such as preservation under linear operations and the relation with quotients of M-convex functions; proofs of \Cref{prop:A} and \Cref{main:A}; sources of Lorentzian proper position, such as pullback of intersection product of nef divisors and intrinsic volumes of convex bodies.

\Cref{sec:dressian-structure}: the structural result \Cref{thm:dressian-cut-out-by-three-term} on $\Dr^1(\mu)$; descriptions of $\Dr^1(M)$ as the order complex of the elementary quotient lattice $\widehat{\Qt}^1(M)$ and as a tropical polytope; a geometric characterization of adjoints of ordinary matroids. 

\Cref{sec:valuated-adjoint}: generalization of adjoints to valuated matroids; the important \Cref{lem:Sigma-basis-valuation} which relates the value of $\mu$ and the value of its adjoint; proof of \Cref{thm:projective-plane}, which implies \Cref{main:G};

\Cref{sec:incidence-problems}: the tropical analogs of \ref{itm:IG-intersection}, \ref{itm:IG-flag} and \ref{itm:IG-flag}, and the corresponding  incidence problems of matroids; proofs of \Cref{main:C,main:D,main:E,main:F} and counterexamples.

\Cref{sec:from-incidence-to-lorentzian}: application of the incidence results to Lorentzian polynomials; proof of \Cref{main:B}.

Appendix \ref{subsec:cofactor-grassman}: the classical geometry behind adjoints of valuated matroids.

\subsection{Notations}\label{subsection:notations}

Unless otherwise stated, $M$ denotes a \textit{simple} matroid of rank $d$ on $[n]$, and $\mu$ denotes a valuated matroid of rank $d$ on $[n]$ with underlying matroid $M$. The degree of the zero polynomial is taken to be 0, so any polynomial of positive degree is nonzero. The following notations will be used throughout.

\vspace{1em}
   \bgroup
   \def\arraystretch{1.2}
   \begin{tabular}{cl}
    \multicolumn{2}{l}{Sets and tuples} \\\hline 
    $\ba,\bb,...$ &  tuples in $\R^n,\N^n,$ or $\Rbar^n$ \\
        $\ba(i)$ & the $i$-th component of $\ba$ \\ 
        $[\ba]$ & the class of $\ba$ modulo scalar addition \\
       $|\ba|_1$ &  $\sum_{i=1}^n|\ba(i)|$\\
        $\be_i$ & the $i$-th standard basis vector of $\R^n$\\
       $Aijk$  & $A\cup\{i,j,k\}$ \\
       $A+i$ & $A\cup \{i\}$ \\
       $A-i$ & $A\backslash\{i\}$ \\\hline
       \multicolumn{2}{l}{Polynomials} \\\hline
       $f,g,h,...$ & polynomials \\
        $w^\ba$   & $w_1^{\ba(1)}\cdots w_n^{\ba(n)}$ \\ 
        $w^B$ & $\prod_{i\in B}w_i$ \\
        $\partial_i$ & the partial derivative with respect to the $i$-th variable \\
        $\partial_{\bv}$ & the directional derivative in the $\bv$ direction \\
        $\partial^\ba$ & $\partial_1^{\ba(1)}\cdots \partial_n^{\ba(n)}$\\
        $\partial^B$ & $\prod_{i\in B}\partial_i$
       \\\hline        \multicolumn{2}{l}{Valuated matroids}  \\\hline
       $\mu,\theta,\nu,\Sigma$  & valuated matroids \\
       $[\mu]$ & the class of $\mu$ modulo scalar addition \\
       $\mu\backslash I$ & the deletion of $\mu$ by $I$ \\
       $\mu/I$ & the contraction of $\mu$ by $I$ 
    \end{tabular}
    \vspace{1em}
    \egroup
    
We also adopt the following standard notations in matroid theory. 

\begin{center} \vspace{5pt}
\begin{tabular}{c|c}
   $\cB(M)$  &  the set of bases of $M$ \\
    $\cLL(M)$ & the lattice of flats of $M$ \\
     $\cLL^k(M)$  &  the corank-$k$ flats of $M$ \\
    $\cLL_k(M)$ & the rank-$k$ flats of $M$ \\
     $\cl_M$ & the closure operator of $M$ \\
     $\Tr M$ & the truncation of $M$
   
\end{tabular}\hspace{0.2in}
\begin{tabular}{c|c}
 
    $\rk_M$ & the rank function of $M$ \\
    $\rk M$ & the rank  of $M$   \\
     $M/A$ & contraction of $M$ by $A$ \\
     $M\backslash A$ &  deletion of $M$ by $A$\\
     $M\oplus N$ & the direct sum of $M$ and $N$ \\
     $U_{r,n}$ & the uniform matroid of rank $r$ on $[n]$
    
\end{tabular}
\vspace{5pt} \end{center} 

\noindent\textbf{Acknowledgments.}
We thank Chris Eur, Alex Fink, June Huh, Dhruv Ranganathan, Botong Wang, and Wang Yao for helpful conversations, and Sam Payne for suggestions on the organization of this paper. We thank Petter Br\"{a}nd\'{e}n for helpful feedback on a draft of this paper. The author received support from NSF grants DMS-2113468, DMS-2302475, and DMS-2053261.

\section{Preliminaries}\label{sec:prelim}

We assume that the reader is familiar with basic notions about matroids and posets. \Cref{subsec:prelim-matroid-quotient} provides background on matroid quotients and \Cref{subsec:prelim-tropical-linear-spaces} on tropical linear spaces. They are not needed until \Cref{sec:dressian-structure}. Readers with the assumed background may skip these sections and come back later.

\subsection{Matroid quotient and the elementary quotient lattice}\label{subsec:prelim-matroid-quotient}

We review the characterization of elementary quotients by linear subclasses and by modular cuts. This material can be found in \cite{cheung1974compatibility} and \cite{white1986theory}.

Given two matroids $M$ and $N$ on $[n]$, $N$ is a \textit{quotient} of $M$, denoted $M\onto N$, if $\cLL(N)\subset \cLL(M)$; the \textit{corank} of the quotient is the number $\rk M-\rk N$; an \textit{elementary quotient} is a quotient of corank 1. A \textit{linear subclass} of $M$ is a subset $\cHH\subset \cLL^1(M)$ that satisfies the following property: 
\begin{center} 
\vspace{5pt}
    If $H_i,H_j\in \cHH$ and they cover $H_i\cap H_j\in \cLL^2(M)$, then all hyperplanes covering $H_i\cap H_j$ are in $\cHH$.
\vspace{5pt} \end{center} 
The whole set $\cLL^1(M)$ is the \textit{trivial linear subclass}. If $Q$ is an elementary quotient of $M$, then $\cLL^1(Q)\cap \cLL^1(M)$ is a nontrivial linear subclass of $M$. Conversely, given a nontrivial linear subclass $\cHH$, there is a unique elementary quotient $Q$ such that $\cLL^1(Q)\cap \cLL^1(M)=\cHH$. Put 
\begin{equation}
    \widehat{\Qt}^1(M)=\Qt^1(M) \cup \{M\},
\end{equation}
where
\begin{equation}
    \Qt^1(M):=\big\{\text{ $Q$ a matroid} \mid  Q\text{ is an elementary quotient of }M\big \}.
\end{equation}
Then the map
\begin{equation}\label{eqn:elementary-quotient-to-linear-subclass}
    \widehat{\Qt}^1(M) \to \big\{\text{ linear subclasses of }M\big\},\quad  Q\mapsto \cLL^1(Q)\cap\cLL^1(M)
\end{equation}
is bijective. See \Cref{ex:linear-subclass-and-elementary-quotient} for some examples of linear subclasses.

\begin{ex}\label{ex:linear-subclass-and-elementary-quotient}
Let $M=U_{3,4}$. The empty set is a linear subclass; the subset \{12,13\} is \textit{not} a linear subclass, because $1$ is covered by 12, 13 and 14, but 14 is not in $\{12,13\}$. The order filter $\{12,34,1234\}\subset \cLL(U_{3,4})$ is a modular cut; the order filter $\{12,13,1234\}$ is \textit{not} a modular cut, because the flats 12 and 13 satisfy \eqref{eqn:modular-pair}, but their intersection is not in $\{12,13,1234\}$. \Cref{tab:linear-subclass-and-quotient} shows three elementary quotients of $U_{3,4}$, the corresponding linear subclasses, and the corresponding modular cuts. 
\end{ex}

\begin{table}[h]
    \centering
    \small 
    \begin{tabular}{cccc} 
        \toprule
        \small \begin{tabular}{c}
            \textbf{The affine}  \\
                \textbf{diagram of $Q$}
        \end{tabular} & 
        \small \begin{tabular}{c}
             \textbf{$\cLL(Q)$, boxed} \\ 
             \textbf{inside $\cLL(U_{3,4})$}
        \end{tabular}   & 
         \small\begin{tabular}{c}
            \textbf{The corresponding}  \\
                \textbf{linear subclass $\cHH$}
        \end{tabular}
        & 
        \small\begin{tabular}{c}
            \textbf{The corresponding}  \\
                \textbf{modular cut $\cF$}\end{tabular}  \\ 
        \midrule
        
        \begin{tabular}{c}
             \includegraphics[width=0.8in]{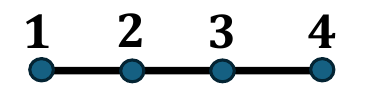} \\
        \end{tabular} & 
         \begin{tabular}{c}
             \\
             \includegraphics[width=1.5in]{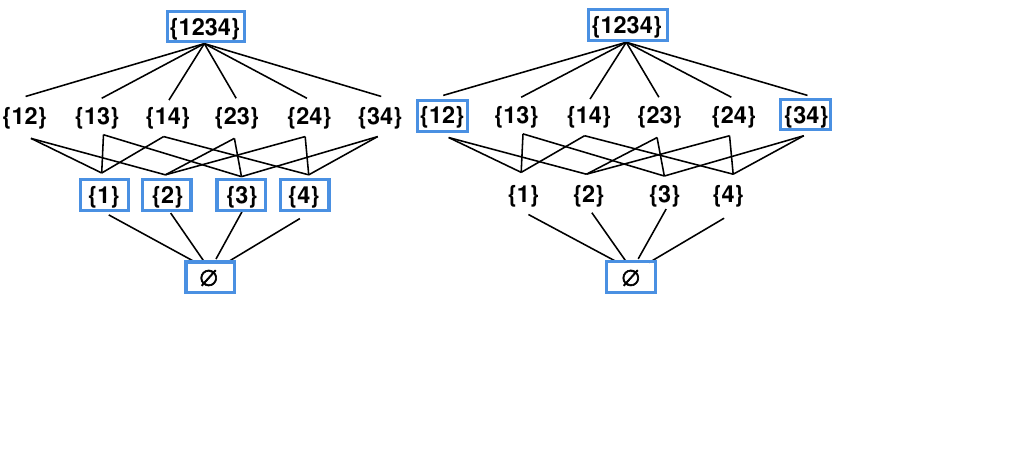} 
             \\
        \end{tabular} & 
         \begin{tabular}{c}
             $\emptyset$ \\
        \end{tabular} & 
        $\{1234\}$ \\ 
        \addlinespace 
        
        \begin{tabular}{c}
             \includegraphics[width=0.6in]{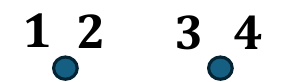}\\
             \\
        \end{tabular} & 
        \begin{tabular}{c}
             \\
             \includegraphics[width=1.5in]{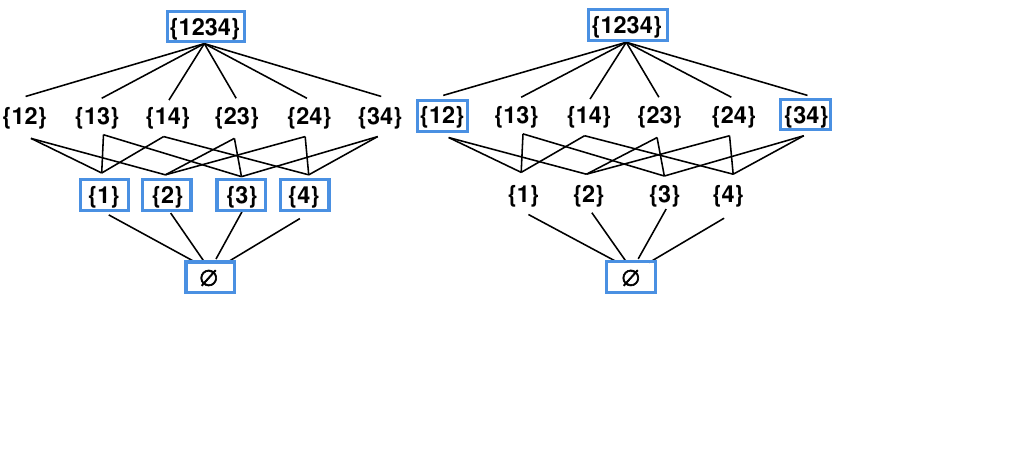} 
             \\
        \end{tabular} & 
            \begin{tabular}{c}
             $\{12,34\}$ \\
        \end{tabular}  & 
        $\{12,34,1234\}$ \\ 
        \addlinespace
        
        \begin{tabular}{c}
             \includegraphics[width=1in]{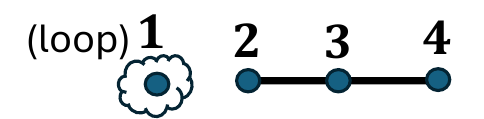}\\
             \\
        \end{tabular}
        & 
               \begin{tabular}{c}
             \\
             \includegraphics[width=1.5in]{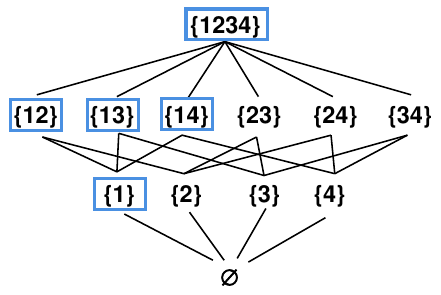} 
             \\
        \end{tabular}& 
        \begin{tabular}{c}
             $\{12,13,14\}$ \\
        \end{tabular}   & 
        upper interval of 1 \\ 
        \bottomrule
    \end{tabular}
    \caption{Three elementary quotients of $U_{3,4}$}
    \label{tab:linear-subclass-and-quotient}
\end{table}

By definition, an intersection of linear subclasses is again a linear subclass. Under \textit{reverse} inclusion, the set of linear subclasses of $M$ is a lattice. The bijection \eqref{eqn:elementary-quotient-to-linear-subclass} then turns $\widehat{\Qt}^1(M)$ into a lattice. The minimal element of $\widehat{\Qt}^1(M)$ is $M$, corresponding to the trivial linear subclass $\cLL^1(M)$; the maximal element is the \textit{truncation} $\Tr M$ of $M$, which corresponds to the linear subclass $\emptyset$. This lattice is known as the \textit{elementary quotient lattice} \cite[Page 150]{white1986theory}.

\Cref{fig:elementary-quotient-lattice-u34} shows the lattice of linear subclasses and the elementary quotient lattice of $U_{3,4}$. It is isomorphic to the order dual of the lattice of flats of the graphical matroid $M_{K_4}$. We warn the reader that this belongs to a special case. See \Cref{prop:valuated-adjoint-tropicalization} (2). In general, elementary quotient lattice is very far from being the order dual of the lattice of flats of any matroid. See \Cref{ex:elementary-quotient-lattice}.
  \begin{figure}
      \centering
\includegraphics[width=3in]{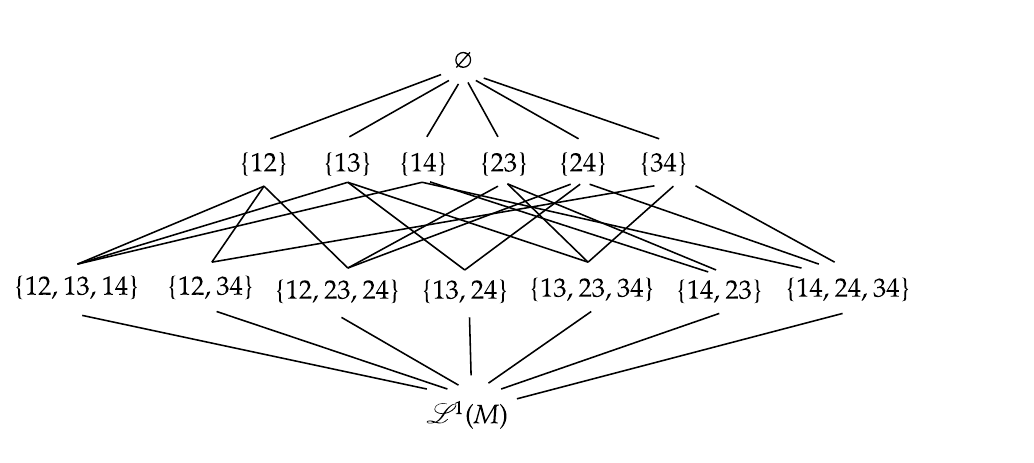} 
    \includegraphics[width=3in]{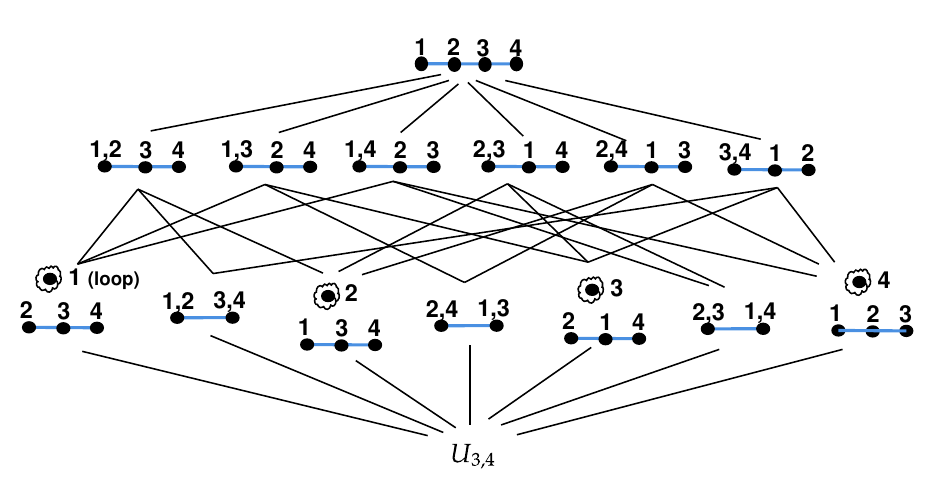}
      \caption{The lattice of linear subclasses of $U_{3,4}$ and the elementary quotient lattice $\widehat{\Qt}^1(U_{3,4})$.}
      \label{fig:elementary-quotient-lattice-u34}
  \end{figure}
  
  A nonempty order filter $\cF\subset\cLL(M)$ is called a \textit{modular cut} if whenever $F_1,F_2\in \cF$ satisfy 
\begin{equation}\label{eqn:modular-pair}
    \rk_M(F_1) + \rk_M(F_2) = \rk_M(F_1\wedge F_2) + \rk_M(F_1\vee F_2),
\end{equation}
we have $F_1\wedge F_2\in \cF$. The \textit{collar} $\cR$ of a modular cut $\cF$ is the set of flats not in $\cF$ but are covered by some element in $\cF$. The full set $\cLL(M)$ is the \textit{trivial modular cut}. If $\cF$ is a nontrivial modular cut, then the subset $\cLL(M)\backslash \cR$ is the lattice of flats of an elementary quotient $Q$ of $M$. Conversely, if $Q$ is an elementary quotient of $M$, then $\cLL(M)\backslash\cLL(Q)$ is the collar of some modular cut. If $\cF$ is a modular cut, then $\cF\cap \cLL^1(M)$ is a linear subclass; conversely, the set of all joins of elements in a linear subclass is a modular cut. See \Cref{ex:linear-subclass-and-elementary-quotient} for examples of modular cuts.

\subsection{Tropical linear spaces}\label{subsec:prelim-tropical-linear-spaces}

 We review notions on valuated matroids and tropical linear spaces. Our references are \cite{brandt2021tropical} and \cite{murota2001circuit}. 

Recall the \textit{tropical projective spaces} $\PT{[n]}$ and \textit{tropical projective tori} $T_{[n]}$ defined in the introduction:
\begin{equation}
    \PT{[n]}:=\left(\Rbar^n\backslash\{(\infty,...,\infty)\}\right)/\R\mathbf{1},\quad T_{[n]}:=\R^n/\R\mathbf{1}.
\end{equation}
For each tuple $\bw\in \Rbar^n$, the \textit{support} of $\bw$, denoted $\supp(\bw)$, is the subset $\{i\in [n]\mid \bw(i)<\infty\}$. Given a finite collection of values $a_1,...,a_m\in \Rbar$, we say $\min_i\{a_i\}$ \textit{vanishes tropically} if the minimum is achieved at least twice. Each $\bw\in \Rbar^n$ with nonempty support defines a tropical equation on $\Rbar^n$, the tropical vanishing of
\begin{equation}
    \min_{i}\{\bx(i)+\bw(i)\}.
\end{equation}
Its vanishing set always has the lineality space $\R\mathbf{1}$. The image of this vanishing set in $\PT{[n]}$ is a \textit{tropical hyperplane}.

A \textit{valuated matroid} of rank $d$ on $[n]$ is a function $\mu:\nk{n}{d}\to \Rbar$ that satisfies the
 \begin{itemize}
     \item \textbf{Tropical Pl\"{u}cker relations:} For any $I\in \nk{n}{d-2}$ and $J\in \nk{n}{d}$, the following vanishes tropically
    \begin{equation}
        \min_{i\in J\backslash I}\{\theta(I+i)+\theta(J-i)\}.
    \end{equation}
 \end{itemize}
 We will call each of the above relations the \textit{Pl\"{u}cker relation indexed by} $(I,J)$. 
 
  For any basis $B$ of $\mu$ and any $i\notin B$, there is a unique circuit contained in $B+i$, called the \textit{basic circuit} $(B,i)$. A basic circuit $(B,i)$ produces a vector $\bC_{(B,i)}\in \Rbar^n$ by 
\begin{equation}
    \bC_{(B,i)}(j)=\mu(B-j+i).
\end{equation}
Up to scalar addition, these are all the \textit{valuated circuits} of $\mu$. Each valuated circuit defines a tropical hyperplane. The intersection of all such hyperplanes is a tropical linear space denoted by $\Trop\mu$. Namely, $\Trop\mu$ is the intersection of
\begin{equation}\label{eqn:valuated-circuit-equation}
    H_\bC =\left\{[\bx]\in \PT{[n]}\mid \min_{i}\{\bx(i)+\bC(i)\}\text{ vanishes tropically}\right\},
\end{equation}
where $\bC$ ranges all valuated circuits of $\mu$. See \Cref{ex:hyperplanes} and \Cref{ex:points} for two examples of tropical linear spaces: hyperplanes and points.

Tropical linear spaces are parametrized by the \textit{Dressians}:
\begin{equation}
    \Dr(d,n) := \left\{ \text{ tropical linear spaces in $\PT{[n]}$ of dimension $d-1$ } \right\},
\end{equation}
which we identify with the set of equivalence classes of valuated matroids of rank $d$ on $[n]$ modulo scalar addition. This makes $\Dr(d,n)$ into a tropical prevariety in $\PT{\nk{n}{d}}$ cut out by all the Pl\"{u}cker relations. Put 
\begin{equation}
        \Dr^k(\mu) := \big\{\text{codimension-$k$ tropical linear subspaces of }\Trop\mu\big\},
\end{equation}
which is a subset of $\Dr(d-1,n)$. We call these spaces the \textit{relative Dressians}. We realize $\Dr^1(\mu)$ as a tropical prevariety in $\PT{\nk{n}{d-1}}$. A tropical linear space $\Trop\theta$ of dimension $d-2$ is contained in $\Trop\mu$ if and only if $\theta$ is an \textit{elementary quotient} of $\mu$, i.e., $\theta$ satisfies the:

\begin{itemize}
    \item \textbf{Tropical incidence-Pl\"{u}cker relations:} For any $I\in \nk{n}{d-2}$ and $J\in \nk{n}{d+1}$, the following vanishes tropically
    \begin{equation}\label{eqn:incidence-plucker}
        \min_{i\in J\backslash I}\{\theta(I+i)+\mu(J-i)\}.
    \end{equation}
\end{itemize}
We will call each of such relations the \textit{incidence relation} \textit{indexed by} $(I,J)$.  

A valuated matroid $\mu$ has an \textit{underlying matroid} $M$ whose bases are the support of $\mu$. We also say $\mu$ is a \textit{basis valuation} on $M$. Conversely, every matroid has a \textit{trivial valuation} that sends the bases of $M$ to 0, and other $d$-sets to $\infty$. The \textit{Bergman fan} $\Trop M$ of a matroid $M$ is the tropical linear space given by the trivial valuation on $M$. Put\footnote{Not to be confused with the \textit{local Dressian} $\Dr_M$, which in some sources denotes the set of all basis valuations on $M$ \cite{olarte2019local}.}
\begin{equation}
    \Dr^1(M):=\big\{\;\text{codimension-1 tropical linear subspaces of }\Trop M\;\big\}.
\end{equation}
Note the slight abuse of language here: in most literature, the Bergman fan is $\Trop M\cap T_{[n]}$. In this paper, $\Trop M$ is the closure of $\Trop M\cap T_{[n]}$ in $\PT{[n]}$, which is not a fan. This abuse of language will be harmless, but we need to adapt the definition of the recession fan to our setting.

\begin{definition}\label{def:recession-space}
    Let $X$ be a tropical prevariety in $\PT{[n]}$. The \textit{recession space} of $X$ is
    \begin{equation}
        \rec(X):=\Big\{[\bw]\in \PT{[n]}\mid \text{there is some } [\bx]\in X\cap T_{[n]}
        \text{ such that }[\bx+t\bw]\in X\text{ for all }t>0\Big\}.
    \end{equation}
\end{definition}
With this definition, the intersection of $\rec(X)$ with $T_{[n]}$ is the usual recession fan of $X\cap T_{[n]}$. Taking the recession space has the following property: given tropical prevarieties $X_1,...,X_m$,
\begin{equation}\label{eqn:recession-space-intersection}
    \rec(\bigcap_{i=1}^m X_i) \subset \bigcap_{i=1}^m \rec(X_i).
\end{equation}
We also have $\rec(\Trop\mu)=\Trop M$. 

\begin{ex}\label{ex:hyperplanes}
If the corank of $\mu$ is 1, then $\mu$ takes the form
\begin{equation}
    \mu([n]-i) = -\bw(i)
\end{equation}
for some $\bw\in \Rbar^n\backslash\{(\infty,...,\infty)\}$. In this case, $\Trop\mu$ is a tropical hyperplane. If $\bw\in \R^n$, then we say $\Trop\mu$ is the hyperplane \textit{centered} at $[\bw]$, which is defined by the tropical vanishing of 
    \begin{equation}
    \min_{i}\{\bx(i)-\bw(i)\}
\end{equation}
\end{ex}

\begin{ex}\label{ex:points}
    If the rank of $\mu$ is 1, then $\mu$ is simply a point in $\Rbar\backslash \{(\infty,...,\infty)\}$.
\end{ex}

\subsection{M-convex functions and sets}\label{subsec:prelim-m-convexity}

We review M-convex sets and M-convex functions. Our main reference is \cite{murota2003DCA}.

  A subset $S\subset \N^n$ is an \textit{M-convex set} if it satisfies the following exchange property: for any $\bx,\by\in S$ and for any $i$ such that $\bx(i)>\by(i)$, there is an index $j$ such that
\begin{center} \vspace{5pt}
    $\by(j)>\bx(j)$ and $\bx-\be_i+\be_j\in S$ and $\by-\be_j+\be_i\in S$.
\vspace{5pt} \end{center} 
 There are two special M-convex sets. The \textit{discrete simplex} 
\begin{center} \vspace{5pt}
    $\Delta^d_n:=\{\ba\in \N^n\mid |\ba|_1=d\}$
\vspace{5pt} \end{center} 
and the set $\nk{n}{d}$ of subsets of $[n]$ of size $d$, regarded as the 0-1 vectors in $\Delta^d_n$. 

The \textit{support} of a function $\vp\colon \N^n\to \Rbar$ is 
\begin{equation}
    \supp(\vp):=\{\ba\in \N^n\mid \vp(\ba)<\infty\}.
\end{equation}
We say $\vp$ is an \textit{M-convex function} if $\supp(\vp)\neq \emptyset$ and it satisfies the following exchange property: for any $\bx,\by\in \supp(\vp)$ and any $i$ such that $\bx(i)>\by(i)$, there is $j$ such that
\begin{center} \vspace{5pt}
    $\bx(j)<\by(j)$ and $\vp(\bx)+\vp(\by)\geq \vp(x-\be_i+\be_j)+\vp(\by-\be_j+\be_i)$.
\vspace{5pt} \end{center} 
If $\vp$ is M-convex, then $\supp(\vp)$ is an M-convex set. Any M-convex set is contained in some $\Delta^d_n$, and $d$ is called the \textit{rank} of $\vp$. From now on, we will use $\Delta^d_n$ as the domain of an M-convex function. On the other hand, $S\subset \N^n$ is M-convex if and only if its \textit{indicator function}
\begin{equation}
    \delta_S(\bx) = \begin{cases}
        0, & \text{ if }\bx\in S \\
        \infty, & \text{ if }\bx\notin S
    \end{cases}
\end{equation}
is M-convex.

 M-convex functions generalize valuated matroids and matroids: a valuated matroid is an M-convex function whose domain is $\nk{n}{d}$; the support of a valuated matroid is the set of bases of its underlying matroid. The indicator function of $\cB(M)$ is the trivial valuation on $M$. 

\subsection{Lorentzian polynomials}\label{subsec:prelim-lorentzian}

We review the definition of Lorentzian polynomials and the relation of the Lorentzian property with M-convexity. For more advanced materials on this topic, we refer to the original paper \cite{branden2020lorentzian}.

  Each polynomial $f\in \R_{\geq 0}[w_1,...,w_n]$ can be written uniquely as $\sum_{\ba\in \N^n}c_\ba w^\ba$. The \textit{support} of $f$ is
\begin{center} \vspace{5pt}
    $\supp f:=\{\ba\in \N^n\mid c_\ba\neq 0\}$.
\vspace{5pt} \end{center} 
A homogeneous polynomial $f\in \R_{\geq 0}[w_1,...,w_n]$ is \textit{Lorentzian} if it satisfies the following conditions:

\begin{itemize}
    \item Support condition: $\supp f$ is M-convex;
    \item Signature condition: for any $\ba\in \N^n$ and $|\ba|_1=d-2$, $\partial^\ba f$ is Lorentzian. That is, the quadratic form $\partial^\ba f$ has at most one positive eigenvalue.
\end{itemize}
If $f$ is Lorentzian, then all its partial derivatives are Lorentzian.

A polynomial $f\in \R_{\geq0}[w_1,...,w_n]$ is \textit{stable} if for all $\bu \in \R^n_{\geq0}$ and all $\bv\in \R^n$, the univariate polynomial $f(t \bu +\bv)$ is real-rooted. Put
\begin{equation}
\begin{split}
    &\rmS^d_n:=\big\{f\in \R_{\geq 0 }[w_1,...,w_n]\mid f\text{ is homogeneous stable of degree }d\big\},\\
    &\rmL^d_n:=\big\{f\in \R_{\geq 0 }[w_1,...,w_n]\mid f\text{ is Lorentzian of degree }d\big\}.
\end{split}
\end{equation}
For instance, $\rmS^1_n=\rmL^1_n$ are linear forms with nonnegative coefficients, $\rmS^2_n=\rmL^2_n$, and in general, $\rmS^d_n\subset \rmL^d_n$. 

For any function $\vp\colon \Delta^d_n\to \Rbar$ and a parameter $0<q\leq 1$, set
\begin{equation}\label{eqn:basis-generating}
    f^\vp_q(w) = \sum_{\ba\in \supp(\vp)}\frac{q^{\vp(\ba)}}{\ba!}w^\ba,\quad \text{ and }\quad g^\vp_q(w) = \sum_{\ba\in \supp(\vp)}\binom{d}{\ba}q^{\vp(\ba)}w^\ba,
\end{equation}
where $\ba!=\prod_{i=1}^n\ba(i)!$, and $\binom{d}{\ba}=\prod_{i=1}^n\binom{d}{\ba(i)}$. We call $f^\vp_q$ the \textit{basis generating polynomial} of $\vp$. When $\vp$ is the trivial valuation on a matroid $M$, we get the usual basis generating polynomial $f_M$ for $M$,
\begin{equation}
    f_M(w) = \sum_{B\in \cB(M)}w^B.
\end{equation}
Recall the following Lorentzian characterization of M-convex functions \cite[Theorem 3.14]{branden2020lorentzian}: 
\begin{center} \vspace{5pt}
    $\vp$ is M-convex if and only if $f^\vp_q\in \rmL^d_n$ for all $0<q\leq 1$.
\vspace{5pt} \end{center} 
This characterization has a tropical version. Let $\K=\R\{t\}$ be the field of \textit{generalized Puiseux series} over $\R$. Elements of $\K$ are formal series of the form
\begin{equation}
    s(t)=\sum_{k\in A}c_kt^k,
\end{equation}
where $A$ is well-ordered and $c_k\in \R$. An element $s\in \K$ is positive if $c_k>0$ for $k=\min A$. Moreover, $\K$ has a valuation,
\begin{equation}
    \val\colon\K \to \Rbar, \quad 0\mapsto \infty,\quad \sum_{k\in A}c_kt^k\mapsto \min A.
\end{equation}
 Lorentzian polynomials can be defined over $\K$. A polynomial over $\K$ is Lorentzian if and only if it is Lorentzian as a real polynomial for all sufficiently small parameter $t$. For more details on changing real coefficients to $\K$, we refer to \cite{branden2010discrete} and the references therein. Put
\begin{equation}
    \rmL^d_n(\K):=\Big\{f\in \K_{\geq 0 }[w_1,...,w_n]\mid f\text{ is Lorentzian of degree }d\Big\}.
\end{equation} 
Given $f=\sum_{\ba\in \Delta^d_n}s_{\ba}(t)w^\ba$, the \textit{tropicalization} of $f$ is a discrete function given by
\begin{equation}
    \trop(f)\colon \Delta^d_n \to \Rbar,\quad \ba\mapsto \val(s_\ba(t)).
\end{equation}
Then the set of M-convex functions on $\Delta^d_n$ is exactly the tropicalization of $\rmL^d_n(\K)$ \cite[Theorem 3.20]{branden2020lorentzian}. 

\section{Lorentzian proper position}\label{sec:Lorentzian-proper-position}

\subsection{Basic properties}

In this subsection, we summarize some properties of Lorentzian proper position. The highlight is that many linear operations preserving the Lorentzian property also preserve Lorentzian proper position.

\begin{prop}\label{prop:proper-position-properties}
    Let $f\in \rmL^d_n$ and $h\in \rmL^{d-1}_n$.
    \begin{enumerate}
        \item If $h\ll_L f$, then $\partial_\bv h \ll_L \partial_\bv f + t h$ for any $\bv\in \R^n_{\geq 0}$ such that $\partial_\bv h\neq 0$ and any $t\geq 0$.
        \item For any linear form $\ell\in \rmL^1_n(\R)$, $f\ll_L \ell f$.
        \item If $f\in \rmL^2_n$, then for any linear form $\ell\in \rmL^1_n(\R)$, $\ell\ll_L f$ if and only if $\ell \ll f$.
        \item For any $\bv\in \R^n_{\geq 0}$, $\partial_\bv f\ll_L f$ whenever $\partial_\bv f\neq 0$.
        \item If $h\ll_L f$, then $th\ll_L f$ for any $t>0$. 
    \end{enumerate}
\end{prop}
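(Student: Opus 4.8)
The plan is to deduce all five items from standard closure properties of Lorentzian polynomials \cite{branden2020lorentzian}, together with the fact that $\rmL^d_n$ is a closed subset of coefficient space. Specifically I will use that $\rmL^d_n$ is preserved under: composition $f\mapsto f(Aw)$ with any (not necessarily square) matrix $A$ having nonnegative entries — in particular adjoining dummy variables, rescaling one variable by a positive constant, and the substitutions $w_i\mapsto w_i+c_i\,w_{n+1}$ with $c_i\ge 0$; specialization of a variable to $0$; products; and, by combining the first two, taking a directional derivative $\partial_\bv g(w)=\tfrac{\partial}{\partial s}g(w+s\bv)\big|_{s=0}$ along any $\bv$ with nonnegative entries. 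Throughout write $y=w_{n+1}$, so that by \Cref{def:lorentzian-proper-position} the relation ``$p\ll_L q$'' means exactly ``$q+yp$ is Lorentzian''.

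Items (2), (5), (3) follow immediately. For (2), $\ell f+y f=(\ell+y)f$ is the product of the Lorentzian linear form $\ell+y$ with $f$ (viewed in $n+1$ variables), hence Lorentzian, so $f\ll_L\ell f$. For (5), $f+y(th)$ is the image of the Lorentzian polynomial $f+yh$ under the positive rescaling $y\mapsto ty$, hence Lorentzian. For (3), $f+y\ell$ is homogeneous of degree $2$ with nonnegative coefficients, and for such polynomials stability and the Lorentzian property coincide ($\rmS^2_{n+1}=\rmL^2_{n+1}$); since $\ell\ll_L f$ means $f+y\ell\in\rmL^2_{n+1}$ while $\ell\ll f$ means $f+y\ell\in\rmS^2_{n+1}$, the two are equivalent.

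For (1): $h\ll_L f$ says $f+yh$ is Lorentzian in $w_1,\dots,w_n,y$. Given $\bv\in\R^n_{\ge 0}$ and $t\ge 0$, the vector $(\bv,t)$ lies in $\R^{n+1}_{\ge 0}$, so
\[
\partial_{(\bv,t)}(f+yh)=\bigl(\partial_\bv f+th\bigr)+y\,\partial_\bv h
\]
is Lorentzian; it is nonzero because its $y$-coefficient $\partial_\bv h$ is nonzero by hypothesis (and one checks $\partial_\bv f+th\ne 0$). By \Cref{def:lorentzian-proper-position}, applied to $\partial_\bv h$ (degree $d-2$) and $\partial_\bv f+th$ (degree $d-1$), this is exactly $\partial_\bv h\ll_L\partial_\bv f+th$.

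Item (4) is the delicate one, and I expect it to be the main obstacle, because the obvious Lorentzian candidate $f(w+y\bv)=f+y\,\partial_\bv f+\tfrac{y^2}{2}\,\partial_\bv^2 f+\cdots$ carries unwanted higher powers of $y$. Instead I bootstrap from (1) and (2): by (2), $f\ll_L\ell f$ for every linear form $\ell$ with nonnegative coefficients, so — since $\partial_\bv f\ne 0$ — item (1) with $\ell f$ in the role of $f$ and $f$ in the role of $h$ gives, for all $t\ge 0$,
\[
\partial_\bv f\ \ll_L\ \partial_\bv(\ell f)+t f=(c_\ell+t)\,f+\ell\,\partial_\bv f,\qquad c_\ell:=\partial_\bv\ell\ge 0.
\]
Taking $\ell=\varepsilon w_1$ (so $c_\ell=\varepsilon\,c_{w_1}\to 0$) and $t=1$, the Lorentzian polynomials $(1+\varepsilon c_{w_1})f+\varepsilon w_1\,\partial_\bv f+y\,\partial_\bv f$ converge to $f+y\,\partial_\bv f$ as $\varepsilon\to 0^+$; since $\rmL^d_{n+1}$ is closed, $f+y\,\partial_\bv f$ is Lorentzian, i.e.\ $\partial_\bv f\ll_L f$. (Alternatively, once the cone property of \Cref{prop:A} is in hand it yields (4) at once, as $(c_\ell+1)f+\ell\,\partial_\bv f$ lies in the closed set $\{g:\partial_\bv f\ll_L g\}\cup\{0\}$ and tends to $f$ as $\ell\to 0$.)
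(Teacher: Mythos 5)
Your arguments for items (1), (2), (3), and (5) coincide with the paper's: (1) is the directional derivative $\partial_{(\bv,t)}(f+w_{n+1}h)$, (2) is the factorization $(\ell+w_{n+1})f$, (3) is $\rmS^2_{n+1}=\rmL^2_{n+1}$, and (5) is rescaling the variable $w_{n+1}$. The genuine difference is item (4). The paper handles it in one line by invoking the operator-theoretic result \cite[Theorem 3.4]{branden2020lorentzian}: the homogeneous linear operator $f\mapsto f+w_{n+1}\partial_\bv f$ preserves stability and nonnegativity of coefficients, hence preserves the Lorentzian property, so $f+w_{n+1}\partial_\bv f$ is Lorentzian. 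You instead bootstrap from (2) and (1): from $f\ll_L \ell f$ you get that $(\partial_\bv\ell+1)f+\ell\,\partial_\bv f+w_{n+1}\partial_\bv f$ is Lorentzian for every linear form $\ell$ with nonnegative coefficients, and letting $\ell=\varepsilon w_1\to 0$ and using that the Lorentzian property is a closed condition (and that the limit $f+w_{n+1}\partial_\bv f$ is nonzero) gives $\partial_\bv f\ll_L f$. This is correct and arguably more self-contained, since it avoids the stability-preserver/symbol machinery entirely and uses only derivatives, products, rescaling, and closedness --- all facts the paper already relies on elsewhere; the cost is an extra limiting argument where the paper has a citation. (Your parenthetical alternative via \Cref{prop:A} is also legitimate and non-circular, since the paper's proof of \Cref{prop:A} only uses parts (1), (3), (5) of this proposition, but it is heavier than needed.) One small point you share with the paper: in (1) and (4) the conclusion tacitly assumes the degree-$d$ member of the pair (e.g.\ $\partial_\bv f+th$) is nonzero, which indeed holds since polynomials with nonnegative coefficients sum to zero only if both vanish, and $f\neq 0$; your aside ``one checks $\partial_\bv f+th\neq 0$'' could be made explicit this way.
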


\begin{proof}
    If $f+w_{n+1}h\in \rmL^d_{n+1}$, then for $\bv'=(\bv,t)\in \R_{\geq 0}^{n+1}$, $\partial_{\bv'}(f+w_{n+1}h)=\partial_\bv f + t h + w_{n+1}\partial_\bv h\in \rmL^{d-1}_{n+1}$. This proves the first statement. As a product of Lorentzian polynomials, $(\ell+w_{n+1})f=\ell f + w_{n+1}f$ is Lorentzian, proving the second statement. Since $\rmS^2_{n+1}=\rmL^2_{n+1}$, the third statement follows. The linear operator 
    \begin{equation}
        \R[w_1,...,w_n]\mapsto \R[w_1,...,w_n,w_{n+1}],\quad f\mapsto f+w_{n+1}\partial_\bv f
    \end{equation}
    is homogeneous, It preserves stability and polynomials with nonnegative coefficients, so by \cite[Theorem 3.4]{branden2020lorentzian}, $f+w_{n+1}\partial_\bv f$ is Lorentzian, proving the forth. The fifth statement is because scaling the variable $w_{n+1}$ preserves the Lorentzian property.
\end{proof}

For any tuple $\bh\in \N^n$, set 
\begin{equation}
    \R_\bh[w_1,...,w_n] = \Big\{f\in \R[w_1,...,w_n]\mid \text{ the degree of $f$ is at most $\bh(i)$ in $w_i$ for every $i$}\Big\}.
\end{equation}

\begin{definition}\label{def:preservation-proper-position}
    Let $T\colon\R_\bh[w_1,...,w_n]\to \R_\br[z_1,...,z_m]$ be a linear operator. We say $T$ \textit{preserves Lorentzian proper position}, if $T$ preserves the Lorentzian property and $T(f)\ll_LT(g)$ whenever $f\ll_L g$.
\end{definition}

A linear operator $T\colon\R_\bh[w_1,...,w_n] \to \R_\br[z_1,...,z_m]$ is \textit{homogeneous} if for some $s\in \Z$,
\begin{center} \vspace{5pt}
    $\deg T(w^\ba) = \deg w^\ba + s$ whenever $T(w^\ba)\neq 0$.
\vspace{5pt} \end{center} 
In \cite[Theorem 4.3]{borcea2010multivariate}, it is shown that a finite-order differential operator preserving stability also preserves proper position. It is also shown there that such an operator preserves stability if and only if its symbol is stable. On the other hand, a homogeneous linear operator with a Lorentzian symbol preserves the Lorentzian property \cite[Theorem 3.2]{branden2020lorentzian}. The following \Cref{prop:Lorentzian-symbol} can be regarded as the Lorentzian analog of \cite[Theorem 4.3]{borcea2010multivariate}.

In the following proofs, for $\ba,\bh\in \N^n$,
\begin{center} \vspace{5pt}
    $\ba \leq \bh$ means $\ba(i)\leq \bh(i)$ for all $i=1,...,n$;\quad  $\binom{\bh}{\ba}=\prod_{i=1}^n\binom{\bh(i)}{\ba(i)}$.
\vspace{5pt} \end{center} 
\begin{prop}\label{prop:Lorentzian-symbol}
    If $T$ is a homogeneous linear operator with a Lorentzian symbol, then $T$ preserves Lorentzian proper position.
\end{prop}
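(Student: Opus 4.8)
The plan is to reduce the statement about Lorentzian proper position to the already-known fact (cited as \cite[Theorem 3.2]{branden2020lorentzian}) that a homogeneous linear operator with a Lorentzian symbol preserves the Lorentzian property. Recall that for $f \ll_L g$ we need $g + w_{n+1} f$ to be Lorentzian, where $f$ has degree $d$ and $g$ has degree $d+1$. The key observation is that the operation $f \mapsto g + w_{n+1} f$ is exactly the kind of thing that should interact well with a homogeneous operator: if $T$ shifts degrees by $s$, then $T$ applied to $g + w_{n+1} f$ should be $T(g) + w_{n+1} T(f)$, provided we extend $T$ appropriately to the larger polynomial ring in the variable $w_{n+1}$ and provided the extension is still homogeneous with a Lorentzian symbol.

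First I would make precise the extension $\widetilde T$ of $T$ to $\R_{\tilde\bh}[w_1,\dots,w_n,w_{n+1}]$ for a suitable bound $\tilde\bh$: set $\widetilde T(w^\ba w_{n+1}^k) = T(w^\ba) \, w_{n+1}^k$, i.e. $\widetilde T = T \otimes \id$ acting trivially on $w_{n+1}$. One checks directly that $\widetilde T$ is again homogeneous (with the same shift $s$), since $\deg(w^\ba w_{n+1}^k) = \deg(w^\ba) + k$ and $\deg \widetilde T(w^\ba w_{n+1}^k) = \deg T(w^\ba) + k = \deg(w^\ba) + s + k$. Next I would compute the symbol of $\widetilde T$: if the symbol of $T$ is $\mathrm{Symb}_T(w,z) = \sum_{\ba \le \bh} T(w^\ba)(z) \binom{\bh}{\ba}^{-1} w^{\bh - \ba}$ (or whatever normalization the paper uses — I would match it to the convention implicit in \cite{branden2020lorentzian}), then the symbol of $\widetilde T$ is $\mathrm{Symb}_T(w,z) \cdot (w_{n+1} z_{n+1})^{\tilde\bh(n+1)}$ up to the appropriate power, which is a product of the Lorentzian polynomial $\mathrm{Symb}_T$ with a polynomial in the two extra variables $w_{n+1}, z_{n+1}$ of the form $(w_{n+1} z_{n+1})^k$. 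Since products of Lorentzian polynomials are Lorentzian and $(w_{n+1} z_{n+1})^k$ is Lorentzian (it is a monomial, and monomials with M-convex — indeed one-point — support are Lorentzian), the symbol of $\widetilde T$ is Lorentzian. Hence $\widetilde T$ preserves the Lorentzian property by \cite[Theorem 3.2]{branden2020lorentzian}.

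Now the argument concludes quickly. Suppose $f \ll_L g$, so $g + w_{n+1} f \in \rmL^{d+1}_{n+1}$. Applying $\widetilde T$, which preserves the Lorentzian property, gives $\widetilde T(g + w_{n+1} f) = T(g) + w_{n+1} T(f)$ Lorentzian. By the homogeneity of $T$, $T(g)$ and $T(f)$ have degrees differing by exactly one (namely $\deg g + s$ and $\deg f + s$), assuming both are nonzero; this is precisely what is required for $T(f) \ll_L T(g)$ to make sense, and the displayed polynomial being Lorentzian is exactly that relation. (If $T(f) = 0$ or $T(g) = 0$ the statement is vacuous or trivial, and one should note that $T$ preserving the Lorentzian property already gives $T(f) \in \rmL^{\deg f + s}_m$ and $T(g) \in \rmL^{\deg g + s}_m$ in the nonzero cases.) The main obstacle I anticipate is purely bookkeeping: choosing the degree bounds $\bh$, $\tilde\bh$, $\br$ correctly so that $\widetilde T$ lands in the right codomain and so that the symbol computation uses the correct normalization constants $\binom{\bh}{\ba}$ — getting the symbol of the extension to come out as an honest product (rather than merely something Lorentzian up to a harmless invertible factor) requires care with how the extra variable's degree bound is handled, but no genuine difficulty beyond that.
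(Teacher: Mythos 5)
Your proposal is essentially the paper's proof: extend $T$ trivially to the auxiliary variable, check that the extension is still homogeneous, show its symbol is $\sym_T$ times a Lorentzian factor in the two extra variables, and invoke \cite[Theorem 3.2]{branden2020lorentzian}; the paper does exactly this with the degree bound in the new variable set to $1$, so that $\td{T}(g+w_0f)=T(g)+z_0T(f)$. One correction to your symbol computation: with degree bound $k$ in the new variable the extra factor is $(w_{n+1}+z_{n+1})^{k}$ (coming from summing $\binom{k}{i}w_{n+1}^{k-i}z_{n+1}^{i}$ over $i$), not the monomial $(w_{n+1}z_{n+1})^{k}$; in the paper's setup $k=1$ and the factor is simply $u_0+z_0$. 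Since $(w_{n+1}+z_{n+1})^{k}$ is Lorentzian, the product is still Lorentzian and your argument closes exactly as written, so the slip is harmless.
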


\begin{proof}
    Let $T\colon\R_\bh[w_1,...,w_n]\to \R_\br [z_1,...,z_m]$ be a linear operator satisfying the hypothesis. Define another linear operator $\td{T}\colon\R_{\td{\bh}}[w_0,w_1,...,w_n]\to \R_{\td{\br}} [z_0,z_1,...,z_m]$, where 
    \begin{equation}
        \td{\bh}(i) = \begin{cases}
            1,& \text{ if }i=0 \\
            \bh(i),&\text{ else}
        \end{cases}, \quad  \td{\br}(i) = \begin{cases}
            1,& \text{ if }i=0 \\
            \br(i),&\text{ else}
        \end{cases},
    \end{equation}
    by the formula $\td{T}(w_0^iw^\ba)=z_0^iT(w^\ba)$ and extending linearly. Then $\td{T}$ is homogeneous because $T$ is. Note that for $f,g\in \R_\bh[w_1,...,w_n]$, $\td{T}(g+w_0f) = T(g) + z_0T(f)$, so it suffices to show that $\td{T}$ preserves the Lorentzian property. We compute its symbol
    \begin{equation}
        \begin{split}
            \sym_{\td{T}}(u_0,u_1,...,u_n,z_0,z_1,...,z_m) & = \sum_{\ba\leq \bh}\binom{\bh}{\ba}u_0T(w^\ba)u^{\bh-\ba} + \sum_{\ba\leq \bh}\binom{\bh}{\ba}T(w^\ba)u^{\bh-\ba}z_0 \\
             & = \sum_{\ba\leq \bh}\binom{\bh}{\ba}T(w^\ba)u^{\bh-\ba}(u_0+z_0) \\
            & = \sym_T(u,z)(u_0+z_0).
        \end{split}
    \end{equation}
Since $\sym_T(u,z)$ and $u_0+z_0$ are both Lorentzian and the product of Lorentzian polynomials is Lorentzian, we have that $\sym_{\td{T}}$ is Lorentzian.
\end{proof}

Abbreviate $\R_\bh[w_1,...,w_n]$ to $\R_\bh[w_i]$ and put
\begin{equation}
    \R_\bh^\text{a}[w_{ij}] = \big\{\text{multi-affine polynomials in }\R[w_{ij}]_{1\leq i\leq n,1\leq j\leq \bh(i)}\big\}.
\end{equation}
The \textit{polarization} operator $\pol_\bh\colon\R_\bh[w_i]\to\R_\bh^\text{a}[w_{ij}]$ is a linear map that sends $w^\ba$ to 
\begin{equation}
    \frac{1}{\binom{\bh}{\ba}}\prod_{i=1}^n\left(\text{elementary symmetric polynomial of degree $\ba(i)$ in the variables $\{w_{ij}\}_{1\leq j\leq \bh(i)}$}\right).
\end{equation}
The \textit{projection} operator $\proj_\bh\colon\R_\bh^\text{a}[w_{ij}]\to \R_\bh[w_i]$ is a linear map that sets $w_{ij}=w_i$. Polarization and projection preserve Lorentzian property \cite[Proposition 3.1]{branden2020lorentzian}. They also preserve Lorentzian proper position.

\begin{prop}\label{prop:polarization-projection}
 Polarization and projection preserve Lorentzian proper position. Let $f,g\in \R_\bh[w_i]$ be Lorentzian of degree $d$ and $d-1$, respectively. Then $g\ll_L f$ if and only if $\pol g\ll_L \pol f$.
\end{prop}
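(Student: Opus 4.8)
The plan is to reduce both claims to the single known fact that polarization and projection preserve the Lorentzian property (\cite[Proposition 3.1]{branden2020lorentzian}), by applying these operators not to $f$ and $g$ individually but to the combined polynomial $f+w_{n+1}g$ that witnesses Lorentzian proper position. First I would enlarge the exponent tuple $\bh$ to $\bh'=(\bh,1)$ by adjoining one fresh variable $w_{n+1}$ of degree $1$. Since $f,g\in\R_\bh[w_i]$ are Lorentzian, hence homogeneous of degrees $d$ and $d-1$, the polynomial $f+w_{n+1}g$ lies in $\R_{\bh'}[w_1,\dots,w_{n+1}]$ and is homogeneous of degree $d$.

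The heart of the argument is the commutation identity
\begin{equation*}
    \pol_{\bh'}(f+w_{n+1}g)=\pol_\bh(f)+z_0\,\pol_\bh(g),
\end{equation*}
where $z_0$ is the (unique) polarized copy of $w_{n+1}$, together with its projection counterpart: $\proj_{\bh'}$ restricts to $\proj_\bh$ on the original variable blocks and to the renaming $z_0\mapsto w_{n+1}$, and since $\proj_\bh\circ\pol_\bh=\id$ it carries $\pol_\bh(f)+z_0\pol_\bh(g)$ back to $f+w_{n+1}g$. Granting these, the statement is immediate. If $g\ll_L f$, then $f+w_{n+1}g$ is Lorentzian, so applying $\pol_{\bh'}$ shows $\pol_\bh(f)+z_0\pol_\bh(g)$ is Lorentzian, i.e.\ $\pol g\ll_L\pol f$; this gives both that polarization preserves Lorentzian proper position and the ``only if'' half of the equivalence. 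Conversely, if $\pol g\ll_L\pol f$, applying $\proj_{\bh'}$ to the Lorentzian polynomial $\pol_\bh(f)+z_0\pol_\bh(g)$ recovers that $f+w_{n+1}g$ is Lorentzian, i.e.\ $g\ll_L f$. Running the same argument on an arbitrary witness $G+z_0F$ of a relation $F\ll_L G$ between multi-affine Lorentzian polynomials $F,G\in\R_{\bh}^{\text{a}}[w_{ij}]$ (with $\deg F=\deg G-1$) shows that projection preserves Lorentzian proper position as well.

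The only step needing genuine care is verifying the commutation identity and its projection counterpart, i.e.\ that adjoining a degree-$1$ variable keeps the $f$-part and the $g$-part from interacting under polarization. This is a monomial-by-monomial check from the definition of $\pol_\bh$: a monomial of $f+w_{n+1}g$ has $w_{n+1}$-degree $0$ or $1$, the degree-$1$ elementary symmetric polynomial in the single variable $z_0$ is just $z_0$, and the normalizing coefficient $\binom{\bh'}{\ba}$ factors as $\binom{\bh}{\ba}\binom{1}{0}$ or $\binom{\bh}{\ba}\binom{1}{1}$ accordingly. Once this bookkeeping is in place there is nothing further to do; I do not anticipate any substantive obstacle beyond being careful with these normalizations and with the fact that $\proj_\bh$ is a genuine left inverse of $\pol_\bh$.
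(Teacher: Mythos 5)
Your proposal is correct and follows essentially the same route as the paper: the paper also adjoins a degree-$1$ variable (its $\td{\bh}$ from the proof of Proposition \ref{prop:Lorentzian-symbol}), applies \cite[Proposition 3.1]{branden2020lorentzian} to the enlarged spaces so that polarization/projection of the witness $f+w_{n+1}g$ splits as $\pol_\bh(f)+z_0\pol_\bh(g)$, and deduces the equivalence from $\proj_\bh\circ\pol_\bh=\id$. Your explicit monomial-by-monomial verification of the commutation identity is simply a spelled-out version of what the paper leaves implicit.
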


\begin{proof}
    The preservation of Lorentzian property follows from applying \cite[Proposition 3.1]{branden2020lorentzian} to $\R_{\td{\bh}}[w_i]$ and $\R_{\td{\bh}}[w_{ij}]$, where $\td{\bh}$ is as in the proof of \Cref{prop:Lorentzian-symbol}. The rest of the statement follows from $\proj_\bh\circ \pol_\bh(f)=f$.
\end{proof}

\subsection{Lorentzian characterization of elementary quotients of M-convex functions}

In this subsection, we relate Lorentzian proper position to \textit{quotients of M-convex functions} studied in \cite{brandenburg2024quotients}. There, the authors give four candidate definitions in different strength of quotients of M-convex functions.
We take the following one.
\begin{definition}\label{def:quotient-M-convex-fn}
    Given two M-convex functions $\vp\colon \Delta^d_n\to \Rbar$ and $\psi\colon \Delta^s_n\to\Rbar$ where $s\leq d$, $\psi$ is called a \textit{quotient} of $\vp$, denoted $\vp\onto \psi$, if the following exchange property is satisfied: for any $\bx\in \supp(\psi)$, $\by\in\supp(\vp)$, and $i$ such that $\bx(i)>\by(i)$, there is some $j$ such that
\begin{center} \vspace{5pt}
    $\bx(j)<\by(j)$\quad  and \quad $\psi(\bx)+\vp(\by)\geq \psi(\bx-\be_i+\be_j)+\vp(\by-\be_j+\be_i)$.
\vspace{5pt} \end{center} 
\end{definition}

Quotients of M-convex functions generalize quotients of valuated matroids and quotients of ordinary matroids. Hence, we will use the notation  $\onto$ universally for quotients and use the default letters to distinguish the objects:
\begin{center} \vspace{5pt}
\begin{tabular}{cl}
    $\vp\onto \psi$ & a quotient of M-convex function;  \\
    $\mu\onto \theta$ &  a quotient of valuated matroids; \\
    $M\onto N$ &  a quotient of matroids.
\end{tabular}
\vspace{5pt} \end{center} 
When $s=d-1$, $\psi$ is called an \textit{elementary quotient} of $\vp$. We will need the following characterization of elementary quotients, which follows from \cite[Theorem 1.3]{brandenburg2024quotients}.

\begin{lemma}\label{lem:elementary-quotient-equiv-description}
    Let $\vp\colon \Delta^d_n\to \Rbar$ and $\psi\colon \Delta^{d-1}_n$ be two M-convex functions. Then $\vp\onto \psi$ if and only if the function $\hat{\vp}\colon\Delta^d_{n+1}\to \Rbar$ defined by 
        \begin{equation}
        \hat{\vp}(\bx,x_{n+1}) = \begin{cases}
            \vp(\bx),& \text{ if }x_{n+1}=0 \\
            \psi(\bx),& \text{ if }x_{n+1}=1 \\
            \infty,&\text{ else }
        \end{cases}
    \end{equation}
        is M-convex.
\end{lemma}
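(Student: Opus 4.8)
The plan is to unpack the M-convex exchange axiom for $\hat\vp$ into a small case analysis and feed the resulting cases to the equivalence of the various candidate notions of quotient established in \cite[Theorem 1.3]{brandenburg2024quotients}; this is the M-convex counterpart of the fact that $N$ is an elementary quotient of a matroid $M$ exactly when the family $\cB(M)\cup\{B\cup\{n+1\}:B\in\cB(N)\}$ is a matroid basis system on $[n+1]$. First I would record that $\supp(\hat\vp)\subset\Delta^d_{n+1}$ consists precisely of the tuples $(\bx,0)$ with $\bx\in\supp(\vp)$ and $(\bx,1)$ with $\bx\in\supp(\psi)$; since $\vp$ and $\psi$ are M-convex, $\supp(\hat\vp)\neq\emptyset$, so $\hat\vp$ is M-convex iff it satisfies the exchange axiom. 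Every pair of points of $\supp(\hat\vp)$ is of one of four types according to whether the two last coordinates are $(0,0)$, $(1,1)$, $(1,0)$, or $(0,1)$, and I would treat these in turn.

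For a pair $(\bx,\varepsilon),(\by,\varepsilon)$ with equal last coordinate the distinguished index must lie in $[n]$, the exchanged tuples again have last coordinate $\varepsilon$, and on the slice $x_{n+1}=\varepsilon$ the function $\hat\vp$ is $\vp$ (if $\varepsilon=0$) or $\psi$ (if $\varepsilon=1$); so the exchange axiom for $\hat\vp$ on such a pair is literally the exchange axiom for $\vp$, resp.\ $\psi$, which holds by hypothesis. For a mixed pair, say $(\bx,1),(\by,0)$ with $\bx\in\supp(\psi)$ and $\by\in\supp(\vp)$, the distinguished index $k$ is either $n+1$ or lies in $[n]$; translating the $\hat\vp$-exchange back through the slices produces, depending on $k$ and on the legal exchange partners, either the exchange inequality of \Cref{def:quotient-M-convex-fn} verbatim (the case $k\in[n]$ with partner in $[n]$) or a variant such as the ``upward'' inequality $\psi(\bx)+\vp(\by)\ge \vp(\bx+\be_j)+\psi(\by-\be_j)$ with $\by(j)>\bx(j)$ (forced when $k=n+1$), and the mirror pair $(\bx,0),(\by,1)$ produces symmetric variants. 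Each of these is one of the reformulations of the quotient relation provided by \cite[Theorem 1.3]{brandenburg2024quotients}, so under the hypothesis $\vp\onto\psi$ they are all available and $\hat\vp$ is M-convex. Conversely, applying the $\hat\vp$-exchange axiom to a mixed pair $(\bx,1),(\by,0)$ at a distinguished index $k\in[n]$ — where the only legal partners lie in $[n]$ — returns \Cref{def:quotient-M-convex-fn} word for word, giving the ``only if'' direction.

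I expect the only real obstacle to be the mixed cases in which the distinguished index is $n+1$, or in which the mirror configuration forces the exchange partner to be $n+1$: there one must certify an ``upward'' or ``reversed'' exchange inequality that is not literally the axiom of \Cref{def:quotient-M-convex-fn}, and this is precisely where the full strength of \cite[Theorem 1.3]{brandenburg2024quotients} — the equivalence of the several quotient notions, rather than \Cref{def:quotient-M-convex-fn} in isolation — is being used. The rest is bookkeeping: checking that every exchanged tuple still lies in $\Delta^d_{n+1}$ and in a slice $x_{n+1}\in\{0,1\}$, so that $\hat\vp$ is not spuriously $\infty$, and that a legal exchange partner of the required type always exists (for instance, when $k=n+1$ above, the degree count $|\by|_1=|\bx|_1+1$ guarantees some $j\in[n]$ with $\by(j)>\bx(j)$). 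I would lay out the write-up as the four-case analysis above, citing \cite[Theorem 1.3]{brandenburg2024quotients} for the mixed cases and the M-convexity of $\vp,\psi$ for the parallel ones.
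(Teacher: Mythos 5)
Your proposal is correct and takes essentially the same route as the paper: the paper offers no independent argument for this lemma but deduces it wholesale from \cite[Theorem 1.3]{brandenburg2024quotients}, and your slice-by-slice case analysis (equal last coordinate reducing to the M-convexity of $\vp$ and $\psi$, mixed pairs with exchange index in $[n]$ giving \Cref{def:quotient-M-convex-fn} verbatim for the ``only if'' direction, and the remaining mixed cases delegated to the cited equivalences) is just an explicit unpacking of that same citation. The only point to keep in mind is that the ``upward'' and mirror exchange inequalities you need in the $i=n+1$ and mirror configurations are precisely what you must trust \cite[Theorem 1.3]{brandenburg2024quotients} to supply; since that is the very result the paper itself invokes for the full statement, this is the same external input rather than a new gap.
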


\begin{rmk}
    If $\vp\onto \psi$ and $\vp$ is a quotient of M-convex functions, and $\vp$ is a valuated matroid, then $\psi$ is automatically also a valuated matroid.
\end{rmk}

By applying the definition of quotients of M-convex functions to indicator functions of M-convex sets, we get the definition of \textit{quotients of M-convex sets}. Given two M-convex sets $S_1\subset \Delta^d_n$ and $S_2\subset \Delta^s_n$ where $s\leq d$, $S_1$ is a quotient of $S_2$, denoted $S_1\onto S_2$, if the following exchange property is satisfied: for any $\bx\in S_2$, $\by\in S_1$, and $i$ such that $\bx(i)>\by(i)$, there is some $j$ such that 
\begin{equation}\label{eqn:quotient-M-convex-set}
    \bx(j)<\by(j),\quad  \bx - \be_i+\be_j\in S_2,\quad \text{and}\quad \by-\be_i+\be_j\in S_1.
\end{equation} \Cref{lem:elementary-quotient-equiv-description} then implies that if $s=d-1$, then $S_1\onto S_2$ is an elementary quotient if and only if $S_1\times \{0\}\cup S_2\times \{1\}\subset \Delta^{d}_{n+1}$ is M-convex.

\begin{definition}\label{def:factorization}
    Let $S_0\subset \Delta^d_n$ and $S_k\subset \Delta^{d-k}_n$ be M-convex and $S_0\onto S_k$. A \textit{factorization} of $S_0\onto S_k$ is a collection of M-convex sets $S_1,...,S_{k-1}$ such that $S_i\onto S_{i+1}$ is an elementary quotient for $i=0,...,k-1$. If a factorization $S_0\onto \cdots \onto S_k$ satisfies the condition:
\begin{center} \vspace{5pt}
    for any factorization $S_1',...,S_{k-1}'$, $S_i'\subset S_i$ for all $i=1,...,k-1$,
\vspace{5pt} \end{center} 
then $S_0\onto S_1\onto \cdots \onto S_{k-1}\onto S_k$ is called a \textit{Higgs factorization}.
\end{definition}
 Higgs factorizations exist for M-convex sets and, in  particular, matroids. By definition, the Higgs factorization is unique. For more details, see \cite[Lemma 2.36]{brandenburg2024quotients}. We will use the following fact: let $S_0$ and $S_k$ be nonempty M-convex sets. Then $S_0\onto S_1 \onto \cdots\onto S_k$ is a Higgs factorization if and only if 
 \begin{equation}
     \td{S} = \bigcup_{i=0}^k S_i\times \{i\} \subset \Delta^d_{n+k}
 \end{equation}
 is M-convex.
 
\begin{definition}
    Let $f$ be any polynomial in $\R[w_0,w_1,...,w_n]$ whose degree in $w_0$ is $d$. Write
\begin{equation}\label{eqn:single-out-a-variable}
    f = f_0 + w_0f_1 + \cdots + w_0^d f_d,
\end{equation}
where $f_i\in \R[w_1,...,w_n]$. Given two integers $i,j$ such that $0\leq i < j \leq d$, the \textit{$[i,j]$ segment} of $f$, denoted $f_{[i,j]}$, is the polynomial $w_0^if_i + w_0^{i+1}f_{i+1}+\cdots + w_0^jf_j$.
\end{definition}
   The following statement shows taking any segment preserves the Lorentzian property. As a corollary, from any Lorentzian polynomial one can extract a sequence of Lorentzian polynomials where each consecutive pair is in proper position.

\begin{prop}\label{prop:interval-preserves-lorentzian}
    Let $f$ given as in \eqref{eqn:single-out-a-variable} be Lorentzian. Suppose $f_0\neq 0$. Then 
    \begin{enumerate}
        \item $f_{[i,j]}$ is Lorentzian for all $0\leq i <j \leq d$.
        \item $\supp f_0\onto \supp f_1\onto \cdots \onto \supp f_d$ is a Higgs factorization.
    \end{enumerate}
\end{prop}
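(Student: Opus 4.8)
The plan is to reduce (1) and (2) to facts already established in the excerpt, using polarization to turn the degree-$d$ behavior in $w_0$ into a multi-affine picture where segments become genuine restrictions of the support. For (1), first note that the segment operation $f \mapsto f_{[i,j]}$ is not literally a coordinate projection, so I would polarize the variable $w_0$: introduce auxiliary variables $w_{01},\dots,w_{0d}$ via $\pol$ and observe that, on the polarized polynomial, the piece of $f$ with $w_0$-degree in $[i,j]$ corresponds to the part of $\pol f$ supported on monomials using between $i$ and $j$ of the variables $w_{0k}$. By \cite[Proposition 3.1]{branden2020lorentzian}, $\pol f$ is Lorentzian; restricting a Lorentzian polynomial to the monomials whose support meets a fixed M-convex "slice" of coordinates is again Lorentzian (this is just setting the offending variables to $0$ after an appropriate contraction/restriction, both of which preserve the Lorentzian property — concretely, $f_{[i,j]}$ up to a monomial factor $w_0^i$ is obtained from a derivative $\partial_0^i$ followed by truncating the $w_0$-degree from above, and truncating from above is itself setting a polarized variable to zero). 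Then project back with $\proj$, which preserves the Lorentzian property, to conclude $f_{[i,j]} \in \mathrm{L}$.

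For (2), the cleaner route is to invoke \Cref{lem:elementary-quotient-equiv-description} and the stated characterization of Higgs factorizations: $\supp f_0 \onto \cdots \onto \supp f_d$ is a Higgs factorization if and only if $\widetilde{S} = \bigcup_{i=0}^d \supp f_i \times \{i\} \subset \Delta^d_{n+d}$ is M-convex. But $\widetilde S$ is, up to relabeling, exactly the support of the polarization $\pol_{w_0} f$ — polarizing $w_0$ replaces $w_0^i$ by the degree-$i$ elementary symmetric polynomial in $d$ fresh variables, so a monomial $w^\ba$ appearing in $f_i$ contributes monomials of multidegree (pick $i$ of the $d$ new variables, multiply by $w^\ba$), and the union of these supports over all $i$ is an M-convex set precisely because $\pol_{w_0} f$ is Lorentzian and hence has M-convex support. (One must be a touch careful: the exact recipe gives $\binom{d}{i}$ copies rather than a single copy of $\supp f_i \times \{i\}$, but all copies are translates under permuting the new coordinates, and M-convexity of the union is equivalent to M-convexity of the "diagonal" copy $\widetilde S$ — this is the same symmetrization argument already used when projection/polarization are related in \Cref{prop:polarization-projection}.) The hypothesis $f_0 \neq 0$ guarantees $\supp f_0 \neq \emptyset$, so all the $\supp f_i$ are nonempty and the Higgs-factorization criterion applies on the nose.

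The main obstacle I anticipate is bookkeeping around the segment operation in part (1): making precise that "truncate the $w_0$-degree to lie in $[i,j]$" is a Lorentzian-preserving operation. The honest way is to factor it as (a) apply $\partial_0^i$, (b) multiply by $w_0^i$ (harmless, a monomial), and (c) kill all terms of $w_0$-degree exceeding $j-i$; step (c) is where care is needed, and the fix is again polarization — in the polarized ring, capping the number of new variables used is literally setting the symmetric-function variables beyond index $j-i$ to zero, which is a restriction and hence preserves the Lorentzian property, after which $\proj$ brings us back. An alternative, possibly shorter, is to derive (1) from (2): once $\supp f_0 \onto \cdots \onto \supp f_d$ is known to be a Higgs factorization, $\bigcup_{i \le k \le j} \supp f_k \times \{k\}$ is M-convex (a contiguous "interval" of a Higgs factorization is again a Higgs factorization of its endpoints), which handles the support condition; the signature condition for $f_{[i,j]}$ then follows from that of $f$ by noting $\partial^\ba f_{[i,j]}$ is obtained from $\partial^\ba f$ by the same truncation, reducing to the one-variable fact that a contiguous-degree segment of a Lorentzian quadratic (in $w_0$ and the $w_i$) is Lorentzian. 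I would present whichever of these two organizations the author's own proof suggests is intended, defaulting to proving (2) first via polarization and then deducing (1).
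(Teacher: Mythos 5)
Your primary route for part (1) has a genuine gap at step (c). Polarizing $w_0$ into $w_{01},\dots,w_{0d}$ sends $w_0^m$ to $e_m(w_{01},\dots,w_{0d})/\binom{d}{m}$; if you then set $w_{0,r+1}=\dots=w_{0,d}=0$ and apply $\proj$, the monomial $w_0^m$ is sent to $\tfrac{\binom{r}{m}}{\binom{d}{m}}\,w_0^m$ for $m\le r$ and to $0$ for $m>r$. So the composite $\proj\circ(\text{restriction})\circ\pol$ is \emph{not} the sharp truncation of the $w_0$-degree: it is the truncation composed with a diagonal rescaling of the degree-$m$ slice by $\binom{r}{m}/\binom{d}{m}$. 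What you obtain is therefore the Lorentzian property of a binomially damped version of $f_{[i,j]}$, not of $f_{[i,j]}$ itself, and you cannot undo the damping: the inverse rescaling multiplies the slices by $\binom{d}{m}/\binom{r}{m}$, a log-convex sequence, and such multipliers do not preserve the Lorentzian property in general. The paper handles exactly this point differently: it treats $f\mapsto f_{[i,j]}$ as a homogeneous linear operator, computes its symbol, which factors as $\bigl(\sum_{i\le m\le j}\binom{d}{m}w_0^m u_0^{d-m}\bigr)$ times bivariate factors in the remaining variables, and observes that a binomial sequence restricted to an interval is still ultra log-concave with no internal zeros, so each factor is Lorentzian and \cite[Theorem 3.2]{branden2020lorentzian} applies. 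The content your polarization was meant to capture is precisely this symbol computation, and it is not recovered by setting polarized variables to zero.

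Two further remarks. For part (2) the polarization detour is unnecessary and the symmetrization claim you wave at is never needed: the set $\bigcup_{i=0}^d \supp f_i\times\{i\}$ is \emph{literally} $\supp f$ viewed inside $\Delta^d_{n+1}$ (the extra coordinate records the $w_0$-degree), so its M-convexity is immediate from $f$ being Lorentzian, which is exactly the paper's one-line argument via the stated Higgs-factorization criterion. Your fallback route for (1) (deduce it from (2)) is closer to sound: the support condition does follow as you say, but the signature condition rests on the unproven assertion that every contiguous-degree segment of a Lorentzian quadratic in $w_0,w_1,\dots,w_n$ is Lorentzian. That assertion is true but needs its own argument (dropping the $w_0^2$-term subtracts a positive semidefinite rank-one matrix, so at most one positive eigenvalue survives; dropping the $w_0$-free part leaves $w_0(cw_0+\ell)$, a product of nonnegative linear forms), and as stated it comes close to assuming the degree-two case of the very proposition you are proving, so it cannot simply be cited as a known fact.
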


\begin{proof}
    We reuse the notation $\td{\bh}$ from the proof of \Cref{prop:Lorentzian-symbol}. The map $f\mapsto f_{[i,j]}$ is the linear operator 
    \begin{equation}
        T\colon\R_{\td{\bh}}[w_0,...,w_n]\to \R_{\td{\bh}}[w_0,...,w_n],\quad  w_0^mw^\ba\mapsto \begin{cases}
            0,& \text{ if }m\notin [i,j] \\
            w_0^mw^\ba,&\text{ else }
        \end{cases} 
    \end{equation}
    which is homogeneous of degree 1. We compute its symbol:
    \begin{equation}
        \begin{split}
            \sym_T(u_0,...,u_n,w_0,...,w_n) & = \sum_{i\leq m\leq j}\sum_{\ba \leq \bh}\binom{d}{m}\binom{\bh}{\ba} w_0^mu_0^{d-m}w^\ba u^{\bh-\ba}  \\
            & = \left(\sum_{i\leq m\leq j}\binom{d}{m}w_0^mu_0^{d-m}\right)\prod_{i=1}^n \left(\sum_{\ba(i)\leq \bh(i)}\binom{\bh(i)}{\ba(i)} w_i^{\ba(i)}u^{\bh(i)-\ba(i)} \right)
        \end{split}
    \end{equation}
    This a product of bivariate polynomials. Each of them is Lorentzian, due to the ultra log-concavity of the binomial coefficients \cite[Example 2.26]{branden2020lorentzian}. Hence, $\sym_T$ is Lorentzian. This proves the first statement. The second follows from the M-convexity of $\supp f$.
\end{proof}

\begin{cor}\label{cor:proper-position-from-single-poly}
    Let $f\in \rmL^d_{n+1}$ have the form as in \eqref{eqn:single-out-a-variable}. If $f_i$ and $f_{i+1}$ are nonzero, then $f_{i+1}\ll_L f_i$.
\end{cor}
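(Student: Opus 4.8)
The plan is to extract the two-term piece $f_i + w_0 f_{i+1}$ from $f$ using only operations already known to preserve the Lorentzian property, and then to recognize this polynomial, with $w_0$ playing the role of the auxiliary variable, as the defining condition of $f_{i+1}\ll_L f_i$ in \Cref{def:lorentzian-proper-position}. Concretely, I would first pass to $g := \partial_0^{\,i} f$. Since $f$ is Lorentzian and $f_i\neq 0$, the polynomial $g$ is a nonzero Lorentzian polynomial (iterated partial derivatives of a Lorentzian polynomial are Lorentzian, and at each step $i!\,f_i$-type terms keep it nonzero); writing $g = g_0 + w_0 g_1 + \cdots$ in powers of $w_0$, one computes $g_m = \tfrac{(m+i)!}{m!} f_{m+i}$, so in particular $g_0 = i!\,f_i$ and $g_1 = (i+1)!\,f_{i+1}$ are both nonzero by hypothesis. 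This last point is the reason to differentiate first rather than work with $f$ directly: it puts us in position to invoke \Cref{prop:interval-preserves-lorentzian}, whose hypothesis requires the $w_0$-constant term to be nonzero, without having to assume anything about $f_0$.

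Next I would apply \Cref{prop:interval-preserves-lorentzian}(1) to $g$ to conclude that its segment $g_{[0,1]} = i!\,f_i + (i+1)!\,w_0 f_{i+1}$ is Lorentzian. Scaling a variable by a positive constant preserves the Lorentzian property (as used in the proof of \Cref{prop:proper-position-properties}), so the substitution $w_0\mapsto \tfrac{1}{i+1} w_0$ produces $i!\,(f_i + w_0 f_{i+1})$, and multiplying by the positive scalar $1/i!$ shows that $f_i + w_0 f_{i+1}$ is Lorentzian. Since $f_i, f_{i+1}\in\R_{\geq 0}[w_1,\dots,w_n]$ are nonzero of degrees $d-i$ and $d-i-1$ respectively, the Lorentzianness of $f_i + w_0 f_{i+1}$ is exactly the assertion $f_{i+1}\ll_L f_i$.

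There is no essential difficulty here; this is a short corollary of the machinery already assembled. The one point deserving care is the mismatch between the hypothesis $f_0\neq 0$ in \Cref{prop:interval-preserves-lorentzian} and the weaker hypothesis of the corollary, which the reduction to $g = \partial_0^{\,i} f$ resolves cleanly. If one preferred to avoid \Cref{prop:interval-preserves-lorentzian} altogether, an alternative is to argue directly as in the proof of \Cref{prop:Lorentzian-symbol}: the homogeneous linear operator sending $w_0^m w^{\ba}$ to $w_0^{m-i} w^{\ba}$ for $m\in\{i,i+1\}$ and to $0$ otherwise has symbol $u_0^{\,d-i-1}\bigl(\binom{d}{i} u_0 + \binom{d}{i+1} w_0\bigr)\prod_{k=1}^{n}(u_k+w_k)^{d}$, which is a product of Lorentzian polynomials and hence preserves the Lorentzian property by \cite[Theorem 3.2]{branden2020lorentzian}; applied to $f$ it outputs precisely $f_i + w_0 f_{i+1}$.
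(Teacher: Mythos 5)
Your proof is correct and takes essentially the same route as the paper: extract the two-term segment via \Cref{prop:interval-preserves-lorentzian}(1) and then rescale (the paper instead applies the proposition to $f$ directly, differentiates to pass from $w_0^if_i+w_0^{i+1}f_{i+1}$ to $i!f_i+(i+1)!\,w_0f_{i+1}$, and absorbs the constants via \Cref{prop:proper-position-properties}(5)). Your reordering---differentiating first so that the $w_0$-constant term is nonzero---is a minor but welcome refinement, since it squares the application with the stated hypothesis $f_0\neq 0$ of \Cref{prop:interval-preserves-lorentzian}, which the paper's own proof of the corollary silently bypasses.
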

\begin{proof}
    By \Cref{prop:interval-preserves-lorentzian}, $w_0^if_i + w_0^{i+1}f_{i+1}$ is Lorentzian, so $i!f_i+(i+1)!w_0f_{i+1}$ is Lorentzian. The conclusion follows from \Cref{prop:proper-position-properties} (5).
\end{proof}

The next two statements are the Lorentzian characterization of elementary quotients of M-convex functions that parallel the Lorentzian characterization of M-convex functions, from which \Cref{main:A} follows.

\begin{prop}\label{prop:M-convex-quotient-Lorentzian}
    Let $\vp\colon \Delta^d_n\to \Rbar$ and $\psi\colon \Delta^{d-1}_n\to \Rbar$ be M-convex functions. The following are equivalent.
    \begin{enumerate}
        \item $\vp\onto \psi$ is an elementary quotient of M-convex functions.
    \item $f^\psi_q\ll_L f^\vp_q $ for all $0<q\leq 1$.\end{enumerate}
\end{prop}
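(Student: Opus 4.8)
The plan is to collapse both sides of the claimed equivalence onto the single condition that one auxiliary function on $\Delta^d_{n+1}$ is $M$-convex: on the combinatorial side this is \Cref{lem:elementary-quotient-equiv-description}, and on the analytic side it is the Lorentzian characterization of $M$-convex functions \cite[Theorem 3.14]{branden2020lorentzian}. The only thing to verify is that these two characterizations speak about the Lorentzianness of \emph{the same} polynomial, which is a one-line coefficient bookkeeping.

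Concretely, I would attach to $\vp$ and $\psi$ the function $\hat\vp\colon\Delta^d_{n+1}\to\Rbar$ of \Cref{lem:elementary-quotient-equiv-description}. By that lemma, (1) is equivalent to $\hat\vp$ being $M$-convex, and by \cite[Theorem 3.14]{branden2020lorentzian} the latter is equivalent to $f^{\hat\vp}_q\in\rmL^d_{n+1}$ for every $0<q<1$. Now $\supp(\hat\vp)$ is the disjoint union of the points $(\ba,0)$ with $\ba\in\supp(\vp)$ and the points $(\ba,1)$ with $\ba\in\supp(\psi)$, and $(\ba,0)!=\ba!\cdot 0!=\ba!=\ba!\cdot 1!=(\ba,1)!$, so the defining sum \eqref{eqn:basis-generating} for $\hat\vp$ splits as
\begin{equation*}
    f^{\hat\vp}_q(w_1,\dots,w_{n+1})=\sum_{\ba\in\supp(\vp)}\frac{q^{\vp(\ba)}}{\ba!}w^\ba+w_{n+1}\sum_{\ba\in\supp(\psi)}\frac{q^{\psi(\ba)}}{\ba!}w^\ba=f^\vp_q+w_{n+1}f^\psi_q ,
\end{equation*}
where on the right $f^\vp_q,f^\psi_q\in\R_{\geq0}[w_1,\dots,w_n]$.

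Finally, since $\vp$ and $\psi$ are $M$-convex, the same characterization shows $f^\vp_q$ and $f^\psi_q$ are nonzero Lorentzian of degrees $d$ and $d-1$, so the relation $f^\psi_q\ll_L f^\vp_q$ is well-defined, and by \Cref{def:lorentzian-proper-position} it holds exactly when $f^\vp_q+w_{n+1}f^\psi_q$ is Lorentzian, i.e.\ when $f^{\hat\vp}_q\in\rmL^d_{n+1}$. Quantifying over all $0<q<1$ then chains the equivalences (1) $\iff$ $\hat\vp$ $M$-convex $\iff$ $f^{\hat\vp}_q$ Lorentzian for all $q$ $\iff$ (2). There is no genuine obstacle here: every step is an application of a previously quoted result, and the only point demanding attention is that the factorial normalizations in \eqref{eqn:basis-generating} are compatible with appending the variable $w_{n+1}$ in degree $0$ and $1$ — which is exactly the identity $0!=1!=1$ used in the displayed computation.
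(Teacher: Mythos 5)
Your proposal is correct and is essentially identical to the paper's own proof: both reduce (1) to the M-convexity of the auxiliary function $\hat\vp$ via \Cref{lem:elementary-quotient-equiv-description}, observe that $f^{\hat\vp}_q=f^\vp_q+w_{n+1}f^\psi_q$, and conclude by the Lorentzian characterization of M-convexity \cite[Theorem 3.14]{branden2020lorentzian}. Your extra remarks on the factorial normalization and the nonvanishing of $f^\vp_q,f^\psi_q$ are fine, minor elaborations of the same argument.
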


\begin{proof}
 By \Cref{lem:elementary-quotient-equiv-description}, $\vp\onto\psi$ if and only if the auxiliary function 
    \begin{equation}
        \hat{\vp}(\bx,x_{n+1}) = \begin{cases}
            \vp(\bx),& \text{ if }x_{n+1}=0 \\
            \psi(\bx),& \text{ if }x_{n+1}=1 \\
            \infty,&\text{ else }
        \end{cases}
    \end{equation}
    is M-convex on $\Delta^d_{n+1}$. Note that $f^\vp_q + w_{n+1} f^\psi_q=f^{\hat{\vp}}_q$. Now the equivalence follows from \cite[Theorem 3.14]{branden2020lorentzian}.
\end{proof}

We state a tropical version of this characterization. Define Lorentzian proper position for polynomials over $\K$ analogous to the real case: for $f\in \rmL_n^d(\K)$ and $g\in \rmL_n^{d+1}(\K)$,
\begin{center} \vspace{5pt}
    $f\ll_L g$ \quad if\quad  $g+w_{n+1}f\in \rmL^{d+1}_{n+1}(\K)$. 
\vspace{5pt} \end{center} 
\begin{prop}\label{prop:quotient-characterization-puiseux}
If $f\in \rmL^d_n(\K)$, $h\in \rmL^{d-1}_n(\K)$ and $h\ll_L f$, then $\trop (f) \onto \trop (h)$. Conversely, if $\vp\colon  \Delta^d_n\to \Rbar$, $\psi\colon \Delta^{d-1}_n\to\Rbar$, and $ \vp\onto \psi$ is an elementary quotient of M-convex functions, then there are $f\in \rmL^d_n(\K)$ and $h\in \rmL^{d-1}_n(\K)$ such that $h\ll_L f$, $\trop(h)=\psi$ and $\trop(f)=\vp$.
\end{prop}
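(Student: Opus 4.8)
The plan is to deduce both directions from the already-established real-coefficient characterization (\Cref{prop:M-convex-quotient-Lorentzian}) together with the standard correspondence between Lorentzian polynomials over $\K$ and families of real Lorentzian polynomials. Recall the excerpt states: a polynomial over $\K$ is Lorentzian if and only if it is Lorentzian as a real polynomial for all sufficiently small parameter $t$, and the tropicalization of a Lorentzian polynomial over $\K$ is M-convex (this is the content of \cite[Theorem 3.20]{branden2020lorentzian}, restated in \Cref{subsec:prelim-lorentzian}).

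For the first direction, suppose $f\in \rmL^d_n(\K)$, $h\in \rmL^{d-1}_n(\K)$, and $h\ll_L f$, i.e.\ $g:=f+w_{n+1}h\in \rmL^{d+1}_{n+1}(\K)$ — wait, degrees: $f$ has degree $d$, $h$ has degree $d-1$, so $w_{n+1}h$ has degree $d$, and $f+w_{n+1}h$ is homogeneous of degree $d$ in the $n+1$ variables $w_1,\dots,w_{n+1}$; call it $\hat f\in \rmL^d_{n+1}(\K)$. First I would observe $\trop(\hat f)$ restricted to the slice $x_{n+1}=0$ equals $\trop(f)$, and restricted to $x_{n+1}=1$ equals $\trop(h)$, while it is $\infty$ elsewhere (since $\hat f$ has no terms with $w_{n+1}$-degree exceeding $1$). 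Since $\hat f$ is Lorentzian over $\K$, $\trop(\hat f)\colon \Delta^d_{n+1}\to\Rbar$ is M-convex; but this is exactly the function $\widehat{\vp}$ of \Cref{lem:elementary-quotient-equiv-description} with $\vp=\trop(f)$, $\psi=\trop(h)$. Hence \Cref{lem:elementary-quotient-equiv-description} gives $\trop(f)\onto\trop(h)$, as desired. (One should also note $\trop(f)$ and $\trop(h)$ are themselves M-convex, which follows since $f$ and $h$ are Lorentzian over $\K$.)

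For the converse, suppose $\vp\colon\Delta^d_n\to\Rbar$, $\psi\colon\Delta^{d-1}_n\to\Rbar$ are M-convex with $\vp\onto\psi$ an elementary quotient. By \Cref{lem:elementary-quotient-equiv-description}, the function $\widehat\vp\colon\Delta^d_{n+1}\to\Rbar$ built from $\vp$ and $\psi$ is M-convex. By the tropical Lorentzian characterization, there is $\hat f\in\rmL^d_{n+1}(\K)$ with $\trop(\hat f)=\widehat\vp$. The key point is that $\widehat\vp$ is supported only in $w_{n+1}$-degrees $0$ and $1$, so I can \emph{force} $\hat f$ to have the form $f+w_{n+1}h$ with $f\in\K_{\ge0}[w_1,\dots,w_n]$, $h\in\K_{\ge0}[w_1,\dots,w_n]$: concretely, take the degree-$0$ and degree-$1$ parts of $\hat f$ in $w_{n+1}$ and discard the rest — but one must check the discarded part is zero, which it is precisely because $\supp(\hat f)=\supp(\widehat\vp)$ lies in $\{x_{n+1}\in\{0,1\}\}$. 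Then set $f$ equal to the $w_{n+1}^0$-part and $h$ equal to the coefficient of $w_{n+1}^1$; by construction $\trop(f)=\vp$ and $\trop(h)=\psi$, and since $\hat f=f+w_{n+1}h$ is Lorentzian over $\K$ we get $h\ll_L f$ directly from the definition. (Alternatively, and perhaps more cleanly: by \Cref{prop:M-convex-quotient-Lorentzian} the real polynomial $f^\vp_q+w_{n+1}f^\psi_q=f^{\widehat\vp}_q$ is Lorentzian for all $0<q<1$; substituting $q=t$ and viewing $t$ as the uniformizer of $\K=\R\{t\}$ yields a polynomial over $\K$ that is Lorentzian for all small $t$, hence Lorentzian over $\K$, with the two pieces tropicalizing to $\vp$ and $\psi$ since $\val(t^{\vp(\ba)})=\vp(\ba)$.)

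The main obstacle is bookkeeping rather than conceptual: I need to be careful that "Lorentzian over $\K$" for a homogeneous polynomial in $n+1$ variables correctly restricts/slices to give Lorentzian proper position of the two pieces in $n$ variables, i.e.\ that the definition $h\ll_L f \iff f+w_{n+1}h\in\rmL^{d}_{n+1}(\K)$ is literally what $\hat f$ being Lorentzian gives — this is immediate from the definitions once degrees are tracked, but it is the spot where an off-by-one in the degree bookkeeping could creep in. A secondary point to verify is that the support-restriction argument (discarding higher $w_{n+1}$-degree terms is vacuous) is valid; this rests on the identity $\supp(\hat f)=\supp(\trop(\hat f))$, which holds because $\K$ has residue field $\R$ and no cancellation of leading terms occurs in a single monomial's coefficient. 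Neither step requires a genuinely new idea; both directions are essentially "transport \Cref{prop:M-convex-quotient-Lorentzian} across the tropicalization dictionary via \Cref{lem:elementary-quotient-equiv-description}."
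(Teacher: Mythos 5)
Your proof is correct and follows essentially the same route as the paper: the paper's proof is exactly the one-line combination of \Cref{lem:elementary-quotient-equiv-description} with \cite[Theorem 3.20]{branden2020lorentzian} (M-convex functions are precisely tropicalizations of Lorentzian polynomials over $\K$), which you have simply spelled out, including the support bookkeeping showing $\hat f$ has no $w_{n+1}$-degree above $1$. No gaps; your parenthetical alternative via $q=t$ is also fine but unnecessary.
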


\begin{proof}
    This is an immediate consequence of \Cref{lem:elementary-quotient-equiv-description} and \cite[Theorem 3.20]{branden2020lorentzian}.
\end{proof}

\subsection{The convexity property of Lorentzian proper position}

 Let $f\in\rmL^d_n$. Consider the following subsets of polynomials induced by Lorentzian proper position.
\begin{equation}
\begin{split}
    \rmL[f\ll_L]:=\big\{g\in\rmL_n^{d+1} \mid f\ll_L g\big\}\cup\{0\}, \\
    \rmL[\ll_Lf]:=\big\{h\in\rmL^{d-1}_n \mid h\ll_L f\big\}\cup\{0\}.
\end{split}
\end{equation}

We are ready to prove \cref{prop:A}, which states the convexity of $\rmL[f\ll_L]$. 

\begin{lemma}\label{lem:lorentzian-property-lemma}
    Let $f,g\in \R_{\geq 0}[w_1,...,w_n]$ be homogeneous of degree $d$ and $d+1$, respectively. Then 
    $f\ll_L g$ if and only if $g+w_{n+1}f$ has M-convex support, $f\in \rmL^d_n$, and $\partial_{\bv_1}\cdots \partial_{\bv_{d-1}}f \ll \partial_{\bv_1}\cdots \partial_{\bv_{d-1}} g$ for any $\bv_1,...,\bv_{d-1}\in \R^n_{>0}$.
\end{lemma}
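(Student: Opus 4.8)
The starting point is the definition (\Cref{def:lorentzian-proper-position}): $f\ll_L g$ means exactly that $G:=g+w_{n+1}f$ lies in $\rmL^{d+1}_{n+1}$. The plan is to prove the two implications separately, using as the main engine the directional‑derivative description of the Lorentzian property: a homogeneous $F\in\R_{\geq 0}[w_1,\dots,w_N]$ of degree $e$ is Lorentzian iff $\supp F$ is M‑convex and, when $e\geq 3$, all of its first partials $\partial_iF$ are Lorentzian (when $e=2$ this last clause is replaced by the condition that $F$ have at most one positive eigenvalue). I also use repeatedly that $\partial_{\bv}$ preserves the Lorentzian property for $\bv\in\R^n_{\geq 0}$ (contained in \cite{branden2020lorentzian}, and implicit in \Cref{prop:proper-position-properties}), that a Lorentzian polynomial not involving a variable stays Lorentzian in the remaining variables, and that $\supp(\partial_iH)=\{\ba:\ba+\be_i\in\supp H\}$ for $H$ with nonnegative coefficients, an operation preserving M‑convexity of M‑convex sets; consequently any iterate of these shadow operations along directions in $\R^n_{>0}$ preserves M‑convexity of supports.

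For the forward implication assume $G$ is Lorentzian. The first condition is immediate since $\supp G$ is M‑convex by definition. Since $f=\partial_{w_{n+1}}G$ and partials of Lorentzian polynomials are Lorentzian, $f$ is Lorentzian, and as it does not involve $w_{n+1}$ we get $f\in\rmL^d_n$, the second condition. For the third, fix $\bv_1,\dots,\bv_{d-1}\in\R^n_{>0}$; viewing each $\bv_k$ as a nonnegative direction in $\R^{n+1}$, the polynomial $\partial_{\bv_1}\cdots\partial_{\bv_{d-1}}G$ is Lorentzian, and because the $\partial_{\bv_k}$ do not touch $w_{n+1}$ it equals $\partial_{\bv_1}\cdots\partial_{\bv_{d-1}}g+w_{n+1}\,\partial_{\bv_1}\cdots\partial_{\bv_{d-1}}f$. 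This is Lorentzian of degree $2$, so via $\rmS^2_{n+1}=\rmL^2_{n+1}$ it says exactly $\partial_{\bv_1}\cdots\partial_{\bv_{d-1}}f\ll\partial_{\bv_1}\cdots\partial_{\bv_{d-1}}g$.

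The converse I would prove by induction on $d$. For $d=1$ the three conditions say precisely that $g+w_{n+1}f$ has M‑convex support and at most one positive eigenvalue (the second condition is automatic, the third is just $f\ll g$), i.e.\ $G\in\rmL^2_{n+1}$. For $d\geq 2$, assume the statement for $d-1$ and suppose the three conditions hold. Since $\deg G=d+1\geq 3$, $G\in\rmL^{d+1}_{n+1}$ iff $\supp G$ is M‑convex (given) and every $\partial_jG$ is Lorentzian. For $j=n+1$, $\partial_{n+1}G=f$ is Lorentzian by the second condition; for $j\in[n]$, $\partial_jG=\partial_jg+w_{n+1}\partial_jf$, so ``$\partial_jG$ Lorentzian'' means exactly $\partial_jf\ll_L\partial_jg$. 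The hypotheses needed to apply the inductive case to $(\partial_jf,\partial_jg)$ follow from the first two conditions (the support of $\partial_jG$ is the $\be_j$‑shadow of $\supp G$, and $\partial_jf$ is Lorentzian because $f$ is), so by induction $\partial_jf\ll_L\partial_jg$ is equivalent to: for all $\bv_1,\dots,\bv_{d-2}\in\R^n_{>0}$, $\partial_{\bv_1}\cdots\partial_{\bv_{d-2}}\partial_jf\ll\partial_{\bv_1}\cdots\partial_{\bv_{d-2}}\partial_jg$. Call the conjunction of these over $j\in[n]$ the condition $(\star)$. The third condition implies $(\star)$: write $\partial_j$ as a limit of directions in $\R^n_{>0}$ and use that the eigenvalue half of ``$\ll$'' is a closed condition, the M‑convex support half being supplied by the first two conditions. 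Conversely $(\star)$ implies the third condition: fixing $\bv_1,\dots,\bv_{d-1}\in\R^n_{>0}$, the polynomial $H:=\partial_{\bv_1}\cdots\partial_{\bv_{d-2}}G$ is homogeneous of degree $3$, has M‑convex support (from the first condition), and all of its first partials are Lorentzian --- those along $[n]$‑directions by $(\star)$, and $\partial_{w_{n+1}}H=\partial_{\bv_1}\cdots\partial_{\bv_{d-2}}f$ by the second condition --- so $H$ is Lorentzian; hence $\partial_{\bv_{d-1}}H=\partial_{\bv_1}\cdots\partial_{\bv_{d-1}}g+w_{n+1}\,\partial_{\bv_1}\cdots\partial_{\bv_{d-1}}f$ is Lorentzian of degree $2$, which is the third condition for those directions.

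I expect the main obstacle to be not the eigenvalue combinatorics --- the induction on $d$ deliberately sidesteps having to show by hand that the relevant nonnegative combinations of Lorentzian quadratic forms are again Lorentzian --- but rather propagating the M‑convexity‑of‑support hypotheses correctly through every step. The relation ``$\ell\ll q$'' between a linear and a quadratic form is an eigenvalue condition (closed and transparent) \emph{together with} an M‑convex support condition, and the latter need not survive a limiting argument; it must instead be read off directly from the first condition via stability of M‑convex sets under the shadow operation, which requires some care whenever one takes derivatives only in $[n]$‑directions inside $\R^{n+1}$. The one place where genuine content enters beyond this bookkeeping is the degree‑$3$ step: one must pick the iterated derivative $H$ of $G$ so that precisely the hypotheses ``$\supp H$ M‑convex'' and ``all first partials of $H$ Lorentzian'' are available, so that the degree‑$\geq 3$ characterization of Lorentzian polynomials yields ``$H$ Lorentzian'' and hence the desired proper position after one further differentiation.
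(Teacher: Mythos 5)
Your proposal is correct, and your forward direction is the paper's: differentiate down to degree two (as in \Cref{prop:proper-position-properties}(1)) and convert $\ll_L$ into $\ll$ via $\rmS^2_{n+1}=\rmL^2_{n+1}$. For sufficiency the paper is much more direct: it checks the signature definition of Lorentzian for $G=g+w_{n+1}f$ head-on --- the quadratics $\partial^{\ba}G$ with $\ba(n+1)\geq 1$ are iterated partials of the Lorentzian $f$, while those with $\ba(n+1)=0$ are limits of the stable quadratics $\partial_{\bv_1}\cdots\partial_{\bv_{d-1}}G$ as the $\bv_i$ tend to standard basis vectors, and the signature condition is closed. You instead induct on $d$ through the recursive characterization (M-convex support plus all first partials Lorentzian), which works but forces extra support bookkeeping, and one justification there is slightly off: the support of $\partial_{\bv}H$ for $\bv\in\R^n_{>0}$ is a \emph{union} of coordinate shadows of $\supp H$, not an iterate of single-coordinate shadows, so your ``consequently any iterate\dots'' remark does not cover the sets needed in the step ``third condition $\Rightarrow(\star)$'' (nor in your final paragraph). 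The fact is true --- for instance it is the support of $\partial_{\bv}$ applied to the Lorentzian polynomial $\sum_{\ba\in\supp G}w^{\ba}/\ba!$ --- or you can bypass it entirely by invoking that stability is preserved under limits (Hurwitz), which also spares you splitting $\ll$ into an eigenvalue half and a support half; either fix is routine, so this is an imprecision rather than a gap. Your closing paragraph proving $(\star)\Rightarrow$ the third condition is not needed for the lemma, and the zero-partial edge cases ($w_j$ absent from both $f$ and $g$) deserve a word, though they are harmless. Net comparison: the paper's route buys a two-line proof with no induction, no recursive characterization, and no shadow combinatorics; yours confines the limiting argument to a single degree-two step at the cost of those auxiliary facts.
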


\begin{proof}
    Note that $\partial_{\bv_1}\cdots \partial_{\bv_{d-1}}f\neq 0$ is a linear form for any $\bv_1,...,\bv_{d-1}\in \R^n_{>0}$. The `only if' direction follows from \Cref{prop:proper-position-properties} (1) and (3). Since the Lorentzian property is a closed condition, sufficiency follows by letting $\bv_1,...,\bv_{d-1}$ approach to standard basis vectors. 
\end{proof}

\begin{lemma}\label{lem:support-of-lorentzian-above}
    Let $f\in\rmL^d_n$ and $f\ll_L g$. Let $\ell\in \rmL^1_n$ be a linear form whose coefficients are all positive. Then
\begin{equation}
 \supp g \subset  \supp \ell f.
\end{equation}
\end{lemma}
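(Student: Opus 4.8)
The goal is to show $\supp g \subset \supp(\ell f)$ where $f \in \rmL^d_n$, $f \ll_L g$, and $\ell$ has all positive coefficients. Since $\ell$ has full support $\{e_1,\dots,e_n\}$, the support of $\ell f$ is $\supp f + \{e_1,\dots,e_n\} = \bigcup_{i=1}^n(\supp f + e_i)$, i.e.\ all $\ba \in \N^n$ with $|\ba|_1 = d+1$ that can be obtained from some element of $\supp f$ by adding one to a single coordinate. So I need to show: every monomial $\ba$ appearing in $g$ satisfies $\ba - e_i \in \supp f$ for some $i$.

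The key input is \Cref{lem:lorentzian-property-lemma}, or more directly the support condition in it: $f \ll_L g$ implies that $g + w_{n+1}f$ has M-convex support. Write $\widetilde{S} = \supp(g + w_{n+1}f) \subset \Delta^{d+1}_{n+1}$. Note $\supp g = \{(\ba, 0) : \ba \in \widetilde S\}$ identified in the first $n$ coordinates with vectors having last coordinate $0$, and $\supp f$ corresponds to $\{(\bb,1) \in \widetilde S\}$ after subtracting $e_{n+1}$. Now take any $\ba \in \supp g$, so $(\ba,0) \in \widetilde S$. Also pick any $(\bb,1) \in \widetilde S$ (such an element exists since $f \neq 0$, because $g$ has positive degree hence $f$ must be nonzero as $f \ll_L g$ presupposes; actually the hypothesis $f \in \rmL^d_n$ already gives $f \neq 0$). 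The two points $(\ba,0)$ and $(\bb,1)$ of the M-convex set $\widetilde S$ differ in the $(n+1)$-st coordinate: $(\bb,1)$ has a strictly larger $(n+1)$-st entry. Apply the M-convex exchange property with $i = n+1$: there is an index $j \le n$ with $\ba(j) < \bb(j)$ (so in particular $j \neq n+1$) such that $(\bb,1) - e_{n+1} + e_j \in \widetilde S$ and $(\ba,0) - e_j + e_{n+1} \in \widetilde S$. Wait — I want to move mass \emph{onto} $\ba$, so I should swap the roles. Apply the exchange property with $\bx = (\bb,1)$, $\by = (\ba,0)$ and the coordinate $i = n+1$ where $\bx(n+1) = 1 > 0 = \by(n+1)$: there is $j \le n$ with $\by(j) < \bx(j)$, i.e.\ $\ba(j) < \bb(j)$, such that $\bx - e_{n+1} + e_j \in \widetilde S$ and $\by - e_j + e_{n+1} \in \widetilde S$. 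The second of these is $(\ba - e_j, 1) \in \widetilde S$, which means $\ba - e_j \in \supp f$. Hence $\ba \in \supp f + e_j \subset \supp(\ell f)$, as desired.

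\textbf{What to watch out for.} The only subtle point is making sure an element of the form $(\bb,1)$ actually lies in $\widetilde S$, i.e.\ that $f \neq 0$; this is immediate from $f \in \rmL^d_n$ (Lorentzian polynomials are nonzero by convention, and in any case the lemma's hypotheses include $f \ll_L g$ which is only defined for nonzero $f$). A second routine check is identifying $\supp(\ell f)$ correctly: because every coefficient of $\ell$ is strictly positive and $f$ has nonnegative coefficients, there is no cancellation, so indeed $\supp(\ell f) = \bigcup_i (\supp f + e_i)$. With these in hand the exchange-property argument is a one-line deduction. I expect no real obstacle here; the lemma is essentially a direct unpacking of the M-convexity of the support of $g + w_{n+1}f$, which is exactly the ``support condition'' half of \Cref{lem:lorentzian-property-lemma}.
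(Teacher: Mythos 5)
Your argument is correct, and it is a slightly different (and more direct) route than the paper's. The paper first invokes Proposition \ref{prop:interval-preserves-lorentzian} to obtain the quotient relation $\supp g \onto \supp f$, and then runs an induction on $|\bx-\by|_1$ using the exchange axiom for quotients of M-convex sets, shrinking the distance by $2$ at each step until it equals $1$. You instead work directly with the M-convexity of $\widetilde S=\supp(g+w_{n+1}f)$ in $\Delta^{d+1}_{n+1}$ and make a \emph{single} exchange at the auxiliary coordinate $i=n+1$, which immediately produces $(\ba-\be_j,1)\in\widetilde S$, i.e.\ $\ba-\be_j\in\supp f$; since both routes ultimately rest on the same support condition for the Lorentzian polynomial $g+w_{n+1}f$, the gain of your version is that it avoids the induction and the detour through the quotient/Higgs language, while the paper's version reuses machinery it needs anyway. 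One small slip: in the paper's exchange axiom, with $\bx=(\bb,1)$, $\by=(\ba,0)$ and $i=n+1$, the index $j$ satisfies $\by(j)>\bx(j)$, i.e.\ $\ba(j)>\bb(j)$, not $\ba(j)<\bb(j)$ as you wrote; this is harmless here because the conclusion you actually use is the membership $(\ba,0)-\be_j+\be_{n+1}\in\widetilde S$, which in particular forces $\ba(j)\geq 1$, but the inequality should be stated in the correct direction. Your identification $\supp(\ell f)=\bigcup_i(\supp f+\be_i)$ via non-cancellation is also fine.
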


\begin{proof}
    It suffices to show that for each $\by\in \supp g$,
    \begin{equation}
        \min_{\bx\in \supp f} |\bx-\by|_1 = 1.
    \end{equation}
    By \Cref{prop:interval-preserves-lorentzian}, $\supp g\onto \supp f$. Take any $\by\in \supp g$ and $\bx\in \supp f$. Since $|\by|_1=d+1$ and $|\bx|_1=d$, if $|\bx-\by|_1> 1$, there must be some $i$ such that $\bx(i)>\by(i)$. Using the exchange axiom, we get $\bx'=\bx-\be_i+\be_j\in \supp f$ for some $j$ such that $\by(j)>\bx(j)$. Now $|\bx'-\by|_1=|\bx-\by|_1-2$. The proof is finished by induction.
\end{proof}

\begin{repprop}{prop:A}
    Let $f\in \R[w_1,...,w_n]$ be Lorentzian of degree $d$. Then $\rmL[f\ll_L]$ is a closed convex cone.
\end{repprop}

\begin{proof}
    Since the Lorentzian property is a closed condition and $\rmL[f\ll_L]$ is defined by the Lorentzian property of $g+w_{n+1}f$, we conclude that $\rmL[f\ll_L]$ is closed. Let $g_1,g_2\in \rmL[f\ll_L]$. By \Cref{prop:proper-position-properties} (5), we only need to show $g_1+g_2\in \rmL[f\ll_L]$. Let $g_0=\ell f$ where $\ell\in \rmL^1_n$ has all-positive coefficients. By \Cref{lem:support-of-lorentzian-above}, whenever $0<t\leq 1$, the support of $tg_0 + (1-t)(g_1+g_2)+w_{n+1}f$ is the support of $(\ell+w_{n+1}) f$, which is M-convex. By \Cref{lem:lorentzian-property-lemma}, for any $\bv_1,...,\bv_{d-1}\in \R^n_{>0}$,
    \begin{equation}
        \partial_{\bv_1}\cdots \partial_{\bv_{d-1}} f\ll \partial_{\bv_1}\cdots \partial_{\bv_{d-1}} g_i, \quad i=0,1,2.
    \end{equation}
    By \Cref{prop:convexity-stable-proper-position}, 
    \begin{equation}
        \partial_{\bv_1}\cdots \partial_{\bv_{d-1}} f\ll \partial_{\bv_1}\cdots \partial_{\bv_{d-1}}(tg_0 + (1-t)(g_1+g_2)).
    \end{equation}
    This shows $tg_0 + (1-t)(g_1+g_2)\in \rmL[f\ll_L]$ for $0<t\leq 1$. Letting $t\to 0$ finishes the proof.
\end{proof}

\begin{rmk}\label{rmk:branden-proof}
The following simple argument for \Cref{prop:A} was communicated to us by Petter Br\"{a}nd\'{e}n. Suppose $f\ll_L g_1,g_2$. Let $c>0$, $\ell=w_1+w_2+\cdots+ w_n$, and put
\begin{equation}
    \begin{split}
       g_1'=g_1(w_1 + c\ell,w_2+c\ell,...,w_n+c\ell),\\
       g_2'=g_2(w_1 + c\ell,w_2+c\ell,...,w_n+c\ell), \\
       f' = f(w_1 + c\ell,w_2+c\ell,...,w_n+c\ell). 
    \end{split}
\end{equation}
Then $g_1',g_2'$ and $f'$ all have full support, so $g_1'+g_2'$ has M-convex support. Then by taking derivatives, it follows from \Cref{prop:convexity-stable-proper-position} that $f\ll_L g_1'+g_2'$. \Cref{prop:A} follows from letting $c\to 0$.
\end{rmk}

One can also talk about convex cones over $\K$: for a vector space $V$ over $\K$, $S\subset V$ is a convex cone if $c_1s_1+c_2s_2\in S$ for any $s_1,s_2\in S$ and nonnegative scalars $c_1,c_2\in \K$. Similar to the real case, for $f\in\rmL^d_n(\K)$ consider
\begin{equation}
\rmL[f\ll_L]_\K:=\big\{g\in \rmL^{d+1}_n(\K) \mid f\ll_L g\big\}\cup\{0\}.
\end{equation}

\begin{thm}\label{thm:Lorentzian-convexity-puiseux}
    Let $f\in\K[w_1,...,w_n]$ be Lorentzian of degree $d$. Then $\rmL[f\ll_L]_\K$ is a convex cone.
\end{thm}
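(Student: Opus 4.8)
The plan is to deduce \Cref{thm:Lorentzian-convexity-puiseux} from the real case \Cref{prop:A} via the standard ``reduction to small $t$'' dictionary between $\rmL^d_n(\K)$ and $\rmL^d_n$: a polynomial over $\K$ is Lorentzian exactly when its evaluation at every sufficiently small real parameter $t>0$ is a Lorentzian real polynomial. So I would first record the elementary bookkeeping facts about generalized Puiseux series that this reduction needs: if $c\in\K$ is nonzero and positive, then its evaluation $c(t)$ is a positive real number for all small $t>0$ (and $c(t)\equiv 0$ if $c=0$); evaluation at a fixed small $t>0$ is a ring homomorphism on the finitely generated subring of $\K$ generated by the coefficients in play, hence commutes with forming $c_1g_1+c_2g_2+w_{n+1}f$; and, since we deal with only finitely many polynomials, the various ``for sufficiently small $t$'' thresholds can be replaced by a single $\varepsilon>0$. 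These are exactly the considerations handled in the references cited in \Cref{subsec:prelim-lorentzian} (\cite{branden2010discrete} and \cite[Theorem 3.20]{branden2020lorentzian}), so I would cite them rather than reprove them.

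Next I would take $g_1,g_2\in\rmL[f\ll_L]_\K$ and nonnegative scalars $c_1,c_2\in\K$, and try to show $(c_1g_1+c_2g_2)+w_{n+1}f\in\rmL^{d+1}_{n+1}(\K)$. By the dictionary this reduces to showing that the real polynomial $c_1(t)(g_1)_t+c_2(t)(g_2)_t+w_{n+1}f_t$ is Lorentzian for all $t\in(0,\varepsilon)$, where subscript $t$ denotes evaluating coefficients at $t$. For such $t$: $f_t\in\rmL^d_n$, and because $g_i+w_{n+1}f\in\rmL^{d+1}_{n+1}(\K)$ we get $(g_i)_t+w_{n+1}f_t\in\rmL^{d+1}_{n+1}$, i.e.\ $f_t\ll_L(g_i)_t$ over $\R$ (the segment $(g_i)_t$ is itself Lorentzian by \Cref{prop:interval-preserves-lorentzian}); and $c_i(t)\geq 0$. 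Hence $(g_1)_t,(g_2)_t$ (and $0$) all lie in $\rmL[f_t\ll_L]$, which by \Cref{prop:A} is a convex cone over $\R$, so the nonnegative real combination $c_1(t)(g_1)_t+c_2(t)(g_2)_t$ lies in it, which is precisely the required Lorentzianness. Letting $t$ run over $(0,\varepsilon)$ completes the proof; note this single computation covers both the scalar-multiplication axiom (take $c_2=0$) and the additivity axiom (take $c_1=c_2=1$), so no separate variable-scaling argument is needed.

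The main point requiring care — more foundational than combinatorial — is the legitimacy of ``evaluate at small $t$'': one must know that the generalized Puiseux series appearing here converge on some interval $(0,\varepsilon)$ (which holds because their supports are well-ordered) and that this evaluation is compatible with the algebraic operations used. This is exactly the content of the cited setup, so I expect it to be routine rather than a genuine obstruction. An alternative route that sidesteps any appeal to convergence would be to transplant Br\"and\'en's argument from \Cref{rmk:branden-proof} over $\K$: the substitution $w_i\mapsto w_i+c\ell$ makes every support full, after which one differentiates and invokes a cone property for stable polynomials over the real closed field $\K$; but this requires a Puiseux analogue of \Cref{prop:convexity-stable-proper-position}, which is why the reduction to small $t$ is the cleaner path and the one I would write up.
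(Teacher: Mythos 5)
Your argument is correct, but it is not the route the paper takes: the paper's proof is a two-line model-theoretic transfer, observing that for fixed $n$ and $d$ the convexity of $\rmL[f\ll_L]$ is a first-order statement in the language of ordered fields, so since $\K$ is real closed it follows from \Cref{prop:A} by Tarski's principle. You instead run the concrete ``evaluate at sufficiently small $t>0$'' reduction, using the characterization recorded in \Cref{subsec:prelim-lorentzian} that a polynomial over $\K$ is Lorentzian iff its evaluations at all small real parameters are Lorentzian, and then applying \Cref{prop:A} to $f_t$, $(g_1)_t$, $(g_2)_t$ with the scalars $c_1(t),c_2(t)\geq 0$; as you note, a single computation handles both cone axioms, and the only extra care is a uniform threshold $\varepsilon$ for the finitely many series involved and the (harmless) degenerate cases where a combination vanishes. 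The trade-off: Tarski's principle is shorter and completely sidesteps any question of interpreting elements of $\K$ as functions of $t$, while your reduction is more elementary and makes the deduction from the real statement explicit, at the cost of leaning on the analytic interpretation of $\K$.

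One small inaccuracy to fix if you write this up: well-orderedness of the support does \emph{not} by itself give convergence of a generalized Puiseux series on an interval $(0,\varepsilon)$ (coefficients can grow arbitrarily fast along exponents accumulating at a finite value). The legitimacy of evaluation at small $t$ is part of the framework imported from \cite{branden2010discrete} and the discussion around \cite[Theorem 3.20]{branden2020lorentzian}, so you should cite that setup for the dictionary rather than justify it by well-orderedness; with that correction the proof stands. Alternatively, note that your own reduction is essentially a hands-on instance of the transfer the paper invokes, and the Tarski route also disposes of the stable analogue you mention at the end (\Cref{prop:convexity-stable-proper-position} over $\K$) for free.
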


\begin{proof}
    The convexity of $\rmL[f\ll_L]$ is a first-order statement. Since $\K$ is real closed, $\rmL[f\ll_L]_\K$ is a convex cone by Tarski's principle.
\end{proof}

We now deduce \Cref{main:A} as part of \Cref{cor:codim-1-tropical-convex} from \Cref{thm:Lorentzian-convexity-puiseux}. Given any M-convex function $\vp\colon \Delta^d_n\to \Rbar$, consider the following two sets of M-convex functions.
\begin{equation}
\begin{split}
    \sF^1(\vp):=\big\{\psi\colon \Delta^{d-1}_n\to \Rbar \text{ M-convex}\mid \vp\onto \psi\big\},\\
     \leftindex^1{\sF}(\vp):=\big\{\rho\colon \Delta^{d+1}_n\to \Rbar \text{ M-convex}\mid \rho\onto \vp\big\}.
\end{split}
\end{equation}
Likewise, put:
\begin{equation}\label{eqn:incidence-dressian}
     \leftindex^1{\Dr}(\mu):=\big\{[\nu]\in \Dr(d+1,n)\mid \Trop\mu\subset \Trop\nu \big\}.
\end{equation}

\begin{cor}\label{cor:codim-1-tropical-convex}
    The tropicalization map
    \begin{equation}
        \Trop\colon \rmL[f\ll_L]_\K \to \leftindex^1{\sF}(\trop f),\quad g\mapsto \trop g
    \end{equation}
    is surjective and it respects convex combinations. In particular, $\sF^1(\vp)$, $\leftindex^1{\sF}(\vp)$, $\Dr^1(\mu)$ and $\leftindex^1{\Dr}(\mu)$ are tropically convex.
\end{cor}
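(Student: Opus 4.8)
The plan is to verify the two asserted properties of the tropicalization map and then read off tropical convexity of the four sets, first for $\leftindex^1{\sF}(\trop f)$ and $\leftindex^1{\cD}(\mu)$ and then for $\sF^1(\vp)$ and $\cD^1(\mu)$ by matroid duality. First I would check that $\Trop$ is well defined: for nonzero $g\in\rmL[f\ll_L]_\K$ we have $f\ll_L g$, so $\trop g$ is M-convex by \cite[Theorem~3.20]{branden2020lorentzian} and $\trop g\onto\trop f$ by \Cref{prop:quotient-characterization-puiseux}, hence $\trop g\in\leftindex^1{\sF}(\trop f)$; the zero polynomial is sent to the all-$\infty$ function, which serves as the tropical zero. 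That $\Trop$ ``respects convex combinations'' is the observation that over $\K$ no cancellation occurs among positive coefficients: if $g_1,g_2\in\rmL[f\ll_L]_\K$ and $c_1,c_2\in\K_{\geq 0}$, then for every $\ba\in\Delta^{d+1}_n$ the coefficient of $w^\ba$ in $c_1g_1+c_2g_2$ has valuation $\min\{\val c_1+\trop g_1(\ba),\ \val c_2+\trop g_2(\ba)\}$, so $\trop(c_1g_1+c_2g_2)$ is the tropical linear combination of $\trop g_1$ and $\trop g_2$ with coefficients $\val c_1,\val c_2$; since $\val\colon\K_{\geq 0}\to\Rbar$ is onto, every such tropical combination is attained.

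The crux is surjectivity. Given $\rho\in\leftindex^1{\sF}(\trop f)$, the quotient $\rho\onto\trop f$ is elementary, so by \Cref{lem:elementary-quotient-equiv-description} the function $\hat{\vp}$ on $\Delta^{d+1}_{n+1}$ with $\hat{\vp}(\cdot,0)=\rho$, $\hat{\vp}(\cdot,1)=\trop f$, and $\infty$ elsewhere is M-convex, and $\supp(\trop f)\times\{1\}$ is the face of $\supp\hat{\vp}$ on which the last coordinate is maximal; $f$ is a Lorentzian lift over $\K$ of $\hat{\vp}$ restricted to that face. I would then establish a relative version of \cite[Theorem~3.20]{branden2020lorentzian}: since passing to this face is a homogenized directional-derivative-type operation that preserves the Lorentzian property, a Lorentzian lift of the face extends to a Lorentzian lift $F\in\rmL^{d+1}_{n+1}(\K)$ of $\hat{\vp}$ whose coefficient of $w_{n+1}$ is exactly $f$; writing $F=g+w_{n+1}f$ then exhibits $f\ll_L g$ with $\trop g=\rho$. (For the corollary's downstream conclusions it suffices to take $f$ to be the standard lift $f^{\trop f}_t$, for which $g=f^{\rho}_t$ works directly by \Cref{prop:quotient-characterization-puiseux}, so the relative lifting can be avoided there.) Thus $\Trop(\rmL[f\ll_L]_\K)=\leftindex^1{\sF}(\trop f)$. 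As $\rmL[f\ll_L]_\K$ is a convex cone by \Cref{thm:Lorentzian-convexity-puiseux} and $\Trop$ carries $\K$-convex combinations to tropical linear combinations, the image $\leftindex^1{\sF}(\trop f)$ is closed under tropical linear combinations, i.e.\ tropically convex; taking $\vp$ to be a valuated matroid $\mu$ (and using that a quotient of a valuated matroid is again a valuated matroid) gives the same for $\leftindex^1{\cD}(\mu)$.

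Finally, for $\sF^1(\vp)$ and $\cD^1(\mu)$ I would pass to duals. Valuated-matroid duality sends a valuated matroid of rank $d$ to one of rank $n-d$ by relabelling Pl\"ucker coordinates $B\mapsto[n]\setminus B$ --- a tropical-linear isomorphism of Pl\"ucker space --- and reverses the quotient relation, so $\theta\mapsto\theta^{*}$ is a bijection $\cD^1(\mu)\to\leftindex^1{\cD}(\mu^{*})$ respecting tropical linear combinations; tropical convexity of $\cD^1(\mu)$ then follows from that of $\leftindex^1{\cD}(\mu^{*})$, proved above, and this recovers \Cref{main:A}. The same argument yields $\sF^1(\vp)$ from $\leftindex^1{\sF}(\vp^{*})$ once one uses the M-convex-function analog of matroid duality. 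I expect the relative lifting inside the surjectivity step --- producing, over the \emph{prescribed} $f$, a Lorentzian lift $g$ of an arbitrary $\rho\onto\trop f$ --- to be the main obstacle; the passage to duals for $\sF^1$ and $\cD^1$ is also genuinely needed rather than cosmetic, since the ``down'' cone $\rmL[\ll_L f]$ fails to be convex (\Cref{main:B}) and the set of all M-convex functions is itself not tropically convex.
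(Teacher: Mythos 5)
Your proposal is correct and essentially follows the paper's own route: well-definedness and surjectivity are read off from \Cref{prop:quotient-characterization-puiseux} (i.e.\ via the standard lifts $f^{\rho}_t$), ``respects convex combinations'' is the same valuation identity $\val(t_1s_1+t_2s_2)=\min(\val t_1+\val s_1,\val t_2+\val s_2)$, and the passage from $\leftindex^1{\sF}(\vp)$ to $\sF^1(\vp)$ (and then to $\cD^1(\mu)$, $\leftindex^1{\cD}(\mu)$ by restricting to supports in $\{0,1\}^n$) is done in the paper by the reflection $\td{\vp}(\bx)=\vp(\mathbf{D}-\bx)$, which is exactly the M-convex analog of matroid duality you invoke. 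The ``relative lifting'' over a prescribed arbitrary $f$ that you flag as the main obstacle is not carried out in the paper either---its one-line proof just cites \Cref{prop:quotient-characterization-puiseux}, which is precisely your standard-lift fallback---so your sketched extension argument can be dropped without affecting any of the stated convexity conclusions.
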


\begin{proof}
    The well-definedness of $\Trop$ and its surjectivity follow from \Cref{prop:quotient-characterization-puiseux}. $\Trop$ respects convex combinations, because for nonnegative elements $t_1,t_2,s_1,s_2\in\K$, $\val(t_1s_1+t_2s_2)=\min(\val(t_1)+\val(s_1),\val(t_2)+\val(s_2))$. This proves the tropical convexity of $\leftindex^1{\sF}(\vp)$. Let $D\geq d$ be an integer. Let $\mathbf{D}$ be the $n$-tuple $(D,...,D)$. Consider the operation $\vp\mapsto \td{\vp}$, where $\td{\vp}(\bx)=\vp(\mathbf{D}-\bx)$. It is easy to check that 
    \begin{center} \vspace{5pt}
        $\td{\vp}$ is M-convex if and only if $\vp$ is M-convex
    \vspace{5pt} \end{center} 
    and 
    \begin{center} \vspace{5pt}
        $\psi\onto\vp$ if and only if $\td{\vp}\onto \td{\psi}$.
    \vspace{5pt} \end{center} 
     Hence, $\sF^1(\vp)$ is also tropically convex. The rest of the statement follows by considering functions whose supports are contained in the unit cube $\{0,1\}^n\subset\N^n$.
\end{proof}

 \Cref{cor:codim-1-tropical-convex} has the following consequence, which \textit{a priori} is not obvious. Let $\psi_1,\psi_2\colon \Delta^{d-1}_n\to \Rbar$ be M-convex functions. If they are elementary quotients of some M-convex function $\vp\colon \Delta^{d}_n\to \Rbar$, then $\psi_3=\min\{\psi_1,\psi_2\}$ is also M-convex and $\vp\onto\psi_3$. This further implies a combinatorial statement about \textit{matroid perspectivity} \cite[Chapter 7]{white1986theory}.

\begin{definition}\label{def:matroid-perspectivity}
    A family of matroids $M_1,...,M_k$ are \textit{perspective} if they have a common elementary quotient $Q$; they are \textit{coperspective} if they are all elementary quotients of some matroid $L$. 
\end{definition}

In general, the union of the sets of bases of matroids is rarely the set of bases of any matroid, whereas this happens when the matroids are perspective or coperspective.

\begin{cor}\label{cor:union-basis-families}
    If $M_1,...,M_k$ are perspective (coperspective, respectively), then $\cB(M_1)\cup\cdots\cup\cB(M_k)$ is the set of bases of another matroid $M_{k+1}$, and $M_1,...,M_{k+1}$ are perspective (coperspective, respectively).
\end{cor}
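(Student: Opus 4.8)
The plan is to deduce both assertions from the tropical convexity of $\sF^1(\vp)$ and $\leftindex^1{\sF}(\vp)$ established in \Cref{cor:codim-1-tropical-convex}, after translating perspectivity and coperspectivity into statements about the trivial valuations of the matroids involved. The one identity doing all the work is that the pointwise minimum of indicator functions of sets is the indicator function of their union, so that $\min\{\delta_{\cB(M_1)},\dots,\delta_{\cB(M_k)}\}=\delta_{\cB(M_1)\cup\cdots\cup\cB(M_k)}$.

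Consider first the perspective case, where $M_1,\dots,M_k$ share a common elementary quotient $Q$. Then all the $M_i$ have a common rank $d$ and $Q$ has rank $d-1$. Set $\psi=\delta_{\cB(Q)}$, an M-convex function on $\Delta^{d-1}_n$, and $\vp_i=\delta_{\cB(M_i)}$, M-convex functions on $\Delta^d_n$. Since a matroid elementary quotient $M_i\onto Q$ is the same thing as the elementary-quotient relation $\vp_i\onto\psi$ of their trivial valuations (quotients of M-convex functions specialize to quotients of matroids via \eqref{eqn:quotient-M-convex-set}, see \Cref{subsec:prelim-m-convexity}, together with the matching of ranks), each $\vp_i$ lies in $\leftindex^1{\sF}(\psi)$. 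By \Cref{cor:codim-1-tropical-convex}, $\leftindex^1{\sF}(\psi)$ is tropically convex, hence closed under pointwise minima of finitely many of its members, so $\min\{\vp_1,\dots,\vp_k\}\in\leftindex^1{\sF}(\psi)$. By the displayed identity this minimum is the indicator function of $\cB(M_1)\cup\cdots\cup\cB(M_k)$, so that union is an M-convex subset of $\{0,1\}^n$, hence the set of bases of a matroid $M_{k+1}$ of rank $d$; and $\delta_{\cB(M_{k+1})}\onto\psi$ means $M_{k+1}\onto Q$, so $M_1,\dots,M_{k+1}$ are perspective. The coperspective case runs symmetrically: if $M_1,\dots,M_k$ are elementary quotients of a matroid $L$, put $\vp=\delta_{\cB(L)}$ and $\psi_i=\delta_{\cB(M_i)}\in\sF^1(\vp)$; tropical convexity of $\sF^1(\vp)$ gives $\min\{\psi_1,\dots,\psi_k\}=\delta_{\cB(M_1)\cup\cdots\cup\cB(M_k)}\in\sF^1(\vp)$, which both exhibits the union as the basis set of a matroid $M_{k+1}$ and shows $L\onto M_{k+1}$, so $M_1,\dots,M_{k+1}$ are coperspective.

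There is no serious obstacle here; the work is entirely in verifying the two translations used above. I would confirm carefully that (i) for matroids $M,N$, the matroid quotient relation $M\onto N$ agrees with the M-convex-set quotient relation between $\cB(M)$ and $\cB(N)$ — immediate from the simultaneous-exchange characterization of matroid quotients and \eqref{eqn:quotient-M-convex-set} — and (ii) an M-convex subset of $\{0,1\}^n$ is exactly the set of bases of a matroid, which holds because the M-convex exchange property restricted to 0-1 vectors is precisely symmetric basis exchange. Once these are in place the corollary is just the observation that a tropically convex set containing the trivial valuations $\delta_{\cB(M_i)}$ contains their tropical sum $\delta_{\cB(M_1)\cup\cdots\cup\cB(M_k)}$.
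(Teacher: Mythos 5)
Your proof is correct and is essentially the paper's own argument: the paper likewise observes that $\supp(\min\{\mu_1,\mu_2\})=\supp(\mu_1)\cup\supp(\mu_2)$ and applies the tropical convexity of $\sF^1(\vp)$ and $\leftindex^1{\sF}(\vp)$ from \Cref{cor:codim-1-tropical-convex} to the trivial valuations on the matroids. You simply spell out the routine translations (matroid quotients versus M-convex set quotients, M-convex subsets of $\{0,1\}^n$ being basis sets) that the paper leaves implicit.
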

\begin{proof}
    For any functions $\mu_1,\mu_2\colon \Delta^d_n \colon \Rbar$, $\supp (\min\{\mu_1,\mu_2\}) = \supp (\mu_1)\cup \supp (\mu_2)$. Apply \Cref{cor:codim-1-tropical-convex} to the trivial valuations on the matroids.
\end{proof}

We end this subsection with an example showing $\rmL[\ll_L f]$ may not be convex.
\begin{ex}\label{ex:L1-not-convex}
    Consider $M=U_{3,4}$. Polynomials in $\rmL[\ll_Lf_M]$ are multi-affine quadratic forms $\frac{1}{2}w^TAw$ for some 4-by-4 matrix $A$. Let $h_1=\frac{1}{2}w^TA_1w$ and $h_2=\frac{1}{2}w^TA_2w$ where
    \begin{equation}
        A_1 = \begin{bmatrix}
            0 & 3.9 & 1 & 1 \\
            3.9 & 0 & 1 & 1 \\
            1 & 1 & 0 & 1 \\
            1 & 1 & 1 & 0
        \end{bmatrix},\quad A_2 = \begin{bmatrix}
            0 & 1 & 1 & 1 \\
            1 & 0 & 1 & 1 \\
            1 & 1 & 0 & 3.9 \\
            1 & 1 & 3.9 & 0
        \end{bmatrix}
    \end{equation}
The leading principal minors of $A_1$ have signs $0,-,+,-$. By Cauchy's interlacing theorem, $h_1$ is Lorentzian. Similar calculation shows $h_2$ is also Lorentzian and $h_1,h_2\ll_L f_M$. However, the determinant of $A_1+A_2$ is positive, so $h_1+h_2$ cannot be Lorentzian. 

From this one sees that the Lorentzian property is not preserved under inversion. For any polynomial $f$, consider the operator
\begin{equation}
    \inv: f \mapsto f^{\inv}:= w_1^{d_1}\cdots w_n^{d_n}f(w_1^{-1},...,w_n^{-1})
\end{equation}
where $d_i$ is the degree of $f$ in $w_i$. Let $g_1=f_M + w_5h_1$ and $g_2=f_M+w_5h_2$, which are Lorentzian. Then either $g_1^{\inv}=w_5f_M^{\inv} + h_1^{\inv}$ or $g_2^{\inv}=w_5f_M^{\inv} + h_2^{\inv}$ is not Lorentzian. Otherwise, by \Cref{prop:A}, $h_1^{\inv}+h_2^{\inv}=h_2 + h_1$ is Lorentzian, a contradiction. Indeed, $g_2^{\inv}$ as a quadratic form is given by the matrix
    \begin{equation}
        A_3 = \begin{bmatrix}
            0 & 3.9 & 1 & 1 & 1 \\
            3.9 & 0 & 1 & 1 & 1 \\
            1 & 1 & 0 & 1 & 1 \\
            1 & 1 & 1 & 0 & 1 \\
            1 & 1 & 1 & 1 & 0
        \end{bmatrix}
    \end{equation}
    whose determinant is negative. Hence, $g_2^{\inv}$ is not Lorentzian.
\end{ex}

\subsection{Lorentzian proper position in context}
\subsubsection{Pullback of intersection product}
 Let $X$ be a $d$-dimensional irreducible projective variety over an algebraically closed field $\mathbb{F}$. Let $H_1,...,H_n$ be a collection of nef divisors on $X$. Then the volume polynomial given by the $d$-fold intersection, i.e.,
    \begin{equation}\label{eqn:volume-poly}
        \vol_X(w_1,...,w_n) = (w_1H_1+\cdots +w_nH_n)^d
    \end{equation}
is a Lorentzian polynomial \cite[Theorem 4.6]{branden2020lorentzian}. Let $Y$ be another nef divisor on $X$. Consider the intersection product 
\begin{equation}\label{eqn:volume-poly-sub}
    \vol_Y(w_1,...,w_n) = (w_1H_1+\cdots +w_nH_n)^{d-1}\cdot Y.
\end{equation}
When $Y$ is the class of a subvariety, $\vol_Y$ is the $(d-1)$-fold intersection of the pullback of $w_1H_1+\cdots + w_nH_n$ on $Y$.
\begin{thm}\label{thm:proper-position-AG}
    Let $\vol_X$ be given by \eqref{eqn:volume-poly} and $\vol_Y$ be given by \eqref{eqn:volume-poly-sub}, then $\vol_Y\ll_L \vol_X$ if both are nonzero.
\end{thm}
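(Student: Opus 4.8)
The plan is to realize $\vol_X + w_{n+1}\vol_Y$, after rescaling one coefficient, as a truncation of an honest volume polynomial on $X$ built from one extra nef class, and then to quote \Cref{cor:proper-position-from-single-poly}.

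First I would adjoin $Y$ to the list of divisors and consider the degree-$d$ polynomial in $n+1$ variables
\[
    V(w_1,\dots,w_n,w_{n+1}) := (w_1H_1+\cdots+w_nH_n+w_{n+1}Y)^d \in \R_{\geq 0}[w_1,\dots,w_{n+1}],
\]
which is the $d$-fold self-intersection of a nonnegative combination of the $n+1$ nef divisors $H_1,\dots,H_n,Y$ on the irreducible projective variety $X$. By \cite[Theorem 4.6]{branden2020lorentzian}, $V$ is Lorentzian of degree $d$; the only hypothesis to note is that intersection numbers of nef divisors are nonnegative, so $V$ has nonnegative coefficients (this is Kleiman positivity and is what makes the cited theorem apply).

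Next I would single out the variable $w_{n+1}$ and expand by the binomial theorem: writing $L=w_1H_1+\cdots+w_nH_n$,
\[
    V = \sum_{k=0}^d \binom{d}{k} w_{n+1}^k\, \bigl(L^{d-k}\cdot Y^k\bigr),
\]
so in the notation of \eqref{eqn:single-out-a-variable} (with the role of $w_0$ played by $w_{n+1}$) one has $V_0 = L^d = \vol_X$ and $V_1 = d\,L^{d-1}\cdot Y = d\,\vol_Y$. Assuming $\vol_X\neq 0$ and $\vol_Y\neq 0$, both $V_0$ and $V_1$ are nonzero, so \Cref{cor:proper-position-from-single-poly} yields $V_1 \ll_L V_0$, i.e. $d\,\vol_Y \ll_L \vol_X$. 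Finally, rescaling by $t=1/d>0$ via \Cref{prop:proper-position-properties}(5) gives $\vol_Y \ll_L \vol_X$, as claimed.

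There is essentially no hard step here: the content is the observation that adjoining $Y$ as an $(n+1)$-st nef class turns the (pullback) intersection $\vol_Y$ into precisely the linear-in-$w_{n+1}$ part of a bona fide Lorentzian volume polynomial, after which the conclusion is a formal consequence of \Cref{cor:proper-position-from-single-poly} and \Cref{prop:proper-position-properties}(5). If any point requires care it is merely verifying the hypotheses of \cite[Theorem 4.6]{branden2020lorentzian} for the enlarged list of divisors, which is standard.
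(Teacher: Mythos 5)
Your proposal is correct and is essentially the paper's own proof: both adjoin $Y$ as an extra nef class, observe that $(w_{n+1}Y + w_1H_1+\cdots+w_nH_n)^d$ is Lorentzian by \cite[Theorem 4.6]{branden2020lorentzian}, and extract $\vol_Y \ll_L \vol_X$ from \Cref{cor:proper-position-from-single-poly}. Your treatment is in fact slightly more careful, since you track the binomial coefficient $d$ on the linear term and dispose of it with \Cref{prop:proper-position-properties}(5), a scalar the paper's displayed expansion silently suppresses.
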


\begin{proof}
The polynomial     \begin{equation}
        \begin{split}
            f(w_0,w_1,...,w_n) & = (w_0Y + w_1H_1 + \cdots + w_nH_n )^d \\
           & = \vol_X + w_0\vol_Y  + w^2_0(w_1H_1+\cdots +w_nH_n)\cdot Y^2 + ... + w_0^d Y^d \end{split}
    \end{equation}
    is Lorentzian. By \Cref{cor:proper-position-from-single-poly}, $\vol_X+w_0\vol_Y$ is Lorentzian.
\end{proof}

\subsubsection{Intrinsic volumes of convex bodies}
Let $K_1,...,K_n,C\subset\R^d$ be convex bodies. Let $\Vol(K)$ be the $d$-dimensional volume of the convex body $K$. The polynomial 
\begin{equation}
\begin{split}    \vol\colon\R^{n+1}_{\geq 0} \to \R,\quad w\mapsto \Vol(w_0 C + w_1K_1+\cdots w_nK_n).
\end{split}
\end{equation}
is Lorentzian by \cite[Theorem 4.1]{branden2020lorentzian}. As a special case, take $C$ to be the unit $d$-ball $B^d$. The Steiner formula gives the following expansion \cite[Section 4]{schneider2013convex}:
\begin{equation}
    \vol(w_0,w_1,...,w_n) = \sum_{k=0}^d c_{d-k}w_0^{d-k}\rmV_k(w_1,...,w_n),
\end{equation}
where $c_i$ is the $i$-dimensional volume of the unit $i$-ball. The polynomial $\rmV_k(w_1,...,w_n)$ is the $k$-th \textit{intrinsic volume} of the convex body $w_1K_1+\cdots +w_nK_n$. For instance, $\rmV_d(K)=\Vol(K)$ and $\rmV_{d-1}(K)$ is proportional to the surface area of $K$. By \Cref{cor:proper-position-from-single-poly}, we have the following.

\begin{thm}\label{thm:proper-position-convex-geometry}
The $k$-th intrinsic volume $\rmV_k(w_1,...,w_n)$ is Lorentzian for all $k=0,...,d$ and $\rmV_k\ll_L \rmV_{k+1}$ whenever they are nonzero.
\end{thm}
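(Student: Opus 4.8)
The plan is to derive both assertions from a single application of \Cref{cor:proper-position-from-single-poly} to the Steiner polynomial. Taking the base convex body $C$ to be the unit $d$-ball $B^d$, the polynomial $\vol(w_0,w_1,\dots,w_n)=\Vol(w_0B^d+w_1K_1+\cdots+w_nK_n)$ is Lorentzian of degree $d$ in the $n+1$ variables $w_0,\dots,w_n$ by \cite[Theorem 4.1]{branden2020lorentzian}, and its Steiner expansion records the coefficient of $w_0^{\,j}$ as $c_j\,\rmV_{d-j}(w_1,\dots,w_n)$, with each $c_j>0$ the volume of the unit $j$-ball. So, singling out the variable $w_0$ exactly as in \eqref{eqn:single-out-a-variable} with $f=\vol$, the associated polynomials are $f_j=c_j\,\rmV_{d-j}$.

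The first thing I would pin down is which $\rmV_k$ are nonzero. Since $\rmV_0\equiv 1$ for any nonempty convex body, $c_d\,w_0^{\,d}$ is a monomial of $\vol$, and since $\supp\vol$ is M-convex the set of indices $j$ for which $f_j\neq 0$ is an interval of $\{0,\dots,d\}$ containing $j=d$; hence the nonzero intrinsic volumes are exactly $\rmV_0,\dots,\rmV_m$ for some $m\le d$. This both delimits the scope of the statement and guarantees that each nonzero $\rmV_k$ with $k\ge 1$ has the nonzero neighbor $\rmV_{k-1}$.

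Next, I would apply \Cref{cor:proper-position-from-single-poly} to $f=\vol$ with index $i=d-k-1$: whenever $\rmV_k$ and $\rmV_{k+1}$ are both nonzero, $f_{d-k}$ and $f_{d-k-1}$ are nonzero, so the corollary yields $c_{d-k}\,\rmV_k\ll_L c_{d-k-1}\,\rmV_{k+1}$; in particular, by \Cref{def:lorentzian-proper-position}, both $\rmV_k$ and $\rmV_{k+1}$ are Lorentzian (and $\rmV_0$, the remaining case, is a nonnegative constant, hence Lorentzian). Finally, I would observe that Lorentzian proper position is unaffected by scaling either side by a positive constant --- scaling the higher-degree polynomial is \Cref{prop:proper-position-properties}(5), and scaling the lower-degree one is undone, up to an overall positive factor, by the substitution $w_{n+1}\mapsto t\,w_{n+1}$, which preserves the Lorentzian property --- so $c_{d-k}\,\rmV_k\ll_L c_{d-k-1}\,\rmV_{k+1}$ is equivalent to $\rmV_k\ll_L\rmV_{k+1}$.

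There is no serious obstacle here: all of the mathematical substance sits in \cite[Theorem 4.1]{branden2020lorentzian} and \Cref{cor:proper-position-from-single-poly}, and the only points that need care are the reindexing $j\leftrightarrow d-k$ and the degenerate configurations (for instance, all $K_i$ lying in a common proper affine subspace, which forces some $\rmV_k$ to vanish identically) --- both are handled by the support analysis above together with the hypothesis that the relevant intrinsic volumes be nonzero.
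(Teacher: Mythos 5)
Your proposal is correct and follows essentially the same route as the paper: expand $\Vol(w_0B^d+w_1K_1+\cdots+w_nK_n)$ via the Steiner formula and apply \Cref{cor:proper-position-from-single-poly} to the coefficients of the powers of $w_0$, with positive scalars absorbed by \Cref{prop:proper-position-properties}(5). The extra bookkeeping you add (the interval structure of the nonzero $\rmV_k$ and the reindexing $j\leftrightarrow d-k$) is accurate but not needed beyond what the paper's one-line argument already uses.
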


\subsubsection{Determinantal polynomials}
Let $V\subset\C^n$ be a complex linear subspace of dimension $d$. With the standard basis of $\C^n$, we represent $V$ by a $d$-by-$n$ matrix $A$ over $\C$ by choosing a basis of $V$. Let $A^*$ be the Hermitian conjugate of $A$. Let $Z$ be the $n$-by-$n$ diagonal matrix with diagonal entries the variables $z_1,...,z_n$. Then the polynomial
\begin{equation}\label{eqn:determinant-poly}
    f_A(z_1,...,z_n)=\det(AZA^*)
\end{equation}
is homogeneous stable with nonnegative coefficients \cite[Theorem 8.1]{choe2004homogeneous}, hence Lorentzian. 
\begin{prop}\label{prop:proper-position-determinant}
    Let $W\subset V$ are linear subspaces in $\C^n$ of dimension $d-1$ and $d$ represented by matrices $A_1$ and $A_2$, respectively. Then $f_{A_1}\ll_L f_{A_2}$.
\end{prop}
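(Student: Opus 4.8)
The plan is to exhibit a single homogeneous determinantal polynomial in $n+1$ variables—automatically Lorentzian by \cite[Theorem 8.1]{choe2004homogeneous}—whose two lowest-order coefficients in the new variable are exactly $f_{A_2}$ and $f_{A_1}$, and then apply \Cref{cor:proper-position-from-single-poly}. This is in the same spirit as the proofs of \Cref{thm:proper-position-AG} and \Cref{thm:proper-position-convex-geometry}.

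First I would normalize the matrix representatives. Replacing $A$ by $gA$ for an invertible $g$ multiplies $f_A=\det(AZA^*)$ by the positive scalar $|\det g|^2$, and by \Cref{prop:proper-position-properties} (5) the relation $\ll_L$ is unaffected by positive scalar multiples of either side, so I am free to choose $A_1$ and $A_2$ among the representatives of $W$ and $V$. Since $W\subset V$, I extend a basis of $W$ to a basis of $V$, which arranges
\[
A_2=\begin{bmatrix}A_1\\ a\end{bmatrix}\in\C^{d\times n}
\]
for some row vector $a\in\C^n$, with $A_1\in\C^{(d-1)\times n}$ of full row rank $d-1$.

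Next I would append one column: set $\tilde A=\begin{bmatrix}A_2 & e_d\end{bmatrix}\in\C^{d\times(n+1)}$, where $e_d\in\C^d$ is the last standard basis vector, and $\tilde Z=\operatorname{diag}(z_1,\dots,z_n,z_{n+1})$. Then $\tilde A$ has full row rank $d$, so $f_{\tilde A}=\det(\tilde A\tilde Z\tilde A^*)$ is homogeneous stable with nonnegative coefficients, hence Lorentzian. Now $\tilde A\tilde Z\tilde A^*=A_2ZA_2^*+z_{n+1}\,e_de_d^*$ is a rank-one perturbation of $A_2ZA_2^*$, so the matrix determinant lemma gives, identically in $z_1,\dots,z_n$,
\[
f_{\tilde A}=\det(A_2ZA_2^*)+z_{n+1}\,\big[\operatorname{adj}(A_2ZA_2^*)\big]_{dd}.
\]
The $(d,d)$ entry of the adjugate is the $(d,d)$ cofactor of $A_2ZA_2^*$, obtained by deleting its last row and column; by the block shape of $A_2$ this leaves exactly $A_1ZA_1^*$. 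Hence $f_{\tilde A}=f_{A_2}+z_{n+1}f_{A_1}$, with $f_{A_2}$ of degree $d$ and $f_{A_1}$ of degree $d-1$, both nonzero since each $A_iZA_i^*$ is generically nonsingular. Taking $z_{n+1}$ as the distinguished variable in \Cref{cor:proper-position-from-single-poly} then gives $f_{A_1}\ll_L f_{A_2}$.

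I do not anticipate a real obstacle: the only idea beyond formal manipulation is recognizing that appending the column $e_d$ to $A_2$ is precisely the device that records the inclusion $W\subset V$ at the level of the polynomials $\det(AZA^*)$, via the cofactor identity $[\operatorname{adj}(A_2ZA_2^*)]_{dd}=\det(A_1ZA_1^*)$. The rank-one determinant identity, the choice of representatives, and the degree and non-vanishing bookkeeping are routine; in fact, since the perturbation has rank one the equality $f_{\tilde A}=f_{A_2}+z_{n+1}f_{A_1}$ is exact, so one could bypass \Cref{cor:proper-position-from-single-poly} and appeal directly to \Cref{def:lorentzian-proper-position}.
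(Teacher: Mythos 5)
Your proof is correct and follows essentially the same route as the paper: normalize the representatives via \Cref{prop:proper-position-properties} (5), append a single standard-basis column to the matrix representing $V$, observe that the resulting determinantal polynomial is exactly $f_{A_2}+z_{n+1}f_{A_1}$, and conclude Lorentzian proper position (the paper deduces stability and hence $f_{A_1}\ll f_{A_2}$, which implies $\ll_L$). Your explicit verification of the identity via the matrix determinant lemma, and the optional detour through \Cref{cor:proper-position-from-single-poly}, are only cosmetic differences.
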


\begin{proof}
Choosing a different representing matrix only changes the polynomial \eqref{eqn:determinant-poly} by multiplying by a positive scalar. By \Cref{prop:proper-position-properties} (5), we only need to prove the statement for one choice of the matrices.

Let $A_1$ be any $d-1$-by-$n$ matrix representing $W$. Extending $A_1$ to a $d$-by-$n$ matrix $A_2$ representing $V$. Extend $A_2$ to a $d$-by-$(n+1)$ matrix $A_3$ as follows
\begin{equation}
    A_3 = \begin{pmatrix}
        A_2 & \be^T_1
    \end{pmatrix}
\end{equation}
where $\be_1=(1,0,...,0)\in \C^d$. Let $Z'$ be the $(n+1)$-by-$(n+1)$ diagonal matrix with diagonal entries $z_1,...,z_n,z_{n+1}$. Then 
\begin{equation}
    \det(A_3Z'A_3^*) = f_{A_2}+z_{n+1}f_{A_1}
\end{equation}
is stable. Hence, $f_{A_1}\ll f_{A_2}$. In particular, $f_{A_1}\ll_L f_{A_2}$.
\end{proof}

\subsection{Lorentzian polynomials supported on given matroids}\label{subset:tutte-group}

In this subsection, we consider Lorentzian polynomials $f$ supported on a given matroid $M$, i.e., $\supp f$ is the set of bases of $M$. A more comprehensive study on this topic is \cite{bakerhuh}, while our focus is how $M$ affects $\rmL[f\ll_L]$, and $\rmL[\ll_Lf]$.

Put
\begin{equation}
    \rmL_M:=\big\{f \text{ Lorentzian }\mid f\text{ is supported on }M\big\}.
\end{equation}
A crucial tool for understanding $\rmL_M$ is the \textit{Tutte group} of a matroid, studied by Dress and Wenzel \cite{dress1989geometric,wenzel1989group}. For the definition of the Tutte group, we refer the reader to the original papers. What is relevant to us is that the Tutte group of a matroid $M$ reflects the following two types of information about $M$: 
\begin{itemize} 
    \item The `number' of basis valuations on $M$;
    \item the space $\rmL_M$ of Lorentzian polynomials supported on $M$. 
\end{itemize}

All matroids have basis valuations induced by a weight vector: for each $\bw\in \R^n$, the assignment  
\begin{equation}
    B\mapsto \bw\cdot \be_B
\end{equation}
is a valuation, where 
\begin{equation}\label{eqn:indicator-vec}
    \be_B = \sum_{i\in B}\be_i
\end{equation}
and $\bw\cdot\be_B$ is the usual dot product. A matroid is called \textit{rigid} if all of its basis valuations arise this way \cite[Definition 2.2]{dress1992valuated}. In other words, if $M$ is rigid, then any tropical linear space whose underlying matroid is $M$ is a translation of $\Trop M$. By \cite[Corollary 5.8]{dress1992valuated}, if the \textit{inner Tutte group} of $M$, a quotient group of the Tutte group, is torsion, then $M$ is rigid. In particular, binary matroids and finite projective spaces are rigid \cite[Proposition 5.9,5.10]{dress1992valuated}. This will be useful in \Cref{sec:incidence-problems} when we study incidence problems of tropical linear spaces.

The relation of the Tutte group and the space of stable polynomials is implicit in \cite{branden2007polynomials} and is studied in \cite{branden2010half}. We extend the results therein to Lorentzian polynomials. Let $A\in\nk{n}{d-2}$ and $i,j,k,l$ be distinct elements not in $A$. The set $\{Aij,Ajk,Akl,Ail\}$ is a \textit{degenerate quadrangle} of the matroid $M$ if $\{Aij,Ajk,Akl,Ail\}\subset \cB(M)$ and at least one of $Aik$ and $Ajl$ is not a basis.

\begin{lemma}\label{lem:degenerate-quadrangle}
    Let $f=\sum_{\cB(M)}a_Bw^B$ be Lorentzian. If $\{Aij,Ajk,Akl,Ail\}$ is a degenerate quadrangle, then 
    \begin{equation}\label{eqn:degenerate-quadrangle}
        a_{Aij}a_{Akl} = a_{Ajk}a_{Ail}.
    \end{equation}
\end{lemma}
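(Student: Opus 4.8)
The plan is to reduce the claim to the signature condition on a single second-order derivative of $f$. Since $\{Aij, Ajk, Akl, Ail\}$ is a degenerate quadrangle, we have $|A| = d-2$, so $\partial^A f$ is a Lorentzian polynomial of degree $2$, i.e. a quadratic form $q = \partial^A f$ in the variables $w_m$ for $m \notin A$. Its coefficient on the monomial $w_m w_{m'}$ (for $m \neq m'$) is a positive multiple of $a_{A \cup \{m,m'\}}$, and the diagonal coefficient on $w_m^2$ vanishes because $f$ is multiaffine... actually $f$ need not be multiaffine in general, but the support condition forces $\supp f \subset \binom{[n]}{d}$ since $f$ is \emph{supported on $M$}, hence the diagonal terms of $q$ are zero. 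So $q$ restricted to the four variables $w_i, w_j, w_k, w_l$ has associated symmetric matrix with zero diagonal and off-diagonal entries $b_{ij}, b_{ik}, b_{il}, b_{jk}, b_{jl}, b_{kl}$ proportional (with the same positive factor, namely $1/\ba!$-type constants that are equal for all these $2$-subsets) to the corresponding $a_{A \cup \{\cdot,\cdot\}}$; by hypothesis $b_{ij}, b_{jk}, b_{kl}, b_{il} > 0$ and at least one of $b_{ik}, b_{jl}$ is zero.

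The key step is then a linear-algebra fact about $4 \times 4$ symmetric matrices with zero diagonal and at most one positive eigenvalue. Passing to the $3 \times 3$ principal submatrix indexed by $\{i,k,l\}$ (or $\{j,k,l\}$, depending on which of $b_{ik}, b_{jl}$ is zero — say $b_{ik} = 0$), we get a symmetric matrix
\[
\begin{pmatrix} 0 & 0 & b_{il} \\ 0 & 0 & b_{kl} \\ b_{il} & b_{kl} & 0 \end{pmatrix},
\]
whose determinant is $2 b_{il} b_{kl}$... that has the wrong sign to exploit directly, so instead I would argue on the full Lorentzian quadratic form $q$ itself. By the signature (Lorentzian) condition, $q$ has at most one positive eigenvalue, equivalently every $2 \times 2$ principal minor of its Gram matrix that is \emph{not} the top-left corner of a positive-definite $\ge 2$ block is $\le 0$; more precisely, the relevant statement (cf. \cite[Example 2.26 / Section 2]{branden2020lorentzian} and the Hurwitz-type characterization of Lorentzian quadratics) is: for a Lorentzian quadratic form supported on an M-convex set, the Gram matrix $Q$ satisfies $Q_{ab}Q_{cd} = Q_{ad}Q_{cb}$ whenever one of $Q_{ac}, Q_{bd}$ vanishes while the other four entries are positive — this is exactly the degenerate-quadrangle identity for a matroid of rank $2$. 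I would prove this $2$-variable-at-a-time: restrict $q$ to the span of $w_i, w_j, w_k, w_l$, and use that a Lorentzian (hence stable, since $\mathrm{L}^2 = \mathrm{S}^2$) quadratic with zero diagonal and one positive eigenvalue must, after scaling variables, have its nonzero off-diagonal pattern supported on a structure forcing the multiplicative relation. Concretely: if $b_{ik} = 0$ then the $2 \times 2$ minor on rows $\{i,k\}$, columns $\{j,l\}$ of $Q$ (which is $b_{ij}b_{kl} - b_{il}b_{kj} = b_{ij}b_{kl} - b_{il}b_{jk}$) must vanish — this minor being a maximal minor of a rank-$\le 1$-off-diagonal configuration forced by one positive eigenvalue — and that is precisely \eqref{eqn:degenerate-quadrangle}.

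The main obstacle, and the step I would be most careful about, is justifying cleanly why that particular $2 \times 2$ minor of the Gram matrix vanishes. The honest route is: a symmetric $4 \times 4$ matrix $Q$ with zero diagonal, all six off-diagonal entries $\ge 0$, and at most one positive eigenvalue, in which $Q_{ik} = 0$ but $Q_{ij}, Q_{jk}, Q_{kl}, Q_{il} > 0$, must satisfy $Q_{ij}Q_{kl} = Q_{il}Q_{jk}$. One proves this by contradiction: if the minor were nonzero, one exhibits a $2$-dimensional subspace on which $Q$ is positive definite (e.g. by a careful choice of two vectors in $\mathrm{span}(e_i, e_j, e_k, e_l)$ using the positivity of the four entries and the vanishing of $Q_{ik}$), contradicting the single-positive-eigenvalue condition. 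This $2$-dimensional positive-definite subspace is built by hand — I would take vectors of the form $e_j + s e_l$ and $e_i + t e_k$ with $s,t>0$ tuned so that the induced $2 \times 2$ Gram matrix has positive trace and positive determinant precisely when $Q_{ij}Q_{kl} \ne Q_{il}Q_{jk}$; once diagonal entries of $Q$ are zero the induced Gram determinant is a clean expression in $s, t$ and the four positive off-diagonal entries. This is the only genuinely computational part, and it is short. Everything else — reduction via $\partial^A$, equality of the proportionality constants, and the support condition — is bookkeeping.
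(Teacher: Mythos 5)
Your overall architecture matches the paper's reduction: apply $\partial^A$ to get a multiaffine Lorentzian quadratic in the variables outside $A$, and read off the identity from the at-most-one-positive-eigenvalue condition. The paper then simply quotes the known stable-polynomial statement (Br\"and\'en's Lemma 6.1, applied to $\partial^A f$, which is stable since $\rmS^2_n=\rmL^2_n$), whereas you try to prove the rank-two case by hand; that is a legitimate, more self-contained route, and the linear-algebra fact you need is true. However, the one step you flag as "the only genuinely computational part" is exactly where your argument breaks: with zero diagonal and $Q_{ik}=0$, the vector $u=e_i+te_k$ satisfies $q(u)=0$, so for your proposed pair $u=e_i+te_k$, $v=e_j+se_l$ the induced Gram matrix is $\begin{pmatrix}0 & B(u,v)\\ B(u,v) & 2sQ_{jl}\end{pmatrix}$, whose determinant is $-B(u,v)^2\le 0$ for every $s,t$. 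No tuning of $s,t$ can make it positive definite, so the contradiction you want cannot be exhibited by vectors of that form.

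The gap is fixable without changing your strategy. Write $a=Q_{ij},b=Q_{jk},c=Q_{kl},d=Q_{il}>0$, $e=Q_{jl}\ge 0$, $Q_{ik}=0$, and compute the determinant of the $4\times 4$ principal submatrix on $\{i,j,k,l\}$ (zero diagonal): it equals $(ac-bd)^2$, independently of $e$. Since this submatrix has trace zero, a nonzero determinant would force signature $(+,+,-,-)$, i.e.\ two positive eigenvalues; restriction of $q$ to a coordinate subspace preserves "at most one positive eigenvalue" (any $2$-dimensional positive-definite subspace for the submatrix is one for $q$), so Lorentzianity forces $ac=bd$, which is \eqref{eqn:degenerate-quadrangle}. (Equivalently, one could perturb your vectors off the null direction, but the determinant identity is cleaner.) Your remaining bookkeeping — multiaffinity of $f$ because it is supported on $\cB(M)$, equality of the proportionality constants, positivity of the four quadrangle coefficients and vanishing of one diagonal of the quadrangle — is correct.
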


\begin{proof}
    The statement is true for stable polynomials \cite[Lemma 6.1]{branden2007polynomials}. If $\{Aij,Ajk,Akl,Ail\}$ is a degenerate quadrangle of $M$, then $\{ij,jk,kl,il\}$ is a degenerate quadrangle of the contraction $M/A$. The conclusion follows from the stability of $\partial^Af$.
\end{proof}

Let $\R^{\cB(M)}$ be the obvious coordinate subspace of $\R^{\nk{n}{d}}$. Let $V_M$ be the subspace of $\R^{\cB(M)}$ cut out by the coordinate-wise logarithm of \eqref{eqn:degenerate-quadrangle}: $\bc\in V_M$ if and only if
\begin{equation}
    \bc(Aij) + \bc(Akl) = \bc(Ajk) + \bc(Ail)
\end{equation}
for each degenerate quadrangle $\{Aij,Ajk,Akl,Ail\}$. There is an obvious subspace of $U_M\subset V_M$ where 
\begin{equation}
    U_M = \bigg\{\bu\in \R^{\cB(M)}\mid \bu(B) = \sum_{i\in B} \ba(i)\text{ for some }\ba\in\R^n \bigg\}.
\end{equation}

Let $\log \rmL_M$ be the logarithmic image of $\rmL_M$ in $\R^{\cB(M)}$. We have 
\begin{equation}
   U_M\subset  \log \rmL_M \subset V_M
\end{equation}
The first containment follows from \Cref{lem:degenerate-quadrangle}; the second containment is because $\rmL_M$ contains all the re-scaling of the basis generating polynomial $f_M$. \cite[Theorem 3.3]{branden2010half} gives the following formula: if $M$ has $c$ connected components and $R$ is the free rank of the inner Tutte group of $M$, then
\begin{equation}
        \dim(V_M) = n - z + 1 + R = \dim(U_M) + R.
    \end{equation}
This implies the following description of $\rmL_M$ when the inner Tutte group of $M$ is torsion.

\begin{prop}\label{prop:space-of-lorentzian-trivial-inner-tutte}
    Suppose the inner Tutte group of $M$ is torsion. Then any Lorentzian polynomial $f$ supported on $M$ is a re-scaling of $f_M$. Moreover, $\rmL_M$ has the following two descriptions.
    \begin{enumerate}
        \item $\rmL_M$ is the image of $U_M$ under the coordinate-wise exponentiation map 
        \begin{equation}
            \exp: \R^{\cB(M)} \to \R^{\cB(M)}_{\geq 0},\quad \bv \mapsto \left(\exp(\bv(B))\right)_B
        \end{equation}
        \item $\rmL_M\subset \R_{\geq 0}^{\cB(M)}$ is cut out by the quadratic equations $\bc(Aij)\bc(Akl) = \bc(Ajk)\bc(Ail)$ for all degenerate quadrangles $\{Aij,Ajk,Akl,Ail\}$.
    \end{enumerate}
\end{prop}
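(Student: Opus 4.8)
The plan is to exploit the sandwich $U_M \subseteq \log\rmL_M \subseteq V_M$ together with the dimension formula quoted from \cite{branden2010half}, namely $\dim V_M = \dim U_M + R$ where $R$ is the free rank of the inner Tutte group of $M$. The hypothesis that the inner Tutte group is torsion says precisely that $R = 0$, so $\dim V_M = \dim U_M$. Since $U_M$ and $V_M$ are both linear subspaces of $\R^{\cB(M)}$ — the former the image of the linear map $\ba \mapsto (\sum_{i\in B}\ba(i))_{B\in\cB(M)}$, the latter the zero locus of homogeneous linear forms — and $U_M \subseteq V_M$, equality of dimension forces $U_M = V_M$. Hence $\log\rmL_M = U_M = V_M$. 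Throughout I use that a polynomial supported on $M$ has coefficient vector in the open positive orthant $\R_{>0}^{\cB(M)}$, on which the coordinatewise logarithm is a bijection onto $\R^{\cB(M)}$ with inverse $\exp$, so I freely identify a Lorentzian polynomial supported on $M$ with its coefficient vector.

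From $\log\rmL_M = U_M$ I would read off the first two assertions. Given $f = \sum_{B\in\cB(M)}a_Bw^B \in \rmL_M$, the vector $(\log a_B)_B$ lies in $U_M$, so $\log a_B = \sum_{i\in B}\ba(i)$ for some $\ba\in\R^n$; putting $\lambda_i = e^{\ba(i)}>0$ gives $a_B = \prod_{i\in B}\lambda_i$, i.e. $f = f_M(\lambda_1w_1,\dots,\lambda_nw_n)$, a re-scaling of $f_M$. Conversely every such re-scaling is Lorentzian, since $f_M$ is Lorentzian by the M-convexity characterization and scaling variables preserves the Lorentzian property; this is the inclusion $U_M\subseteq\log\rmL_M$. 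So $\rmL_M = \exp(U_M)$, which is assertion (2), and the previous sentence is assertion (1).

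For assertion (3), observe that $V_M$ is by construction the common zero locus in $\R^{\cB(M)}$ of the linear forms $\bc(Aij)+\bc(Akl)-\bc(Ajk)-\bc(Ail)$ over all degenerate quadrangles, i.e. of the coordinatewise logarithms of the binomials $\bc(Aij)\bc(Akl)-\bc(Ajk)\bc(Ail)$. Applying $\exp$ and using $\log\rmL_M = V_M$: a coefficient vector $\bc$ with full support $\cB(M)$ belongs to $\rmL_M$ if and only if $\bc(Aij)\bc(Akl)=\bc(Ajk)\bc(Ail)$ for every degenerate quadrangle $\{Aij,Ajk,Akl,Ail\}$. On the boundary of the orthant the binomial system acquires spurious solutions of smaller support, so assertion (3) is understood inside the positive orthant, which is exactly where ``supported on $M$'' lives.

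Since every input is already in hand — the inclusions $U_M\subseteq\log\rmL_M\subseteq V_M$, \Cref{lem:degenerate-quadrangle}, and the dimension count from \cite{branden2010half} — there is no genuine obstacle; the only points needing care are the elementary translation ``torsion inner Tutte group $\iff R=0$'', so that the sandwich of linear subspaces collapses, and the bookkeeping of the $\exp/\log$ dictionary, including the caveat about the positive orthant in part (3).
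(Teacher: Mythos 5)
Your argument is exactly the one the paper intends: the sandwich $U_M\subseteq\log \rmL_M\subseteq V_M$ together with the dimension formula $\dim V_M=\dim U_M+R$ from \cite{branden2010half}, plus the observation that torsion inner Tutte group means $R=0$, so the inclusion of linear subspaces collapses and $\log\rmL_M=U_M=V_M$, from which the re-scaling claim and both descriptions follow by the $\exp/\log$ dictionary. This matches the paper's (essentially proofless) derivation, and your caveat that the binomial equations in part (2) should be read inside the positive orthant is a correct and harmless refinement.
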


\begin{cor}\label{cor:space-of-lorentzian-trivial-inner-tutte-2}
    If the inner Tutte group of $M$ is torsion, then for any Lorentzian polynomial $f$ supported on $M$, $\rmL[\ll_Lf]\cong \rmL[\ll_Lf_M]$ and $\rmL[f\ll_L]\cong \rmL[f_M\ll_L]$ by scaling the variables.
\end{cor}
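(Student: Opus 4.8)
The plan is to derive the corollary directly from \Cref{prop:space-of-lorentzian-trivial-inner-tutte}. When the inner Tutte group of $M$ is torsion, part (1) of that proposition says every Lorentzian $f$ supported on $M$ has the coefficient of $w^B$ equal to $\prod_{i\in B}\lambda_i$ for a fixed tuple $(\lambda_1,\dots,\lambda_n)\in\R^n_{>0}$ (namely $\lambda_i=e^{\ba(i)}$ for the vector $\ba$ with $\log f=(\sum_{i\in B}\ba(i))_B\in U_M$). Equivalently, $f=f_M(\lambda_1 w_1,\dots,\lambda_n w_n)$. So the first step is to introduce the substitution operator $\Lambda\colon\R[w_1,\dots,w_n]\to\R[w_1,\dots,w_n]$, $w_i\mapsto\lambda_i w_i$; it is an invertible linear map (inverse $w_i\mapsto\lambda_i^{-1}w_i$), preserves degree and homogeneity, takes polynomials with nonnegative coefficients to polynomials with nonnegative coefficients, and satisfies $\Lambda(f_M)=f$.

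Next I would verify that $\Lambda$ and $\Lambda^{-1}$ preserve the Lorentzian property and Lorentzian proper position. Preservation of the Lorentzian property under rescaling one variable is exactly the fact used in the proof of \Cref{prop:proper-position-properties}(5), and $\Lambda$ is a composition of such rescalings. For Lorentzian proper position, if $g+w_{n+1}h$ is Lorentzian in $n+1$ variables, apply the rescaling $w_i\mapsto\lambda_i w_i$ for $i\le n$ and $w_{n+1}\mapsto w_{n+1}$: this is again a rescaling of variables, so $\Lambda(g)+w_{n+1}\Lambda(h)$ is Lorentzian; running the same argument with $\Lambda^{-1}$ shows $h\ll_L g\iff\Lambda(h)\ll_L\Lambda(g)$, and likewise with the roles of degrees $d$ and $d+1$ interchanged.

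Finally I would assemble the isomorphisms. The map $\Lambda$ is a linear bijection of $\rmL^{d-1}_n$ onto itself fixing $0$, and by the previous step $h\ll_L f_M\iff\Lambda(h)\ll_L\Lambda(f_M)=f$; hence $\Lambda$ restricts to a linear isomorphism $\rmL[\ll_L f_M]\xrightarrow{\sim}\rmL[\ll_L f]$. Symmetrically, viewing $\Lambda$ as a linear bijection of $\rmL^{d+1}_n$ onto itself, $f_M\ll_L g\iff f\ll_L\Lambda(g)$ yields a linear isomorphism $\rmL[f_M\ll_L]\xrightarrow{\sim}\rmL[f\ll_L]$; both isomorphisms are realized by scaling the variables, as asserted. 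There is essentially no obstacle here: all the substance is in \Cref{prop:space-of-lorentzian-trivial-inner-tutte}, and the only things requiring care are the bookkeeping of keeping the auxiliary variable $w_{n+1}$ fixed under the substitution, and --- if one prefers to carry an overall positive scalar $c$ with $f=c\,f_M(\lambda_1 w_1,\dots,\lambda_n w_n)$ --- the elementary observation that multiplying a polynomial by a positive constant and then rescaling $w_{n+1}$ leaves the relations $h\ll_L f_M$ and $f_M\ll_L g$ unchanged.
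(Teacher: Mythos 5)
Your proposal is correct and follows essentially the same route as the paper: invoke \Cref{prop:space-of-lorentzian-trivial-inner-tutte} to write $f=f_M(a_1w_1,\dots,a_nw_n)$, observe that the variable substitution (fixing $w_{n+1}$) preserves the Lorentzian property and hence $\ll_L$, and conclude via the inverse substitution that it restricts to the two claimed isomorphisms.
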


\begin{proof}
    By \cref{prop:space-of-lorentzian-trivial-inner-tutte}, $f=f_M(a_1w_1,...,a_nw_n)$ for some $a_i>0,i=1,...,n$. Consider the map
    \begin{equation}
        \rmL[\ll_Lf_M] \to \rmL[\ll_Lf], \quad g\mapsto g(a_1w_1,...,a_nw_n).
    \end{equation}
    This map is well-defined, since the map $f_M+w_{n+1}g\mapsto f_M(a_1w_1,...,a_nw_n)+w_{n+1}g(a_1w_1,...,a_nw_n)$ preserves the Lorentzian property. It has an inverse $g\mapsto g(a_1^{-1}w_1,...,a_n^{-1}w_n)$, so $\rmL[\ll_Lf_M]\cong \rmL[\ll_Lf]$. The statement for $\rmL[f\ll_L]$ has the same proof.
\end{proof}

\section{Structural theorems on the relative Dressian $\Dr^1(\mu)$}\label{sec:dressian-structure}

In this section we study the relative Dressians and their discrete counterparts:
\begin{equation}
    \begin{split}
        \Dr^1(\mu) & :=\big\{\text{ codimension-1 tropical linear subspaces of }\Trop\mu\big\}, \\
        \Dr^1(M) & :=\big\{\text{ codimension-1 tropical linear subspaces of }\Trop M\big\}, \\
        \Qt^1(M) & : =\{\text{ elementary matroid quotients of }M\}, \\
        \widehat{\Qt}^1(M) &:=\Qt^1(M)\cup\{M\}.
    \end{split}
\end{equation}
We prove structural results including
\begin{itemize}
    \item \Cref{thm:dressian-cut-out-by-three-term}: $\Dr^1(\mu)$ is cut out by \textit{three-term} incidence relations;
    \item \Cref{thm:dressian-is-order-complex}: $\Dr^1(M)$ is the order complex of $\widehat{\Qt}^1(M)$;
    \item \Cref{thm:spanned-by-join-irreducibles}: $\Dr^1(M)$ is a tropical polytope generated by the trivial valuations on the join-irreducibles of $\widehat{\Qt}^1(M)$;
\end{itemize}
With these results, we give a geometric characterization of adjoints of ordinary matroids in \Cref{prop:characterization-of-adjoint}, which become foundational in our generalization of adjoints to valuated matroids in \Cref{sec:valuated-adjoint}.

Now consider
\begin{equation}
    \begin{split}
        \Dr(\mu) & :=\big\{\text{ all tropical linear subspaces of }\Trop\mu\big\}, \\
        \Dr(M) & :=\big\{\text{ all tropical linear subspaces of }\Trop M\big\},\\
        \Qt(M) & : =\{\text{ all matroid quotients of }M\}.
    \end{split}
\end{equation}
These sets are partially ordered by the quotient relation: $\Dr(\mu)$ and $\Dr(M)$ are ordered by valuated matroid quotient and $\Qt(M)$ is ordered by ordinary matroid quotient. There is a commutative diagram. 
\begin{center} \vspace{5pt}
    \begin{tikzcd}
    \Dr^1(\mu) \ar[d,hookrightarrow] \ar[rr, "\text{underlying}","\text{matroid}"'] & &  \Qt^1(M)   \ar[rr,"\text{trivial}", "\text{valuation}"'] \ar[d,hookrightarrow] & & \Dr^1(M) \ar[d,hookrightarrow] \\
    \Dr(\mu) \ar[rr, "\text{underlying}","\text{matroid}"'] & & \Qt(M) \ar[rr,"\text{trivial}", "\text{valuation}"'] & & \Dr(M) 
\end{tikzcd}
\vspace{5pt} \end{center} 
The horizontal maps in the second row are order-preserving. 
This diagram has a guiding role for the remainder of the paper. This section concerns the relation between $\Dr^1(\mu)$ and $\Qt^1(M)$: some essential information of $\Dr^1(\mu)$ is controlled by the lattice structure of $\widehat{\Qt}^1(M)$. \Cref{sec:incidence-problems} concerns the inclusions $\Dr^1(\mu)\into \Dr(\mu)$ and $\Qt^1(M)\into \Qt(M)$: some incidence properties of $\Trop\mu$ can be extracted from the structure of $\Dr^1(\mu)$.

\subsection{Defining equations of $\Dr^1(\mu)$}
The main goal of this subsection is to prove \Cref{thm:dressian-cut-out-by-three-term}, which reduces the number of defining equations of $\Dr^1(\mu)$ to the following three distinguished classes of incidence relations. Each class is labeled by at most three $(d-1)$-sets. Any $A\in \nk{n}{d-1}$ spans a (possibly degenerate) hyperplane in $M$, and the three classes are divided according to the relative position of the hyperplanes.

\begin{itemize}
    \item \textbf{Degenerate hyperplanes:} If $A\in \nk{n}{d-1}$ is dependent in $M$, then $\theta(A)=\infty$. This follows from the following computation. Let $C\subset A$ be a circuit of $M$. Let $i\in C$ be any element and $B$ a basis of $M$ containing $C-i$. Consider the incidence relation indexed by $(A-i, B+i)$:
    \begin{equation}
        \min_{j\in B\backslash A}\left\{ \theta(A) + \mu(B),\quad  \theta(A-i+j)+\mu(B+i-j) \right\}.
    \end{equation}
    Since $C\subset B+i-j$ for all $j\in B\backslash A$, $\mu(B+i-j)=\infty$. Then $\theta(A)=\infty$ because $\mu(B)<\infty$.
    \item \textbf{Parallel hyperplanes:} If $A,B$ are independent, $|A\backslash B|=1$, and $\cl_M(A)=\cl_M(B)$, then we claim that
    \begin{equation}\label{eqn:paralle-hyperplane-relation}
        \theta(A)+\mu(Bk)=\theta(B)+\mu(Ak)
    \end{equation}
for any $k$ such that $Ak$ (thus $Bk$) is a basis of $M$. Moreover, the difference $\mu(Ak)-\mu(Bk)$ is independent of the choice of $k$. This can be seen from the following: suppose $A=Di$ and $B=Dj$. Take any $k\notin \cl_M(A)$. Then $Dik$ is a basis of $M$. The incidence relation indexed by $(D,Dijk)$ says $\min\{\theta(A)+\mu(Bk),\theta(B)+\mu(Ak)\}$ vanishes tropically. If for some $l\neq k$, $Dil$ is another basis of $M$, then we have the Pl\"{u}cker relation of $\mu$ indexed by $(Di,Djkl)$
    \begin{equation}
        \min\big\{\quad \mu(Dil)+\mu(Djk),\quad  \mu(Dik)+\mu(Djl),\quad \mu(Dij)+\mu(Dkl)\quad 
        \big\}.
    \end{equation}
    Since $\mu(Dij)=\infty$, we get $\mu(Dil)+\mu(Djk) = \mu(Dik)+\mu(Djl)$, so $\mu(Dil)-\mu(Djl) = \mu(Dik)-\mu(Djk)$.
    \item \textbf{Concurrent hyperplanes:} Suppose that $D\in\nk{n}{d-2}$, $i,j,k\notin D$, that $Di,Dj,Dk$ are all independent. Then 
    \begin{equation}
        \min\big\{\quad \theta(Di)+\mu(Djk),\quad \theta(Dj)+\mu(Dik),\quad \theta(Dk)+\mu(Dij)\quad\big\}
    \end{equation}
    vanishes tropically. This follows from the incidence relation indexed by $(D,Dijk)$. The hyperplanes $\cl_M(Di), \cl_M(Dj),\cl_M(Dk)$ are called concurrent because they meet at a coline $\cl_M(D)$. Note that if $\cl_M(Di)=\cl_M(Dj)$, then $\mu(Dij)=\infty$, in which case this relation degenerates to the parallel hyperplane relations.
\end{itemize}

The above incidence relations involve at most three terms. Therefore, we call them the \textit{three-term incidence relations}.

\begin{lemma}\label{lem:support-lemma}
    Let $\theta\colon \nk{n}{d-1}\to \Rbar$ be any function that satisfy the incidence relations \eqref{eqn:incidence-plucker} defining $\Dr^1(\mu)$, then $\supp(\theta)$ is the set of bases of a matroid.
\end{lemma}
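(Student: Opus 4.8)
The plan is to read off the structure of $\supp(\theta)$ from the three classes of three-term incidence relations isolated just above, and then to recognize $\supp(\theta)$ as the basis set of the elementary quotient of $M$ attached to a suitable linear subclass. One may assume $\theta\not\equiv\infty$, the case $\supp\theta=\emptyset$ being degenerate.

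First I would use the degenerate-hyperplane relations to get $\theta(A)=\infty$ for every $A\in\nk{n}{d-1}$ that is dependent in $M$: given a circuit $C\subseteq A$, an element $i\in C$, and a basis $B$ of $M$ with $C-i\subseteq B$ (such a $B$ automatically misses $i$, since $i\in\cl_M(C-i)$), the incidence relation indexed by $(A-i,B+i)$ has at most one finite term, namely $\theta(A)+\mu(B)$, so tropical vanishing forces it to be $\infty$, hence $\theta(A)=\infty$. Thus every element of $\supp\theta$ is an $M$-independent $(d-1)$-set. Next I would use the parallel-hyperplane relations to show that membership of $A$ in $\supp\theta$ depends only on $F:=\cl_M(A)\in\cLL^1(M)$. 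Indeed, if $A,B$ are $M$-independent $(d-1)$-sets with $|A\setminus B|=1$ and $\cl_M(A)=\cl_M(B)=F$, pick any $k\notin F$ (possible since $\rk_M F=d-1<d$); then $Ak,Bk\in\cB(M)$ and \eqref{eqn:paralle-hyperplane-relation} reads $\theta(A)+\mu(Bk)=\theta(B)+\mu(Ak)$ with $\mu(Ak),\mu(Bk)$ finite, so $\theta(A)<\infty\iff\theta(B)<\infty$. Since the basis-exchange graph of the restriction $M|_F$ is connected, $\theta$ is finite on all or none of the bases of $M|_F$; therefore $\supp\theta=\{A\in\nk{n}{d-1}:A\text{ independent in }M,\ \cl_M(A)\in\cHH\}$ for a well-defined $\cHH\subseteq\cLL^1(M)$.

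The final step is to use the concurrent-hyperplane relations to show $\cHH^c:=\cLL^1(M)\setminus\cHH$ is a linear subclass of $M$. Fix a coline $D'\in\cLL^2(M)$ and three distinct hyperplanes $H_1,H_2,H_3\supseteq D'$; choosing a basis $D$ of $D'$ and elements $i\in H_1\setminus D'$, $j\in H_2\setminus D'$, $k\in H_3\setminus D'$, one checks that $\cl_M(Di)=H_1$, $\cl_M(Dj)=H_2$, $\cl_M(Dk)=H_3$, and that $\mu(Dij),\mu(Dik),\mu(Djk)$ are all finite, so the concurrent relation forces at least two of $\theta(Di),\theta(Dj),\theta(Dk)$ to be finite once one of them is --- i.e. the number of $H_\ell$ lying in $\cHH$ is $0$ or at least $2$. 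Hence, if $H_1,H_2\in\cHH^c$ cover $D'=H_1\wedge H_2$, then for any further hyperplane $H_3\supseteq D'$ at most one of $\{H_1,H_2,H_3\}$ lies in $\cHH$, so none does and $H_3\in\cHH^c$ --- precisely the linear-subclass property. Under $\theta\not\equiv\infty$ the set $\cHH$ is nonempty, so $\cHH^c$ is a nontrivial linear subclass; by the bijection \eqref{eqn:elementary-quotient-to-linear-subclass} it corresponds to an elementary quotient $N$ of $M$, whose bases are exactly the $M$-independent $(d-1)$-sets with $\cl_M$-image in $\cHH$, so $\cB(N)=\supp\theta$.

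I expect the main obstacle to be the combinatorial bookkeeping in the last two steps: checking that the parallel-hyperplane relation actually applies (producing $k$, and using connectivity of the basis graph of $M|_F$), and, more subtly, converting the three-element dichotomy supplied by the concurrent relations into the pairwise linear-subclass axiom. Once $\cHH^c$ is seen to be a linear subclass, identifying $\supp\theta$ with $\cB(N)$ is the standard dictionary between linear subclasses, modular cuts, and single-element extensions recalled in \Cref{subsec:prelim-matroid-quotient}.
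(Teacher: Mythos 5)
Your proposal is correct, but it follows a genuinely different route from the paper. The paper verifies the basis-exchange axiom for $\supp(\theta)$ directly: given $A,B\in\supp(\theta)$ and $i\in A\setminus B$, it produces an exchange $A-i+j\in\supp(\theta)$ by a two-case analysis ($\cl_M(A)=\cl_M(B)$, handled by the parallel-hyperplane relations and connectivity of the basis-exchange graph of the hyperplane; $\cl_M(A)\neq\cl_M(B)$, handled by the three-term relation indexed by $(D,Dijk)$). You instead show that membership in $\supp(\theta)$ depends only on the spanned hyperplane, extract the set $\cHH\subset\cLL^1(M)$ of ``finite'' hyperplanes, use the concurrent relations to get the $0$-or-$\geq 2$ dichotomy on concurrent triples, conclude that $\cHH^c$ is a linear subclass, and identify $\supp(\theta)$ with the basis set of the corresponding elementary quotient. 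Your argument buys more than the lemma asserts --- it shows $\supp(\theta)$ is the basis set of an \emph{elementary quotient} of $M$, which is in the spirit of where the paper is headed --- while the paper's proof is more elementary and self-contained, needing only the exchange axiom. The one step you defer is the final identification $\cB(N)=\supp(\theta)$: the paper's preliminaries recall the bijection \eqref{eqn:elementary-quotient-to-linear-subclass} but not the explicit basis description of the quotient attached to a linear subclass (namely, the independent $(d-1)$-sets whose closure avoids the subclass). This is standard --- it follows in one line from $N=(M+_{\cF}e)/e$ for the modular cut $\cF$ determined by $\cHH^c$, since $A$ is independent in $N$ iff $A$ is independent in $M$ and $\cl_M(A)\notin\cF$ --- but you should either cite it or include that line, as the paper's stated dictionary alone does not literally give it. All other steps (the degenerate-hyperplane computation, the two-term parallel relation with a chosen $k\notin F$, the verification that $Dij,Dik,Djk$ are bases so the concurrent relation has all $\mu$-terms finite, and the passage from the triple dichotomy to the linear-subclass axiom for $\cHH^c$) check out.
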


\begin{proof}
    Let $A,B$ be in $\supp(\theta)$. Then $A,B$ are independent in $M$. We may assume $|A\backslash B|\geq 2$, for otherwise there is nothing to prove.
    \begin{itemize}
        \item Case I: $\cl_M(A)=\cl_M(B)$. For any $i\in A\backslash B$, there is $j\in B\backslash A$ such that $A-i+j$ is a basis of $\cl_M(A)$. Since any two bases can be obtained by a sequence of basis exchanges, and by the parallel hyperplane relations, two bases differing by one element must be both in $\supp(\theta)$ or both not in $\supp(\theta)$, we conclude that $A-i+j\in \supp(\theta)$. 
        \item Case II: $\cl_M(A)\neq \cl_M(B)$. Then for some $j\in B\backslash A$, $Aj$ is a basis of $M$. Take $i\in A\backslash B$ and write $A=Di$
        \begin{itemize}
            \item If there is some $k\in B\backslash A$ such that $Djk$ is a basis of $M$, then the incidence relation indexed by $(D,Dijk)$ is supported at both $A$ and $Dk$, possibly at $Dj$, so at least one of $A-i+k$ and $A-i+j$ is in $\supp(\theta)$.
            \item If for all $k\in B\backslash A$ and $k\neq j$, $Djk$ is not a basis of $M$, then we deduce that $k\in \cl_M(Dj)$ for all $k\in B\backslash A$. Since $A\cap B\subset D$, we know that $\cl_M(B)\subset \cl_M(Dj)$. Since they have the same rank, $\cl_M(B)=\cl_M(Dj)$. By the previous case, $Dj\in\supp(\theta)$.         \end{itemize}
        
    \end{itemize}    Since for any $i\in A\backslash B$, there is some $j\in B\backslash A$ such that $A-i+j$ is in the support of $\theta$, we conclude that $\supp(\theta)$ is the set of bases of some matroid.
\end{proof}

\begin{thm}\label{thm:linear-imlies-quadratic}
 Let $\mu$ be a valuated matroid. Then the incidence relations suffice to cut out $\Dr^1(\mu)$.
\end{thm}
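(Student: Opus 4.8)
The plan is to let $X\subseteq\PT{\nk{n}{d-1}}$ be the tropical prevariety defined by the incidence relations with respect to $\mu$; since every elementary quotient of $\mu$ satisfies these relations, $\cD^1(\mu)\subseteq X$, and it suffices to prove $X\subseteq\cD^1(\mu)$ — that is, that every $\theta\in X$ is a valuated matroid of rank $d-1$ (once it is, the incidence relations force $\Trop\theta\subseteq\Trop\mu$, so $[\theta]\in\cD^1(\mu)$). Fix such a $\theta$; we may assume $\supp\theta\neq\emptyset$, and also $n\ge d+2$, since for $n\le d+1$ every function on $\nk{n}{d-1}$ is a rank-$(d-1)$ valuated matroid (all its tropical Pl\"ucker relations have at most two terms). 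By \Cref{lem:support-lemma}, $\supp\theta=\cB(N)$ for a matroid $N$; recording which entries of each relation $\min_{i\in J\setminus I}\{\theta(I+i)+\mu(J-i)\}$ are finite, and using that a vanishing tropical minimum cannot be attained uniquely at a finite value, shows that no such relation has exactly one index $i\in J\setminus I$ with $I+i\in\cB(N)$ and $J-i\in\cB(M)$; this is the exchange criterion for $N$ to be an elementary quotient of $M$, so $M\onto N$. Because a function supported on the bases of a matroid and satisfying all \emph{three-term} tropical Pl\"ucker relations is automatically a valuated matroid, it remains only to verify the three-term tropical Pl\"ucker relations of $\theta$.

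To organize this I would imitate the classical inclusion $\bigwedge^{d-1}V\hookrightarrow\bigwedge^{d-1}E$ by passing to a single-element coextension. Since $M\onto N$, the sets $\cB(M)\cup\{B'+(n+1)\mid B'\in\cB(N)\}$ are the bases of a matroid on $[n+1]$, and I define $\widehat\mu\colon\nk{n+1}{d}\to\Rbar$ by $\widehat\mu\vert_{\nk{n}{d}}=\mu$ and $\widehat\mu(B'+(n+1))=\theta(B')$ for $B'\in\nk{n}{d-1}$. Sorting the three-term tropical Pl\"ucker relations of $\widehat\mu$ according to where $n+1$ sits in the index pair shows they come in exactly four kinds: (i) $n+1$ in neither set, giving three-term Pl\"ucker relations of $\mu$; (ii) $n+1$ only in the short index set, giving three-term incidence relations of $\theta$ with respect to $\mu$; (iii) $n+1$ only in the long index set, giving (differently indexed) three-term incidence relations of concurrent-hyperplane type; and (iv) $n+1$ in both sets, which after deleting $n+1$ are exactly the three-term tropical Pl\"ucker relations of $\theta$. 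Kinds (i)--(iii) hold because $\mu$ is a valuated matroid and because $\theta\in X$; thus the coextension makes transparent that the whole content of the theorem is kind (iv).

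The core step is then: given $S\in\nk{n}{d-3}$ and distinct $a,b,c,d\notin S$, show $\min\{\theta(Sab)+\theta(Scd),\ \theta(Sac)+\theta(Sbd),\ \theta(Sad)+\theta(Sbc)\}$ is attained at least twice. The plan is to eliminate $\mu$ from the system consisting of the concurrent-hyperplane incidence relations indexed by $(Sx,Sabcd)$, $x\in\{a,b,c,d\}$ — these couple the six values $\theta(S\cdot\cdot)$ to the values of $\mu$ on the $d$-subsets of $Sabcd$ — together with the three-term tropical Pl\"ucker relations of $\mu$ among those $\mu$-values (adjoining an auxiliary element $e\notin Sabcd$, which exists since $n\ge d+2$, to form the length-$(d+1)$ index sets those relations need). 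This is the min-plus tropicalization of the classical coordinate identity underlying the linearity of $\Gr(d-1,V)=\bP(\bigwedge^{d-1}V)$ inside $\bP(\bigwedge^{d-1}E)$, which is exactly why a solution of the incidence relations must be decomposable; cf.\ the coordinate computation in \cite{jell2022moduli}.

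The hard part will be carrying out this elimination: one must follow, through the chain of tropical relations, which minima are attained once versus at least twice — the combinatorial shadow of the sign cancellation in the classical determinantal identity — and separately handle degenerate configurations in which some of the flats $\cl_M(Sa),\dots,\cl_M(Sd)$ coincide, where the relation collapses to the parallel-hyperplane relations. An alternative that sidesteps the explicit elimination is to produce finitely many valuated matroids $\theta_1,\dots,\theta_m\in\cD^1(\mu)$ — for instance Murota's elementary Higgs lifts of $\mu$ and their translates — whose tropical convex hull is all of $X$, and then invoke the tropical convexity of $\cD^1(\mu)$ from \Cref{main:A} to conclude $X\subseteq\cD^1(\mu)$.
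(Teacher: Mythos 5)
Your setup is sound and in fact mirrors the paper's: you reduce, via \Cref{lem:support-lemma} and Murota's local (three-term) characterization of valuated matroids, to verifying the three-term tropical Pl\"ucker relations of $\theta$, and you correctly identify that these must be extracted from the concurrent-hyperplane incidence relations together with the Pl\"ucker relations of $\mu$ (with an auxiliary element when $\rk_M(Sabcd)=d-1$). But the proof stops exactly where the theorem lives. The passage ``the hard part will be carrying out this elimination\dots which minima are attained once versus at least twice\dots and separately handle degenerate configurations'' is not a step you may defer: it \emph{is} the proof. The paper's argument consists precisely of this case analysis: when $\rk_M(Sijkl)=d-1$ one picks $m$ with $\rk_M(Sijklm)=d$, converts the (now two-term, parallel-hyperplane) incidence relations indexed by $(Si,Sijkm)$, $(Si,Siklm)$, $(Sj,Sjklm)$, $(Sl,Sjklm)$ into equalities, and substitutes into the Pl\"ucker relation of $\mu$ indexed by $(Sm,Smijkl)$; when $\rk_M(Sijkl)=d$ one uses the four incidence relations indexed by $(Sx,Sijkl)$ and splits according to how many of $\mu(Sijk),\dots,\mu(Sjkl)$ are infinite, with the generic subcase handled as in the uniform situation and the degenerate subcases handled by normalization or by forcing $\theta(Skl)=\infty$. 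None of this bookkeeping is done in your proposal, so as written it is a plan with a genuine gap at its core, not a proof. The coextension $\widehat\mu$ and the four-kind sorting of its three-term relations are correct but, as you yourself note, only repackage the statement.

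Two further points. First, your reduction to $n\ge d+2$ rests on a false claim: for $n=d$ or $n=d+1$ a function on $\nk{n}{d-1}$ still has three- and four-term tropical Pl\"ucker relations (e.g.\ rank $2$ on $4$ elements), so not every such function is a valuated matroid; fortunately the assumption is unnecessary, since in the only case where an auxiliary element is needed one has $\rk_M(Sabcd)=d-1<d$, so an element outside $\cl_M(Sabcd)$ exists automatically. Second, the suggested shortcut via finitely many Higgs lifts whose tropical convex hull is ``all of $X$'' plus \Cref{main:A} is unsubstantiated: the paper proves a finite tropical generation statement only for the trivial valuation (\Cref{thm:spanned-by-join-irreducibles}), and showing that the prevariety $X$ cut out by the incidence relations lies in the tropical span of known elements of $\cD^1(\mu)$ is essentially the statement you are trying to prove.
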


\begin{proof}
    Let $\theta\colon \nk{n}{d-1}\to\Rbar$ satisfy all the incidence relations. By \cite[Theorem 5.2.25]{murota2010matrices} and \Cref{lem:support-lemma}, it suffices to show that $\theta$ satisfied all the three-term Pl\"{u}cker relations. Namely, for any $S$ with $|S|=d-3$ and distinct indices $i,j,k,l\notin S$, the following vanishes tropically,    \begin{equation}\label{eq:plucker-to-prove}
        \min\big\{\;\theta(Sij)+\theta(Skl),\quad \theta(Sik)+\theta(Sjl),\quad \theta(Sil)+\theta(Sjk)\;\big\}.
    \end{equation}
If the minimum above is $\infty$, there is nothing to show, so WLOG we may assume $\theta(Sik)+\theta(Sjl)<\infty$. There are two major cases.
    \begin{itemize}
        \item Case I: $\rk_M(Sijkl)=d-1$. In this case, $\mu(Sijk)=...=\mu(Sjkl)=\infty$. Choose $m\in [n]$ such that $\rk_M(Sijklm)=d$. Then $Sikm$ and $Sjlm$ are both bases of $M$, meaning $\mu(Sikm)<\infty$ and $\mu(Sjlm)<\infty$. The incidence relations indexed by $(Si,Sijkm),(Si,Siklm),(Sj,Sjklm),$ and $(Sl,Sjklm)$ give us
        \begin{equation}\label{eq:useful-incidence}
            \begin{split}
                \theta(Sik)+\mu(Sijm)=\theta(Sij)+\mu(Sikm),\\
                 \theta(Sik)+\mu(Silm)=\theta(Sil)+\mu(Sikm), \\
                \theta(Sjk) + \mu(Sjlm)=\theta(Sjl) + \mu(Sjkm),  \\
                \theta(Skl) + \mu(Sjlm)=\theta(Sjl) + \mu(Sklm).
            \end{split}
        \end{equation}
        On the other hand, the Pl\"{u}cker relation of $\mu$ indexed by $(Smi,Smjkl)$ implies the tropical vanishing of
        \begin{equation}
            \min\big\{\; \mu(Sijm) + \mu(Sklm), \quad \mu(Sikm) + \mu(Sjlm),\quad \mu(Silm) + \mu(Sjkm) \;\big\}.
        \end{equation}
        Since $\theta(Sik)+\theta(Sjl)<\infty$, the following vanishes tropically
        \begin{equation}
            \min\begin{Bmatrix}
                \mu(Sijm) + \mu(Sklm)+\theta(Sik)+\theta(Sjl), \\
                \mu(Sikm) + \mu(Sjlm)+\theta(Sik)+\theta(Sjl),\\
                \mu(Silm) + \mu(Sjkm)+\theta(Sik)+\theta(Sjl)
            \end{Bmatrix}.
        \end{equation}
        
        Applying \eqref{eq:useful-incidence}, we get the tropical vanishing of
        \begin{equation}
            \min\begin{Bmatrix}
                \mu(Sikm) + \mu(Sjlm)+\theta(Sij)+\theta(Skl),\\
                \mu(Sikm) + \mu(Sjlm)+\theta(Sik)+\theta(Sjl),\\ \mu(Sikm) + \mu(Sjlm)+\theta(Sil)+\theta(Sjk)
                \end{Bmatrix}. 
        \end{equation}
        Since $\mu(Sikm) + \mu(Sjlm)<\infty$, subtracting $\mu(Sikm) + \mu(Sjlm)$ from the above gives \eqref{eq:plucker-to-prove}.
        \item Case II: $\rk_M(Sijkl)=d$. Since both $Sik$ and $Sjl$ can be extended to a basis of $M$, at least two of the four values $\mu(Sijk),\mu(Sijl),\mu(Sikl),\mu(Sjkl)$ are finite. Consider the incidence relations indexed by $(i,ijkl),(j,ijkl),(k,ijkl)$, and $(l,ijkl)$: the tropical vanishing of the following
        \begin{equation}\label{eqn:incidence-relations}
        \begin{split}
        \min\big\{\;\theta(Sij)+\mu(Sikl),\quad\theta(Sik)+\mu(Sijl),\quad\theta(Sil)+\mu(Sijk)\;\big\}, \\
        \min\big\{\;\theta(Sij)+\mu(Sjkl),\quad\theta(Sjk)+\mu(Sijl),\quad\theta(Sjl)+\mu(Sijk)\;\big\}, \\
        \min\big\{\;\theta(Sik)+\mu(Sjkl),\quad\theta(Sjk)+\mu(Sikl),\quad\theta(Skl)+\mu(Sijk)\;\big\},\\
        \min\big\{\;\theta(Sil)+\mu(Sjkl),\quad\theta(Sjl)+\mu(Sikl),\quad\theta(Skl)+\mu(Sijl)\;\big\}.
        \end{split}
    \end{equation}
    \begin{itemize}
        \item Case II(a): all the $\mu$-values above are finite. This case is essentially proved in \cite[Theorem 3.10]{joswig2023generalized}.
        \item Case II(b): exactly one of the $\mu$-values is infinite. We may assume $\mu(Sijk)=\infty$. Let $\eta\colon\nk{n}{d-1}\to \R$ be the affine function given by
        \begin{equation}
            A\mapsto \be_A\cdot \big(\mu(Sjkl)\be_i+\mu(Sikl)\be_j+\mu(Sijl)\be_k\big)-\mu(Sjkl)-\mu(Sikl)-\mu(Sijl). 
        \end{equation}
        Let $\hat{\mu}=\mu + \eta$ and $\hat{\theta}=\theta+\eta$. Note that $\hat{\mu}(Sijl)=\hat{\mu}(Sikl)=\hat{\mu}(Sjkl)=0$. Since $\mu\onto \theta$ if and only if $\hat{\mu}\onto \hat{\theta}$, we may thus assume $\mu(Sijl)=\mu(Sikl)=\mu(Sjkl)=0$. The incidence relations \eqref{eqn:incidence-relations} become the tropical vanishing of
        \begin{equation}
        \begin{split}
            \min\{\theta(Sij),\; \theta(Sik)\}, \quad & \min\{\theta(Sij),\;\theta(Sjk)\},\\
            \min\{\theta(Sik), \;\theta(Sjk)\}, \quad & \min\{\theta(Sil),\;\theta(Sjl),\;\theta(Skl)\}.
        \end{split}
    \end{equation}
    It is clear that they imply \eqref{eq:plucker-to-prove}.

        \item Case II(c): two of the $\mu$-values are infinite. By relabeling, we may assume $\mu(Sikl)=\mu(Sjkl)=\infty$. This means $Skl$ cannot be extended to a basis of $Sijkl$, so $Skl$ is dependent in $M$ and we must have $\theta(Skl)=\infty$. The incidence relations \eqref{eqn:incidence-relations} become the tropical vanishing of
        \begin{equation}
            \min\{\theta(Sik)+\mu(Sijl),\;\theta(Sil)+\mu(Sijk)\}, \quad
        \min\{\theta(Sjk)+\mu(Sijl),\;\theta(Sjl)+\mu(Sijk)\}
        \end{equation}
        It is easy to check that \eqref{eq:plucker-to-prove} is satisfied.
    \end{itemize}
    \end{itemize}
\end{proof}

Now we improve \Cref{thm:linear-imlies-quadratic} to \Cref{thm:dressian-cut-out-by-three-term}.

\begin{thm}\label{thm:dressian-cut-out-by-three-term}
    Let $\mu$ be a valuated matroid. Then $\Dr^1(\mu)$ is cut out by the three-term incidence relations.
\end{thm}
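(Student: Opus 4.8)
The plan is to bootstrap from \Cref{thm:linear-imlies-quadratic}. Since the incidence relations already cut out $\cD^1(\mu)$ and every element of $\cD^1(\mu)$ trivially satisfies the three-term incidence relations, it suffices to prove the reverse inclusion: if $\theta\colon\nk{n}{d-1}\to\Rbar$ satisfies all the degenerate, parallel, and concurrent hyperplane relations, then $\mu\onto\theta$, i.e.\ $\theta$ satisfies every incidence relation.

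I would reduce this to a statement about a single valuated matroid on $[n+1]$. Let $\hat\mu\colon\nk{n+1}{d}\to\Rbar$ be given by $\hat\mu(S)=\mu(S)$ when $n+1\notin S$ and $\hat\mu(S)=\theta(S-(n+1))$ when $n+1\in S$. By \Cref{lem:elementary-quotient-equiv-description}, $\mu\onto\theta$ is equivalent to $\hat\mu$ being a valuated matroid, and by \cite[Theorem 5.2.25]{murota2010matrices} this in turn follows once we know that (i) $\supp\hat\mu$ is the set of bases of a matroid and (ii) $\hat\mu$ satisfies all three-term Pl\"ucker relations.

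For (ii), I would sort the three-term Pl\"ucker relations of $\hat\mu$, indexed by $(S,\{a,b,c,d\})$ with $|S|=d-2$, by the position of $n+1$. If $n+1$ occurs in none of $S,a,b,c,d$, the relation is a three-term Pl\"ucker relation of $\mu$ and holds since $\mu$ is a valuated matroid. If $n+1\in\{a,b,c,d\}$, substituting the definition of $\hat\mu$ turns the relation into $\min\{\theta(Sa)+\mu(Sbc),\ \theta(Sb)+\mu(Sac),\ \theta(Sc)+\mu(Sab)\}$, which is a concurrent hyperplane relation (and its degenerate specializations — some $\cl_M(Sx)$ dependent, or two of those closures equal — reduce to the degenerate and parallel hyperplane relations), so it holds by hypothesis. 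If $n+1\in S$, the relation is a three-term Pl\"ucker relation of $\theta$; and these hold because the case analysis in the proof of \Cref{thm:linear-imlies-quadratic} derives precisely the three-term Pl\"ucker relations of $\theta$, using only incidence relations indexed by pairs $(I,J)$ with $I\subseteq J$ (such as $(Si,Sijkm)$ and $(Si,Sijkl)$) — that is, only three-term incidence relations — together with the Pl\"ucker relations of $\mu$.

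For (i), I would likewise note that the proof of \Cref{lem:support-lemma} invokes only parallel and concurrent three-term relations, so $\supp\theta=\cB(M')$ for a rank-$(d-1)$ matroid $M'$; the parallel hyperplane relations moreover show that single-element exchanges inside a hyperplane of $M$ preserve membership in $\supp\theta$, which combined with basis exchange in $M$ gives $M\onto M'$. Then $\supp\hat\mu=\cB(M)\sqcup\{B+(n+1):B\in\cB(M')\}$ is the basis set of the corank-$1$ Higgs lift of $M\onto M'$, so (i) holds; applying \cite[Theorem 5.2.25]{murota2010matrices} now yields that $\hat\mu$ is a valuated matroid, hence $\mu\onto\theta$ and $[\theta]\in\cD^1(\mu)$. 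I expect the only genuine difficulty to be bookkeeping rather than ideas: one must verify carefully that the proofs of \Cref{lem:support-lemma} and \Cref{thm:linear-imlies-quadratic} never silently use an incidence relation with four or more terms, and one must dispatch the degenerate configurations those proofs gloss over — dependent $(d-1)$-sets, coincident hyperplane closures, infinite $\mu$- or $\theta$-values — in each of which the relevant relation either collapses to a parallel hyperplane relation or is vacuous because every one of its terms equals $\infty$.
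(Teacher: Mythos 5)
Your route is genuinely different from the paper's. The paper proves the theorem by reducing every long incidence relation directly to the three-term ones: assuming a unique minimizer $\theta(A1)+\mu(D-1)$, it plays a Pl\"ucker exchange for $\mu$ against the three-term incidence relation indexed by $(A,A1k_ik_{i+1})$ to produce strict inequalities $\theta(Ak_{i+1})+\mu(A1k_i)>\theta(Ak_i)+\mu(A1k_{i+1})$, and summing these around a repeated index gives a contradiction; the Pl\"ucker relations of $\theta$ are then supplied by \Cref{thm:linear-imlies-quadratic}. You instead pass to the lift $\hat\mu$ on $[n+1]$ and invoke Murota's local characterization. Your decomposition of the three-term Pl\"ucker relations of $\hat\mu$ is correct (when $n+1$ is one of the four exchange indices the relation is exactly a concurrent relation, degenerating to a parallel or vacuous relation when closures coincide or sets are dependent), and your reading of the paper checks out: the proofs of \Cref{lem:support-lemma} and \Cref{thm:linear-imlies-quadratic} only ever use incidence relations indexed by $(I,J)$ with $I\subset J$ and $|J\setminus I|=3$, so under the three-term hypothesis alone they already give that $\supp\theta$ is a matroid and that $\theta$ satisfies the three-term Pl\"ucker relations. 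Your approach buys a proof that leans on existing local-to-global machinery with little new computation; the paper's buys independence from any analysis of the lift's support.

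The one step that is not mere bookkeeping, and whose justification as written is wrong, is your claim (i) that $\supp\hat\mu$ is a matroid basis family, which you derive from ``parallel relations plus basis exchange in $M$ give $M\onto M'$.'' That implication fails: take $M=U_{3,4}$ and any $\theta$ finite exactly on $\{12,13,14\}$. The degenerate and parallel relations are vacuous here (every $2$-set is independent and is the unique basis of its hyperplane), $\supp\theta$ is a perfectly good rank-$2$ matroid $M'$, yet $M\not\onto M'$ (the flat $\{2,3,4\}$ of $M'$ is not a flat of $U_{3,4}$) and $\cB(M)\cup\{B+(n+1)\}$ is not a matroid. What excludes this $\theta$ is a concurrent relation: at the coline $\{2\}$ the minimum $\min\{\theta(12)+\mu(234),\,\theta(23)+\mu(124),\,\theta(24)+\mu(123)\}$ has exactly one finite term. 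So step (i) must use the concurrent relations; for instance, the degenerate and parallel relations show membership in $\supp\theta$ depends only on $\cl_M(B)$, and the concurrent relations show that in any concurrent triple the number of hyperplanes carrying finite $\theta$-values is never exactly one, so the hyperplanes on which $\theta$ is infinite form a linear subclass; since $\theta\not\equiv\infty$ this subclass is nontrivial and corresponds under \eqref{eqn:elementary-quotient-to-linear-subclass} to an elementary quotient $Q$ of $M$ with $\cB(Q)=\supp\theta$, whence $\supp\hat\mu$ is the basis family of the Higgs lift of $M\onto Q$. With that repair, your argument goes through and gives a valid alternative proof.
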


\begin{proof}
We show that any incidence relation is implied by the three-term incidence relations. Take the incidence relation indexed by $(A,D)$ for some $A\in \nk{n}{d-2}$ and $D\in\nk{n}{d+1}$. WLOG, we assume the relation is the tropical vanishing of 
    \begin{equation}\label{eq:arbitrary-incidence}
        \min\{\theta(A1)+\mu(D-1),...,\theta(Am)+\mu(D-m)\},
    \end{equation}
    where all the $\mu$ values are finite. We may also assume that all the $\theta$ values are finite; otherwise we can simply remove those that are infinity from the relation. Suppose for a contradiction that all the three-term incidence relations are satisfied, but \eqref{eq:arbitrary-incidence} does not vanish tropically: say, $\theta(A1)+\mu(D-1)$ is the unique minimum. Since $A1$ is independent and $D-1$ is a basis of $M$, there is some $k_1\in D-1$ such that $A1k_1$ is a basis of $M$. Using the Pl\"{u}cker relation for $\mu$ indexed by $(Ak_1,D)$, we get some $k_2\in D-1$ such that 
    \begin{equation}
        \mu(A1k_1)+\mu(D-1) \geq \mu(Ak_1k_2) + \mu(D-k_2) \Rightarrow \mu(A1k_1) - \mu(Ak_1k_2) \geq \mu(D-k_2) - \mu(D-1)
    \end{equation}
    This implies
    \begin{equation}
        \theta(Ak_2)-\theta(A1) + \mu(A1k_1) - \mu(Ak_1k_2)  \geq \theta(Ak_2)-\theta(A1) + \mu(D-k_2) - \mu(D-1) > 0.
    \end{equation}
    The subtractions make sense, since the values subtracted are all finite. By assumption, the three-term incidence relation indexed by $(A,A1k_1k_2)$ is satisfied, so 
    \begin{equation}
        \theta(Ak_2) + \mu(A1k_1) > \theta(A1)+ \mu(Ak_1k_2)  = \theta(Ak_1)+ \mu(A1k_2). 
    \end{equation}
   Thus $\theta(Ak_1)+\mu(A1k_2)<\infty$. We then use the Pl\"{u}cker relation  indexed by $(Ak_2,D)$ and so forth. Repeat this process. After obtaining $k_i$, using the Pl\"{u}cker relation indexed by $(Ak_i,D)$, we get $k_{i+1}$ such that
    \begin{equation}
        \theta(Ak_{i+1}) + \mu(A1k_i) > \theta(A1)+ \mu(Ak_ik_{i+1})  = \theta(Ak_i)+ \mu(A1k_{i+1}), 
    \end{equation}
    where $\theta(Ak_i)+ \mu(A1k_{i+1})<\infty$.
    Since there are only finitely many indices, for some $s$ and $l\leq s$ we have $k_l=k_{s+1}$. Now we have
    \begin{equation}
        \sum_{i=l}^{s} (\theta(Ak_{i+1}) + \mu(A1k_i)) > \sum_{i=l}^{s} (\theta(Ak_i) + \mu(A1k_{i+1})),
    \end{equation}
    but these two quantities are finite and equal. We get a contradiction.
\end{proof}

Imposing incidence relations only adds tropical linear equations. Hence, \Cref{thm:dressian-cut-out-by-three-term} implies the following stronger result. Let $\omega_1,...,\omega_s$ and $\nu_1,...,\nu_t$ be valuated matroids on $[n]$. Put 
\begin{equation}\label{eqn:incidence-variety}
    \Dr^1(\mu;\omega_1,...,\omega_s;\nu_1,...,\nu_t) = \left\{\Trop\theta\in \Dr^1(\mu) \;\Big|\; \bigcup_{i=1}^{t}\Trop\nu_i\subset \Trop\theta \subset \bigcap_{j=1}^s\Trop\omega_j\right\}.
\end{equation}

\begin{cor}\label{cor:first-incidence-dressian-linear}
    The space $\Dr^1(\mu;\omega_1,...,\omega_s;\nu_1,...,\nu_t)$ as in \eqref{eqn:incidence-variety} is a linear tropical prevariety.
\end{cor}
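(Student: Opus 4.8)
The plan is to deduce this directly from \Cref{thm:dressian-cut-out-by-three-term}, by checking that the three extra containment conditions in \eqref{eqn:incidence-variety} impose only further tropical linear equations on the coordinates $\theta(A)$, $A\in\nk{n}{d-1}$, of a point $[\theta]\in\PT{\nk{n}{d-1}}$.

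First I would unpack the conditions. Since $\bigcup_{i=1}^t\Trop\nu_i\subset\Trop\theta$ is equivalent to $\Trop\nu_i\subset\Trop\theta$ for every $i$, and $\Trop\theta\subset\bigcap_{j=1}^s\Trop\omega_j$ is equivalent to $\Trop\theta\subset\Trop\omega_j$ for every $j$, I can rewrite $\cD^1(\mu;\omega_1,\dots,\omega_s;\nu_1,\dots,\nu_t)$ as the subset of $\cD^1(\mu)$ on which the quotient relations $\theta\onto\nu_i$ ($i=1,\dots,t$) and $\omega_j\onto\theta$ ($j=1,\dots,s$) all hold. By the equivalence between containment of tropical linear spaces and the quotient relation recalled in \Cref{subsec:prelim-tropical-linear-spaces}, each of these is cut out by the corresponding tropical incidence-Pl\"ucker relations.

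Next I would observe that, with $\mu$, the $\nu_i$, and the $\omega_j$ all held fixed, every one of these incidence-Pl\"ucker relations --- as well as every three-term incidence relation cutting out $\cD^1(\mu)$ via \Cref{thm:dressian-cut-out-by-three-term} --- is a tropical vanishing statement of the form $\min_k\{\theta(A_k)+c_k\}$, where the $A_k$ range over $(d-1)$-subsets of $[n]$ and the constants $c_k\in\Rbar$ (possibly $\infty$) are read off from the fixed data. In other words each of them is a tropical linear form in the coordinates $\theta(A)$, hence defines a tropical hyperplane. As there are only finitely many such relations, $\cD^1(\mu;\omega_1,\dots,\omega_s;\nu_1,\dots,\nu_t)$ is a finite intersection of tropical hyperplanes in $\PT{\nk{n}{d-1}}$, i.e., a linear tropical prevariety.

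The only step requiring care --- and it is routine --- is the verification that fixing one side of a containment genuinely linearizes the incidence-Pl\"ucker relations in the coordinates of the moving side; this is immediate from the shape of those relations, in which the fixed valuated matroid contributes only the coefficients. I do not expect to need a three-term reduction for the $\nu_i$- and $\omega_j$-relations, since the notion of a linear tropical prevariety permits arbitrarily many defining tropical hyperplanes; should a bounded description be wanted, one could re-run the elimination argument of \Cref{thm:dressian-cut-out-by-three-term}.
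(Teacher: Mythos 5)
Your proposal is correct and follows essentially the same route as the paper, which disposes of this corollary in one sentence: the containments translate into incidence-Pl\"ucker relations in which the fixed valuated matroids $\mu$, $\omega_j$, $\nu_i$ contribute only constants, so together with the (three-term) incidence relations cutting out $\cD^1(\mu)$ from \Cref{thm:dressian-cut-out-by-three-term} one only ever imposes tropical linear equations on the coordinates $\theta(A)$. Your elaboration of why fixing one side linearizes the relations, and your remark that no three-term reduction is needed for the extra conditions, are both consistent with the paper's argument.
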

\Cref{cor:union-basis-families} can be likewise strengthened. We note the following sequence of this strengthening.
\begin{cor}\label{cor:exists-freest}
    For any matroids $L_1,...,L_k$ and $N_1,...,N_l$ on $[n]$, if the set 
    \begin{equation}
    \begin{aligned}
        \Qt^1(M;L_1,...,L_k;N_1,...,N_l):=\big\{Q\in \Qt^1(M)\mid & L_i\onto Q \text{ for all }i=1,...,k\text{ and } \\
        & Q\onto N_j\text{ for all }j=1,...,l \big \}.
    \end{aligned}
\end{equation}
is nonempty, then it has a freest element.
\end{cor}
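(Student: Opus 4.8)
The plan is to reduce \Cref{cor:exists-freest} to the linearity statement \Cref{cor:first-incidence-dressian-linear}, specialized to trivial valuations. Let $\mu$ be the trivial valuation on $M$, let $\omega_j$ be the trivial valuation on $L_j$ for $j=1,\dots,k$, and let $\nu_i$ be the trivial valuation on $N_i$ for $i=1,\dots,l$. By \Cref{cor:first-incidence-dressian-linear}, the set $P:=\cD^1(\mu;\omega_1,\dots,\omega_k;\nu_1,\dots,\nu_l)$ is a linear tropical prevariety in $\PT{\nk{n}{d-1}}$. The first step is bookkeeping: the points of $P$ represented by a valuated matroid all of whose finite values are equal are exactly the trivial valuations on the matroids $Q$ in $\cQ^1(M;L_1,\dots,L_k;N_1,\dots,N_l)$. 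Indeed, such a $\theta$ is the trivial valuation on its underlying matroid $Q$; that $\theta$ is an elementary quotient of $\mu$ says precisely $Q\in\cQ^1(M)$, while $\Trop\theta\subset\Trop\omega_j$ says $L_j\onto Q$ and $\Trop\nu_i\subset\Trop\theta$ says $Q\onto N_i$.

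The second step is the key observation that a nonempty linear tropical prevariety is closed under coordinatewise minimum. This is an elementary check on each defining tropical linear equation ``$\min_{a\in S}(x_a+c_a)$ vanishes tropically'': if both $x$ and $y$ attain their minimum over $a\in S$ at least twice, then so does $\min\{x,y\}$, because the combined minimum equals $\min\{\min_a(x_a+c_a),\ \min_a(y_a+c_a)\}$ and is still attained at the two minimizing indices of whichever of $x,y$ realizes the smaller value. Hence, for $Q,Q'\in\cQ^1(M;L_1,\dots,L_k;N_1,\dots,N_l)$ with trivial valuations $\theta_Q,\theta_{Q'}$, the valuated matroid $\min\{\theta_Q,\theta_{Q'}\}$ lies in $P$; it again has all finite values equal, with support $\cB(Q)\cup\cB(Q')$, so it is the trivial valuation on a matroid $Q''\in\cQ^1(M;L_1,\dots,L_k;N_1,\dots,N_l)$ with $\cB(Q'')=\cB(Q)\cup\cB(Q')$. (In particular this reproves, in this special case, that $\cB(Q)\cup\cB(Q')$ is again a matroid, cf.\ \Cref{cor:union-basis-families}.)

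For the final step, since there are only finitely many matroids on $[n]$ and $\cQ^1(M;L_1,\dots,L_k;N_1,\dots,N_l)$ is nonempty by hypothesis, iterating the previous step over all its elements yields a matroid $Q_{\max}\in\cQ^1(M;L_1,\dots,L_k;N_1,\dots,N_l)$ whose basis set is the union of $\cB(Q)$ over all members $Q$ of that set. Every member is then a weak-map image of $Q_{\max}$, i.e.\ has basis set contained in $\cB(Q_{\max})$, so $Q_{\max}$ is the freest element.

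I do not expect a serious obstacle here: the substance is packaged inside \Cref{cor:first-incidence-dressian-linear}, and the rest is the finite-join argument above. The only points requiring mild care are the translation in the first step between tropical containments of trivial valuations and the matroid quotient relations, and noting that closure under coordinatewise minimum is genuinely available for \emph{linear} tropical prevarieties (it can fail for general tropical prevarieties). If one preferred, the minimum-closure lemma could be replaced by invoking tropical convexity of $P$ and taking the degenerate tropical combination, but the self-contained phrasing above seems cleanest.
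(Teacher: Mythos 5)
Your proof is correct and follows essentially the same route the paper intends: the corollary is stated there as a consequence of \Cref{cor:first-incidence-dressian-linear} (the "likewise strengthened" version of \Cref{cor:union-basis-families}), i.e.\ closure of the constrained space under tropical sum applied to trivial valuations, followed by taking the union of bases over the finitely many members to obtain the freest element. Your direct verification that a linear tropical prevariety is closed under coordinatewise minimum is just an explicit rendering of the tropical convexity the paper invokes, so there is no substantive difference.
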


\subsection{The simplified space $\Dr_s^1(\mu)$}

We describe a tropical prevariety $\Dr_s^1(\mu)$ inside $\PT{\cLL^1(M)}$ that carries the same information as $\Dr^1(\mu)$ does. This simplified space allows cleaner generalization of matroid adjoints to valuated matroids, while the space $\Dr^1(\mu)$ is more convenient in computations.

By the degenerate hyperplane relations, $\Dr^1(\mu)$ is contained in the coordinate subspace 
\begin{equation}
    \left\{[\bx] \in \PT{\nk{n}{d-1}} \mid \bx(A) = \infty\text{ if }\rk_M(A)<d-1 \right\}.
\end{equation}
Now the parallel hyperplane relations say if $A_1,A_2\in \nk{n}{d-1}$ span the same hyperplane, then for any $[\theta]\in\Dr^1(\mu)$, the value of $\theta(A_1)$ determines the value of $\theta(A_2)$. Therefore, $\theta$ is uniquely determined once we know its value on \textit{one} basis for each hyperplane of $M$. Suppose $\{D_H\}_{H\in \cLL^1(M)}$ is a choice of bases of hyperplanes of $M$, one for each hyperplane. Put
\begin{equation}
    \Dr_s^1(\mu) : =\left\{[\theta']\in \PT{\cLL^1(M)}\mid \text{ for some }[\theta]\in\Dr^1(\mu), \theta'(H)=\theta(D_H)\text{ for all }H\in \cLL^1(M)\right\}.
\end{equation}
Then $\Dr_s^1(\mu)$ and $\Dr^1(\mu)$ are isomorphic as polyhedral complexes. Moreover, $\Dr_s^1(\mu)$ is a prevariety of the same dimension as $\Dr^1(\mu)$ cut out by equations labeled by \textit{concurrent triples} defined below. 

\begin{definition}\label{def:concurrent-triples}
    Given three distinct hyperplanes $H_1,H_2,H_3\in \cLL^1(M)$, we say $\{H_1,H_2,H_3\}$ is a \textit{concurrent triple} if they meet at a coline, i.e., a corank-2 flat.
\end{definition}
The coefficients of the equations can be computed from the formulas given by the parallel hyperplane relations \eqref{eqn:paralle-hyperplane-relation}. Of course, $\Dr_s^1(\mu)$ depends on the choice of hyperplane bases $\{D_H\}$. However, the specific choice will not matter to us.

\subsection{The relation between $\Dr^1(M)$ and $\Qt^1(M)$}\label{subsec:dressian-of-tropM}

In this subsection we focus on the special case where $\mu$ is the trivial valuation on $M$. The central results are \Cref{thm:dressian-is-order-complex}, which describes $\Dr_s^1(\mu)$ as an order complex, and \Cref{prop:characterization-of-adjoint}, a new characterization of matroid adjoints in terms of the geometry of $\Dr^1(\mu)$. 

By \Cref{thm:dressian-cut-out-by-three-term}, $\Dr_s^1(M)$ is cut out by three-term incidence relations labeled by all the concurrent triples: for each concurrent triple $\{H_1,H_2,H_3\}$,
\begin{equation}\label{eqn:defining-equations-dressian-tropM}
    \min\{\;\theta(H_1),\;\theta(H_2),\; \theta(H_3)\;\}
\end{equation}
vanishes tropically. Consider 
\begin{equation}
    \Dr_s^1(M)^\circ = \Dr_s^1(M)\cap T_{\cLL^1(M)},
\end{equation}
which is a rational polyhedral fan. For each collection of hyperplanes $\cHH\subset \cLL^1(M)$, put 
\begin{equation}
    \be_{\cHH}=\sum_{H\in \cHH} \be_{\{H\}}\in \R^{\cLL^1(M)}.
\end{equation}
We warn the reader that $\be_{\{H\}}$ is the standard basis vector in the $H$-direction in $\R^{\cLL^1(M)}$, instead of the vector $\be_H$ as in \eqref{eqn:indicator-vec}. Given $\cHH_1,...,\cHH_k\subset \cLL^1(M)$, put 
\begin{equation}
    \cone(\be_{\cHH_1},...,\be_{\cHH_k}) = \big\{a_1\be_{\cHH_1}+\cdots + a_k\be_{\cHH_k}\mid a_1,...,a_k\geq 0\big\}.
\end{equation}

\begin{thm}\label{thm:dressian-is-order-complex}
    The collection of cones $\cone(\be_{\cHH_1},...,\be_{\cHH_k})+\R\mathbf{1}$, where $\emptyset\subsetneq \cHH_1\subsetneq \cdots \subsetneq \cHH_k \subsetneq \cLL^1(M)$ runs over all chains of linear subclasses of $M$, forms a simplicial fan. The support of this fan equals $\Dr_s^1(M)^\circ$.
\end{thm}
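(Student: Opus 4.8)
The plan is to reformulate membership in $\cD_s^1(M)^\circ$ combinatorially and then read off the fan structure. For $\theta\in\R^{\cLL^1(M)}$ and $c\in\R$, write $\cHH_{\ge c}(\theta):=\{H\in\cLL^1(M)\mid\theta(H)\ge c\}$. The first and crucial step is the lemma: \emph{$[\theta]\in\cD_s^1(M)^\circ$ if and only if $\cHH_{\ge c}(\theta)$ is a linear subclass of $M$ for every $c\in\R$.} This is where the defining equations \eqref{eqn:defining-equations-dressian-tropM} get used: for a concurrent triple $\{H_1,H_2,H_3\}$ the common coline $Z$ equals $H_i\cap H_j$ for every pair, and each of $H_1,H_2,H_3$ covers $Z$. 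If $[\theta]\in\cD_s^1(M)^\circ$ but some $\cHH_{\ge c}(\theta)$ failed the linear-subclass condition, we would have $H_i,H_j\in\cHH_{\ge c}(\theta)$ covering a coline $Z$ and a third hyperplane $H_l$ through $Z$ with $\theta(H_l)<c\le\theta(H_i),\theta(H_j)$, making $\theta(H_l)$ the unique minimum of $\{\theta(H_i),\theta(H_j),\theta(H_l)\}$ and contradicting \eqref{eqn:defining-equations-dressian-tropM}. Conversely, if $\min\{\theta(H_1),\theta(H_2),\theta(H_3)\}$ were attained only at $H_1$ with $\theta(H_1)<\theta(H_2)\le\theta(H_3)$, then $\cHH_{\ge\theta(H_2)}(\theta)$ would contain $H_2,H_3$ but not $H_1$, contradicting that it is a linear subclass.

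Granting the lemma, I would derive the support statement together with a canonical chain attached to each point. For a chain of linear subclasses $\emptyset\subsetneq\cHH_1\subsetneq\cdots\subsetneq\cHH_k\subsetneq\cLL^1(M)$ and scalars $a_1,\dots,a_k\ge0$, the function $\theta=\sum_i a_i\be_{\cHH_i}$ has every superlevel set $\cHH_{\ge c}(\theta)$ equal to one of $\emptyset,\cHH_1,\dots,\cHH_k,\cLL^1(M)$ (these nested sets are exactly the level structure of $\theta$), so by the lemma $\cone(\be_{\cHH_1},\dots,\be_{\cHH_k})+\R\mathbf1\subseteq\cD_s^1(M)^\circ$. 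Conversely, given $[\theta]\in\cD_s^1(M)^\circ$, let $c_0<\cdots<c_m$ be the distinct values of a representative; the lemma makes each $\cHH_j^\theta:=\cHH_{\ge c_j}(\theta)$ a linear subclass, and $\emptyset\subsetneq\cHH_m^\theta\subsetneq\cdots\subsetneq\cHH_1^\theta\subsetneq\cLL^1(M)$, with $\theta=c_0\mathbf1+\sum_{j=1}^m(c_j-c_{j-1})\be_{\cHH_j^\theta}$ and all coefficients positive. Thus the union of the listed cones equals $\cD_s^1(M)^\circ$, and $[\theta]$ lies in the relative interior of the cone $\sigma_{\mathcal C^\theta}$ of the chain $\mathcal C^\theta=(\cHH_m^\theta\subsetneq\cdots\subsetneq\cHH_1^\theta)$. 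Moreover $\mathcal C^\theta$ is recovered from $[\theta]$ alone: if $[\theta]$ lies in the relative interior of the cone of any chain $\cHH_1\subsetneq\cdots\subsetneq\cHH_m$ (i.e.\ all coefficients positive), a direct computation of $\theta$ along that chain's levels shows $\{\cHH_1,\dots,\cHH_m\}=\{\cHH_{\ge c}(\theta)\mid c\text{ a non-minimal value of }\theta\}$.

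Finally I would check the fan axioms. Simpliciality: choosing $H_j\in\cHH_j\setminus\cHH_{j-1}$ for $1\le j\le k$ and $H_0\notin\cHH_k$ makes the matrix $(\be_{\cHH_i}(H_j))$ unitriangular while $\be_{\cHH_i}(H_0)=0$, so $\be_{\cHH_1},\dots,\be_{\cHH_k},\mathbf1$ are linearly independent and $\cone(\be_{\cHH_1},\dots,\be_{\cHH_k})+\R\mathbf1$ is a simplicial cone of dimension $k$ in $T_{\cLL^1(M)}$; its faces are the cones of subchains, so the collection is closed under faces. For the intersection axiom, for two chains $\mathcal C,\mathcal C'$ I claim $\sigma_{\mathcal C}\cap\sigma_{\mathcal C'}=\sigma_{\mathcal C\cap\mathcal C'}$, where $\mathcal C\cap\mathcal C'$ is the chain of common members: one inclusion holds since $\sigma_{\mathcal C\cap\mathcal C'}$ is a face of each; for the other, any $[\theta]$ in the intersection lies, by simpliciality, in the relative interior of a subchain cone of $\mathcal C$ and of a subchain cone of $\mathcal C'$, and by the uniqueness just established both subchains equal $\mathcal C^\theta$, so $\mathcal C^\theta$ is a common subchain and $[\theta]\in\sigma_{\mathcal C\cap\mathcal C'}$. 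I expect the essential content to be the opening lemma and the recovery of $\mathcal C^\theta$ from $[\theta]$; the rest is routine bookkeeping with nested chains.
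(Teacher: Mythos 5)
Your proposal is correct and takes essentially the same route as the paper: the paper likewise identifies points of $\cD_s^1(M)^\circ$ with positive combinations of indicator vectors of a nested chain of superlevel sets and checks the linear-subclass condition against the concurrent-triple equations \eqref{eqn:defining-equations-dressian-tropM}, exactly the content of your opening lemma. Your explicit verification of the fan axioms (simpliciality via the unitriangular argument and the intersection axiom via recovering $\mathcal{C}^\theta$ from $[\theta]$) is sound and spells out what the paper compresses into the observation that each point lies in the relative interior of a unique cone.
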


\begin{proof}
    We first show that $\cone(\be_{\cHH_1},...,\be_{\cHH_k})\subset \Dr_s^1(M)^\circ$. Take an arbitrary element $\bw=a_1\be_{\cHH_1}+\cdots +a_k\be_{\cHH_k}$ from this cone. Let $\{H_1,H_2,H_3\}$ be any concurrent triple. By the definition of linear subclasses, up to relabeling $H_1,H_2$ and $H_3$, the following cases exhaust the possibilities regarding the memberships of $H_1,H_2$ and $H_3$ in the $\cHH_i$'s:
    \begin{itemize}
        \item None of $H_1,H_2$ or $H_3$ is in $\cHH_k$;
        \item $H_1\in \cHH_i$ for some $i$ and $H_2,H_3\notin \cHH_k$;
        \item $H_1\in \cHH_i$, $H_2,H_3\notin \cHH_{j-1}$, and $H_2,H_3\in \cHH_j$ for $1\leq i\leq j\leq k$.
    \end{itemize}
    It is easy to check that in any case, $\bw$ is contained in the tropical hyperplane labeled by $\{H_1,H_2,H_3\}$.

    On the other hand, pick an arbitrary $\bw\in \Dr_s^1(M)^\circ$. We may assume that all coordinates of $\bw$ are nonnegative and at least one coordinate is positive. Then there is a unique way to write $\bw$ as 
    \begin{equation}
        \bw = a_1 \be_{\cG_1} + \cdots + a_k \be_{\cG_k}
    \end{equation}
    for some subsets $\emptyset \subsetneq \cG_1 \subsetneq \cdots \subsetneq \cG_k=\cLL^1(M)$ and $a_1,...,a_k>0$. Let $\{H_1,H_2,H_3\}$ be any concurrent triple. Suppose $\cG_i$ contains $H_1$ and $H_2$. We need to show $H_3\in \cG_i$.
    \begin{itemize}
        \item If $\bw(H_1)>\bw(H_2)$, then $\bw(H_3)=\bw(H_2)$, so $H_3\in\cG_i$;
        \item If $\bw(H_1)=\bw(H_2)$, then $\bw(H_3)\geq \bw(H_1)$, meaning that $H_3\in \cG_j$ for some $j\leq i$. Therefore, $H_3\in \cG_i$.
    \end{itemize}
    Hence, $\cG_i$ is a linear subclass. Moreover, we just showed that each element $\bw\in \Dr_s^1(M)^\circ$ lies in the relative interior of a unique cone defined above. Therefore, the collection of all those cones is a simplicial fan whose support equals $\Dr_s^1(M)^\circ$.
\end{proof}

By \Cref{thm:dressian-is-order-complex}, the dimension of $\Dr_s^1(M)$ is the length of the longest maximal chain of $\Qt^1(M)$. From this one sees that $\Dr_s^1(M)$ is not always pure-dimensional, as suggested by \Cref{ex:elementary-quotient-lattice}. It may have cones whose dimensions drop, as is the case for the V\'{a}mos matroid (\Cref{ex:vamos}). We also have the following dimension estimate.

\begin{cor}\label{cor:dim-of-dr1}
    The polyhedral complex $\Dr_s^1(M)$ always has cones of dimension $d-1$.
\end{cor}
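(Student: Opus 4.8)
The plan is to build, for the given simple rank-$d$ matroid $M$, an explicit chain of $d-1$ nonempty proper linear subclasses of $M$; \Cref{thm:dressian-is-order-complex} then hands us a cone of $\cD_s^1(M)^\circ$ of dimension $d-1$, hence a cell of $\cD_s^1(M)$ of that dimension. The crucial observation is that every flat $F$ of $M$ determines a linear subclass $\cHH_F := \{H\in\cLL^1(M)\mid F\subseteq H\}$, the pencil of hyperplanes through $F$: if $H_1,H_2\in\cHH_F$ cover $H_1\cap H_2$ and $H_3$ is any hyperplane covering $H_1\cap H_2$, then $F\subseteq H_1\cap H_2\subsetneq H_3$, so $H_3\in\cHH_F$. (Equivalently, $\cHH_F=\cF\cap\cLL^1(M)$ for the principal filter $\cF=\{G\in\cLL(M)\mid F\subseteq G\}$, which is a modular cut because meets of flats containing $F$ contain $F$.) When $1\le\rk_M(F)\le d-1$ this subclass is nonempty — extend a basis of $F$ to a basis of $M$ and discard one element not in $F$; the closure of the rest is a hyperplane through $F$ — and proper, since for any $x\in F$ the closure of all-but-one element of a basis through $x$ is a hyperplane missing $x$ (using that $M$ is loopless of rank $\ge1$).

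Next I would fix a maximal flag of flats $\hat0=F_0\lessdot F_1\lessdot\cdots\lessdot F_{d-1}\lessdot F_d=[n]$, so that $\rk_M(F_i)=i$, and consider $\cHH_{F_1},\dots,\cHH_{F_{d-1}}$. From $F_1\subsetneq\cdots\subsetneq F_{d-1}$ we get $\cHH_{F_1}\supseteq\cdots\supseteq\cHH_{F_{d-1}}$, and I claim each inclusion is strict. For $1\le i\le d-2$, pass to $M/F_i$, which is loopless (as $F_i$ is a flat) of rank $d-i\ge2$; the image of any $x\in F_{i+1}\setminus F_i$ is a nonloop of $M/F_i$, so some hyperplane of $M/F_i$ misses it, and that hyperplane pulls back to a hyperplane of $M$ containing $F_i$ but not $F_{i+1}$. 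Thus $\emptyset\subsetneq\cHH_{F_{d-1}}\subsetneq\cdots\subsetneq\cHH_{F_1}\subsetneq\cLL^1(M)$ is a chain of $d-1$ nonempty proper linear subclasses (for $d=1$ this chain is empty and the cone below is $\R\mathbf1$, of dimension $0=d-1$).

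Finally, by \Cref{thm:dressian-is-order-complex} the cone $\cone(\be_{\cHH_{F_{d-1}}},\dots,\be_{\cHH_{F_1}})+\R\mathbf1$ is a cone of the simplicial fan supporting $\cD_s^1(M)^\circ$; its rays together with $\mathbf1=\be_{\cLL^1(M)}$ are the indicator vectors of a strictly increasing chain of subsets of $\cLL^1(M)$, hence linearly independent, so the cone has dimension $d-1$, and the corollary follows. I expect the only delicate point to be the strictness of the chain $\cHH_{F_1}\supsetneq\cdots\supsetneq\cHH_{F_{d-1}}$ — that a maximal flag of flats genuinely witnesses $d-1$ distinct pencils of hyperplanes — which is precisely where looplessness of $M/F_i$ and the rank bound $d-i\ge2$ are used; everything else is bookkeeping or quoted directly from \Cref{thm:dressian-is-order-complex}.
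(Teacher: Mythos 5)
Your proposal is correct and follows essentially the same route as the paper: both send a maximal flag of flats $F_1\subsetneq\cdots\subsetneq F_{d-1}$ to the pencils $\cHH_{F_i}$ of hyperplanes through $F_i$ (i.e.\ the linear subclasses of the principal truncations), check these are nonempty, proper and strictly nested, and then invoke \Cref{thm:dressian-is-order-complex}. The only difference is that the paper proves the stronger fact that covers in $\cLL(M)$ map to covers in $\widehat{\cQ}^1(M)$ via a modular-pair argument, while you verify just the strictness of the chain (via looplessness of $M/F_i$), which is all the corollary needs.
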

\begin{proof}
    There is an order-preserving embedding $\cLL(M)\into \widehat{\Qt}^1(M)$, where each flat $F$ is sent to the principal truncation of $M$ by $F$ \cite{white1986theory}. We show that this map sends maximal chains to maximal chains.
    
    Recall that for the principal truncation of $M$ by $F$, the corresponding linear subclass is the set all the hyperplane containing $F$, and the corresponding modular cut is the upper interval of $F$. Suppose $F,G\in \cLL(M)$ and $F$ covers $G$. If $\cF$ is any modular cut strictly containing the upper interval of $F$ and meanwhile contained in the upper interval of $G$, then $\cF$ contains a hyperplane $H\in \cHH_G\backslash \cHH_F$. Since $F \wedge H = G$, we have
    \begin{equation}
        \rk_M(F) + \rk_M(H) = \rk_M(F\wedge H) + \rk_M(F\vee H),
    \end{equation}
    so $G\in \cF$, meaning $\cF$ is the upper interval of $G$. This implies that the principal truncation by $F$ covers the principal truncation by $G$ in $\widehat{\Qt}^1(M)$, so the maximal chains in $\cLL(M)$ are sent to maximal chains of $\widehat{\Qt}^1(M)$. The conclusion then follows from \Cref{thm:dressian-is-order-complex}.
\end{proof}

\begin{ex}\label{ex:elementary-quotient-lattice}
    The maximal chains of $\Qt^1(M)$ (and thus the maximal chains of $\widehat{\Qt}^1(M)$) may not have the same length. This is already the case for $M=U_{3,6}$, as shown in \Cref{fig:elementary-quotient-lattice}.

    \begin{figure}[H]
    \centering
    \includegraphics[width=3.5in]{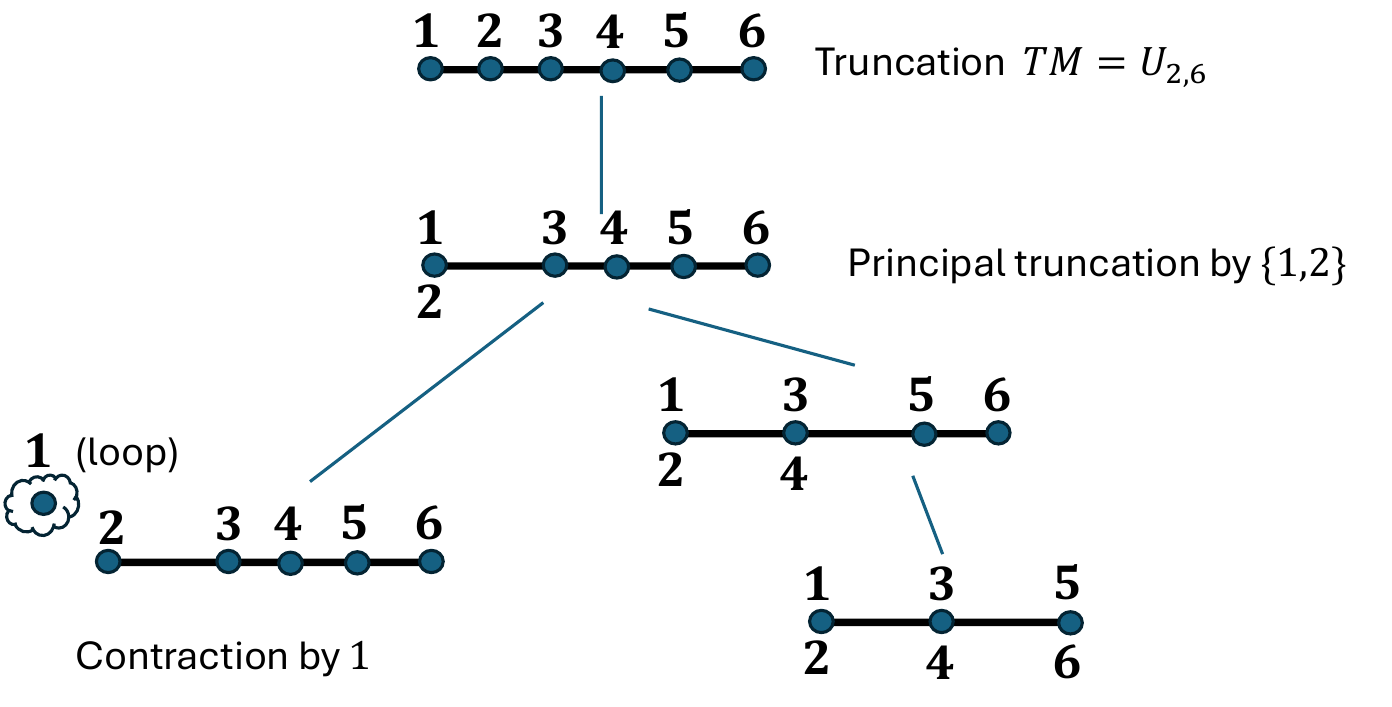}
    \caption{Two maximal chains of $\Qt^1(U_{3,6})$.}
    \label{fig:elementary-quotient-lattice}
\end{figure}
\end{ex}

The next statement provides a set of minimal generators of $\Dr_s^1(M)$. In particular, it shows directly that $\Dr_s^1(M)$ is a tropical polytope in the sense of \cite{gaubert2011minimal}. 

\begin{thm}\label{thm:spanned-by-join-irreducibles}
    Identify $\Qt^1(M)$ as a subset of $\Dr_s^1(M)$ consisting of the trivial valuations on the elementary quotients of $M$. Then $\Dr_s^1(M)$ is spanned tropically by the join-irreducible elements of $\Qt^1(M)$.
\end{thm}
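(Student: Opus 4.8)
The plan is to read off the simplified coordinates of the proposed generators and then match them against the cone description of $\cD_s^1(M)^\circ$ in \Cref{thm:dressian-is-order-complex}. For an elementary quotient $Q$ of $M$ with modular cut $\cF_Q$ and linear subclass $\cHH_Q=\cLL^1(Q)\cap\cLL^1(M)$, the rank formula for elementary quotients gives $\rk_Q(D_H)=d-1$ when $H\notin\cF_Q$ and $\rk_Q(D_H)=d-2$ when $H\in\cF_Q$; hence the point $\mathbf{p}_Q\in\cD_s^1(M)$ attached to the trivial valuation on $Q$ has $\mathbf{p}_Q(H)=0$ for $H\notin\cHH_Q$ and $\mathbf{p}_Q(H)=\infty$ for $H\in\cHH_Q$. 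Since $M$ is the bottom of $\widehat{\cQ}^1(M)$, every join-irreducible of $\widehat{\cQ}^1(M)$ lies in $\cQ^1(M)$, and under the anti-isomorphism \eqref{eqn:elementary-quotient-to-linear-subclass} the join-irreducibles correspond exactly to the \emph{meet-irreducible} linear subclasses of $M$, i.e.\ those having a unique upper cover under inclusion.

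One inclusion is soft: each $\mathbf{p}_Q$ lies in $\cD_s^1(M)$ because $\cQ^1(M)\subseteq\cD^1(M)$, and $\cD_s^1(M)$ is tropically convex, being the image of the tropically convex set $\cD^1(M)$ (\Cref{main:A}, \Cref{cor:codim-1-tropical-convex}) under the coordinate projection $\theta\mapsto(\theta(D_H))_{H}$, which preserves tropical convexity. So the tropical convex hull of $\{\mathbf{p}_Q\mid Q\text{ join-irreducible}\}$ is contained in $\cD_s^1(M)$.

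For the reverse inclusion, by \Cref{thm:dressian-is-order-complex} it suffices to write every point $\bw=\sum_{i=1}^{r}c_i\be_{\cK_i}$, where $\emptyset=\cK_0\subsetneq\cK_1\subsetneq\cdots\subsetneq\cK_r\subsetneq\cK_{r+1}=\cLL^1(M)$ is a chain of linear subclasses and $c_1,\dots,c_r>0$, as a tropical combination of the $\mathbf{p}_Q$; points of $\cD_s^1(M)$ with infinite coordinates then follow by closure, since perturbing those coordinates to large finite values keeps the point in $\cD_s^1(M)$ (so $\cD_s^1(M)=\overline{\cD_s^1(M)^\circ}$) while a tropical convex hull of finitely many points is closed. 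Now $\bw$ is constant on each layer $\cK_j\setminus\cK_{j-1}$ with value $u_{r-j+1}$, where $u_0:=0$ and $u_t:=c_r+c_{r-1}+\cdots+c_{r-t+1}$, so $0=u_0<u_1<\cdots<u_r$. To each meet-irreducible linear subclass $\cM$ I assign $p(\cM):=\max\{p\in\{0,\dots,r\}\mid \cK_p\subseteq\cM\}$ and coefficient $\lambda_\cM:=u_{r-p(\cM)}$, and claim $\bw=\min_{\cM}\bigl(\lambda_\cM+\mathbf{p}_\cM\bigr)$ coordinatewise. Fix $H_0$, say $H_0\in\cK_j\setminus\cK_{j-1}$. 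If a meet-irreducible $\cM$ has $H_0\notin\cM$, then $\cK_j\not\subseteq\cM$, so $p(\cM)\le j-1$ and the $\cM$-term at $H_0$ equals $\lambda_\cM=u_{r-p(\cM)}\ge u_{r-j+1}=\bw(H_0)$, giving ``$\ge$''. For ``$\le$'' I need one meet-irreducible $\cM$ with $\cK_{j-1}\subseteq\cM$ and $H_0\notin\cM$, for then $p(\cM)=j-1$ and its term equals $\bw(H_0)$ exactly. This is the key lattice fact: \emph{given a linear subclass $\cK$ and a hyperplane $H_0\notin\cK$, some meet-irreducible linear subclass contains $\cK$ and avoids $H_0$} --- take $\cM$ maximal among linear subclasses containing $\cK$ but not $H_0$, and note that $\cM=\cM_1\cap\cM_2$ with $\cM_1,\cM_2\supsetneq\cM$ would force $H_0\in\cM_1\cap\cM_2=\cM$ by maximality. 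Applying this with $\cK=\cK_{j-1}$ completes the check.

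For the sharper ``tropical polytope'' statement one should also verify that $\{\mathbf{p}_Q\mid Q\text{ join-irreducible}\}$ is a minimal generating set: a tropical combination of the remaining generators equal to $\mathbf{p}_Q$ can only use generators whose linear subclass contains $\cHH_Q$ (else a coordinate in $\cHH_Q$ becomes finite), hence contains the unique upper cover $\cHH_Q^{+}$ of $\cHH_Q$, and then every coordinate in the nonempty set $\cHH_Q^{+}\setminus\cHH_Q$ comes out $\infty$, contradicting finiteness of $\mathbf{p}_Q$ there. I expect the ``$\le$'' step above to be the main obstacle: the naive candidate generator, the complement of $\cK_{j-1}$, is not a linear subclass, so one must manufacture a meet-irreducible by a maximality argument and arrange the layer/partial-sum bookkeeping so that the minimum lands precisely on $\bw(H_0)$; the closure treatment of infinite-coordinate points is a secondary technicality.
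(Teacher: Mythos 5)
Your proposal is correct and takes essentially the same route as the paper: both rest on \Cref{thm:dressian-is-order-complex}, the identification of the trivial valuation on an elementary quotient with the indicator function of the complement of its linear subclass, and a closure argument, with the reduction to join-irreducibles coming from the lattice of linear subclasses. The only real difference is organizational: the paper first spans $\cD_s^1(M)^\circ$ by all of $\cQ^1(M)$ along the chain and then invokes that every element is a join (which agrees with tropical sum) of join-irreducibles, whereas you write the combination over meet-irreducible linear subclasses directly, proving the required separation fact by a maximality argument.
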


\begin{proof}
    Let $\cJ$ be the subset of join-irreducible elements of $\Qt^1(M)$. It suffices to show that the tropical span of $\cJ$ contains $\Dr^1_s(M)^\circ$, because the tropical span of $\cJ$ is closed, and $\Dr^1_s(M)$ is the closure of $\Dr^1_s(M)^\circ$. The trivial valuation on the elementary quotient of $M$ given by the linear subclass $\cHH$, regarded as a function $\cLL^1(M)\to \Rbar$, is simply the indicator function $\delta_{\cHH^c}$ of $\cHH^c=\cLL^1(M)\backslash \cHH$, i.e.,
    \begin{equation}
        \delta_{\cHH^c}(H) = \begin{cases}
            0, & \text{if }H\notin \cHH \\
            \infty, & \text{if }H\in \cHH
        \end{cases}.
    \end{equation}
    
    Take any element $[\theta]\in \Dr^1_s(M)^\circ$. By \Cref{thm:dressian-is-order-complex}, up to adding a constant, every $[\theta]\in \Dr^1_s(M)^\circ$ takes the form
    \begin{equation}
        \theta = \sum_{i=1}^k a_i\be_{\cHH_i}
    \end{equation}
    where $a_i>0$ for all $i$ and $\emptyset\subsetneq \cHH_1\subsetneq \cdots \subsetneq \cHH_k$ is an ascending chain of linear subclasses. We rewrite $\theta$ as 
    \begin{equation}
        \theta = \min\left\{ \sum_{i=1}^k a_i+\delta_{\emptyset^c} , \sum_{i=1}^{k-1}a_i+\delta_{\cHH_1^c}, \cdots,  \sum_{i=1}^{k-j}a_i+\delta_{\cHH_j^c}, \cdots , \delta_{\cHH_k^c}\right\}.
    \end{equation}
This shows $\Dr_s^1(M)^\circ$ is in the tropical span of the image of $\Qt^1(M)$ inside $\Dr_s^1(M)$. It remains to show that the tropical span of $\cJ$ contains the image of $\Qt^1(M)$. This is true because the join operation in $\Qt^1(M)$ agrees with tropical sum. By the definition of join-irreducible elements, any element in $\Qt^1(M)$ is a join of elements in $\cJ$. This finishes the proof.
\end{proof}

\begin{rmk}
    The join-irreducible elements of $\Qt^1(M)$ are not necessarily the atoms of $\widehat{\Qt}^1(M)$. For instance, the linear subclass $\{12,34\}$ of $U_{3,6}$ is join-irreducible, but it is not an atom of $\widehat{\Qt}^1(U_{3,6})$.
\end{rmk}

Recall that an \textit{adjoint} $W$ of $M$ is a matroid of rank $d$ on $\cLL^1(M)$ such that $\cLL(M)$ embeds in the order dual of $\cLL(W)$. We now relate adjoints with the geometry of $\Dr_s^1(M)$.

\begin{prop}\label{prop:characterization-of-adjoint}
    Let $M$ and $W$ be simple matroids of the same rank. The following are equivalent:
    \begin{enumerate}
        \item the ground set of $W$ is $\cLL^1(M)$ and each concurrent triple of $M$ is a circuit of $W$;
       
        \item there is an order-reversing embedding $\cLL(W)\hookrightarrow \widehat{\Qt}^1(M)$ that maps $\cLL_1(W)$ onto $\cLL^1(M)$;
        \item the groundset of $W$ is $\cLL^1(M)$ and $\Trop W\subset \Dr_s^1(M)$. 
        \item $W$ is an adjoint of $M$.
    \end{enumerate}
\end{prop}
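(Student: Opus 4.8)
My plan is to prove the three biconditionals (1)$\Leftrightarrow$(3), (1)$\Leftrightarrow$(2) and (1)$\Leftrightarrow$(4); as (1) occurs in each, this gives the full equivalence. Conceptually, all of (1), (2), (3) will turn out to say that the ground set of $W$ is $\cLL^1(M)$ and \emph{every flat of $W$ is a linear subclass of $M$}. I would establish (1)$\Leftrightarrow$(3) first (the new, geometric ingredient), then (1)$\Leftrightarrow$(2), and finally (1)$\Leftrightarrow$(4), the reconciliation with the classical definition, which I expect to be the main obstacle.

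For (1)$\Leftrightarrow$(3): both conditions set the ground set of $W$ to be $\cLL^1(M)$, so $\Trop W$ and $\cD_s^1(M)$ both lie in $\PT{\cLL^1(M)}$. By \Cref{thm:dressian-cut-out-by-three-term} and the description of the simplified space, $\cD_s^1(M)$ is cut out by the tropical vanishing of $\min\{\bx(H_1),\bx(H_2),\bx(H_3)\}$ over all concurrent triples $\{H_1,H_2,H_3\}$ of $M$, so (3) says exactly that $\Trop W$ satisfies each such relation identically. I would then prove the elementary fact that, for a three-element subset $S$ of the ground set of a simple matroid $W$, the Bergman fan $\Trop W$ satisfies the tropical vanishing of $\min_{H\in S}\bx(H)$ identically if and only if $S$ is a circuit of $W$: ``if'' is immediate from the valuated circuit of the trivial valuation associated with $S$; for ``only if'', if $S$ is independent then extending it to a basis of $W$ and taking the point of $\Trop W$ attached to a complete flag of flats refining that basis produces a point where $\min_{H\in S}\bx(H)$ is attained only once. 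The hypothesis $|S|=3$ is genuinely needed here, as the analogue fails for larger dependent sets. Running this over all concurrent triples yields (1)$\Leftrightarrow$(3).

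For (1)$\Leftrightarrow$(2): assuming (1), every flat $\bar F$ of $W$ is a linear subclass of $M$ — if $H_i,H_j\in\bar F$ cover a coline $C$ of $M$, then for every other hyperplane $H_k$ through $C$ the triple $\{H_i,H_j,H_k\}$ is a concurrent triple, hence a $W$-circuit, so $H_k\in\cl_W(\{H_i,H_j\})\subseteq\bar F$. Since, by \eqref{eqn:elementary-quotient-to-linear-subclass}, $\widehat{\cQ}^1(M)$ is the lattice of linear subclasses of $M$ ordered by reverse inclusion, the identity assignment $\bar F\mapsto\bar F$ is then an order-reversing embedding $\cLL(W)\hookrightarrow\widehat{\cQ}^1(M)$ carrying $\cLL_1(W)$ (the singletons, as $W$ is simple) onto the singleton linear subclasses, which is how $\cLL^1(M)$ sits inside $\widehat{\cQ}^1(M)$; this is (2). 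Conversely, given such a $\phi$, if a concurrent triple $\{H_1,H_2,H_3\}$ over a coline $C$ were independent in $W$ then $\{H_1\}\vee\{H_2\}\not\geq\{H_3\}$ in $\cLL(W)$; but $\phi$, being order-reversing and identifying atoms with the corresponding singletons, sends $\{H_1\}\vee\{H_2\}$ below $\{H_1\}$ and below $\{H_2\}$, hence below their meet in $\widehat{\cQ}^1(M)$ — the smallest linear subclass containing $H_1$ and $H_2$, which contains $H_3$ by the linear-subclass axiom — so $\phi(\{H_1\}\vee\{H_2\})\leq\phi(\{H_3\})$ and thus $\{H_1\}\vee\{H_2\}\geq\{H_3\}$, a contradiction. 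Hence every concurrent triple is dependent, so a circuit of $W$, which is (1).

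Finally, (1)$\Leftrightarrow$(4) reconciles these conditions with the classical definition of an adjoint, and this is where I expect the real work. The direction (4)$\Rightarrow$(1) is quick: an adjoint embedding $\cLL(M)\hookrightarrow\cLL(W)^{\op}$ carries a coline $C$ to a rank-$2$ flat of $W$ containing every hyperplane through $C$, so any three of those are dependent in $W$, hence a circuit. For (1)$\Rightarrow$(4) the subtlety is that (1) only constrains three-element sets: one must show that, given $\rk W=d$, requiring all concurrent triples to be $W$-circuits forces, for each coline $C$, the set of hyperplanes through $C$ to be a $W$-flat of the correct rank, so that $F\mapsto\{H\in\cLL^1(M):F\leq H\}$ is a well-defined order-reversing embedding $\cLL(M)\hookrightarrow\cLL(W)^{\op}$ — the mechanism being that enlarging such a flat would propagate along the linear-subclass axiom, which governs the flats of $W$ by the first step of the (1)$\Rightarrow$(2) argument, until some flat acquires full rank, contradicting $\rk W=d$. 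This is essentially the classical dictionary among adjoints, modular cuts and linear subclasses (cf.\ \cite{cheung1974adjoints,white1986theory}), now recast through $\cD_s^1(M)$; alternatively, (1)$\Rightarrow$(4) can be deduced from (1)$\Rightarrow$(2) by composing $\phi^{-1}$ with the principal-truncation embedding $\cLL(M)\hookrightarrow\widehat{\cQ}^1(M)$ from the proof of \Cref{cor:dim-of-dr1}, once one checks that its image lies in $\phi(\cLL(W))$, which comes down to the same flat-closedness statement.
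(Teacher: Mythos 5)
Your handling of (1), (2), (3) is sound and runs essentially parallel to the paper: the paper proves the cycle $(1)\Rightarrow(2)\Rightarrow(3)\Rightarrow(1)$ using exactly the ingredients you use (under (1) the set of atoms below a $W$-flat is a linear subclass, giving the order-reversing map into $\widehat{\cQ}^1(M)$; \Cref{thm:dressian-is-order-complex} identifies $\cD_s^1(M)$ with the order complex of linear-subclass chains; and a dependent $3$-set in a simple matroid is a circuit). Your direct tropical-hyperplane argument for $(1)\Leftrightarrow(3)$, including the flag construction showing that an independent triple violates one of the defining relations, is correct.

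The genuine gap is in your treatment of (4), specifically $(1)\Rightarrow(4)$. The paper does not prove the link to the classical adjoint at all: it quotes $(2)\Rightarrow(4)$ from \cite{bachem1986extension} and $(4)\Rightarrow(2)$ from Cheung's embedding theorem \cite{cheung1974adjoints}. What you offer in place of the first citation is not a proof. The statement you need -- that under (1) and $\rk W=d$ the assignment $F\mapsto\cl_W\bigl(\{H\in\cLL^1(M)\mid F\subseteq H\}\bigr)$ is an order-reversing \emph{embedding}, equivalently that the hyperplanes through each flat $F$ span a $W$-flat of rank $d-\rk_M(F)$ (or at least that this map is injective and order-reflecting) -- is precisely the content of the Bachem--Kern theorem, and your one-sentence mechanism (``enlarging such a flat would propagate along the linear-subclass axiom \dots until some flat acquires full rank, contradicting $\rk W=d$'') does not establish it: condition (1) only controls closures of \emph{pairs} of hyperplanes covering a common coline, so for flats of corank $\geq 3$ nothing forces $\cl_W(\cHH_F)$ to stay inside $\cHH_F$ or to have the right rank without a genuine induction/rank-counting argument (the same issue resurfaces in your alternative route, where you yourself note it ``comes down to the same flat-closedness statement''). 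Either supply that argument in full or cite \cite{bachem1986extension} for $(2)\Rightarrow(4)$ as the paper does. A smaller unproved step occurs in your $(4)\Rightarrow(1)$: you assert that an adjoint embedding carries a coline to a rank-$2$ flat of $W$; this is true but needs the observation that an injective order-reversing map between graded lattices of the same rank $d$ sends a maximal chain to a maximal chain, hence sends corank-$2$ elements to rank-$2$ elements (or, again, one can simply invoke Cheung's theorem, which is how the paper reaches (1) from (4) via $(4)\Rightarrow(2)\Rightarrow(3)\Rightarrow(1)$).
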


\begin{proof}
$(2)\Rightarrow(4)$ is proved in \cite{bachem1986extension} and $(4)\Rightarrow(2)$ is Cheung's embedding theorem \cite{cheung1974adjoints}. We prove $(1)\Rightarrow(2)\Rightarrow(3)\Rightarrow(1)$. Identify $\Qt^1(M)$ with the lattice of linear subclasses. By the assumption of (1), $\cLL_1(W)=\cLL^1(M)$. Consider the map 
        \begin{equation}
            \iota\colon\cLL(W) \to \widehat{\Qt}^1(M), F\mapsto \text{ the linear subclass generated by }\{H\in \cLL_1(W)\mid H\leq_W F\}.
        \end{equation}
        From the assumption that each concurrent triple is a circuit of $W$, we know $\{H\in \cLL_1(W)\mid H\leq_W F\}$ is already a linear subclass of $M$. If $F_1\leq_W F_2$, then $\iota(F_1)$ is freer than $\iota(F_2)$, as the corresponding linear subclass of $\iota(F_1)$ is smaller. Injectivity of $\iota$ is clear. Therefore, $\iota$ is an order-reversing embedding. This shows $(1) \Rightarrow (2)$. The implication $(2) \Rightarrow (3)$ follows from \Cref{thm:dressian-is-order-complex} and the fact that $\Trop W$ is the order complex of $\cLL(W)$. Given the hypotheses of (3), the concurrent triples of $M$ are dependent in $W$. Since $W$ is simple, all its circuits have size at least 3, so all the concurrent triples must be circuits of $W$. This shows $(3)\Rightarrow (1)$. 
\end{proof}

It is well known that all rank-3 matroids have adjoints. For obvious geometric reasons, all realizable matroids have adjoints.
\begin{prop}\label{prop:existence-of-ordinary-adjoint}
    If $M$ is realized by a point configuration in a projective space, then the matroid $W$ given by the hyperplane arrangement of all the hyperplanes spanned by the points is an adjoint of $M$.
\end{prop}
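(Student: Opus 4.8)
The plan is to argue with an explicit realization. Fix a field $\bF$, a $d$-dimensional $\bF$-vector space $V$, and points $p_1,\dots,p_n\in\PP(V)\cong\PP^{d-1}$ realizing $M$; for $S\subseteq[n]$ write $\langle S\rangle\subseteq V$ for the linear span of (lifts of) the $p_i$ with $i\in S$. Then the flats of $M$ are exactly the sets $F$ with $\cl_M(F)=F$, and $F\mapsto U_F:=\langle F\rangle$ identifies $\cLL(M)$ with the subspaces of $V$ spanned by subsets of $\{p_1,\dots,p_n\}$, with $\rk_M(F)=\dim U_F$. Each $H\in\cLL^1(M)$ spans a hyperplane of $\PP(V)$, i.e.\ a line $\ell_H:=U_H^{\perp}$ in $V^{*}$; the map $H\mapsto \ell_H$ realizes $\cLL^1(M)$ as a point configuration in $\PP(V^{*})$, and $W$ is by definition the matroid of this configuration.

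First I would set up the linear-algebra dictionary. Since the $p_i$ span $\PP(V)$, for any basis $B=\{b_1,\dots,b_d\}$ of $M$ the hyperplanes spanned by the sets $B\setminus\{b_k\}$ give $d$ functionals that are linearly independent in $V^{*}$, so $\rk W=d$ and $W$ is loopless; moreover distinct hyperplanes of $M$ give distinct lines in $V^{*}$, so $W$ is simple. More importantly, for $F\in\cLL(M)$ and $H\in\cLL^1(M)$ one has $F\le H$ in $\cLL(M)$ iff $U_F\subseteq U_H$ iff $\ell_H\in\PP(U_F^{\perp})$. Hence
\[
  \psi(F):=\{\,H\in\cLL^1(M)\ \mid\ F\le H\,\}
\]
is exactly the set of points of the dual configuration lying on the linear subspace $\PP(U_F^{\perp})\subseteq\PP(V^{*})$, and therefore $\psi(F)$ is a flat of $W$.

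Next I would verify that $\psi\colon\cLL(M)\to\cLL(W)$ is an order-reversing embedding. Order-reversal is immediate, since $F_1\le F_2$ gives $U_{F_1}\subseteq U_{F_2}$, hence $U_{F_1}^{\perp}\supseteq U_{F_2}^{\perp}$ and $\psi(F_1)\supseteq\psi(F_2)$. Injectivity follows from the standard matroid fact that every flat equals the intersection of the hyperplanes containing it: $F=\bigcap_{H\in\psi(F)}H$, with the empty intersection read as the top flat $[n]$ when $F=[n]$, so $F$ is recovered from $\psi(F)$. Together with $\rk W=d$ and the ground set of $W$ being $\cLL^1(M)$, this is precisely the statement that $W$ is an adjoint of $M$. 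Alternatively, one can invoke \Cref{prop:characterization-of-adjoint}(1): a concurrent triple $\{H_1,H_2,H_3\}$ of $M$ meets at a coline $C\in\cLL^2(M)$, so $\dim U_C^{\perp}=d-(d-2)=2$ and $\ell_{H_1},\ell_{H_2},\ell_{H_3}$ are three distinct points on the projective line $\PP(U_C^{\perp})$, hence a $3$-element circuit of the simple matroid $W$; this gives condition (1) of \Cref{prop:characterization-of-adjoint}.

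I do not expect a genuine obstacle here; the one place requiring care is the bookkeeping that turns the abstract lattice statements about $M$ into linear algebra in $V$ and $V^{*}$ — in particular checking that $H\mapsto U_H^{\perp}$ is an honest bijection from $\cLL^1(M)$ onto the ground set of $W$ (loops and parallel elements on the dual side must be ruled out) and that the resulting matroid has full rank $d$ rather than something smaller. Once that dictionary is in place, both the direct embedding argument and the verification of \Cref{prop:characterization-of-adjoint}(1) are routine.
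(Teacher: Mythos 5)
Your argument is correct and complete. The paper itself gives no proof of this proposition: it is stated as holding ``for obvious geometric reasons,'' with pointers to the matroid-of-lines construction in the cited literature, so there is no written argument to compare against; what you supply is exactly the standard justification that the paper leaves implicit. Both of your routes work: the dual configuration $H\mapsto U_H^{\perp}$ in $\PP(V^*)$ is simple of rank $d$ on $\cLL^1(M)$ (your basis argument gives $d$ independent functionals, and distinct hyperplane flats give distinct lines since $H=\{i\mid p_i\in\PP(U_H)\}$), and then either the explicit map $\psi(F)=\{H\mid F\le H\}$ or the concurrent-triple criterion of \Cref{prop:characterization-of-adjoint}(1) finishes the proof; the second route is particularly well matched to this paper, since that characterization is proved here independently. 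One small point worth making explicit: the definition of adjoint asks for a poset embedding of $\cLL(M)$ into the order dual of $\cLL(W)$, i.e.\ $F_1\le F_2$ \emph{iff} $\psi(F_1)\supseteq\psi(F_2)$, not merely an injective order-reversing map; but this stronger property follows from the very fact you invoke, since $\psi(F_1)\supseteq\psi(F_2)$ gives $F_1=\bigcap_{H\in\psi(F_1)}H\subseteq\bigcap_{H\in\psi(F_2)}H=F_2$. With that sentence added, the proof is airtight.
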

Dually, if $M$ is realized by a hyperplane arrangement, then the adjoint arising this way is the \textit{matroid of lines} discussed in \cite{jell2022moduli}. For further properties of adjoints, we refer to \cite[Proposition 2.3]{fu2024adjoints}. The following ones will be useful to us.

\begin{prop}\label{prop:properties-adjoint}
    Let $W$ be an adjoint of $M$.
    \begin{enumerate}
        \item If $B$ is a basis of $M$, then \begin{equation}
            \{\cl_M(B-i)\mid i\in B\}
        \end{equation}
        is a basis of $W$.
        \item Let $F\in \cLL(W)$. Then 
        \begin{equation}
            \{G\in \cLL(M)\mid G\leq_W F\}
        \end{equation}
        is a modular cut of $M$.
        \item If $M$ is a modular matroid, i.e., $\cLL(M)$ is modular, then $W$ is unique and we have isomorphisms of lattices
        \begin{equation}
           \cLL(M) \cong \widehat{\Qt}^1(M) \cong  \cLL(W)^\text{op}.
        \end{equation}
       In particular, $\Dr_s^1(M)\cong \Trop M$.    \end{enumerate}
\end{prop}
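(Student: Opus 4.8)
Here is the plan. Everything will run through the order-reversing embedding witnessing that $W$ is an adjoint. By Cheung's embedding theorem \cite{cheung1974adjoints} I may take this embedding to be $\phi\colon\cLL(M)\hookrightarrow\cLL(W)$, $F\mapsto\{H\in\cLL^1(M)\mid F\subseteq H\}$; thus $\phi$ is injective and order-reversing, $\phi(F)$ is genuinely a flat of $W$ for every $F$, $\phi(\cl_M(\emptyset))=\cLL^1(M)$ is the top flat of $W$, $\phi([n])=\emptyset$ is its bottom, and $\phi$ restricts to a bijection $\cLL^1(M)\to\cLL_1(W)$, $H\mapsto\{H\}$. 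The key structural fact I would record first is that $\phi$ carries maximal chains of $\cLL(M)$ to maximal chains of $\cLL(W)$: both lattices are geometric of rank $d$, and a length-$d$ chain from $\hat 0_M$ to $\hat 1_M$ is carried by the order-reversing $\phi$ to a length-$d$ chain from $\hat 0_W$ to $\hat 1_W$, hence maximal. From this I get the two facts I will use repeatedly: $\rk_W\phi(F)=d-\rk_M F$ for every flat $F$ of $M$, and $\phi(G_1\vee G_2)=\phi(G_1)\cap\phi(G_2)=\phi(G_1)\wedge_W\phi(G_2)$ for all $G_1,G_2\in\cLL(M)$ (the latter being immediate from the description of $\phi$).

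For (1) I would fix a basis $B=\{b_1,\dots,b_d\}$ and set $F_k=\cl_M(\{b_{k+1},\dots,b_d\})$, giving a maximal chain $\hat 1_M=F_0\gtrdot F_1\gtrdot\cdots\gtrdot F_d=\hat 0_M$ with $\rk_M F_k=d-k$. The hyperplanes $\cl_M(B-b_i)$ are pairwise distinct (equality would make $B$ dependent), and a short check gives $\cl_M(B-b_i)\in\phi(F_k)\iff i\le k$ (use $B-b_i\supseteq\{b_{k+1},\dots,b_d\}$ for $i\le k$, and $b_k\notin\cl_M(B-b_k)$ for the exclusion at step $k$). Since $\phi(F_{k-1})\lessdot\phi(F_k)$ in $\cLL(W)$, this forces $\phi(F_k)=\cl_W\bigl(\phi(F_{k-1})\cup\{\cl_M(B-b_k)\}\bigr)$, so by induction $\phi(F_k)=\cl_W\{\cl_M(B-b_1),\dots,\cl_M(B-b_k)\}$; at $k=d$ this reads $\cl_W\{\cl_M(B-i)\mid i\in B\}=\phi(\hat 0_M)=\hat 1_W$, so these $d$ distinct elements span $W$ and hence form a basis. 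For (2), reading $G\le_W F$ as $\phi(G)\subseteq F$, the set in question is the order filter $\cF=\{G\in\cLL(M)\mid\phi(G)\subseteq F\}$ (an order filter because $\phi$ reverses order), nonempty since $\phi([n])=\emptyset$. If $G_1,G_2\in\cF$ is a modular pair, I would combine submodularity of $\rk_W$ with $\rk_W\phi(\cdot)=d-\rk_M(\cdot)$ and $\phi(G_1)\wedge_W\phi(G_2)=\phi(G_1\vee G_2)$ to get $\rk_W(\phi(G_1)\vee_W\phi(G_2))\ge d-\rk_M(G_1\wedge G_2)=\rk_W\phi(G_1\wedge G_2)$; since also $\phi(G_1)\vee_W\phi(G_2)\subseteq\phi(G_1\wedge G_2)$ (as $\phi$ reverses order), the two coincide, and as $F$ is a flat of $W$ containing $\phi(G_1),\phi(G_2)$ it contains $\phi(G_1\wedge G_2)$, i.e.\ $G_1\wedge G_2\in\cF$. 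Hence $\cF$ is a modular cut.

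For (3), assume $\cLL(M)$ modular. Then every pair of flats is a modular pair, so every modular cut of $M$ is an order filter closed under all binary meets, hence a principal filter $\{G\ge F\}$; intersecting with $\cLL^1(M)$ and using that a flat of a geometric lattice is the meet of the hyperplanes above it, $F\mapsto\{H\in\cLL^1(M)\mid F\subseteq H\}$ becomes a bijection from $\cLL(M)$ onto the linear subclasses of $M$. Via \eqref{eqn:elementary-quotient-to-linear-subclass} this promotes the order-preserving embedding $\cLL(M)\hookrightarrow\widehat{\cQ}^1(M)$ from the proof of \Cref{cor:dim-of-dr1} to an isomorphism $\cLL(M)\cong\widehat{\cQ}^1(M)$; then the order-reversing embedding $\cLL(W)\hookrightarrow\widehat{\cQ}^1(M)$ of \Cref{prop:characterization-of-adjoint}(2) must be onto, giving $\cLL(W)^{\op}\cong\widehat{\cQ}^1(M)\cong\cLL(M)$, and uniqueness of $W$ follows since those embeddings identify $\cLL_1(W)$ with $\cLL^1(M)$ canonically while a simple matroid is determined by its atom-labelled lattice of flats. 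For $\cD_s^1(M)\cong\Trop M$: \Cref{prop:characterization-of-adjoint}(3) gives $\Trop W\subseteq\cD_s^1(M)$, and by \Cref{thm:spanned-by-join-irreducibles} $\cD_s^1(M)$ is the tropical span of the join-irreducibles of $\cQ^1(M)$; under $\widehat{\cQ}^1(M)\cong\cLL(M)$ these are the principal truncations of $M$ at the points $i\in[n]$, whose trivial valuations are the points $\theta_i\in\PT{\cLL^1(M)}$ with $\theta_i(H)=0$ for $i\notin H$ and $\theta_i(H)=\infty$ for $i\in H$. Since $\{H\in\cLL^1(M)\mid i\in H\}=\phi(\{i\})$ is a flat of $W$, the point $\theta_i$ is the point at infinity along the corresponding ray of $\Trop W$, so $\theta_i\in\Trop W$; as a tropical linear space $\Trop W$ is tropically convex, so it contains the tropical span of the $\theta_i$, giving $\cD_s^1(M)\subseteq\Trop W$ and hence $\cD_s^1(M)=\Trop W$. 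Finally $\Trop W\cong\Trop M$ as polyhedral complexes, because the Bergman fan of a matroid depends only on the order complex of its lattice of flats and $\cLL(W)^{\op}\cong\cLL(M)$ has the same order complex as $\cLL(M)$.

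I expect the main obstacle to be part (3): the lattice input that every modular cut of a modular matroid is principal (yielding $\widehat{\cQ}^1(M)\cong\cLL(M)$) must be checked carefully, and—more substantively—the nonobvious inclusion $\cD_s^1(M)\subseteq\Trop W$ relies on feeding \Cref{thm:spanned-by-join-irreducibles} the observation that the join-irreducible elementary quotients of $M$ already lie on the Bergman fan of the adjoint. By contrast, parts (1) and (2) should be essentially bookkeeping with $\phi$ once the maximal-chain property is in hand.
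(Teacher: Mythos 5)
The paper itself gives no proof of this proposition (it is quoted with a pointer to \cite[Proposition 2.3]{fu2024adjoints}), so I am judging your argument on its own merits. Your overall framework is the standard one: work with the canonical order-reversing map $\phi\colon F\mapsto\{H\in\cLL^1(M)\mid F\subseteq H\}$, and your part (1) is correct granted that framework, as is most of part (3) (your detour through the points $\theta_i\in\Trop W$ does work, though once you have $\widehat{\cQ}^1(M)\cong\cLL(M)$ you could get $\cD_s^1(M)\cong\Trop M$ directly from \Cref{thm:dressian-is-order-complex}). One caveat on the setup: what this paper calls Cheung's embedding theorem is the embedding $\cLL(W)\hookrightarrow\widehat{\cQ}^1(M)$ of \Cref{prop:characterization-of-adjoint}(2), not the statement that the adjoint embedding $\cLL(M)\hookrightarrow\cLL(W)^{\op}$ may be taken to be $\phi$; under the paper's definition the fact that each $\phi(F)$ is a flat of $W$ needs a (short) justification, e.g.\ after normalizing the given embedding $e$ so that $e(H)=\{H\}$ on hyperplanes, one has $H'\in e(F)\iff e(H')\le e(F)\iff H'\supseteq F$, and a flat of the simple matroid $W$ equals its set of atoms, so $e(F)=\phi(F)$.

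The genuine gap is in part (2). Your key inequality $\rk_W(\phi(G_1)\vee_W\phi(G_2))\ge d-\rk_M(G_1\wedge G_2)$ does not follow from submodularity of $\rk_W$: submodularity, combined with $\rk_W\phi(\cdot)=d-\rk_M(\cdot)$ and $\phi(G_1)\wedge_W\phi(G_2)=\phi(G_1\vee G_2)$ and the modular-pair equality, gives exactly the opposite bound $\rk_W(\phi(G_1)\vee_W\phi(G_2))\le d-\rk_M(G_1\wedge G_2)$. The lower bound you need is precisely the nontrivial content of (2) — that $\phi$ sends the meet of a modular pair to the join of the images — and it requires input from the adjoint structure. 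A fix that stays within your toolkit uses part (1): choose a basis $B_\wedge$ of $G_1\wedge G_2$, extend it to bases $B_i=B_\wedge\sqcup C_i$ of $G_i$; the modular-pair equality makes $B_1\cup B_2$ independent, so extend by $D$ to a basis $B$ of $M$. The $d-\rk_M(G_1\wedge G_2)$ distinct hyperplanes $\cl_M(B-b)$ with $b\in C_1\cup C_2\cup D$ all lie in $\phi(G_1)\cup\phi(G_2)$ (for $b\in C_2\cup D$ they contain $G_1$, for $b\in C_1\cup D$ they contain $G_2$) and are independent in $W$, being part of the $W$-basis $\{\cl_M(B-b)\mid b\in B\}$ from part (1). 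This gives $\rk_W(\phi(G_1)\vee_W\phi(G_2))\ge d-\rk_M(G_1\wedge G_2)$, and then your containment argument and the closedness of $F$ finish the proof exactly as you wrote.
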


\begin{rmk}\label{rmk:unsimplified-adjoint}
    If $M$ has an adjoint $W$, we can define a matroid $\td{W}$ on $\nk{n}{d-1}$ as follows: $(B_1,...,B_d)$ is a basis if and only if 
\begin{center} \vspace{5pt}
    $\rk_M(B_i)=d-1$ for $i=1,...,d$ \quad and \quad $\{\cl_M(B_1),...,\cl_M(B_d)\}$ is a basis of $W$.
\vspace{5pt} \end{center} 
Then the simplification of $\td{W}$ is $W$. The parallel classes of atoms of $\td{W}$ are in bijection to $\cLL^1(M)$, and $B\in\nk{n}{d-1}$ is a loop in $\td{W}$ if and only if $\rk_M(B)<d-1$. Note that $\Trop \td{W}\subset \Dr^1(M)$. The following fact will be useful:
\begin{center} \vspace{5pt}
$\{B_1,...,B_d\}$ is dependent in $\td{W}$ if \; $\rk_M\left(\bigcap_{i=1}^d \cl_M(B_i)\right)\neq 0$.  
\vspace{5pt} \end{center} 
\end{rmk}

\section{Adjoints of valuated matroids}\label{sec:valuated-adjoint}
In this section, we develop the notion of adjoints of valuated matroids. On one hand, this notion is based on \Cref{prop:characterization-of-adjoint}, the geometric characterization of adjoints of ordinary matroids. On the other, it can be understood by drawing analogy from the embedding
\begin{equation}
    \Gamma: \Gr(d,E) \hookrightarrow \Gr(d,\wedge^{d-1}E),\quad V\mapsto \wedge^{d-1}V,
\end{equation}
which is an instance of \textit{geometric plethysms} which generalize Pl\"{u}cker embeddings. Relevant properties of this embedding are proved in \Cref{subsec:cofactor-grassman}. These properties say that the Pl\"{u}cker coordinates of $\wedge^{d-1} V \in \Gr(d,\wedge^{d-1}E)$ can be expressed in terms of the Pl\"{u}cker coordinates of $V\in \Gr(d,E)$.

The technical core of this section is \Cref{lem:Sigma-basis-valuation}, which include three formulas relating the values of $\mu$ to the values of its adjoint $\Sigma$. We call them the \textit{cofactor formulas}. They are the tropical analogs of the properties of the map $\Gamma$. 

In \Cref{subsec:applications-of-adjoint}, we apply what we know about adjoints to proving \Cref{thm:projective-plane}, which includes \Cref{main:G} as a special case.

\subsection{Two formulations}\label{subsec:valuated-adjoint}

We give two formulations of adjoints of valuated matroids, in reference to the two models $\Dr_s^1(\mu)$ and $\Dr^1(\mu)$, respectively.

\begin{subdefinition}{thm}\label{def:valuated-adjoint}
    \begin{definition}\label{def:valuated-adjoint-A}
        A valuated matroid $\Sigma_s\colon\cLL^1(M)\to \Rbar$ with a simple underlying matroid is an \textit{adjoint} of $\mu$, if
\begin{center} \vspace{5pt}
    $\Trop\Sigma_s\subset \Dr^1_s(\mu)$ and $\Sigma_s$ has rank $d$.
\vspace{5pt} \end{center}  
    \end{definition}
Similar to how we define the space $\Dr_s^1(\mu)$, any subset $P$ in $\Dr^1(\mu)$ can be simplified to a subset $P_s\subset \Dr_s^1(\mu)\subset \PT{\cLL^1(M)}$. In particular, for any valuated matroid $\Sigma$ such that $\Trop\Sigma\subset \Dr^1(\mu)$, there is an induced simplification $\Sigma_s$ such that $\Trop\Sigma_s\subset\Dr_s^1(\mu)$. Using this, we give an alternative formulation.
\begin{definition}\label{def:valuated-adjoint-B}
        A valuated matroid $\Sigma\colon \binom{\nk{n}{d-1}}{d}\to\Rbar$ is an \textit{adjoint} of $\mu$ if 
\begin{center} \vspace{5pt}
    $\Trop\Sigma\subset \Dr^1(\mu)$, $\Sigma$ has rank $d$, and its simplification $\Sigma_s\subset \Dr_s^1(\mu)$ has a simple underlying matroid.
\vspace{5pt} \end{center} 
    \end{definition}
\end{subdefinition}

It is easy to verify that the two formulations are consistent: $\Sigma$ is an adjoint in the sense of \Cref{def:valuated-adjoint-B} if and only if the induced simplification $\Sigma_s$ is an adjoint in the sense of \Cref{def:valuated-adjoint-A}.
We state some equivalent definitions of valuated adjoints below in reference to the simplified space $\Dr^1_s(\mu)$.

\begin{prop}\label{prop:valuated-adjoint-simplified}
Let $\Sigma_s\colon\cLL^1(M)\to \Rbar$ be a valuated matroid with a simple underlying matroid $W$. The following are equivalent.
\begin{enumerate}
    \item $\Sigma_s$ is an adjoint of $\mu$ in the sense of \Cref{def:valuated-adjoint-A};
    \item $\Trop\Sigma_s\subset \Dr_s^1(\mu)$ and $W$ is an adjoint of $M$;
    \item $\Sigma_s$ has rank $d$, and each defining equation of $\Dr^1_s(\mu)$ is given by a  valuated circuit of $\Sigma_s$.
\end{enumerate}
\end{prop}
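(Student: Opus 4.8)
The plan is to run the cycle $(1)\Rightarrow(2)\Rightarrow(3)\Rightarrow(1)$. Recall that $M$ is simple and that $W$, the underlying matroid of $\Sigma_s$, is simple by hypothesis; once each implication supplies $\rk\Sigma_s=d=\rk M$, the characterization in \Cref{prop:characterization-of-adjoint} applies to the pair $(M,W)$. For $(1)\Rightarrow(2)$ I would pass to recession spaces. By the simplified form of \Cref{thm:dressian-cut-out-by-three-term}, $\cD^1_s(\mu)$ is the intersection of the tropical hyperplanes $V_T=\{[\bx]\mid \min\{\bx(H_1)+c_1,\bx(H_2)+c_2,\bx(H_3)+c_3\}\text{ vanishes tropically}\}$, one for each concurrent triple $T=\{H_1,H_2,H_3\}$ of $M$, where the constants $c_i=c_i^T\in\R$ are finite (differences of values of $\mu$, via the parallel hyperplane relations). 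The recession of $V_T$ is exactly the hyperplane \eqref{eqn:defining-equations-dressian-tropM} cutting out $\cD^1_s(M)$ at $T$, so \eqref{eqn:recession-space-intersection} gives $\rec(\cD^1_s(\mu))\subseteq\cD^1_s(M)$. If (1) holds then $\Trop\Sigma_s\subseteq\cD^1_s(\mu)$, hence $\Trop W=\rec(\Trop\Sigma_s)\subseteq\cD^1_s(M)$; since $W$ is simple of rank $d$ on ground set $\cLL^1(M)$, the implication $(3)\Rightarrow(4)$ of \Cref{prop:characterization-of-adjoint} shows $W$ is an adjoint of $M$, which together with $\Trop\Sigma_s\subseteq\cD^1_s(\mu)$ is (2).

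For $(2)\Rightarrow(3)$, assume $W$ is an adjoint of $M$ and $\Trop\Sigma_s\subseteq\cD^1_s(\mu)$. Adjoints have rank $d$, so $\rk\Sigma_s=d$. Fix a concurrent triple $T=\{H_1,H_2,H_3\}$ with equation $V_T$ as above. Because $W$ is an adjoint, $T$ is a circuit of $W$ by the implication $(4)\Rightarrow(1)$ of \Cref{prop:characterization-of-adjoint}, so $\Sigma_s$ has a valuated circuit $\bC_T$ with support exactly $T$, whose tropical hyperplane $H_{\bC_T}$ contains $\Trop\Sigma_s$ by definition of $\Trop\Sigma_s$. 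It remains to check $H_{\bC_T}=V_T$; then the $T$-equation of $\cD^1_s(\mu)$ is literally the valuated-circuit equation $H_{\bC_T}$, and as $T$ is arbitrary we obtain (3). Both $H_{\bC_T}$ and $V_T$ are pulled back, along the coordinate projection $\pi_T$ onto the $T$-coordinates, from tropical lines in $\PT{T}$ (their coefficient vectors are supported on $T$), and both contain $\Trop\Sigma_s$: the former by the above, the latter since $V_T\supseteq\cD^1_s(\mu)\supseteq\Trop\Sigma_s$. Now $\pi_T(\Trop\Sigma_s)$ is the tropical linear space of the deletion $\Sigma_s\backslash(\cLL^1(M)\setminus T)$, whose underlying matroid is $U_{2,3}$ because $T$ is a circuit of $W$; hence $\pi_T(\Trop\Sigma_s)$ is a full tropical line in $\PT{T}$. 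A tropical line contained in a tropical line in the tropical projective plane equals it, so $\pi_T(\Trop\Sigma_s)$ coincides with the images of both $H_{\bC_T}$ and $V_T$ in $\PT{T}$, forcing $\bC_T$ and $(c_1,c_2,c_3)$ to agree up to a global shift, i.e.\ $H_{\bC_T}=V_T$.

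The implication $(3)\Rightarrow(1)$ is then formal: by (3), $\rk\Sigma_s=d$ and each defining hyperplane $V_T$ of $\cD^1_s(\mu)$ equals $H_{\bC_T}$ for a valuated circuit $\bC_T$ of $\Sigma_s$; since $\cD^1_s(\mu)=\bigcap_T V_T$ and $\Trop\Sigma_s$ is the intersection of the hyperplanes $H_{\bC}$ over \emph{all} valuated circuits $\bC$ of $\Sigma_s$, we get $\Trop\Sigma_s\subseteq\bigcap_T H_{\bC_T}=\cD^1_s(\mu)$; with the standing simplicity of $W$ this is exactly \Cref{def:valuated-adjoint-A}.

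I expect the only real obstacle to be the coefficient-matching step in $(2)\Rightarrow(3)$, namely upgrading ``$V_T$ contains $\Trop\Sigma_s$'' to ``$V_T=H_{\bC_T}$''. Its conceptual content is the tropical-orthogonality description of valuated circuits: for a valuated matroid $\Sigma_s$, any vector $\bc$ whose support is a circuit $C$ of the underlying matroid and for which $H_{\bc}\supseteq\Trop\Sigma_s$ is forced to be the scalar-unique valuated circuit of $\Sigma_s$ on $C$. This is equivalent to the statement, used above, that the projection of a tropical linear space to a coordinate subspace is the tropical linear space of the corresponding restriction; I would state and justify (or cite) it at least for size-$3$ supports, which is all that concurrent triples require. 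Everything else is unwinding the definitions of valuated adjoint against \Cref{prop:characterization-of-adjoint} and \Cref{thm:dressian-cut-out-by-three-term}.
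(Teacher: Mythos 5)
Your proposal is correct and takes essentially the same route as the paper: the cycle $(1)\Rightarrow(2)\Rightarrow(3)\Rightarrow(1)$, with the recession-space containment $\Trop W=\rec(\Trop\Sigma_s)\subset\rec(\cD^1_s(\mu))\subset\cD^1_s(M)$ plus \Cref{prop:characterization-of-adjoint} for $(1)\Rightarrow(2)$, and the fact that concurrent triples are circuits of $W$ for $(2)\Rightarrow(3)$. The only difference is that your projection-to-$\PT{T}$ argument explicitly verifies the coefficient-matching (that the defining equation on a concurrent triple must be the valuated circuit of $\Sigma_s$), a step the paper asserts without spelling out.
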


\begin{proof}
    We have
    \begin{equation}
        \rec\Sigma_s=\Trop W \subset \rec \Dr^1_s(\mu)\subset  \Dr^1_s(M).
    \end{equation}
    The second containment follows from \eqref{eqn:recession-space-intersection}. Hence, (1) implies (2) by \Cref{prop:characterization-of-adjoint} (3). If $W$ is an adjoint of $M$, then the concurrent triples are circuits of $W$. Hence, if in addition $\Sigma_s\subset \Dr^1_s(\mu)$, the defining equations of $\Dr^1_s(\mu)$ have to be valuated circuits of $\Sigma_s$, proving $(2)\Rightarrow(3)$. If the defining equations of $\Dr_s^1(\mu)$ are valuated circuits of $\Sigma_s$, then $\Sigma_s\subset \Dr^1_s(\mu)$. Since $\Sigma_s$ has rank $d$, it is an adjoint of $\mu$ by definition.
\end{proof}

We restate \Cref{prop:valuated-adjoint-simplified} in reference to the space $\Dr^1(\mu)$. 

\begin{prop}\label{prop:valuated-adjoint-unsimplified}
Let $\Sigma\colon\binom{\nk{n}{d-1}}{d}\to \Rbar$ be a valuated matroid with underlying matroid $\td{W}$. The following are equivalent.
\begin{enumerate}
    \item $\Sigma$ is an adjoint of $\mu$ in the sense of \Cref{def:valuated-adjoint-B};
    \item $\Trop\Sigma\subset \Dr^1(\mu)$ and the induced simplification of $\td{W}$ to a matroid on $\cLL^1(M)$ is an adjoint of $M$;
    \item $\Sigma$ has rank $d$ and each of the three-term incidence relations defining $\Dr^1(\mu)$ is given by a valuated circuit of $\Sigma$. 
\end{enumerate}
\end{prop}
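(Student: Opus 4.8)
The plan is to deduce \Cref{prop:valuated-adjoint-unsimplified} from \Cref{prop:valuated-adjoint-simplified} by transporting all three statements through the simplification $\Sigma\mapsto\Sigma_s$. First I would record the elementary facts that make this transport work: simplification induces an isomorphism of polyhedral complexes $\cD^1(\mu)\cong\cD_s^1(\mu)$ which preserves rank, so $\Trop\Sigma\subset\cD^1(\mu)$ if and only if $\Trop\Sigma_s\subset\cD_s^1(\mu)$, and $\Sigma$ has rank $d$ if and only if $\Sigma_s$ does; and by \Cref{rmk:unsimplified-adjoint} the underlying matroid $\td{W}$ of $\Sigma$ simplifies to a matroid $W$ on $\cLL^1(M)$, whose support as a valuated matroid makes $W$ precisely the underlying matroid of $\Sigma_s$, with the loops of $\td{W}$ being exactly the dependent $(d-1)$-sets and the parallel classes of $\td{W}$ indexed by $\cLL^1(M)$. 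In particular the matroid called ``$W$'' in \Cref{prop:valuated-adjoint-simplified} is by construction the induced simplification of $\td{W}$ referred to in condition~(2) here.

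With this in place, the equivalence $(1)\Leftrightarrow(2)$ is essentially formal: the consistency remark following \Cref{def:valuated-adjoint} says that $\Sigma$ is an adjoint of $\mu$ in the sense of \Cref{def:valuated-adjoint-B} exactly when $\Sigma_s$ is one in the sense of \Cref{def:valuated-adjoint-A}, and then \Cref{prop:valuated-adjoint-simplified} $(1)\Leftrightarrow(2)$, together with the identification of $W$ just noted, gives the claim.

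The substantive point is the bridge to condition~(3), and for this I would set up a dictionary between the data cutting out $\cD^1(\mu)$ and the data cutting out $\cD_s^1(\mu)$. By \Cref{thm:dressian-cut-out-by-three-term} the three-term incidence relations split into the degenerate, parallel, and concurrent hyperplane relations. Under simplification the concurrent hyperplane relations correspond bijectively to the defining equations of $\cD_s^1(\mu)$ (those labelled by concurrent triples), while the degenerate and parallel relations are absorbed into the simplification. On the valuated-matroid side, the valuated circuits of $\Sigma$ supported on at least three $(d-1)$-sets push forward to the valuated circuits of $\Sigma_s$; a loop $A$ of $\td{W}$ is a size-one circuit whose associated tropical hyperplane is exactly $\{\,[\bx]\mid\bx(A)=\infty\,\}$, i.e.\ a degenerate hyperplane relation; and a parallel pair $\{A,B\}$ of $\td{W}$ is a size-two circuit whose valuated-circuit vector, once $\Trop\Sigma\subset\cD^1(\mu)$ is assumed, is forced to be the parallel hyperplane relation \eqref{eqn:paralle-hyperplane-relation} (this uses that $\mu(Ak)-\mu(Bk)$ is independent of the extension $k$). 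Thus ``each three-term incidence relation defining $\cD^1(\mu)$ is given by a valuated circuit of $\Sigma$'' unpacks into exactly two assertions: that $\td{W}$ simplifies to an adjoint $W$ of $M$ (this handles the degenerate and parallel relations) and that each defining equation of $\cD_s^1(\mu)$ is a valuated circuit of $\Sigma_s$ (this handles the concurrent ones). Invoking \Cref{prop:valuated-adjoint-simplified} $(2)\Leftrightarrow(3)$ then closes the cycle $(1)\Leftrightarrow(2)\Leftrightarrow(3)$.

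The main obstacle I anticipate is not conceptual but a careful bookkeeping check: one must verify that the degenerate and parallel hyperplane relations, which vanish when one passes from $\cD^1(\mu)$ to $\cD_s^1(\mu)$, are faithfully recorded on the valuated-matroid side as the loops and parallel classes of $\td{W}$ — and, crucially, that the containment $\Trop\Sigma\subset\cD^1(\mu)$ pins down the valuation of $\Sigma$ on each parallel pair to the value dictated by \eqref{eqn:paralle-hyperplane-relation}, rather than merely to some valuated circuit supported on that pair. Once this compatibility is confirmed, the remainder is a formal transfer through \Cref{prop:valuated-adjoint-simplified}.
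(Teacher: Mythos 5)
Your proposal is correct and matches the paper's intent exactly: the paper offers no separate argument for this proposition, presenting it as a restatement of \Cref{prop:valuated-adjoint-simplified} obtained by transporting everything through the simplification $\Sigma\mapsto\Sigma_s$, which is precisely the reduction you carry out. The dictionary you spell out (degenerate and parallel relations $\leftrightarrow$ loops and parallel pairs of $\td{W}$, concurrent relations $\leftrightarrow$ circuits of $\Sigma_s$, with the containment $\Trop\Sigma\subset\cD^1(\mu)$ pinning down the circuit values via valuated cocircuits) is exactly the bookkeeping the paper leaves implicit.
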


\subsection{The cofactor formulas for valuated matroids}\label{subsec:cofactor-valuated}

Adjoints of valuated matroids may not exist and may not be unique if they do exist. However, \Cref{lem:Sigma-basis-valuation} prescribes \textit{some} values of $\Sigma$. We call these formulas the \textit{cofactor formulas}, because they are the tropical analogs of the relation between the determinant of a matrix with the determinant of its cofactor matrix. For a full elaboration, see \Cref{subsec:cofactor-grassman}.

By \Cref{prop:properties-adjoint} (1), for any basis $D$ of $\mu$, the subset
\begin{equation}
    \tD =\{D-i\mid i\in D\} \in \binom{\nk{n}{d-1}}{d}
\end{equation}
is a basis of $\Sigma$. we call $\tD$ the basis \textit{induced by} $D$. For $B\in \nk{n}{d-1}$, put 
\begin{center} \vspace{5pt}
$\tD+B:=\tD\cup\{B\}$ if $B\notin \tD$ and $\tD-B:=\tD\backslash\{B\}$ if $B\in \tD$.    
\vspace{5pt} \end{center} 
Besides these induced bases, one finds other bases of $\Sigma$ using basic circuits. If $(\tD,A)$ is a basic circuit of $\Sigma$, where $\tD$ is a basis of $\Sigma$ and the $(d-1)$-set $A$ is not in $\tD$, then for any $B\in (\tD,A)$, $\tD-B+A$ is another basis of $\Sigma$. We call this process an \textit{elementary basis exchange via the circuit} $(\tD,A)$. Moreover, if $\bC_{(B,v)}$ is a valuated basic circuit, then
\begin{equation}\label{eq:basis-val-from-circuit-val}
    \bC_{(\tD,A)}(B)-\bC_{(\tD,A)}(A) = \Sigma(\tD-B+A) - \Sigma(\tD)
\end{equation}
measures how much the basis valuation changes when we change to a new basis.

To avoid stacks of notations, we use the following \textit{temporary} notations for the proof of \Cref{lem:Sigma-basis-valuation}: if $B\subset [n]$ and $j\in B$, we write $B^j$ for $B-j$. The statements in \Cref{lem:Sigma-basis-valuation} look complicated, but the content is simple. We illustrate the statements in \Cref{ex:adjoint-valuation-lemma} using the case $d=3$.

\begin{lemma}[The cofactor formulas]\label{lem:Sigma-basis-valuation} 
    Let $\Sigma$ be an adjoint of $\mu$. Let $\tD_1,\tD_2$ be the bases of $\Sigma$ induced by the bases $D_1$ and $D_2$ of $\mu$, respectively. Then $\Sigma$ satisfies the following.
    \begin{enumerate}

    \item  If $i\in D_1\cap D_2$ and $B$ is any independent $(d-1)$-set of $M$ such that $\tD_1 - D_1^i + B$ is a basis of $\Sigma$, then
    \begin{equation}
        \Sigma(\tD_2 -D_2^i + B) - \Sigma(\tD_1 - D_1^i + B) = (d-2)(\mu(D_2)-\mu(D_1))
    \end{equation}    
        \item $\Sigma(\tD_2)-\Sigma(\tD_1)=(d-1)(\mu(D_2)-\mu(D_1))$.
        \item Let $\tD$ be a basis of $\Sigma$ induced by $D$. If $i\in D$, $B$ is some independent $(d-1)$-set of $M$, and $D'=B+i$, then
        \begin{equation}\label{eqn:valuation-of-sigma-3}
            \Sigma(\tD - D^i + B) - \Sigma(\tD) = \mu(D') - \mu(D)
        \end{equation}
    \end{enumerate}
\end{lemma}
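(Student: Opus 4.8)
The plan is to reduce all three identities to a single \emph{elementary move} in $\Sigma$ and then compose enough copies of it. By \Cref{prop:valuated-adjoint-unsimplified}(3), for any independent $S\in\nk{n}{d-2}$ and $x,y,z\notin S$ such that $S+x,S+y,S+z$ are independent, the concurrent triple $\{S+x,\,S+y,\,S+z\}$ is the support of a valuated circuit $\bC$ of $\Sigma$ with, up to a common additive constant, $\bC(S+x)=\mu(S+y+z)$, $\bC(S+y)=\mu(S+x+z)$, $\bC(S+z)=\mu(S+x+y)$; when two of the corresponding hyperplanes coincide this degenerates to a parallel-hyperplane relation, i.e.\ a two-term valuated circuit of $\Sigma$, and the computations below go through verbatim. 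Since $\{S+x,S+y,S+z\}$ is a circuit of the underlying matroid of $\Sigma$ (it is a concurrent triple of $M$, hence a circuit of the adjoint $W$ by \Cref{prop:characterization-of-adjoint}, and its members span distinct hyperplanes), whenever a basis $\mathtt{G}$ of $\Sigma$ contains $S+x$ and $S+z$ but not $S+y$ the triple is precisely the basic circuit $(\mathtt{G},S+y)$, and \eqref{eq:basis-val-from-circuit-val} gives the elementary move
\begin{equation*}
\Sigma\big(\mathtt{G}-(S+x)+(S+y)\big)-\Sigma(\mathtt{G})=\bC(S+x)-\bC(S+y)=\mu(S+y+z)-\mu(S+x+z).
\end{equation*}

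For (2), first take $|D_1\setminus D_2|=1$, say $D_1=E+p$ and $D_2=E+q$ with $|E|=d-1$; then $\tD_1$ and $\tD_2$ share the element $E$, while their remaining $d-1$ elements are $(E-e)+p$ and $(E-e)+q$ for $e\in E$. Applying the elementary move with $S=E-e$, $x=p$, $y=q$, $z=e$ successively over $e\in E$ — legitimate because the running basis always contains $(E-e)+p$ and the fixed hyperplane $E$ — turns $\tD_1$ into $\tD_2$ in $d-1$ moves, each changing $\Sigma$ by $\mu(D_2)-\mu(D_1)$, so $\Sigma(\tD_2)-\Sigma(\tD_1)=(d-1)(\mu(D_2)-\mu(D_1))$; for arbitrary $D_1,D_2$ join them by a path in the basis-exchange graph of $M$ and telescope, all intermediate values $\Sigma(\widetilde{G})$ being finite. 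Statement (1) is proved identically inside the contraction $M/i$: for $|D_1\setminus D_2|=1$ write $D_1=E+i+p$, $D_2=E+i+q$ with $|E|=d-2$, so that $\tD_1-D_1^i+B$ and $\tD_2-D_2^i+B$ share the two elements $E+i$ and $B$, and the elementary move with $S=(E-e)+i$, $x=p$, $y=q$, $z=e$, performed $d-2$ times, converts one into the other with total change $(d-2)(\mu(D_2)-\mu(D_1))$; the general case telescopes along a path in the basis-exchange graph of $M/i$. Finally (3) follows from (1) and (2): writing $D'=B+i$ and $\widetilde{D'}$ for the basis of $\Sigma$ induced by $D'$, apply (1) with $D_1=D'$, $D_2=D$, $B=D'-i$ — noting $\tD_1-D_1^i+B=\widetilde{D'}$ and $\tD_2-D_2^i+B=\tD-D^i+B$ — to obtain $\Sigma(\tD-D^i+B)-\Sigma(\widetilde{D'})=(d-2)(\mu(D)-\mu(D'))$, and then add $\Sigma(\widetilde{D'})-\Sigma(\tD)=(d-1)(\mu(D')-\mu(D))$ supplied by (2).

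The real work is the bookkeeping around the moves and the telescoping. One must check that each move is an honest elementary basis exchange in the underlying matroid of $\Sigma$ — i.e.\ that the concurrent triple in play (or its degenerate parallel pair) really is the basic circuit of the running basis — and that the anchor hyperplane, and in (1) the carried set $B$, survive the entire chain; the degenerate situations (dependent representatives, coincident hyperplanes, $i$ a coloop) are easily dispatched. The point I expect to require the most care is that every set $\widetilde{G}-G^i+B$ encountered along the path in $M/i$ is again a basis of $\Sigma$, so that the subtracted values are finite. This should follow inductively: all elements of $\widetilde{G}-G^i$ contain $i$, so by \Cref{rmk:unsimplified-adjoint} the hypothesis that $\tD_1-D_1^i+B$ is a basis forces $i\notin B$; the elementary moves then introduce only genuinely new sets, so, starting from the single basis provided by the hypothesis, every set reached along the chain is again a basis. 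Verifying these structural points, rather than any individual computation, is where the effort lies.
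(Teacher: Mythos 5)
Your proposal is correct and follows essentially the same route as the paper: both arguments reduce everything to elementary basis exchanges of $\Sigma$ along the valuated circuits supplied by the three-term (concurrent/parallel) incidence relations, compute each exchange's contribution as $\mu(D_2)-\mu(D_1)$ via \eqref{eq:basis-val-from-circuit-val}, telescope along basis-exchange paths (keeping $i$ for part (1)), and obtain (3) as the sum $(d-2)(\mu(D)-\mu(D'))+(d-1)(\mu(D')-\mu(D))$. The only difference is organizational — the paper proves (1) first with $d-2$ moves and gets (2) by one extra exchange, whereas you prove (2) directly with $d-1$ moves — and your handling of the degenerate cases (parallel pairs, $i\in\cl_M(B)$ via \Cref{rmk:unsimplified-adjoint}) matches the paper's.
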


\begin{ex}\label{ex:adjoint-valuation-lemma}
\Cref{subfig:valuation-1,subfig:valuation-2,subfig:valuation-3} illustrate the three cases in \Cref{lem:Sigma-basis-valuation} using point configurations in the plane. In this case, the ground set of $\Sigma$ is all subsets of $[n]$ of size 2, which correspond to lines. The basis $\tD$ of $\Sigma$ induced by the basis $D=\{1,2,3\}$ of $\mu$ is $\{12,13,23\}$, and $\tD^1=\{12,13\}$, etc. Using the same color coding as in the pictures, the three cases in \Cref{lem:Sigma-basis-valuation} mean
\begin{center} \vspace{5pt}
    \Cref{subfig:valuation-1}: $\Sigma({\color{red}12},{\color{red}13},67)-\Sigma({\color{blue}14},{\color{blue}15},67)=\mu(123)-\mu(145)$; \\
    \Cref{subfig:valuation-2}: $\Sigma(12,13,23)-\Sigma({\color{red}45},{\color{red}46},{\color{red}56})=2\big(\mu(123)-\mu({\color{red}456})\big)$; \\
    \Cref{subfig:valuation-3}: $\Sigma(12,13,{\color{red}45})-\Sigma(12,13,{\color{blue}23})=\mu(145)-\mu(123)$.
\vspace{5pt} \end{center} 
\begin{figure}
    \centering
    \begin{tabular}{ccc}
        \subfloat[]{\label{subfig:valuation-1}
        \includegraphics[width=1.2in]{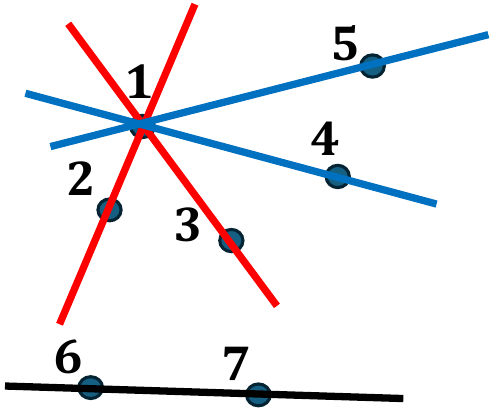}
        } \hspace{0.2in} & \subfloat[]{\label{subfig:valuation-2}
        \includegraphics[width=1.5in]{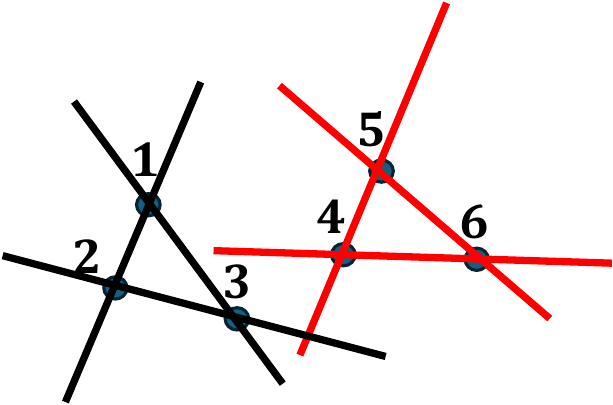}
        }\hspace{0.2in} &  \subfloat[]{\label{subfig:valuation-3}
        \includegraphics[width=1.2in]{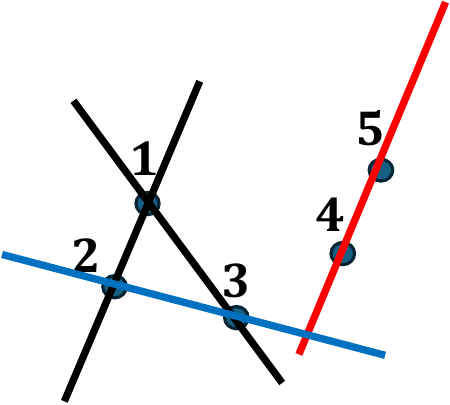}
        }
    \end{tabular}
    
    \caption{The three cases in \Cref{lem:Sigma-basis-valuation} when $d=3$.}
    \label{fig:three-cases}
\end{figure}    
\end{ex}

\begin{proof}
    For the first statement: we first prove the case when $|D_1\cap D_2|=d-1$. WLOG, assume that
    \begin{equation}
        D_1 = A \sqcup \{a\} \text{ and } D_2 = A \sqcup \{b\},
    \end{equation}
    where $A=[d-1]$, $a,b\notin A$ and $a\neq b$. Assume also that $i=1$. We analyze how to get $\tD_2-D_2^1+B$ from $\tD_1-D_1^1+B$ using elementary basis exchange. Explicitly, 
    \begin{center} \vspace{5pt}
        $\tD_1-D_1^1+B = \left\{B,D_1^2,D_1^3,...,D_1^{d-1},A\right\}$ \quad and \quad $\tD_2-D_2^1+B = \left\{B,D_2^2,D_2^3,...,D_2^{d-1},A\right\}$.
    \vspace{5pt} \end{center} 
    Note that for each $k=2,...,d-1$, the hyperplanes $\cl_M(D_1^k)$, $\cl_M(D_2^k)$ and $\cl_M(A)$ are meet at the coline $\cl_M(A^k)$. By \Cref{prop:valuated-adjoint-unsimplified} (3), $\{D_1^k,D_2^k,A\}$ is a circuit of $\Sigma$. We have the following sequence of elementary basis exchanges:  
    \begin{align*}
        \tD_1-D_1^1+B = \left\{B,D_1^2,D_1^3,...,D_1^{d-1},A\right\} & \xrightarrow{+D_2^2 - D_1^2} \left\{B,D_2^2,D_1^3,...,D_1^{d-1},A\right\} \\
        & \xrightarrow{+D_2^3 - D_1^3} \left\{B,D_2^2,D_2^3,...,D_1^{d-1},A\right\} \\
        & \xrightarrow{\quad\cdots\quad} \cdots \\
        & \xrightarrow{+D_2^{d-1} - D_1^{d-1}} \left\{B,D_2^2,D_2^3,...,D_2^{d-1},A\right\} =\tD_2 - D_2^1 + B
    \end{align*}
    Each circuit $\{D_1^k,D_2^k,A\}$ corresponds to the incidence relation for $\Trop\theta\subset\Trop\mu$ indexed by $(A^k,A^kabk)$: 
    \begin{equation}
        \min\Big\{\theta(A^ka)+\mu(A^kbk),\quad \theta(A^kb)+\mu(A^kak),\quad \theta(A^kk)+\mu(A^kab)\Big\},
    \end{equation}
     which corresponds to a valuated circuit $\bC_k\in \Rbar^{\nk{n}{d-1}}$ of $\Sigma$, where
    \begin{equation}
        \bC_k(B) = \begin{cases}
            \mu(A^kbk),&\quad \text{ if }B = A^ka \\
            \mu(A^kak),&\quad \text{ if }B = A^kb \\
            \mu(A^kab),&\quad \text{ if }B = A^kk \\
            \infty, & \quad \text{ else.}
        \end{cases}
    \end{equation}
    Note that $\mu(A^kbk)=\mu(D_2)$ and $\mu(A^kak)=\mu(D_1)$ are both finite. Take the sum of all the changes of the basis valuation given by \eqref{eq:basis-val-from-circuit-val}:
    \begin{align*}
        \Sigma(\tD_2 - D_2^1+B) - \Sigma(\tD_1 - D_1^1+B) & = \sum_{k=2}^{d-1}\big(\bC_k(D_1^{k}) - \bC_k(D_2^{k})\big) \\
        & = \sum_{k=2}^{d-1}\big(\bC_k(A^{k}a) - \bC_k(A^{k}b)\big) \\
        & = (d-2)(\mu(D_2)-\mu(D_1)).
    \end{align*}
    If $D_1$ and $D_2$ are arbitrary bases with a common element $i$, then we can take a sequence of elementary basis exchanges to get $D_2$ from $D_1$. Applying the above calculation to each consecutive step gives the result.

    For the second statement: again, by taking a sequence of elementary basis exchanges, it suffices to prove the case where $D_1$ and $D_2$ differ by one element. Still assume that
    \begin{equation}
        D_1 = A \sqcup \{a\} \text{ and } D_2 = A \sqcup \{b\},
    \end{equation}
    where $A=[d-1]$, $a,b\notin A$ and $a\neq b$. Note that $\cl_M(D_1^1),\cl_M(D_2^1)$, and $\cl_M(A)$ are concurrent hyperplanes, so we have the elementary basis exchange
    \begin{equation}
        \tD_2 - D_2^1 + D_1^1 =\left\{D_1^1,D_2^2,D_2^3,...,D_2^{d-1},A\right\} \xrightarrow{+D_2^1 - D_1^1} \tD_2.
    \end{equation}
    The corresponding incidence relation is indexed by $(A^1,A^1ab1)$:

        \begin{equation}
        \min\Big\{\theta(A^1a)+\mu(A^1b1),\quad\theta(A^1b)+\mu(A^1a1),\quad\theta(A^11)+\mu(A^1ab)\Big\},
    \end{equation}
    and the corresponding valuated circuit $\bC$ is
    \begin{equation}
        \bC(B) = \begin{cases}
            \mu(A^1a1),&\quad \text{ if }B = A^1b \\
            \mu(A^1b1),&\quad \text{ if }B = A^1a \\
            \mu(A^1ab),&\quad \text{ if }B = A^11 \\
            \infty, & \quad \text{ else.}
        \end{cases}
    \end{equation}
     We get  
    \begin{equation}
    \begin{split}
        \Sigma(\tD_2) - \Sigma(\tD_1) = \; &  \Sigma(\tD_2) -\Sigma(\tD_2 - D_2^1 + D_1^1) + \Sigma(\tD_2 - D_2^1 + D_1^1) - \Sigma(\tD_1) \\
        = \; & \bC(D_1^1) - \bC(D_2^1) + \Sigma(\tD_2 - D_2^1 + D_1^1) - \Sigma(\tD_1-D_1^1 + D_1^1) \\
        = \; & \mu(A^1b1) - \mu(A^1a1) + (d-2)(\mu(D_2)-\mu(D_1))\\
        = \; &  \mu(D_2)-\mu(D_1) + (d-2)(\mu(D_2)-\mu(D_1)) \\
         = \; &(d-1)(\mu(D_2)-\mu(D_1)).
    \end{split}
    \end{equation}
    For the third statement, if $i\notin \cl_M(B)$, then \eqref{eqn:valuation-of-sigma-3} is obtained by combining the first two: 
    \begin{equation}
    \begin{split}
        \Sigma(\tD - D^i +B)-\Sigma(\tD) =  \; & \Sigma(\tD - D^i +B) -\Sigma(\tD'-B + B) + \Sigma(\tD') - \Sigma(\tD) \\ = \; & (d-2)(\mu(D)-\mu(D')) + (d-1)(\mu(D')-\mu(D)) \\
        =\; & \mu(D')-\mu(D).
    \end{split}
    \end{equation}
    If $i\in \cl_M(B)$, then
 since 
 \begin{center} \vspace{5pt}
    $i\in \cl_M(A)$ for all $A\in \tD-D^i+B$,
 \vspace{5pt} \end{center} 
we know $\tD-D^i+B$ is not a basis of $\Sigma$ by \Cref{rmk:unsimplified-adjoint}. Note that $D=B+i$ is not a basis of $\mu$, either, so both sides of \eqref{eqn:valuation-of-sigma-3} are $\infty$. This completes the proof.
\end{proof}

\subsection{Valuated matroids with adjoints}
We provide classes of valuated matroids with adjoints.

\begin{prop}\label{prop:projective-geometry-valuated-adjoint}
If the underlying matroid $M$ of $\mu$ is a direct sum of finite projective spaces, then $\mu$ has a unique adjoint $\Sigma_s\colon \cLL^1(M) \to \Rbar$. Moreover, there is an isomorphism $\mu\cong \Sigma_s$.
\end{prop}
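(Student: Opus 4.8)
The plan is to reduce to the modular case of ordinary adjoints already in hand and then upgrade it using the cofactor formulas and the rigidity of $M$. First I would note that a direct sum of projective geometries is a modular matroid, so \Cref{prop:properties-adjoint}~(3) applies: the ordinary adjoint $W$ of $M$ exists, is unique, satisfies $\cLL(W)\cong\cLL(M)^{\mathrm{op}}$, and $\cD_s^1(M)\cong\Trop M$. Since the lattice of flats of a projective geometry is the subspace lattice of some $\mathbb F_q^r$, which is self-dual, and the order dual of a product of lattices is the product of the order duals, we get $\cLL(W)\cong\cLL(M)$ and hence $W\cong M$ as simple matroids. This establishes the matroid part of the claimed isomorphism, and by \Cref{prop:valuated-adjoint-simplified}~(2) it already forces the underlying matroid of \emph{any} adjoint of $\mu$ to be $W$.

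For existence and the precise shape of $\Sigma_s$ I would use that $M$, and hence $W\cong M$, is rigid: finite projective geometries are rigid (their inner Tutte group is torsion, \Cref{subset:tutte-group}) and rigidity passes to direct sums, so every valuated matroid supported on $W$ is a weight-vector valuation $B\mapsto\bv\cdot\be_B$. Writing $\mu$ itself as the weight-vector valuation attached to some $\bw\in\R^n$ and fixing hyperplane bases $\{D_H\}_{H\in\cLL^1(M)}$, I would \emph{define} $\Sigma_s$ to be the weight-vector valuation on $W$ with weights $\bv(H):=\bw\cdot\be_{D_H}$ and then verify $\Trop\Sigma_s\subset\cD_s^1(\mu)$. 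By \Cref{thm:dressian-cut-out-by-three-term} it suffices to check the three-term incidence relations indexed by concurrent triples: for a concurrent triple meeting at a coline $L$, choosing a basis of $\mu$ adapted to $L$ turns the required tropical vanishing, via \Cref{lem:Sigma-basis-valuation}~(3), into a three-term Pl\"ucker relation of $\mu$, which holds since $\mu$ is a valuated matroid; the recession directions are handled by $\rec(\Trop\Sigma_s)=\Trop W=\cD_s^1(M)$. As $\Sigma_s$ has rank $d$, it is an adjoint of $\mu$. For uniqueness, any adjoint $\Sigma_s'$ has underlying matroid $W$ and, being a weight-vector valuation, is pinned down on all of its bases — in the modular case every basis of $W$ is induced by a basis $D$ of $M$ as $\{\cl_M(D-i):i\in D\}$ — by \Cref{lem:Sigma-basis-valuation}~(2)–(3) in terms of $\mu$, up to one additive constant; hence $[\Sigma_s']=[\Sigma_s]$, and in particular the construction does not depend on the choice of $\{D_H\}$.

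Finally, the isomorphism $\mu\cong\Sigma_s$ should follow from the explicit description above: composing the self-duality isomorphism of each projective-geometry summand (the point--hyperplane polarity) with the dictionary $H\leftrightarrow D_H$, one matches the weight function $\bv$ with $\bw$ up to the normalization built into the construction, exactly as predicted by the classical identity of \Cref{subsec:cofactor-grassman} — applying $\wedge^{d-1}$ to a torus-rescaled realization of $M$ produces a torus-rescaled realization of the isomorphic matroid $W$. I expect this last bookkeeping step — reconciling the cofactor weights $\bw\cdot\be_{D_H}$ with $\bw$ through the self-duality of the projective geometry while correctly tracking the normalizations (the factor $d-1$ appearing in \Cref{lem:Sigma-basis-valuation}~(2) against the corrections from the parallel-hyperplane relations \eqref{eqn:paralle-hyperplane-relation} used to pass from $\cD^1(\mu)$ to $\cD_s^1(\mu)$) — to be the one genuinely delicate point; the rest is either the modular case of \Cref{prop:properties-adjoint} or a direct translation of a Pl\"ucker relation of $\mu$.
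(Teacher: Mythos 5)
Your skeleton overlaps substantially with the paper's proof (modularity and \Cref{prop:properties-adjoint}~(3), rigidity, $W\cong M$ by self-duality), but the paper gets everything in one stroke that you reconstruct by hand: by rigidity one may assume $\mu$ \emph{is} the trivial valuation, and then $\cD_s^1(M)\cong\Trop M$ is itself a tropical linear space of rank $d$; since a tropical linear space cannot properly contain another of the same dimension, it is the tropicalization of a unique class $[\Sigma_s]$, which is therefore the unique adjoint by \Cref{def:valuated-adjoint-A}, and $\Sigma_s$ is the trivial valuation on $W\cong M$, so the isomorphism is immediate. This normalization also makes your explicit weight-vector construction, the verification of the three-term relations, and the "every basis of $W$ is induced by a basis of $M$" claim (true here, but an extra unproved lemma in your route) unnecessary.

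Two concrete problems with your version as written. First, in the existence check you appeal to \Cref{lem:Sigma-basis-valuation}~(3) to reduce the concurrent-triple equations to Pl\"{u}cker relations of $\mu$; but the hypothesis of that lemma is that $\Sigma$ is \emph{already} an adjoint of $\mu$, so invoking it to certify your candidate is circular. The step can be repaired by a direct computation: for $\mu(B)=\bw\cdot\be_B$ and $\bv(H)=\bw\cdot\be_{D_H}$, after the parallel-hyperplane corrections the equation of $\cD_s^1(\mu)$ attached to a concurrent triple becomes the tropical vanishing of $\min_m\{\theta'(H_m)-\bv(H_m)\}$, which is precisely the valuated-circuit hyperplane of your $\Sigma_s$ on that triple (concurrent triples are circuits of $W$) — not a Pl\"{u}cker relation of $\mu$ — and every point of $\Trop\Sigma_s$ satisfies it. Second, the "Moreover" clause is not actually proved: you defer it to a bookkeeping step, and the matching you predict ($\bv(H)=\bw\cdot\be_{D_H}$ agreeing with $\bw$ under the point--hyperplane polarity) fails on the nose for nontrivial $\bw$; e.g.\ for the Fano plane with $\bw=\be_1$ one gets $\bv(H)\in\{0,1\}$ depending on whether the chosen $D_H$ contains $1$, which is not $\bw$ of the polar point up to a constant. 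The isomorphism only becomes transparent after first normalizing $\mu$ to the trivial valuation via rigidity (equivalently, after fixing a notion of isomorphism up to adding a weight vector), which is exactly the reduction the paper performs at the start.
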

\begin{proof}
    A direct sum of finite projective spaces is rigid, so we may assume that $\mu$ is the trivial valuation on $M$. By \Cref{thm:dressian-is-order-complex}, $\Dr_s^1(M)\cong \Trop M$. Both the existence and uniqueness follows from \Cref{def:valuated-adjoint-A}.
\end{proof}

Another source of valuated adjoints is \textit{tropicalization}. Let $\Trop\mu$ be a \textit{tropicalized} linear space. Namely, there is a $d$-dimensional vector space $V\subset\K^n$ such that $\Trop \PP V = \Trop\mu$. Since $W\subset V\Rightarrow \Trop \PP W\subset \Trop \PP V$, we have the following diagram
\begin{equation}\label{eqn:tropicalization-diagram}
    \begin{tikzcd}
        \Gr(d-1,V)\ar[r,hookrightarrow]\ar[d,"\Trop"] & \Gr(d-1,n) \ar[d,"\Trop"] \\
      \Dr^1(\mu) \ar[r,hookrightarrow] & \Dr(d-1,n)
    \end{tikzcd}
\end{equation}
As is discussed in \Cref{subsec:history}, $\Gr(d-1,V)$ lives in the Pl\"{u}cker embedding of $\Gr(d-1,n)$ as a $(d-1)$-dimensional linear subspace, so $\Trop \Gr(d-1,V)$ is a tropical linear space of projective dimension $d-1$ inside $\Dr^1(\mu)$. We show that the valuated matroid associated with $\Trop \Gr(d-1,V)$ is an adjoint of $\mu$. The case when the underlying matroid of $\mu$ is $U_{n-1,n}$ is special: $\Dr^1(\mu)$ is a tropical linear space.

\begin{prop}\label{prop:valuated-adjoint-tropicalization}
    Let $\mu$ be the valuated matroid representing the tropicalized linear space $\Trop V$.
    \begin{enumerate}
        \item The valuated matroid associated with $\Trop \Gr(d-1,V)$ as in \eqref{eqn:tropicalization-diagram} is an adjoint of $\mu$.
        \item If $V$ is a generic hyperplane in $\K^n$, then $\Trop \Gr(n-2,V)=\Dr^1(\mu)$ is a translation of $\Trop M_{K_n}$, where $M_{K_n}$ is the graphic matroid of the complete graph $K_n$.
    \end{enumerate}
\end{prop}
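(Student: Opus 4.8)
The plan is to treat the two parts separately, using the commutative square \eqref{eqn:tropicalization-diagram} for part (1) and the combinatorics of the elementary quotient lattice of $U_{n-1,n}$ for part (2). Throughout, write $\Sigma$ for the valuated matroid carried by $\Trop\Gr(d-1,V)$, i.e.\ the valuated matroid of the linear subspace $\Gr(d-1,V)=\PP(\wedge^{d-1}V)\hookrightarrow\PP(\wedge^{d-1}\K^n)$.

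For part (1) I would verify the three requirements of \Cref{def:valuated-adjoint-B} through \Cref{prop:valuated-adjoint-unsimplified}. First, $\Trop\Sigma\subset\cD^1(\mu)$ is exactly the commutativity of \eqref{eqn:tropicalization-diagram} recorded in the text: each $\K$-point of $\Gr(d-1,V)$ is a subspace $\PP W\subset\PP V$, so $\Trop\PP W\subset\Trop\PP V=\Trop\mu$, and $\cD^1(\mu)$ is closed. Second, $\Sigma$ has rank $d$, because $\Gr(d-1,V)\cong\PP^{d-1}$ is a projective $(d-1)$-plane in $\PP(\wedge^{d-1}\K^n)$. Third, I must show the simplification of the underlying matroid of $\Sigma$ is an adjoint of $M$: here I would observe that the coordinate functional indexed by $A\in\nk{n}{d-1}$ restricts to zero on $\wedge^{d-1}V$ exactly when $A$ is dependent in $M$, and that the restrictions indexed by $A_1,A_2$ are proportional exactly when $\cl_M(A_1)=\cl_M(A_2)$, so the simplified underlying matroid $W$ of $\Sigma$ lives on $\cLL^1(M)$; and that $W$ is an adjoint of $M$ is the classical fact that applying $\wedge^{d-1}$ to a point configuration in $\PP^{d-1}$ realizing $M$ produces the arrangement of all hyperplanes spanned by those points, which is \Cref{prop:existence-of-ordinary-adjoint} together with the plethysm identities collected in \Cref{subsec:cofactor-grassman}. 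Then \Cref{prop:valuated-adjoint-unsimplified} yields that $\Sigma$ is an adjoint of $\mu$.

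For part (2), now $d=n-1$, and genericity of $V$ forces the underlying matroid of $\mu$ to be $U_{n-1,n}$. Every $(n-2)$-subset is an independent flat of $U_{n-1,n}$ with distinct closure, so $\cLL^1(U_{n-1,n})=\nk{n}{n-2}$ and $\cD^1_s(\mu)=\cD^1(\mu)$. Since $\mu$ has corank $1$, $\Trop\mu$ is the tropical hyperplane centered at some $[\bw]\in T_{[n]}$ (\Cref{ex:hyperplanes}), a translate of $\Trop U_{n-1,n}$, and translation by $[\bw]$ matches codimension-$1$ tropical linear subspaces of one with those of the other, so $\cD^1(\mu)$ is a translate of $\cD^1(U_{n-1,n})$. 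I would then identify the latter with $\Trop M_{K_n}$: under the complement bijection $\nk{n}{n-2}\leftrightarrow\nk{n}{2}$, a subset of $\cLL^1(U_{n-1,n})$ is a linear subclass iff the corresponding edge set of $K_n$ contains the third edge of every triangle two of whose edges it contains, i.e.\ iff it is a disjoint union of cliques; hence $\pi\mapsto\{\text{edges internal to the blocks of }\pi\}$ is an inclusion-preserving bijection from the partition lattice $\Pi_n$ onto the linear subclasses of $U_{n-1,n}$, and since the elementary quotient lattice is ordered by reverse inclusion this gives $\widehat{\cQ}^1(U_{n-1,n})\cong\Pi_n^{\mathrm{op}}=\cLL(M_{K_n})^{\mathrm{op}}$. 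By \Cref{thm:dressian-is-order-complex}, $\cD^1_s(U_{n-1,n})^\circ$ is the order complex of $\widehat{\cQ}^1(U_{n-1,n})$; an order complex is unchanged under order duality, so this equals the order complex of $\cLL(M_{K_n})$, namely $\Trop M_{K_n}\cap T_{\cLL^1(U_{n-1,n})}$. Taking closures, $\cD^1(\mu)$ is a translate of $\Trop M_{K_n}$, a tropical linear space of projective dimension $n-2$.

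Finally, the equality $\Trop\Gr(n-2,V)=\cD^1(\mu)$ follows from part (1) together with a dimension count: $\Trop\Gr(n-2,V)\subset\cD^1(\mu)$ and both are tropical linear spaces of projective dimension $n-2$, so — a valuated matroid quotient of corank zero being trivial — the two coincide. I expect step three of part (1) to be the main obstacle: turning the coordinate description of $\wedge^{d-1}V$ into the precise statement that its simplified matroid is the hyperplane-arrangement adjoint of $M$; this is exactly why \Cref{subsec:cofactor-grassman} was isolated, and with those identities in hand the rest is bookkeeping. The partition-versus-linear-subclass dictionary in part (2) is routine but also needs care, since the direction of the order must be tracked to land on $\cLL(M_{K_n})^{\mathrm{op}}$ rather than $\cLL(M_{K_n})$ — consistent with the warning made at \Cref{fig:elementary-quotient-lattice-u34}.
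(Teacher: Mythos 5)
Your proposal is correct, and it splits into a same-route half and a genuinely different half. Part (1) is essentially the paper's argument: containment $\Trop\Sigma\subset\cD^1(\mu)$ from the square \eqref{eqn:tropicalization-diagram}, rank $d$ from $\Gr(d-1,V)\cong\PP^{d-1}$, and then \Cref{prop:valuated-adjoint-unsimplified}(2); the only difference is that where the paper cites \cite[Theorem 8]{jell2022moduli} to identify the underlying matroid of $\Sigma$ with the matroid of lines of the arrangement given by $V$ (hence, after simplification, the adjoint of \Cref{prop:existence-of-ordinary-adjoint}), you re-derive that identification from the cofactor coordinates of \Cref{subsec:cofactor-grassman} — a sketch, but the same content, and you correctly flag it as the step needing the most care. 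Part (2) is where you diverge: the paper shows the concurrent-hyperplane relations of $\cD^1(U_{n-1,n})$ are exactly the $3$-cycles of $K_n$, which by \Cref{thm:dressian-cut-out-by-three-term} cut out $\cD^1(U_{n-1,n})$, and then invokes \cite{yu2006representing} to say these circuits are a tropical basis for $\Trop M_{K_n}$; you instead identify the linear subclasses of $U_{n-1,n}$ with the unions of cliques, i.e.\ the flats of $M_{K_n}$, and conclude via \Cref{thm:dressian-is-order-complex} that $\cD_s^1(U_{n-1,n})^\circ$ and $\Trop M_{K_n}\cap T_{\cLL^1(U_{n-1,n})}$ are the same union of indicator-vector cones over the same chains of subsets of $E(K_n)$ (so you do not even need the order-duality remark — the cones depend only on the chains, not on which direction you order them). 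Your route stays entirely internal to the paper, trading the external tropical-basis input for the closure fact $\cD_s^1(M)=\overline{\cD_s^1(M)^\circ}$, which the paper itself uses in the proof of \Cref{thm:spanned-by-join-irreducibles}; the paper's route has the side benefit of exhibiting explicit defining equations. The closing step is the same in both proofs: a tropical linear space of dimension $n-2$ contained in another of dimension $n-2$ must equal it, which you phrase as triviality of corank-zero quotients and the paper phrases as $\Trop M_{K_n}$ containing no different tropical linear space of its own dimension — both asserted at the same level of detail, so no gap relative to the paper.
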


\begin{proof}
    By \cite[Theorem 8]{jell2022moduli}, the matroid $\td{W}$ realized by the linear subspace $\Gr(d-1,V)\subset\mathbb{P}^{\nk{n}{d-1}}$ is the matroid of lines of the hyperplane arrangement given by $V$. By \Cref{prop:existence-of-ordinary-adjoint}, the simplification of $\td{W}$ to a matroid on $\cLL^1(M)$ is an adjoint of $M$. The first statement follows from \Cref{prop:valuated-adjoint-unsimplified} (2).

    If $V$ is a generic hyperplane in $\K^n$, then $\Trop V$ is a translation of $\Trop U_{n-1,n}$, so $\Dr^1(\mu)$ is a translation of $\Dr^1(U_{n-1,n})$. Label the vertices of $K_n$ with $[n]$. Identify the hyperplane $[n]\backslash\{i,j\}$ of $U_{n-1,n}$ with the edge $ij$ of $K_n$. Then a direct computation shows that the concurrent hyperplane relations of $\Dr^1(U_{n-1,n})$ are exactly given by all the 3-cycles of $K_n$. These are all the defining equations of $\Dr^1(U_{n-1,n})$. By \cite[Theorem 3.1]{yu2006representing}, these circuits form a tropical basis for $\Trop M_{K_n}$. Hence, $\Dr^1(U_{n-1,n})=\Trop M_{K_n}$. Since $M_{K_n}$ has rank $n-1$, $\Trop M_{K_n}$ cannot contain a different tropical linear space of the same dimension, so $\Trop \Gr(n-2,V) = \Dr^1(\mu)$.
\end{proof}

It is known that all rank-3 matroids have adjoints. Whether or not all rank-3 valuated matroids have adjoints will be addressed in a subsequent paper.

\subsection{Applications}\label{subsec:applications-of-adjoint}

We now apply the above results to proving \Cref{thm:projective-plane}, from which \Cref{main:G} follows.

\begin{thm}\label{thm:projective-plane}
Let $q$ be a prime power and $M$ be a finite projective plane over $\mathbb{F}_q$. The subset 
\begin{equation}
    \mathscr{V} := \left\{ [\theta]\in \Dr(2,q^3-1) \mid \mu\onto\theta \text{ for some basis valuation $\mu$ on }M \right\}\subset \Dr(2,q^3-1)
\end{equation}
is a subcomplex of codimension at least 2 inside $\Dr(2,q^3-1)$. 
\end{thm}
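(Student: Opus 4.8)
\medskip
\noindent\textbf{Plan of proof.}
Write $d=\rk M=3$ and $n=q^3-1$. A rank-$2$ valuated matroid that is a quotient of a rank-$3$ one is automatically an elementary quotient, so unwinding the definition,
\[
\mathscr{V}=\bigcup_{\mu}\cD^1(\mu),
\]
the union over all basis valuations $\mu$ of $M$; since each $\cD^1(\mu)$ sits inside $\cD(2,n)$, so does $\mathscr{V}$. The plan is to realize $\mathscr{V}$ as a translate of $\cD^1(M)$ by a linear family of directions, and then to compare dimensions with $\cD(2,n)$.

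Two structural facts about $M$ enter. First, the lattice of flats $\cLL(M)$ is modular: in a projective plane two lines meet in a point, and a point together with a non-incident line spans the whole plane, so the modular law holds rank by rank. Since $M$ is realizable over $\mathbb{F}_q$ it has an adjoint by \Cref{prop:existence-of-ordinary-adjoint}, so \Cref{prop:properties-adjoint}(3) gives $\cD_s^1(M)\cong\Trop M$, and as $\cD^1(M)\cong\cD_s^1(M)$ as polyhedral complexes,
\[
\dim\cD^1(M)=\dim\Trop M=d-1=2 .
\]
Second, $M$ is rigid --- finite projective geometries are rigid \cite{dress1992valuated}, and rigidity is inherited by parallel extensions --- so every basis valuation of $M$ is of the form $\mu_\bw\colon B\mapsto\bw\cdot\be_B$ for some $\bw\in\R^n$.

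Now I would compare $\cD^1(\mu_\bw)$ with $\cD^1(M)=\cD^1(\mu_{\mathbf 0})$. Let $L\colon\R^n\to\R^{\nk{n}{2}}$ be the injective linear map $\bw\mapsto(\bw\cdot\be_A)_{A}$ (one recovers $\bw$ from the pairwise sums $\bw\cdot\be_{\{i,j\}}$). Inspecting the incidence--Pl\"ucker relations shows that, for a rank-$2$ valuated matroid $\theta$, $\mu_{\mathbf 0}\onto\theta$ holds if and only if $\mu_\bw\onto(\theta+L(\bw))$: in the relation indexed by $(I,J)$, passing from $(\mu_{\mathbf 0},\theta)$ to $(\mu_\bw,\theta+L(\bw))$ adds the same constant $\bw\cdot\be_I+\bw\cdot\be_J$ to every term, so it does not affect whether the minimum is attained twice. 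Hence $\cD^1(\mu_\bw)=\cD^1(M)+L(\bw)$, and therefore
\[
\mathscr{V}=\cD^1(M)+L(\R^n),
\]
a Minkowski sum of the polyhedral complex $\cD^1(M)$ with a linear subspace. In particular $\mathscr{V}$ is closed and, being a finite union of polyhedral cells inside $\cD(2,n)$, it is a subcomplex after a common refinement of the fan structures. Since $\mathbf 1\in L(\R^n)$, the image of $L(\R^n)$ in tropical projective space is $(n-1)$-dimensional, so
\[
\dim\mathscr{V}\le\dim\cD^1(M)+(n-1)=2+(n-1)=n+1 .
\]
On the other hand $\cD(2,n)$ is the Dressian of lines, which equals the space of phylogenetic trees and is pure of dimension $2n-4$. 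Hence every cell of $\mathscr{V}$ has codimension at least $(2n-4)-(n+1)=n-5=q^3-6\ge 2$ in $\cD(2,n)$, since $q\ge 2$. This is the claim.

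The step I expect to be the main obstacle is controlling $\dim\cD^1(\mu)$ uniformly: for a general rank-$3$ matroid $\cD_s^1$ need not be pure and can carry cells of dimension exceeding $d-1$ (cf.\ \Cref{cor:dim-of-dr1} and the V\'amos example \Cref{ex:vamos}), and it is precisely the modularity of the projective plane, funneled through the adjoint characterization \Cref{prop:properties-adjoint}(3), that rules this out. A secondary point that deserves a real argument rather than a citation is the rigidity of $M$, especially its persistence when $M$ is presented on $q^3-1$ elements (hence with nontrivial parallel classes) rather than on its $q^2+q+1$ points.
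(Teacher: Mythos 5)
Your proof is correct and takes essentially the same route as the paper's: rigidity of the finite projective geometry reduces $\mathscr{V}$ to $\cD^1(M)$ plus the $(q^3-2)$-dimensional lineality space of translations, \Cref{prop:properties-adjoint}(3) (via modularity and the adjoint) gives $\dim\cD^1(M)=2$, and comparison with $\dim\cD(2,q^3-1)=2q^3-6$ (the tropical Grassmannian of lines, i.e.\ the space of trees) yields codimension $q^3-6\geq 2$. The only difference is that you spell out details the paper leaves implicit, namely the explicit translation map $L$ and the check that the incidence--Pl\"ucker relations are invariant under it.
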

\begin{proof}
    Since $M$ is rigid, $\mathscr{V}$ is simply $\Dr^1(M)$ plus the $(q^3-2)$-dimensional lineality space coming from translation. By \Cref{prop:properties-adjoint} (3), $\Dr^1(M)$ has dimension 2, so $\dim\mathscr{V}=q^3$. On the other hand, $\Dr(2,q^3-1)$ equals the tropical Grassmannian, so $\dim \Dr(2,q^3-1) = 2(q^3-3) = 2q^3-6$. Now we have 
    \begin{equation}
        \dim\Dr(2,q^3-1)- \dim\mathscr{V} = q^3-6 \geq 2.
    \end{equation}
\end{proof}

 The uniform matroid $U_{2,n}$ is an elementary quotient of any simple rank-3 matroid on $[n]$. In particular, $U_{2,q^3-1}$ is an elementary quotient of the projective plane over $\mathbb{F}_q$. By \Cref{thm:projective-plane}, if $\Trop\theta$ is a generic tropical line, then no tropical plane $\Trop\mu$ contains $\Trop\theta$ if the underlying matroid of $\mu$ is the tropical projective plane over $\mathbb{F}_q$. If we only assume $M$ is simple and rigid, we get a slightly weaker statement.

\begin{thm}\label{thm:rigid-tropical-plane}
    Let $M$ be a simple rigid matroid on $[n]$ of rank 3, where $n\geq 7$. Then the subset
    \begin{equation}
    \mathscr{V} := \left\{ \theta\in \Dr(2,n) \mid \theta\subset \mu \text{ for some basis valuation $\mu$ on }M \right\}\subset \Dr(2,n)
\end{equation}
is strictly contained in $\Dr(2,n)$. In other words, there are tropical lines $\theta$ such that no tropical plane containing $\theta$ has underlying matroid $M$.
\end{thm}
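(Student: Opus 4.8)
The plan is to adapt the proof of \Cref{thm:projective-plane}, replacing the input $\dim\cD^1(M)=2$ (which came from modularity of the projective plane) by a dimension estimate valid for every simple rigid rank-$3$ matroid. Since $M$ is rigid, every basis valuation on $M$ is a weighting $\mu_{\bw}\colon B\mapsto\sum_{i\in B}\bw(i)$ for some $\bw\in\R^n$; moreover reweighting a valuated matroid (adding $\sum_{i\in B}\bw(i)$ to the value at $B$) is compatible with the quotient relation and corresponds under $\Trop$ to translation by $[\bw]\in T_{[n]}$. Hence $\cD^1(\mu_{\bw})=[\bw]+\cD^1(M)$ for all $\bw$, and therefore
\begin{equation*}
    \mathscr{V}\;=\;\cD^1(M)+W,
\end{equation*}
where $W\subset\PT{\nk{n}{2}}$ is the $(n-1)$-dimensional image of the translation map $\bw\mapsto\big(\sum_{i\in A}\bw(i)\big)_{A}$. (I will also note that $M$ is connected: a disconnected simple rank-$3$ matroid on $n\geq 7$ points would be $U_{1,1}\oplus U_{2,n-1}$ with $n-1\geq 4$, hence non-rigid.) In particular $\cD^1(M)$ is a finite polyhedral fan, so $\mathscr{V}$ is a finite union of polyhedral cones, hence closed. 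Since the locus of $[\theta]\in\cD(2,n)$ with underlying matroid $U_{2,n}$ is open, dense, and of full dimension $2n-4$, it suffices to show $\dim\big(\mathscr{V}\cap\{[\theta]:\text{underlying matroid }U_{2,n}\}\big)<2n-4$.

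The core of the argument is to understand, for a rank-$2$ valuated matroid $\theta$ with full support, which weights $\bw$ satisfy $\mu_{\bw}\onto\theta$. Evaluating the incidence--Pl\"ucker relations on $\mu_{\bw}$, the relation indexed by $(\{p\},J)$ with $p\notin J$ says: the minimum of $\tau_p(i):=\theta(\{p,i\})-\bw(i)$ over those $i\in J$ with $J-i$ a basis of $M$ is attained at least twice. Specializing to $4$-sets $J\subseteq[n]\setminus p$ that are in general position in $M$, and to $4$-sets containing exactly one collinear triple, gives: for each $p$, for every collinear triple avoiding $p$ and every general-position $4$-subset avoiding $p$, the corresponding minimum of $\tau_p$ is attained twice. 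The key combinatorial claim is that, because $M$ is simple with $n\geq 7$ and rigid — so $M$ is neither a near-pencil nor a union of three lines, both of which are non-rigid (e.g.\ the near-pencil imposes no nontrivial Pl\"ucker relations at all) — these conditions force each $\tau_p$ to be constant on $[n]\setminus p$ except at at most one index. Combining this with the symmetry $\theta(\{p,q\})=\theta(\{q,p\})$ and the fact that $K_n$ minus at most $n$ edges is connected, one deduces that $\theta(\{p,q\})=\bw(p)+\bw(q)+\gamma$ for all pairs $\{p,q\}$ outside a partial matching $\mathcal{M}$ of $[n]$, the values on $\mathcal{M}$ being free (and strictly larger). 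Counting the continuous parameters $\bw$, $\gamma$ and the $|\mathcal{M}|\leq\lfloor n/2\rfloor$ free values on $\mathcal{M}$, modulo the two evident redundancies (shifting $\bw$ by a multiple of $\mathbf{1}$ while adjusting $\gamma$, and adding a global constant to $\theta$), bounds the dimension of each such stratum — and hence of the generic part of $\mathscr{V}$ — by $(n-1)+\lfloor n/2\rfloor$.

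Finally, $(n-1)+\lfloor n/2\rfloor<2n-4$ exactly when $\lceil n/2\rceil\geq 4$, i.e.\ when $n\geq 7$; this is precisely where the hypothesis is used. I expect the main obstacle to be the combinatorial claim in the second paragraph: extracting ``$\tau_p$ is constant up to one outlier'' from the incidence relations requires that $M$ carry enough collinear triples and general-position $4$-subsets, which amounts to ruling out the degenerate rank-$3$ matroids, and for that one needs a clean proof that near-pencils and triangle matroids fail to be rigid — extending the easy observation that such matroids admit far more basis valuations than weightings (for the near-pencil, all of $\R^{\cB(M)}$; for the triangle matroid, a positive-dimensional family coming from realizations). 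A smaller technical point to handle carefully is that $\mathscr{V}=\cD^1(M)+W$ is closed and that the full-support locus is dense of dimension $2n-4$ in $\cD(2,n)$.
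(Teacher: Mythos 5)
Your first step (rigidity gives $\mathscr{V}=\cD^1(M)+W$ with $W$ the $(n-1)$-dimensional translation space, then bound the dimension of the generic stratum) is the same reduction the paper makes, and the closedness/density framing is fine. The proof breaks at the key combinatorial claim: that the incidence relations force each $\tau_p$ to be constant up to one outlier, hence that every full-support quotient $\theta$ of $\mu_{\bw}$ has the normal form $\theta(xy)=\bw(x)+\bw(y)+\gamma$ off a partial matching. This is false, and not because of degenerate matroids — it fails for the nicest rigid example. Take $M=F_7$ (simple, binary, hence rigid, $n=7$), $\mu$ the trivial valuation, and fix a point $p$ whose three lines are $\{p,a,b\},\{p,c,d\},\{p,e,f\}$. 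A singleton $\{pab\}$ and the pencil of all lines through $p$ are both linear subclasses, so by \Cref{thm:dressian-is-order-complex} the chain $\{pab\}\subsetneq\{pab,pcd,pef\}$ with weights $a_1,a_2>0$ gives a point of $\cD^1(\mu)$ with full support: concretely $\theta(xy)=a_1+a_2$ if $\cl_M(xy)=pab$, $\theta(xy)=a_2$ if $\cl_M(xy)\in\{pcd,pef\}$, and $\theta(xy)=0$ otherwise. Its underlying matroid is $U_{2,7}$, so it lies in the stratum you analyze, yet $\tau_p=\theta(p\,\cdot)$ takes a non-minimal value at the two indices $a,b$ (already the single pencil violates the one-outlier claim, via $\tau_a$), and a short case analysis on which of $cd,ef$ lie in the matching shows that no choice of $(\bw,\gamma,\mathcal{M})$ reproduces this $\theta$: e.g.\ if $cd,ef\in\mathcal{M}$ then linearity of the remaining pairs forces $\bw(c)=\bw(d)=\bw(e)=\bw(f)$ and then $pa$ or $ab$ being linear forces $a_1=0$ or $a_1+a_2=0$; the other cases collapse similarly to $2a_2=0$. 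So the parameter count $(n-1)+\lfloor n/2\rfloor$, and with it the final inequality, has no derivation. Note also that rigidity cannot rescue the claim: rigidity constrains rank-$3$ valuations on $M$, not its rank-$2$ valuated quotients, which is exactly what is being parametrized.

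The correct normal form for generic points of $\cD^1(\mu_{\bw})$ is the one coming from \Cref{thm:dressian-is-order-complex}: $\theta(xy)=\bw(x)+\bw(y)+c+\sum_{i\,:\,\cl_M(xy)\in\cHH_i}a_i$ for a chain of linear subclasses $\cHH_1\subsetneq\cdots\subsetneq\cHH_k$ and $a_i>0$; pencils of lines through a point always occur as such subclasses, so restricting to pairwise-disjoint two-point lines (a matching) is far too narrow. With this description, what your argument actually requires is an upper bound on the length of chains of linear subclasses of $M$, i.e.\ on $\dim\cD^1(M)$ — this is precisely how the paper concludes, by adding the $(n-1)$-dimensional translation factor to a dimension bound for $\cD^1(M)$ (for the projective planes in \Cref{thm:projective-plane}, modularity gives dimension exactly $2$) and comparing with $\dim\cD(2,n)$; your figure $2n-4$ for the generic locus of $\cD(2,n)$ agrees with the count $2(n-2)$ used there. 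If you want to keep your stratified count, replace the matching normal form by the chain-of-linear-subclasses normal form and bound the chain length for the matroids under consideration; that bound, not the exclusion of near-pencils, is where the real work lies.
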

\begin{proof}
    By \Cref{cor:dim-of-dr1}, $\Dr^1(M)$ has cones of dimension 2. By the rigidity of $M$, after adding a lineaity space of dimension $n-1$ coming from translation, $\mathscr{V}$ has a cone of dimension at most $2+n-1 = n+1$, but $\Dr(2,n)$ has pure dimension $2(n-2)-1=2n-5$. The conclusion follows since $2n-5 > n+1$ when $n\geq 7$.
\end{proof}

\section{Tropical linear incidence geometry}\label{sec:incidence-problems}

In this section, we study the tropical analogs of \ref{itm:IG-intersection}, \ref{itm:IG-interpolation} and \ref{itm:IG-flag} listed in the introduction. Our choice of the three questions comes from three poset-theoretic properties $\Qt(M)$ and $\Dr(\mu)$ might have, according to Baker and Bowler's theory of matroids over hyperfields \cite{baker2019matroids}. In that theory, a matroid can be thought of as a `linear subspace' over a \textit{hyperfield}: ordinary matroids are `linear subspaces' over the \textit{Krasner hyperfield}; valuated matroids are `linear subspaces' over the \textit{tropical hyperfield}. Conjecturally, the posets $\Qt(M)$ and $\Dr(\mu)$ might resemble the lattice of flats of a matroid. It is well-known that $\Qt(M)$ may not be a lattice, which is already the case when $M=U_{3,3}$. See \Cref{fig:U33-not-lattice}.
\begin{figure}[H]
    \centering
    \includegraphics[width=0.5\linewidth]{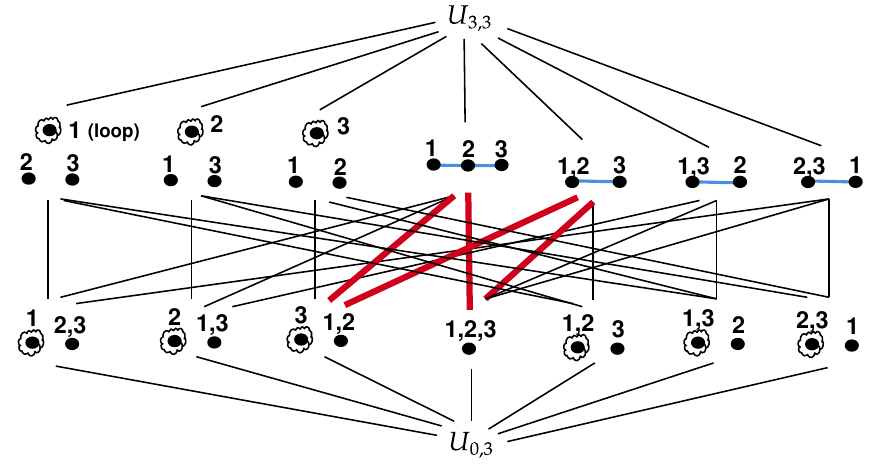}
    \caption{$\Qt(U_{3,3})$ is not a lattice. The highlighted subposet violates the lattice condition.}
    \label{fig:U33-not-lattice}
\end{figure}
Still, the following key properties of $\cLL(M)$ can be defined for general posets, so one can ask whether or not they hold for $\Qt(M)$.
\begin{itemize}
    \item \cite{Haskins_1971} A poset $\cP$ is \textit{upper semimodular}\footnote{For graded lattices such as $\cLL(M)$, the upper semimodularity is equivalent to the \textit{submodularity} of the rank function. Therefore, both the words `upper semimodular' and `submodular' are used to describe the poset $\cLL(M)$.} if 
    \begin{center} \vspace{5pt}
        whenever $F_1,F_2\in \cP$ cover $G$, there is some $H\in \cP$ covering both $F_1$ and $F_2$.   \vspace{5pt} \end{center} 
        \item A poset $\cP$ is \textit{graded} if all maximal chains of $\cP$ have the same length.
    
\end{itemize}
It is known that $\Qt(M)$ is a graded poset, due to the existence of Higgs factorization. For the quest for semimodularity of $\Qt(M)$, we consider the special case $\Qt(U_{n,n})$. By matroid duality, we have (see \Cref{def:matroid-perspectivity}),
\begin{center} \vspace{5pt}
    $\Qt(U_{n,n})$ is upper semimodular $\Leftrightarrow$ if $M_1$ and $M_2$ are coperspective, then they are perspective. 
\vspace{5pt} \end{center} 
We show in \Cref{ex:counter-to-submodular} and \Cref{prop:intersection-fail-at-every-d-larger-than-4} that 
\begin{center} \vspace{5pt}
    $\Qt(U_{n,n})$ is \textit{not} upper semimodular for all $n\geq 8$, 
\vspace{5pt} \end{center} 
while in \Cref{cor:rank-3-modular} we show
\begin{center} \vspace{5pt}
     $\Qt(M)$ is a modular poset, meaning both $\Qt(M)$ and its order dual are upper semimodular, for all $M$ of rank at most 3.
\vspace{5pt} \end{center} 

The lattice of flats $\cLL(M)$  also has the following property:
\begin{itemize}
    \item Any $d-1$ points of $\cLL(M)$ are contained in a hyperplane of $M$.
\end{itemize}
We ask the analogous question for $\Qt(M)$:
\begin{center} \vspace{5pt}
    Given $d-1$ rank-1 quotients $Q_1,...,Q_{d-1}$ of $M$, is there an elementary quotient $N$ of $M$ such that $Q_1,...,Q_{d-1}$ are quotients of $N$?
\vspace{5pt} \end{center} 
We show in \Cref{thm:adjoint-discrete-interpolation} that the answer depends on $M$. 

The tropical analogs of \ref{itm:IG-intersection}, \ref{itm:IG-interpolation} and \ref{itm:IG-flag} come from asking the above questions for the poset $\Dr(\mu)$. Consider the following valuated version of matroid perspectivity.
\begin{definition}\label{def:valuated-perspectivity}
    Valuated matroids $\theta_1,...,\theta_k$ are \textit{coperspective} if they are common elementary quotients of some valuated matroid $\mu$; they are \textit{perspective} if they have a common elementary quotient. Correspondingly, tropical linear spaces $\Trop\theta_1,...,\Trop\theta_k$ are \textit{coperspective} (\textit{perspective}, respectively) if $\theta_1,...,\theta_k$ are coperspective (perspective, respectively).
\end{definition}

We state the tropical analogs of \ref{itm:IG-intersection}, \ref{itm:IG-interpolation}, and \ref{itm:IG-flag} as follows.

\begin{itemize}
    \item \textbf{Existence of a common subspace of expected dimension:} if $\Trop\theta_1$ and $\Trop\theta_2$ are codimension-1 subspaces of $\Trop\mu$, are $\Trop\theta_1$ and $\Trop\theta_2$ perspective?

    We show in \Cref{main:C} that this is true when $d=3$, but we construct counter-examples for every $d\geq 4$ (\Cref{ex:counter-to-submodular} and \Cref{prop:intersection-fail-at-every-d-larger-than-4}).
    \item \textbf{Linear interpolation of points:} is every set of $d-1$ points $[\bw_1],...,[\bw_{d-1}]\in \Trop\mu$ contained in a codimension-1 subspace $\Trop\theta\subset \Trop\mu$?

    No general answer was known except for the case $d=n$, where the subspace $\theta$ is constructed by \textit{stable sum}. We show in \Cref{main:D} and \Cref{main:E} that the answer depends on the structure of $\Dr^1(\mu)$.
    \item \textbf{Completion of partial flags:} can every partial flag $\Trop\mu_k\subset \Trop\mu_0$, where $\Trop\mu_k$ is codimension-$k$ in $\Trop\mu_0$, be completed to a full flag $\Trop\mu_k \subsetneq \Trop\mu_{k-1}\subsetneq \cdots\subsetneq \Trop\mu_0$?

    When the flag starts with $\emptyset$, 
     successive hyperplane sections of $\Trop\mu$ by \textit{arbitrary} hyperplanes complete the flag $\emptyset\subset \Trop\mu$. We show in \Cref{prop:complete-towards-point} that when $\Trop\mu_k$ is a point, any flag can be completed by \textit{well-chosen} successive hyperplane sections. 
\end{itemize}

For convenience of reference, we make the following definitions.
\begin{definition}
    A subset $S\subset \Trop\mu$ has the \textit{linear interpolation property} if every set of $d-1$ points in $S$ is contained in a codimension-1 subspace $\Trop\theta\subset \Trop\mu$.
\end{definition}

\begin{definition}
     A matroid $M$ has the \textit{discrete interpolation property} if for every $d-1$ rank-1 quotients of $M$ there is an elementary quotient $N$ of $M$ such that $Q_1,...,Q_{d-1}$ are quotients of $N$.
\end{definition}
We study the continuous and the discrete versions together.

\subsection{Linear interpolation of points}

We discuss the discrete interpolation problem first. For a rank-1 matroid $Q$, $\cLL(Q)=\{F,[n]\}$ for some $F\subsetneq [n]$, so a rank-1 quotient $Q$ of $M$ is given by a proper flat $F\in \cLL(M)$. The discrete interpolation property is equivalent to the following statement:
\begin{center} \vspace{5pt}
    For any $d-1$ proper flats $F_1,...,F_{d-1}\in \cLL(M)$, there is some elementary quotient $N$ of $M$ such that $F_1,...,F_{d-1}\in \cLL(N)$.
\vspace{5pt} \end{center} 

Recall that $M$ has the \textit{Levi intersection property} if 
every set of $d-1$ hyperplanes are contained in a nontrivial linear subclass (see \Cref{subsec:prelim-matroid-quotient}). The following theorem places the discrete interpolation property in-between the existence of adjoints and the Levi intersection property.

\begin{thm}\label{thm:adjoint-discrete-interpolation}
     Let $M$ be a matroid. If $M$ has an adjoint, then $M$ has the discrete interpolation property. If $M$ has the discrete interpolation property, then $M$ has the Levi intersection property.
 \end{thm}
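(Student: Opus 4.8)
The plan is to establish the two implications separately. The second one (discrete interpolation $\Rightarrow$ Levi intersection) is a short bookkeeping argument with linear subclasses, so I would dispatch it first; the first implication (adjoint $\Rightarrow$ discrete interpolation) carries the real content.

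For discrete interpolation $\Rightarrow$ Levi: take $d-1$ hyperplanes $H_1,\dots,H_{d-1}\in\cLL^1(M)$. These are proper flats of $M$, hence correspond to rank-1 quotients $Q_1,\dots,Q_{d-1}$, so the discrete interpolation property gives an elementary quotient $N$ with $H_i\in\cLL(N)$ for all $i$. Since $M$ is simple, $H_i\subsetneq[n]$, so $H_i$ is a proper flat of $N$ and thus $\rk_N(H_i)<\rk N=d-1$; on the other hand $N$ is a corank-1 quotient of $M$, so $\rk_N(H_i)\ge\rk_M(H_i)-1=d-2$. Hence $\rk_N(H_i)=d-2$, i.e.\ $H_i\in\cLL^1(N)$, and therefore $H_1,\dots,H_{d-1}\in\cLL^1(N)\cap\cLL^1(M)=\cHH_N$. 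This $\cHH_N$ is a nontrivial linear subclass because $N$ is a genuine elementary quotient ($N\ne M$) and, under the bijection \eqref{eqn:elementary-quotient-to-linear-subclass}, only $M$ maps to the trivial linear subclass $\cLL^1(M)$. So $M$ has the Levi intersection property.

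For adjoint $\Rightarrow$ discrete interpolation, fix $d-1$ proper flats $F_1,\dots,F_{d-1}\in\cLL(M)$; the task is to produce an elementary quotient $N$ with $F_i\in\cLL(N)$ for every $i$, equivalently a nontrivial modular cut $\cF$ of $M$ with $F_i\notin\cR(\cF)$ for all $i$. Two building blocks are available. First, any flat of corank $\ge 2$ already lies in $\cLL(\Tr M)$, so the only genuine constraints come from the hyperplanes among the $F_i$. Second, in the adjoint $W$, every flat of $W$, viewed as a subset of $\cLL^1(M)$, is a linear subclass of $M$ (by \Cref{prop:characterization-of-adjoint}, since each concurrent triple is a circuit of $W$), and the top flat of $W$ is all of $\cLL^1(M)$. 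Using the second block: if all $F_i=H_i$ are hyperplanes, then $\cl_W\{H_1,\dots,H_{d-1}\}$ has $W$-rank $\le d-1<\rk W=d$, hence is a proper flat of $W$, hence a nontrivial linear subclass $\cHH$ containing all the $H_i$; the corresponding elementary quotient $N$ has $H_i\in\cLL^1(N)\subseteq\cLL(N)$, and we are done. To handle a mixed tuple I would let $\cF$ be the modular cut generated by the hyperplanes among the $F_i$ together with those lower flats $F_i$ sitting below one of these hyperplanes (iterating, since adjoining hyperplanes through such a flat can create new incidences), and use the adjoint to certify that every intermediate linear subclass stays strictly below the top flat of $W$, so that $\cF$ remains proper; one then checks from \Cref{prop:characterization-of-adjoint} and the collar description that no $F_i$ falls into $\cR(\cF)$. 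Cleanly, one can instead deduce the whole statement from the valuated version \Cref{main:D} applied to the trivial valuation $\mu$ on $M$: by \Cref{prop:valuated-adjoint-unsimplified} the trivial valuation on $\td{W}$ is an adjoint of $\mu$, the rank-1 quotients $Q_{F_i}$ are points of $\Trop M$, and a codimension-1 tropical linear subspace $\Trop\theta\subseteq\Trop M$ through all of them has underlying matroid an elementary quotient $N$ of $M$ with $N\onto Q_{F_i}$, i.e.\ $F_i\in\cLL(N)$, for all $i$.

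The main obstacle is the mixed case of the first implication. One cannot in general force all $F_i$ into a single proper modular cut: the join $\bigvee_W\hat F_i$ of the corresponding flats of $W$ (where $\hat F_i\subseteq\cLL^1(M)$ is the set of hyperplanes through $F_i$) can already be all of $W$ — e.g.\ for $d-1$ generic points of $M$. So the collar flexibility, namely that $F_i$ only needs to avoid $\cR(\cF)$ rather than lie in $\cF$, must be exploited, and the bookkeeping of which lower flats are threatened by which hyperplanes is where care is required. In the tropical route the analogous obstacle is to verify that each incidence condition ``$Q_{F_i}\in\Trop\theta$'' cuts a codimension-$\le 1$ tropically linear slice of the $(d-1)$-dimensional tropical linear space $\Trop\td{W}$ and that $d-1$ such slices meet — this is exactly where the cofactor formulas \Cref{lem:Sigma-basis-valuation} and the degree-1 property of tropical linear spaces enter, i.e.\ it is the content of the proof of \Cref{main:D}.
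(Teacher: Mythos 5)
Your proof of the second implication is the paper's argument verbatim (with the extra, correct, observation that $H_i$ is forced to be a hyperplane of $N$), so there is nothing to add there. For the first implication your submission really contains two arguments. The direct one — generate a modular cut from the hyperplanes among the $F_i$ and the lower flats beneath them, "iterate", and check nothing lands in the collar — is only a sketch, and as you yourself note it is exactly the mixed case where it is unfinished. The paper resolves precisely this point with a self-contained combinatorial recursion inside $\cLL(W)$: starting from $\cl_W$ of the hyperplanes in $\cA$, it enlarges the flat of $W$ one cover at a time, absorbing a member of $\cA$ only when that member covers an element of the current lower interval (i.e.\ only when it would otherwise fall into the collar); each absorption raises $\rk_W$ by exactly $1$, so after at most $|\cA|=d-1$ steps one has a flat of $W$ of rank at most $d-1<d$, hence (via \Cref{prop:properties-adjoint}(2)) a nontrivial modular cut $\cF'$, and the termination condition guarantees every unabsorbed $F_i$ avoids the collar and so survives into $\cLL(N)$. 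This is exactly the "collar flexibility" you identified but did not implement.

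Your fallback — deduce the statement from \Cref{main:D} applied to the trivial valuation — is, however, a complete and legitimate alternative route: the trivial valuation on $W$ (or on $\td{W}$) is an adjoint of the trivial valuation $\mu$ on $M$ by \Cref{prop:characterization-of-adjoint}/\Cref{prop:valuated-adjoint-unsimplified}; the points of $\Trop M$ attached to the rank-$1$ quotients $Q_{F_i}$ (which may lie on the boundary of $\PT{[n]}$, a case \Cref{main:D} covers by its limiting argument) are then interpolated by some $\Trop\theta\subset\Trop M$ of codimension $1$, and passing to underlying matroids gives an elementary quotient $N$ with $M\onto N\onto Q_{F_i}$, i.e.\ $F_i\in\cLL(N)$. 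This is non-circular, since the proof of \Cref{main:D} (via \Cref{lem:adjoint-and-linear-interpolation} and the cofactor formulas \Cref{lem:Sigma-basis-valuation}) nowhere uses the present theorem. The trade-off relative to the paper: the paper's proof is purely lattice-theoretic, works before any of the valuated/tropical machinery is developed, and keeps the discrete statement independent of \Cref{main:D}; your reduction is shorter on the page but imports the full weight of the tropical interpolation theorem and inverts the paper's logical order, so it should be presented as a corollary of \Cref{main:D} rather than as the primary proof.
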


 \begin{proof}
    Let $W$ be an adjoint of $M$. Let $\cA\subset \cLL(M)$ consist of $d-1$ proper flats. We define an order ideal $\cF\subset \cLL(W)$ recursively. All the poset operations below are within $\cLL(W)$:
    \begin{itemize}
        \item Let $\cA_0=\{F\in \cA\mid \rk_W(F)=1\}$, namely, the subset of $\cA$ consisting of atoms of $W$. Set $\cF_0$ to be the lower interval of $\cl_W(\cA_0)$. Note that $\rk_W(\cl_W(\cA_0))\leq |\cA_0|$
        \item If $\cF_i$ is the lower interval of $F_i$, and there is some $G\in \cA\backslash\cF_i$ such that $G$ covers some element in $\cF_i$ (which must be $F_i\wedge G$), set $\cF_{i+1}$ to be the lower interval of $F_{ i+1}=F_i\vee G$. Since $F$ covers $F\wedge G_i$, $F_{i+1}$ covers $F_i$ by submodularity. This means $\rk_W(F_{i+1})=\rk_W(F_i)+1$; if no such $G\in \cA$ exists, terminate this process;
        \item Let $\cF=\cF_m$ if the process terminates at the $m$-th step.
    \end{itemize}
    The above process will terminate in at most $|\cA|=d-1$ steps. By construction, $\cF$ is a lower interval of some flat $F_m\in \cLL(W)$ and $\rk_W(F_m)\leq d-1$. Consider $\cF'=\{F\in \cF\mid F\in \cLL(M)\}$, which is a nontrivial modular cut of $M$ by \Cref{prop:properties-adjoint} (2). If a flat in $\cA$ does not belong to $\cF'$, then it is not covered by any element in $\cF'$. In other words, $\cA$ does not meet the collar of $\cF$, so the elementary quotient of $M$ by $\cF'$ contains $\cA$. This proves the first implication.

    Take $d-1$ hyperplanes $H_1,...,H_{d-1}\in \cLL^1(M)$. Since $M$ has the discrete interpolation property, there is an elementary quotient $N\in \Qt^1(M)$ such that $H_1,...,H_{d-1}\in \cLL(N)$. The nontrivial linear subclass $\cLL^1(N)\cap \cLL^1(M)$ contains $H_1,...,H_{d-1}$. Hence, $M$ has the Levi intersection property.
\end{proof}

Given any collection of points $[\bw_1],...,[\bw_k]\in \PT{[n]}$, put 
\begin{equation}
    \Dr^1(\mu;\bw_1,...,\bw_k) := \left\{\;[\theta]\in \Dr^1(\mu)\mid [\bw_1],...,[\bw_k]\in \Trop\theta\;\right\}.
\end{equation}
 Then $S\subset\Trop\mu$ has the linear interpolation property if and only if
\begin{center} \vspace{5pt}
$\Dr^1(\mu;\bw_1,...,\bw_{d-1})\neq\emptyset$ whenever $[\bw_1],...,[\bw_{d-1}]\in S$.
\vspace{5pt} \end{center} 
We now prove \Cref{main:E} and \Cref{main:D}, which provide negative and positive cases of the linear interpolation property, respectively.

   \begin{repthm}{main:E}
Let $M$ be a matroid. If $M$ does not have the Levi intersection property, then $\Trop M\cap T_{[n]}$ does not have the linear interpolation property. 
\end{repthm}

\begin{proof}
    Suppose $H_1,...,H_{d-1}$ are the hyperplanes where the Levi intersection property fails. Namely, the only linear subclass containing $\{H_1,...,H_{d-1}\}$ is $\cLL^1(M)$. Pick $c>0$ and consider the points $[\bw_1],...,[\bw_{d-1}]$ where
    \begin{equation}
        {\bw_i}(k)=\begin{cases}
            c, & \text{ if }k\in H_i \\
            0, & \text{ else }.
            \end{cases}
    \end{equation}
    Each $\bw_i$ is a tropical convex combination of the valuated cocircuit associated with the hyperplane $H_i$ and the point $(c,...,c)$, so $[\bw_i]\in \Trop M$.
    
    Suppose for a contradiction that $\Trop\theta$ is a tropical linear subspace of rank $d-1$ containing all the points. WLOG, we may assume that $\min_B\{\theta(B)\}=0$. Let $M^{\bw_i}$ be the initial matroid of $M$ at ${\bw_i}$ and $\theta^{\bw_i}$ be the initial matroid of $\theta$ at $\bw_i$. A simple calculation shows that $M^{\bw_i}$ is the direct sum of (the restriction of $M$ on) $H_i$ and the unique loopless rank-1 matroid on $[n]\backslash H_i$. Taking initial matroids preserves quotients, so $M^{\bw_i}\onto \theta^{\bw_i}$. By assumption, $[\bw_i]\in \Trop\theta$, so $\theta^{\bw_i}$ is loopless. Hence, $\theta^{\bw_i}$ has a basis $B_i=D_i + a_i$, where $D_i\subset H_i$ is of size $d-2$ and $a_i\in [n]\backslash H_i$ is any element.
    
    By the parallel hyperplane relations, if $A_1,A_2$ are two bases of any hyperplane $H$, then $\theta(A_1)=\theta(A_2)$. In this case, put $\theta_H=\theta(A_1)$.  A basis of $H_i$ may or may not be a basis of $\theta^{\bw_i}$. By definition, $B_i$ is a minimizer of 
    \begin{equation}
        \min_{B}\{\theta(B)- {\bw_i} \cdot \be_B\},
    \end{equation}
    so we have $\theta_{H_i} -(d-1)c \geq \theta(B_i) - (d-2)c$. In other words,
\begin{center} \vspace{5pt}
$\theta_{H_i}\geq \theta(B_i)+c \geq c$ for each $i$.    
\vspace{5pt} \end{center} 
 We show that this implies $\theta_H\geq c$ for all hyperplanes $H$, thus contradicting to the assumption that $\theta$ can take the value 0. Let $\cHH_0=\{H_1,...,H_{d-1}\}$. Set
    \begin{equation}
        \cHH_{k} = \cHH_{k-1} \cup \Big\{H\in \cLL^1(M)\mid (H,H_i,H_j)\text{ is a concurrent triple for some }H_i,H_j\in \cHH_{k-1} \Big\}
    \end{equation}
    By construction, the sequence $\cHH_k$ will not stabilize until $\cHH_k$ is a linear subclass. Moreover, from the defining equations \eqref{eqn:defining-equations-dressian-tropM} of $\Dr_s^1(M)$, we see that if $(H,H',H'')$ is a concurrent triple, then \begin{equation}
        \theta_H\geq c,\theta_{H'}\geq c \Rightarrow \theta_{H''}\geq c.
    \end{equation} 
    This means if $\theta_H\geq c$ for all $H\in \cHH_k$, then $\theta_H\geq c$ for all $H\in \cHH_{k+1}$. Since the smallest linear subclass containing $\cHH_0$ is the set of all hyperplanes, we see that $\theta_H\geq c$ for all $H\in \cLL^1(M)$.
\end{proof}

\begin{ex}\label{ex:vamos}
    The V\'{a}mos matroid $V_8$, whose affine diagram is given in \Cref{fig:vamos}, does not have the Levi intersection property \cite{cheung1974adjoints}. The only linear subclass containing $\{1234,1256,3456\}$ is the trivial one.
\end{ex}

\begin{figure}
    \centering
    \includegraphics[width=1.5in]{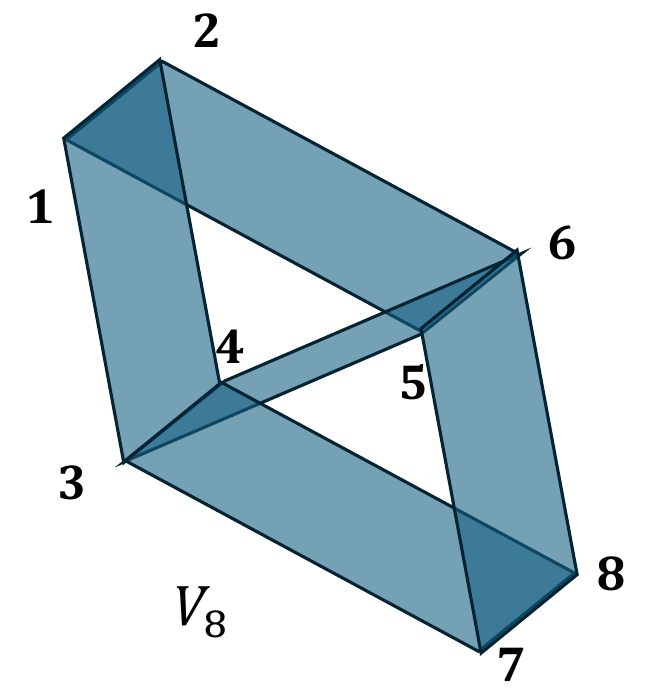}
    \caption{The V\'{a}mos matroid does not have the Levi intersection property}
    \label{fig:vamos}
\end{figure}

We now proceed to prove \Cref{main:D}. This requires careful examination of the space $\Dr^1(\mu;\bw)$ when $\mu$ has an adjoint $\Sigma$. Using \Cref{lem:Sigma-basis-valuation}, we show that $\Dr^1(\mu;\bw)$ contains a hyperplane section of $\Sigma$ when $[\bw]\in \Trop\mu\cap T_{[n]}$. \Cref{main:D} then follows from an intersection argument. 

To complete the argument, we need the following fact on how valuated circuits change under truncation. For any $\bw\in\Rbar^n\backslash\{(\infty,...,\infty)\}$, let $\nu$ be the corank-1 valuated matroid given by
\begin{equation}\label{eqn:corank-1-valuation}
    \nu([n]-i) = \bw(i).
\end{equation}
Then the \textit{truncation} of $\mu$ \textit{by} $\nu$, denoted $\Tr_{\nu}\mu$, is a valuated matroid of rank $d-1$ given by
\begin{equation}\label{eqn:valuation-under-truncation}
    \Tr_{\nu}\mu(B) = \min_{i\notin B}\{\mu(B+i)+\bw(i)\}.
\end{equation}
Let $\mu'=\Tr_{\nu}\mu$. Geometrically, $\Trop\mu'$ is the stable intersection $\Trop\mu\stcap \Trop\nu$. If $F\subset [n]$ is the support of $\bw$, then the underlying matroid of $\Tr_\nu\mu$ is the principal truncation of $M$ by $\cl_M(F)$. In particular, if $\bw\in \R^n$, then $\Trop\nu$ is a standard tropical hyperplane centered at $[-\bw]\in T_{[n]}$, and the underlying matroid of $\Tr_\nu \mu$ is the truncation $\Tr M$ of $M$. 

The circuits of the truncation $\Tr M$ of a matroid $M$ consists of the circuits of $M$ of size no larger than $d$, plus the bases of $M$. This can be refined for valuated matroids as follows. Suppose $\Trop\nu$ is a tropical hyperplane centered at $[\bw]\in T_{[n]}$. The valuated circuit $\bC_{D}$ of $\Tr_\nu \mu$ associated with a basis $D$ of $\mu$ is given by
\begin{equation}\label{eq:truncated-circuit}
    \bC_D(i) = \Tr_\nu\mu(D-i) =  \min_{j\notin D-i}\{\mu(D-i+j)-\bw(j)\}.
\end{equation}
We summarize the above as the following lemma.

\begin{lemma}\label{lem:valuation-of-truncation}
    Let $\nu$ be the corank-1 valuated matroid given by \eqref{eqn:corank-1-valuation} for some point $\bw\in \R^n$. Then the valuated circuits of $\mu'=\Tr_\nu\mu$ consist of the valuated circuits of $\mu$ whose underlying circuits have size no large than $d$, plus the circuits $\bC_D$ described by \eqref{eq:truncated-circuit}.
\end{lemma}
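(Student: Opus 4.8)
The plan is to reduce to the classical description of the circuits of a truncation and then evaluate the resulting valuated circuits using \eqref{eqn:valuation-under-truncation}. First I would pin down the underlying matroid: by \eqref{eqn:valuation-under-truncation} the support of $\mu'=\Tr_\nu\mu$ is the set of $(d-1)$-subsets $B$ for which $B+j$ is a basis of $M$ for some $j$, and since $\bw\in\R^n$ has full support this is exactly the set of independent $(d-1)$-sets of $M$; hence the underlying matroid of $\mu'$ is $\Tr M$, as already recorded in the text. The circuits of $\Tr M$ are the circuits of $M$ of size at most $d$ together with the bases of $M$ (a circuit of size $d+1$ properly contains a $d$-set that is already dependent in $\Tr M$, so it drops out). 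Since valuated circuits are only defined up to scalar addition, it suffices to compute, for each circuit of $\Tr M$, the valuated circuit of $\mu'$ supported on it and recognize it as one of the two advertised families; this simultaneously gives both inclusions.

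The case of a basis $D$ of $M$ is immediate: $D$ is a circuit of $\Tr M$, its fundamental-circuit presentations in $\mu'$ are the pairs $(D-i,\,i)$ with $i\in D$ (as $D$ is the unique circuit inside the $d$-set $D=(D-i)\cup\{i\}$), so the valuated circuit of $\mu'$ supported on $D$ is $i\mapsto\mu'(D-i)$, and \eqref{eqn:valuation-under-truncation} rewrites this as $\min_{j\notin D-i}\{\mu(D-i+j)\pm\bw(j)\}$, which is exactly $\bC_D$ of \eqref{eq:truncated-circuit} (the sign is the usual normalization difference between ``$\nu$ given by \eqref{eqn:corank-1-valuation}'' and ``$\Trop\nu$ centered at $[\bw]$'').

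The substantive case is a circuit $C$ of $M$ with $|C|\le d$. I would fix $j_0\in C$, extend the independent set $C-j_0$ to a basis $B'$ of $\Tr M$, and set $H:=\cl_M(B')$, a hyperplane of $M$ containing $C$; one checks that the fundamental circuit of $j_0$ with respect to $B'$ is $C$, both in $\Tr M$ and in $M$ (after further extending $B'$ to a basis of $M$), so that the valuated circuit of $\mu'$ on $C$ is $i\mapsto\mu'(B'-i+j_0)$ for $i\in C$. The geometric point to verify is that $\cl_M(B'-i+j_0)=H$ for every $i\in C$, so \eqref{eqn:valuation-under-truncation} gives $\mu'(B'-i+j_0)=\min_{k\notin H}\{\mu((B'+k)-i+j_0)+\bw(k)\}$ with the \emph{same} index set $\{k\notin H\}$ for all $i\in C$; then for each fixed $k\notin H$ the set $B'+k$ is a basis of $M$ whose fundamental circuit for $j_0$ is still $C$, so $i\mapsto\mu((B'+k)-i+j_0)$ equals a fixed representative $\bC^{\mu}_C$ of the valuated circuit of $\mu$ on $C$ plus a constant $c_k$. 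Taking the minimum over $k\notin H$ yields $\mu'(B'-i+j_0)=\bC^{\mu}_C(i)+\min_{k\notin H}\{c_k+\bw(k)\}$, i.e.\ the valuated circuit of $\mu'$ on $C$ is $\bC^{\mu}_C$ up to an additive constant, as desired. The main obstacle is precisely this bookkeeping: confirming that the auxiliary hyperplane $H$ is independent of $i\in C$, and that each inner term is a scalar shift of the $\mu$-valuated circuit on $C$ itself rather than on some larger circuit — both facts come down to $C$ being the fundamental circuit of $j_0$ with respect to all bases in sight. As an alternative to this hands-on computation, once the underlying matroid has been identified this circuit identification can be extracted from the treatment of truncation of valuated matroids in \cite{murota1997matroid}.
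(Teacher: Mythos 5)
Your proposal is correct and follows essentially the route the paper takes: the paper states this lemma as a summary of the preceding discussion (circuits of $\Tr M$ are the circuits of $M$ of size at most $d$ plus the bases of $M$, together with the formula \eqref{eq:truncated-circuit} obtained from \eqref{eqn:valuation-under-truncation}), without writing out a formal proof. Your verification that the valuated circuits supported on small circuits of $M$ are unchanged — via the observation that $\cl_M(B'-i+j_0)=H$ for all $i\in C$, so the minimum in \eqref{eqn:valuation-under-truncation} runs over the same set $\{k\notin H\}$ and each inner term is a scalar shift of the $\mu$-circuit on $C$ — correctly supplies the bookkeeping the paper leaves implicit, and your remark on the sign discrepancy between \eqref{eqn:corank-1-valuation} and \eqref{eq:truncated-circuit} is apt.
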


The next lemma rephrases \cite[Corollary 4.2.11]{maclagan2021introduction}. We will need it as a criterion for a tropical linear space to contain a given point.

\begin{lemma}\label{lem:point-membership}
    Let $[\bw]\in T_{[n]}$. Then $[\bw]\in \Trop\mu$ if and only if the minimizers of 
    \begin{equation}
        \min_{B}\{\mu(B) - \bw\cdot\be_B\}
    \end{equation}
    cover $[n]$.
\end{lemma}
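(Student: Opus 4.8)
The plan is to reduce to the case $\bw=0$ and then argue directly from the description of $\Trop\mu$ as the intersection of the tropical hyperplanes $H_{\bC}$ of its valuated circuits, showing that $[0]\in\Trop\mu$ precisely when every element of $[n]$ lies in a basis of minimum value. For the reduction I would replace $\mu$ by the valuated matroid $\mu'$ with $\mu'(B)=\mu(B)-\bw\cdot\be_B$ (adding the linear functional $B\mapsto\bw\cdot\be_B$ preserves the tropical Pl\"{u}cker relations, so $\mu'$ is again a valuated matroid); its valuated circuits are the $\bw$-shifts of those of $\mu$ up to additive constants, and hence $[\bw]\in\Trop\mu$ if and only if $[0]\in\Trop\mu'$. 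Since $\mu$ and $\mu'$ have the same minimizers of $B\mapsto\mu(B)-\bw\cdot\be_B$, which are exactly the minimum-value bases of $\mu'$, it remains to treat $\mu'$, which I relabel $\mu$. (This amounts to saying the minimum-value bases are the bases of the initial matroid of $\mu$ at $\bw$, and the claim to be proved is that this matroid is loopless.)

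For the ``only if'' direction I would argue contrapositively: suppose $k\in[n]$ lies in no minimum-value basis and fix a minimum-value basis $B^*$, so $k\notin B^*$ and the basic circuit $(B^*,k)$ is defined. Its valuated circuit $\bC=\bC_{(B^*,k)}$ is supported on the unique circuit $C\subseteq B^*+k$ (which contains $k$), with $\bC(j)=\mu(B^*+k-j)$ on $C$, so $\bC(k)=\mu(B^*)$. For every $j\in C\setminus\{k\}$ the set $B^*+k-j$ is a basis containing $k$, hence $\mu(B^*+k-j)>\mu(B^*)$ by minimality; thus $\min_j\bC(j)$ is attained uniquely, and finitely, at $k$. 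Therefore $[0]\notin H_{\bC}\supseteq\Trop\mu$, so $[0]\notin\Trop\mu$.

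For the converse I would prove the sharper statement that if a valuated circuit $\bC$ attains $\min_j\bC(j)$ at a single index $j_0$, then $j_0$ lies in no minimum-value basis. Up to an additive constant, $\bC=\bC_{(B,i)}$ for some basis $B$ and $i\notin B$; its support is the unique circuit $C$ of $B+i$, which equals $\{j\in B+i: B+i-j\text{ is a basis}\}$, and $\bC(j)=\mu(B+i-j)$ on $C$. Put $B_0=B+i-j_0$, a basis with $\mu(B_0)=\bC(j_0)$. If some minimum-value basis $B^*$ contained $j_0$, then since $j_0\in B^*\setminus B_0$ the valuated exchange property (a standard consequence of the tropical Pl\"{u}cker relations) would produce $y\in B_0\setminus B^*$ with $B^*-j_0+y$ and $B_0-y+j_0$ bases and $\mu(B^*)+\mu(B_0)\ge\mu(B^*-j_0+y)+\mu(B_0-y+j_0)$. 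Here $B_0-y+j_0=B+i-y$ is a basis, so $y\in C$ and $\bC(y)=\mu(B+i-y)$; combined with $\mu(B^*-j_0+y)\ge\mu(B^*)$ this gives $\bC(y)\le\mu(B_0)=\bC(j_0)$ with $y\in C\setminus\{j_0\}$, contradicting the uniqueness of $j_0$. Hence if the minimum-value bases cover $[n]$, no valuated circuit has a unique minimizer, every $\min_j\bC(j)$ is attained at least twice, and $[0]\in\Trop\mu$.

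I expect the real content to sit in this last argument — converting a valuated circuit that certifies $[0]\notin\Trop\mu$ into an element of $[n]$ missed by all minimum-value bases — which is the tropical-linear-space incarnation of \cite[Corollary~4.2.11]{maclagan2021introduction} (there stated via the absence of monomials in the initial ideal); the work is bookkeeping rather than depth. The two facts it leans on, both standard for valuated matroids, are that every valuated circuit is, up to an additive constant, a basic circuit $\bC_{(B,i)}$, and that the support of $\bC_{(B,i)}$ is exactly $\{j\in B+i: B+i-j\text{ is a basis of }\mu\}$.
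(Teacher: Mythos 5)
Your argument is correct, and it is worth noting that the paper does not actually prove this lemma: it is stated as a rephrasing of \cite[Corollary 4.2.11]{maclagan2021introduction} and used as a black box, so any proof is by definition a different route. Your route is a self-contained one that works directly with the paper's own definition of $\Trop\mu$ as the intersection of the hyperplanes $H_{\bC}$ of the valuated (basic) circuits: the translation $\mu'(B)=\mu(B)-\bw\cdot\be_B$ correctly reduces to $\bw=0$ (adding a linear functional preserves the tropical Pl\"{u}cker relations and shifts each circuit $\bC$ to $\bC+\bw$ up to a constant); the ``only if'' direction via the basic circuit $(B^*,k)$ of a minimum-value basis $B^*$ is right, with the strict inequality $\mu(B^*+k-j)>\mu(B^*)$ coming from the standing assumption that $k$ lies in no minimum-value basis (your ``by minimality'' is slightly compressed but the hypothesis is in place); and the converse correctly uses the symmetric exchange inequality for valuated matroids (the M-convex exchange property already recorded in the paper's preliminaries) to show that a circuit with a unique minimizer $j_0$ forces $j_0$ outside every minimum-value basis, the key computation $B_0-y+j_0=B+i-y$ and $\bC(y)\le\bC(j_0)$ being exactly what is needed. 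What the citation buys the paper is brevity and a statement phrased in the language of initial matroids being loopless; what your proof buys is self-containedness and the fact that it never leaves the circuit-hyperplane description of $\Trop\mu$ used throughout the paper, at the cost of invoking (correctly) the two standard facts that every valuated circuit is a basic circuit up to an additive constant and that the support of $\bC_{(B,i)}$ is the fundamental circuit of $i$ over $B$.
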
 

For any point $\bw\in \R^n$, let $\bw^{\wedge k}$ be a point in $\R^{\nk{n}{k}}$ whose coordinates, indexed by $\nk{n}{k}$, are given by
\begin{equation}
    (\bw^{\wedge k})(I) = \bw\cdot \be_I = \sum_{i\in I}\bw(i).
\end{equation}
This operation descends to $T_{[n]}$. We get a point $[\bw^{\wedge k}]\in T_{\nk{n}{k}}$.

\begin{lemma}\label{lem:adjoint-and-linear-interpolation}
    Let $[\bw]\in \Trop\mu\cap T_{[n]}$ and $[\bV]=[\bw^{\wedge (d-1)}]$. Then $H_\bV\stcap \Trop\Sigma\subset \Dr^1(\mu;\bw)$.
\end{lemma}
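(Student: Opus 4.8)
The plan is to reduce the claim to a statement about initial matroids, get a hold of $\Trop\Sigma$ through the cofactor formulas, and then exploit that a hyperplane section of a tropical linear space is a truncation. First, since stable intersection is contained in set-theoretic intersection and $H_\bV\stcap\Trop\Sigma\subset\Trop\Sigma\subset\cD^1(\mu)$, every $[\theta]\in H_\bV\stcap\Trop\Sigma$ already satisfies $\mu\onto\theta$, so $\Trop\theta\subset\Trop\mu$ and $\theta$ has rank $d-1$; the whole content is to show $[\bw]\in\Trop\theta$. Translating $\mu,\theta,\Sigma$ by a common weight vector --- which fixes the underlying matroids and commutes with the incidence--Pl\"{u}cker relations, hence preserves the hypothesis that $\Sigma$ is an adjoint of $\mu$ --- we may assume $\min_B\{\mu(B)-\bw\cdot\be_B\}=0$. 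By \Cref{lem:point-membership}, $[\bw]\in\Trop\mu$ then says the initial matroid $\mu^\bw$, whose bases are the minimizers, is loopless, and $[\bw]\in\Trop\theta$ is equivalent to $\theta^\bw$ being loopless.

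The engine is \Cref{lem:Sigma-basis-valuation}. Fix a basis $D$ of $\mu^\bw$. By \Cref{prop:properties-adjoint}~(1), $\tD=\{D-i\mid i\in D\}$ is a basis of the underlying matroid of $\Sigma$, and combining the cofactor formula \Cref{lem:Sigma-basis-valuation}~(3) with \Cref{rmk:unsimplified-adjoint} (to locate supports) shows that the valuated cocircuit of $\Sigma$ at $\tD$ in the direction $D-i$ is, up to an additive constant, the function $A\mapsto\mu(A+i)$ on $\nk{n}{d-1}$. Since a tropical linear space is the tropical span of its valuated cocircuits at any one basis, this yields the parametrization: every $[\theta]\in\Trop\Sigma$ has the form $\theta(A)=\min_{i\in D}\{c_i+\mu(A+i)\}$ for some $c\in\R^D$. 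A short computation with this formula and the normalization $\min_B\{\mu(B)-\bw\cdot\be_B\}=0$ identifies, for any such $\theta$ and after suitable re-normalization, the bases of $\theta^\bw$ as $\bigcup_{i\in I}\{A\mid A+i\in\cB(\mu^\bw)\}$, where $I\subset D$ is the set of indices achieving $\min_{i\in D}\{c_i+\bw(i)\}$; hence $\theta^\bw$ is loopless precisely when $I$ meets at least two distinct parallel classes of $\mu^\bw$.

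Now I would show that $H_\bV\stcap\Trop\Sigma$ lands in the locus where $I$ is large enough. Writing $H_\bV=\Trop\lambda$ for the corank-$1$ valuated matroid $\lambda$ with $\lambda(\nk{n}{d-1}\setminus\{A\})=\bw\cdot\be_A$, the section $H_\bV\stcap\Trop\Sigma$ is the tropical linear space of the truncation $\Tr_\lambda\Sigma$, whose valuated cocircuits are obtained from the cocircuits $A\mapsto\mu(A+i)$ of $\Sigma$ by the truncation recipe (dual to \Cref{lem:valuation-of-truncation}): each is a $\lambda$-balanced tropical fusion $\theta'=\min_k\{a_k+\mu(\cdot+k)\}$ over the indices $k$ lying on one line of $M$, and the balancing forced by $\lambda$ makes the corresponding index set $I$ meet at least two distinct parallel classes of $\mu^\bw$, so that $(\theta')^\bw$ is loopless by the previous paragraph, i.e. $[\bw]\in\Trop\theta'$. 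Finally, every point of $H_\bV\stcap\Trop\Sigma=\Trop(\Tr_\lambda\Sigma)$ is a tropical combination of these cocircuits, and the property $[\bw]\in\Trop(-)$ is preserved under tropical combinations of valuated matroids, because the initial matroid of a tropical combination contains, among its bases, those of at least one of the individual initial matroids, hence is loopless whenever all of them are. Therefore $[\bw]\in\Trop\theta$ for every $[\theta]\in H_\bV\stcap\Trop\Sigma$, which is the claim.

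The step I expect to be the main obstacle is the description of the valuated cocircuits of $\Tr_\lambda\Sigma$ in the third paragraph --- unwinding the truncation formula together with the cofactor identities, treating lines of $M$ with three or more points (where a single cocircuit of $\Tr_\lambda\Sigma$ fuses three or more cocircuits of $\Sigma$), and checking that the $\lambda$-balancing genuinely forces the index set $I$ to meet at least two parallel classes of $\mu^\bw$. A related point that must be handled with care is that the statement really needs the \emph{stable} intersection: the set-theoretic inclusion $H_\bV\cap\Trop\Sigma\subset\cD^1(\mu;\bw)$ is false, so the argument must use that $\Tr_\lambda\Sigma$ represents the stable intersection (equivalently, that only the ``transverse'' fusions of cocircuits contribute).
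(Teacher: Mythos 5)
There is a genuine gap, and it sits at the engine of your argument. In your second paragraph you assert that, because the fundamental cocircuits of $\Sigma$ at the basis $\tD$ are (up to additive constants) the functions $A\mapsto\mu(A+i)$, ``a tropical linear space is the tropical span of its valuated cocircuits at any one basis,'' and hence every $[\theta]\in\Trop\Sigma$ has the form $\theta(A)=\min_{i\in D}\{c_i+\mu(A+i)\}$. The identification of the fundamental cocircuits via \Cref{lem:Sigma-basis-valuation}~(3) is correct, but the spanning claim is false: a tropical linear space is the tropical convex hull of \emph{all} of its valuated cocircuits, not of the $d$ fundamental ones attached to a single basis. Already for a trivially valued tropical line in $\PT{[4]}$ (a tree with four leaves), the tropical span of the two fundamental cocircuits of one basis is only a path inside the tree. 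So your parametrization of $\Trop\Sigma$, the resulting description of $\cB(\theta^{\bw})$ in terms of the index set $I$, and the looplessness criterion built on it all rest on a false premise; and the third paragraph, where the criterion is applied to the cocircuits of $\Tr_\lambda\Sigma$ and the ``$\lambda$-balancing forces $I$ to meet two parallel classes'' step, is exactly the part you flag as not yet established. As it stands the proof does not go through.

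For comparison, the paper's proof never parametrizes $\Trop\Sigma$ by generators. It works with the \emph{defining equations} instead: a point $[\theta]$ of $H_\bV\stcap\Trop\Sigma$ lies on the tropical linear space of the truncation of $\Sigma$ by the hyperplane centered at $[\bV]$, so by \Cref{lem:valuation-of-truncation} it satisfies the tropical vanishing of $\min_{B\subset D}\{\theta(B)+\bC_{\tD}(B)\}$ for each basis $D$ of $\mu$, where $\bC_{\tD}(B)=\min_{B'}\{\Sigma(\tD-B+B')-\bV(B')\}$. The cofactor formula \Cref{lem:Sigma-basis-valuation}~(3) together with \Cref{lem:point-membership} (the minimizers of $\min_{D'}\{\mu(D')-\bw\cdot\be_{D'}\}$ cover $[n]$, since $[\bw]\in\Trop\mu$) shows $\bC_{\tD}(B)+\bV(B)$ is independent of $B\subset D$, so this circuit equation \emph{is} the incidence relation $\min_{i\in D}\{\theta(D-i)+\bw(i)\}$; the case where $D$ is not a basis of $\mu$ is handled separately with the parallel-hyperplane relations. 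Your instinct to use the stable intersection (rather than the set-theoretic one) and the cofactor formulas is the right one, but to repair your route you would either have to prove a correct cocircuit description of $\Tr_\lambda\Sigma$ and carry out the balancing analysis you postponed, or switch, as the paper does, from generators to circuit equations.
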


\begin{proof}
    Let $[\theta]\in H_\bV\stcap \Trop\Sigma$. We show that $[\bw]\in \Trop\theta$ by showing that all the incidence relations are satisfied. The nontrivial incidence relations for this containment have the following form: for each $d$-set $D$ whose rank in $M$ is at least $d-1$, the following vanishes tropically, 
    \begin{equation}
        \min_{i\in D}\{\theta(D-i)+\bw(i)\}.
    \end{equation}
    This is the same as the tropical vanishing of
    \begin{equation}\label{eqn:vanishing-to-prove}
        \min_{B\subset D}\{\theta(B)-\bV(B)\}
    \end{equation}
    where the minimum ranges over $(d-1)$-sets $B\subset D$. There are two cases depending on whether or not $D$ is a basis of $\mu$.
    \begin{itemize}
            \item Case I: $D$ is a basis of $\mu$. By \Cref{lem:valuation-of-truncation}, $\theta$ satisfies the tropical vanishing of
    \begin{equation}\label{eqn:vanishing-at-hand}
        \min_{B\subset D}\{\theta(B)+\bC_{\tD}(B)\},
    \end{equation}
    where $\bC_{\tD}(B)=\min_{|B'|=d-1}\{\Sigma(\tD - B + B')-\bV(B')\}$, $\tD$ is the basis of $\Sigma$ induced by $D$, and $B'$ ranges over all $(d-1)$-sets. If we can show that $\bC_{\tD}(B)+\bV(B)$ does not depend on $B$, then we know that \eqref{eqn:vanishing-at-hand} is exactly the relation \eqref{eqn:vanishing-to-prove}. Fix $B\subset D$ and let $i=D\backslash B$. We compute
        \begin{align*}
            \bC_{\tD}(B)-\Sigma(\tD)+\bV(B) & = \min_{|B'|=d-1}\{\Sigma(\tD - B + B')-\Sigma(\tD) -\bV(B') + \bV(B)\} \\
            &  \xlongequal{\Cref{lem:Sigma-basis-valuation}(3)}  \min_{|B'|=d-1}\{\mu(B'i) - \mu(D) -\bV(B') + \bV(B)\} \\
           & = \min_{|B'|=d-1}\{\mu(B'i) - \mu(D) -\bw\cdot \be_{B'i} + \bw\cdot\be_{Bi}\} \\
           & = \min_{|B'|=d-1}\{\mu(B'i) -\bw\cdot \be_{B'i} \} -(\mu(D) -\bw\cdot\be_D).
        \end{align*}
    
    By \Cref{lem:point-membership}, the minimizers of $\min_{|D'|=d}\{\mu(D')-\bw\cdot \be_{D'}\}$ cover $[n]$, so 
    \begin{equation}
       \min_{|B'|=d-1}\{\mu(B'i) -\bw\cdot\be_{B'i} \} =\min_{|D'|=d}\{\mu(D')-\bw\cdot\be_{D'}\},
    \end{equation}
    which is independent of $i$. Therefore, for fixed $D$, $C_{\tD}(B)-\Sigma(\tD)+\bV(B)$, and thus $C_{\tD}(B)+\bV(B)$, is independent of $B$.

    \item Case II: $D$ is not a basis of $\mu$. Since we assume that $\mu$ is loopless, $D$ contains at least two bases of $\cl_M(D)$. If $B_1,B_2\subset D$ are distinct bases of $\cl_M(D)$, then $|B_1\backslash B_2|=1$ and by the parallel hyperplane relations, $\theta(B_1)-\theta(B_2)=\mu(B_1k)-\mu(B_2k)$ is independent of $k$ when $Dk$ spans $[n]$. Pick any such $k$. We have the incidence relation for $[\bw]\in \Trop\mu$ indexed by $(\emptyset,Dk)$:
    \begin{equation}\label{eq:point-mu-incidence}
         \min_{k\notin B}\{\mu(Bk)-\bw\cdot\be_{Bk}\}.
    \end{equation}
    Since $\theta(B_1)-\theta(B_2)=\mu(B_1k)-\mu(B_2k)$, \eqref{eq:point-mu-incidence} is equivalent to the tropical vanishing of
    \begin{equation}
        \min_{k\notin B}\{\theta(B)-\bw\cdot \be_{Bk}\}
    \end{equation}
    and thus equivalent to the tropical vanishing of $\min_{B\subset D}\{\theta(B)-\bV(B)\}$.
    \end{itemize}
    Therefore, all the incidence relations for $[\bw]\in \Trop\theta$ are satisfied.
\end{proof}

\begin{repthm}{main:D}
    Let $\mu$ be valuated matroid on $[n]$. If $\mu$ has an adjoint, then $\Trop\mu$ has the linear interpolation property.
\end{repthm}

\begin{proof}
    First, we consider the case $[\bw_1],...,[\bw_{d-1}]\in \Trop\mu\cap T_{[n]}$. Let $\Sigma$ be an adjoint of $\mu$. Let $H_{{\bV_i}}\subset \PT{\nk{n}{d-1}}$ be the tropical hyperplane centered at $[\bV_i]=[{\bw_i}^{\wedge(d-1)}]$. We have
\begin{equation}
    \begin{split}
        H_{\bV_1}\stcap H_{\bV_2} \stcap \cdots \stcap H_{\bV_{d-1}}\stcap \Trop \Sigma & \subset \bigcap_{i=1}^{d-1} (H_{{\bV_i}} \stcap \Trop\Sigma) \\
        & \subset \bigcap_{i=1}^{d-1}\Dr^1(\mu;{\bw_i}) \\
        & =\Dr^1(\mu;\bw_1,...,\bw_{d-1}).
    \end{split}
\end{equation}
    Since $\Sigma$ has rank $d$, the above stable intersection is nonempty, so $\Dr^1(\mu;\bw_1,...,\bw_{d-1})$ is nonempty.

    If $[\bw_1],...,[\bw_{d-1}]$ are arbitrary points in $\Trop M$, then each point $[\bw_i]$ is the limit of a sequence $\{\{[\bw_i^k]\}\}$ where $[\bw_i^k]\in \Trop M$ for all $k$. For each $k$, we get from stable intersection a unique tropical linear subspace $\Trop\theta_k$ containing $\{[\bw_1^k],...,[\bw_{d-1}^k]\}$. Since $\Dr^1(\mu)$ is compact, $[\theta_k]$ limits to some $[\theta]$. Since the incidence relations are closed conditions, $\Trop\theta$ contains $\{[\bw_1],...,[\bw_{d-1}]\}$. 
\end{proof}

\subsection{Completion of partial flags}
Recall the setup of the question: $\mu_0\onto \mu_k$ is a quotient of valuated matroids with a flag of underlying
matroids $M_0\onto M_k$, where $\rk M_0=d$ and $\rk M_k = d-k$. We ask if there are valuated matroids $\mu_1,...,\mu_{k-1}$ such that $\mu_0\onto \mu_1\onto\cdots\onto \mu_k$ where each consecutive quotient is an elementary quotient.

We first consider two variants of the question motivated by quotients of ordinary matroids. If $M_0\onto M_k$ is a quotient of corank $k$, then there is a matroid $\ol{M}$ on $[n]\sqcup I$ where $|I|=k$, such that $M_0 = \ol{M}\backslash I$ and $M_k=\ol{M}/I$. The existence of such an $\ol{M}$ implies the existence of the Higgs factorization. We give a short proof of this classical fact using Lorentzian polynomials.

\begin{prop}\label{prop:extension-implies-Higgs}
    Let $\ol{M}$ be a matroid on $[n]\sqcup I$ such that $I$ is independent in $I$. Let $M_0=\ol{M}\backslash I$ and $M_k=\ol{M}/I$. Then $M_0\onto M_k$ and it has a Higgs factorization.
\end{prop}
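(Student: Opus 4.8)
The plan is to read the Higgs factorization of $M_0\onto M_k$ directly off the basis generating polynomial of $\overline M$, using that segments of a Lorentzian polynomial are Lorentzian with supports forming a Higgs factorization (\Cref{prop:interval-preserves-lorentzian}). With notation as in the paragraph above ($k=|I|$, $\rk M_0=d$, $\rk M_k=d-k$), I would first record the ranks: since $I$ is independent in $\overline M$ we have $\rk_{\overline M}(I)=k$, hence $\rk\overline M=\rk(\overline M/I)+k=d$, and therefore $\rk(\overline M\setminus I)=\rk M_0=d=\rk\overline M$; in particular $[n]$ spans $\overline M$, so the bases of $M_0$ are exactly the bases of $\overline M$ contained in $[n]$.

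Next I would introduce the basis generating polynomial $f_{\overline M}$ in the variables $(w_i)_{i\in[n]}$ and $(y_e)_{e\in I}$. It is multi-affine, homogeneous of degree $d$, and Lorentzian, because the indicator function of $\cB(\overline M)$ is the trivial valuation on $\overline M$, hence M-convex, so the Lorentzian characterization of M-convex functions applies. Substituting $y_e=w_0$ for all $e\in I$, where $w_0$ is a single fresh variable, is an instance of the projection operator, which preserves the Lorentzian property; hence
\[
    g(w_0,w_1,\dots,w_n):=f_{\overline M}\big((w_i)_{i\in[n]},(w_0)_{e\in I}\big)
\]
is Lorentzian, homogeneous of degree $d$, multi-affine in $w_1,\dots,w_n$, and of degree at most $k$ in $w_0$. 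Writing $g=g_0+w_0g_1+\cdots+w_0^kg_k$ with $g_j\in\R[w_1,\dots,w_n]$, a count of which monomials of $f_{\overline M}$ survive the substitution shows that $\supp g_j=\{\,B\subseteq[n]:B\cup S\in\cB(\overline M)\text{ for some }S\subseteq I\text{ with }|S|=j\,\}$, every element having size $d-j$. In particular $g_0=f_{M_0}\neq0$ (this is where $[n]$ spanning $\overline M$ is used), and $g_k=f_{M_k}$, because $I$ is independent and $|B\cup I|=d=\rk\overline M$, so $B\cup I\in\cB(\overline M)$ exactly when $B$ is a basis of $\overline M/I$.

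Then, since $g$ is Lorentzian with $g_0\neq0$, \Cref{prop:interval-preserves-lorentzian}(2) applied to $g$ gives that $\supp g_0\onto\supp g_1\onto\cdots\onto\supp g_k$ is a Higgs factorization of M-convex sets. Since $g$ is multi-affine in $w_1,\dots,w_n$, each $\supp g_j$ is an M-convex subset of $\binom{[n]}{d-j}$, hence the basis set $\cB(M_j')$ of a matroid $M_j'$, with $M_0'=M_0$ and $M_k'=M_k$. Therefore $M_0=M_0'\onto M_1'\onto\cdots\onto M_k'=M_k$ is a Higgs factorization of matroids, and in particular $M_0\onto M_k$, as required.

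The step needing the most care is bookkeeping rather than a genuine difficulty: pinning down the ranks so that the bottom segment $g_0$ is really $f_{M_0}$ and hence nonzero — precisely the place where $\rk M_0=\rk\overline M$ enters, and the only spot where dropping the contextual rank hypotheses would break the argument — and recording that a chain of elementary quotients of the M-convex sets $\cB(M_j')$ is the same thing as a Higgs factorization of the matroids $M_j'$. Everything else is a routine application of facts already in hand: basis generating polynomials are Lorentzian, projection preserves the Lorentzian property, and segments of Lorentzian polynomials are Lorentzian with M-convex supports.
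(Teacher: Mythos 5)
Your proof is correct and follows essentially the same route as the paper: form $f_{\ol{M}}$, identify the $I$-variables to a single variable $w_0$ (the paper cites a substitution result of Br\"and\'en--Huh where you invoke projection, which is equivalent here), and apply \Cref{prop:interval-preserves-lorentzian}(2) to read off the Higgs factorization. The only additions are your explicit rank bookkeeping ensuring $g_0=f_{M_0}\neq 0$ and $g_k=f_{M_k}$, which the paper leaves implicit.
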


\begin{proof}
    Suppose $|I|=k$. The basis generating polynomial of $\ol{M}$ can be written as 
    \begin{equation}
        f_{\ol{M}}(w_1,...,w_n,z_1,...,z_k) = f_{M_0} + \sum^k_{i=1}z_if_i + \sum_{\ba\in \nk{k}{2} }z^\ba f_\ba + \cdots + z_1z_2...z_k f_{M_k}. 
    \end{equation}
    Setting $z_1=...=z_k=w_0$, by \cite[Theorem 2.10]{branden2020lorentzian} we get a new Lorentzian polynomial in the following form
    \begin{equation}
        f(w_0,w_1,...,w_n) = f_{M_0} + w_0f_1 + w_0^2f_2 + \cdots + w_0^k f_{M_k}.
    \end{equation}
    By \Cref{prop:interval-preserves-lorentzian}, $\supp f_{M_0}\onto \supp f_1 \onto \cdots \onto \supp f_{M_k}$ is a Higgs factorization.
\end{proof}

The above discussion suggests the following questions.

\begin{qst}\label{qst:completion-to-higgs}
    Can we complete the flag $\mu_0\onto \mu_k$ such that the flag of underlying matroids $M_0\onto \cdots\onto M_k$ is a Higgs factorization?
\end{qst}

\begin{qst}\label{qst:completion-by-extension}
    Is there a valuated matroid $\ol{\mu}$ of rank $d$ on $[n]\sqcup I$, where $|I|=k$, such that $\mu_0=\ol{\mu}\backslash I$ and $\mu_k=\ol{\mu}/I$?
\end{qst}

\cite[Theorem 5.1.2]{brandt2021tropical} gives an affirmative answer to \Cref{qst:completion-by-extension} in the case of elementary quotients ($k=1$), which is a special case of \Cref{lem:elementary-quotient-equiv-description}. In the matroid case, \Cref{qst:completion-to-higgs} and \Cref{qst:completion-by-extension} imply each other. We show that \Cref{qst:completion-by-extension} implies \Cref{qst:completion-to-higgs} for valuated matroids.

\begin{prop}\label{prop:extension-implies-higgs}
    Let $\ol{\mu}$ be a valuated matroid such on $[n]\sqcup I$ and $I$ is independent in $\ol{\mu}$. Let $\mu_0$ be the deletion $\ol{\mu}\backslash I$ and $\mu_2$ be the contraction $\ol{\mu}/I$. Then there is some $\mu_1$ such that $\mu_0\onto \mu_1\onto\mu_2$ and the underlying flag matroid $M_0\onto M_1\onto M_2$ is a Higgs factorization.
\end{prop}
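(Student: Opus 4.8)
The plan is to run the argument of \Cref{prop:extension-implies-Higgs} over the generalized Puiseux field $\K$. Write $I=\{a,b\}$, let $z_1,z_2$ be the variables indexed by $a,b$ and $w_1,\dots,w_n$ the variables indexed by $[n]$, and let $f:=f^{\ol\mu}_t\in\K_{\ge 0}[w_1,\dots,w_n,z_1,z_2]$ be the basis generating polynomial of $\ol\mu$ with parameter $q=t$. Since $\ol\mu$ is M-convex and a polynomial over $\K$ is Lorentzian exactly when its real specializations at all sufficiently small $t>0$ are Lorentzian, the Lorentzian characterization of M-convex functions \cite[Theorem 3.14]{branden2020lorentzian} gives $f\in\rmL^d_{n+2}(\K)$ with $\trop f=\ol\mu$; and since each basis of $\ol\mu$ meets $I$ in $0$, $1$ or $2$ elements, $f$ has degree at most $1$ in each of $z_1,z_2$.

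Next I would specialize $z_1=z_2=z_0$ for a fresh variable $z_0$. By \cite[Theorem 2.10]{branden2020lorentzian} (applied to the real specializations), the resulting polynomial
\[
F(w,z_0)=f(w,z_0,z_0)=F_0(w)+z_0F_1(w)+z_0^2F_2(w)
\]
lies in $\rmL^d_{n+1}(\K)$. Reading off coefficients: $F_0=\sum_{B\in\cB(\ol\mu),\,B\cap I=\emptyset}t^{\ol\mu(B)}w^B$ is the basis generating polynomial of $\mu_0=\ol\mu\setminus I$; since $I$ is independent, $F_2=\sum_{B'\cup I\in\cB(\ol\mu)}t^{\ol\mu(B'\cup I)}w^{B'}$ is the basis generating polynomial of $\mu_2=\ol\mu/I$ (up to an additive constant in the exponents, which is immaterial); and $F_1$ collects the bases meeting $I$ in a single element. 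Here $F_0\ne 0$ because $\rk\mu_0=d$, $F_2\ne0$ because $I$ extends to a basis of $\ol\mu$, and then M-convexity of $\supp F$ forces $F_1\ne 0$ (an exchange between a monomial of $z_0$-degree $0$ and one of $z_0$-degree $2$ produces one of $z_0$-degree $1$). Now \Cref{prop:interval-preserves-lorentzian}, whose proof is a symbol computation using only ultra-log-concavity of binomial coefficients and hence is valid verbatim over $\K$, shows that $F_1\in\rmL^{d-1}_{n+1}(\K)$ and that $\supp F_0\onto\supp F_1\onto\supp F_2$ is a Higgs factorization; \Cref{cor:proper-position-from-single-poly} then yields $F_2\ll_L F_1\ll_L F_0$.

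Finally I would invoke \Cref{prop:quotient-characterization-puiseux}: from $F_1\ll_L F_0$ and $F_2\ll_L F_1$ it gives $\trop F_0\onto\trop F_1$ and $\trop F_1\onto\trop F_2$. Set $\mu_1:=\trop F_1$. Then $\trop F_0=\mu_0$ and $\trop F_2=\mu_2$ (up to scalars, which do not affect the quotient relation), $\mu_1$ is a valuated matroid (the tropicalization of the Lorentzian polynomial $F_1$, hence M-convex with support the basis set of a matroid), and $\rk\mu_1=\deg F_1=d-1$; therefore $\mu_0\onto\mu_1\onto\mu_2$ are elementary quotients. Since $\supp\mu_i=\supp F_i=\cB(M_i)$ for $i=0,1,2$, the flag of underlying matroids $M_0\onto M_1\onto M_2$ is precisely the Higgs factorization produced above.

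The argument is essentially bookkeeping once the Lorentzian machinery of \cite{branden2020lorentzian} is granted; the points that need care are (i) identifying $F_0,F_1,F_2$ with the basis generating polynomials of the deletion, the Higgs lift, and the contraction — this is where independence of $I$ and the standing hypothesis $\rk\mu_0=d$ enter — and (ii) checking that \Cref{prop:interval-preserves-lorentzian} and \Cref{cor:proper-position-from-single-poly}, stated over $\R$, carry over to coefficients in $\K$, which they do since their proofs are field-insensitive (alternatively one can descend to $\R$ via Tarski's principle as in \Cref{thm:Lorentzian-convexity-puiseux}). I do not expect any genuine obstacle beyond these.
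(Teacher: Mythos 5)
Your proof is correct, but it takes a genuinely different route from the paper's. The paper works directly at the tropical level: it writes $f_q^{\ol{\mu}}=f_q^{\mu_0}+z_1f_q^{\mu_{11}}+z_2f_q^{\mu_{12}}+z_1z_2f_q^{\mu_2}$, notes that $\mu_0\onto\mu_{11}\onto\mu_2$ and $\mu_0\onto\mu_{12}\onto\mu_2$ (the $k=1$ case), and then invokes \Cref{cor:first-incidence-dressian-linear} (tropical convexity of the incidence prevariety, a consequence of \Cref{thm:dressian-cut-out-by-three-term}) to conclude that $\mu_1=\min\{\mu_{11},\mu_{12}\}$ is a valuated matroid with $\mu_0\onto\mu_1\onto\mu_2$, finishing the Higgs claim exactly as in \Cref{prop:extension-implies-Higgs}. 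You instead lift to $\K$, diagonalize $z_1=z_2=z_0$, and run \Cref{prop:interval-preserves-lorentzian}, \Cref{cor:proper-position-from-single-poly}, and \Cref{prop:quotient-characterization-puiseux}; note that your $\mu_1=\trop F_1$ is literally the same valuated matroid $\min\{\mu_{11},\mu_{12}\}$, since the coefficients of $F_1$ are $t^{\mu_{11}(B)}+t^{\mu_{12}(B)}$ with positive coefficients and no cancellation. What your route buys is independence from the incidence-relation machinery of Section 4: you only need the Puiseux-coefficient Lorentzian toolkit of Section 3, effectively re-deriving the one instance of tropical convexity you need on the fly (and your transfer of the real statements to $\K$ is unproblematic, since Lorentzian over $\K$ means Lorentzian for all sufficiently small real $t$, so one can just specialize). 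What the paper's route buys is brevity and staying at the tropical level once its structural theorem on $\cD^1(\mu)$ is available. The two side conditions you flag are handled the same way implicitly in the paper: $\rk\mu_0=d$ is a standing assumption needed for the statement to make sense, and $F_1\neq 0$ (equivalently $\cB(M_{11})\cup\cB(M_{12})\neq\emptyset$) follows from M-convexity of $\supp F$ exactly as you argue.
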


\begin{proof}
    Suppose $|I|=k$. The basis generating polynomial of $\ol{\mu}$ can be written as 
    \begin{equation}
        f_q^{\ol{\mu}}(w_1,...,w_n,z_1,...,z_k) = f_q^{\mu_0} + \sum^k_{i=1}z_if_q^{\theta_i} + \sum_{\ba\in \nk{k}{2} }z^\ba f_q^{\theta_\ba} + \cdots + z_1z_2...z_k f_q^{\mu_k}, 
    \end{equation}
    where the $\theta_i, \theta_\ba$, etc. are valuated matroids on $[n]$ obtained by deleting and contracting certain elements in $I$. Setting $z_1=...=z_k=w_0$, we get a new Lorentzian polynomial in the following form
    \begin{equation}
        f_q(w_0,w_1,...,w_n) = f_q^{\mu_0} + w_0\sum^k_{i=1}f_q^{\theta_i} + w_0^2\sum_{\ba\in \nk{k}{2} } f_q^{\theta_\ba} + \cdots + w_0^k f_q^{\mu_k}
    \end{equation}
    Since $f_q$ is Lorentzian for any $0<q\leq 1$, we can regard $f_q$ as a Lorentzian polynomial over $\K$. We then get a factorization 
    \[\mu_0\onto \min_i\{\theta_i\}\onto\min_{\ba\in \nk{k}{2}}\{\theta_\ba\}\onto \cdots \onto \mu_k \] by tropicalizing $f$. \Cref{prop:extension-implies-Higgs} shows that it is a Higgs factorization.
\end{proof}

Now we show that if the flag starts with a point (or, dually, ends with a hyperplane), then both \Cref{qst:completion-to-higgs} and \Cref{qst:completion-by-extension} have affirmative answers. \Cref{main:F} is then a corollary. 
\begin{prop}\label{prop:complete-towards-point}
     Let $\mu_0\onto\mu_{d-1}$ be an quotient of valuated matroids of corank $d-1$ where $\mu_{d-1}$ has rank 1. Let $M_0$ and $M_{d-1}$ be the underlying matroids of $\mu_0$ and $\mu_{d-1}$, respectively.
     \begin{enumerate}
         \item There are valuated matroids $\mu_1,...,\mu_{d-2}$ such that $\mu_0\onto\mu_1\onto\cdots \onto \mu_{d-2}\onto \mu_{d-1}$ and the flag of underlying matroids is the Higgs factorization of $M_0\onto M_{d-1}$.
         \item There is a valuated matroid $\ol{\mu}$ on $[n]\sqcup I$ of rank $d$, where $|I|=d-1$, such that $\ol{\mu}\backslash I = \mu_0$ and $\ol{\mu}/I=\mu_{d-1}$.
     \end{enumerate}
 \end{prop}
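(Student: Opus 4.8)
The plan is to realise the flag as the sequence of iterated truncations of $\mu_0$ by a single hyperplane through the point $\mu_{d-1}$, packaged as the graded pieces of one Lorentzian polynomial over $\K$; the same polynomial, after polarization, produces the extension in part~(2). Throughout I assume $M_0$ is loopless (loops of $M_0$ are loops of every member of any flag, hence discardable) and, for the body of the argument, that $\mu_{d-1}$ has a representative $\bw\in\R^n$; the remaining case is taken up at the end. Note that $\mu_0\onto\mu_{d-1}$ with $\mu_{d-1}$ of rank $1$ just says $[\bw]\in\Trop\mu_0$, so \Cref{lem:point-membership} will be available.

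To carry this out, I would first lift $\mu_0$: by \cite[Theorem 3.20]{branden2020lorentzian}, $g:=\sum_{B}t^{\mu_0(B)}w^B$ lies in $\rmL^d_n(\K)$ with $\trop(g)=\mu_0$. Put $a_i:=t^{-\bw(i)}\in\K_{>0}$ and form $f(w,z):=g(w+az)\in\rmL^d_{n+1}(\K)$, using that a nonnegative linear substitution preserves the Lorentzian property \cite[Theorem 2.10]{branden2020lorentzian}. Writing $f=\sum_{k=0}^{d}z^kh_k$ with $h_k=\tfrac1{k!}\partial_a^kg$, each $h_k$ is a nonzero multi-affine Lorentzian polynomial of degree $d-k$ over $\K$ (derivatives of multi-affine Lorentzian polynomials in a positive direction are again such, and nonnegativity of coefficients rules out vanishing), so $\mu_k:=\trop(h_k)$ is a valuated matroid of rank $d-k$ on $[n]$, with $\mu_0$ the given one. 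By \Cref{prop:interval-preserves-lorentzian}\,(1) the segment $f_{[0,d-1]}=\sum_{k=0}^{d-1}z^kh_k$ is Lorentzian; \Cref{cor:proper-position-from-single-poly} gives $h_{k+1}\ll_Lh_k$, so \Cref{prop:quotient-characterization-puiseux} yields elementary quotients $\mu_0\onto\mu_1\onto\cdots\onto\mu_{d-1}$, and \Cref{prop:interval-preserves-lorentzian}\,(2) applied to $f_{[0,d-1]}$ shows the underlying matroids $M_0\onto\cdots\onto M_{d-1}$ form a Higgs factorization — necessarily the Higgs factorization of $M_0\onto M_{d-1}$, since such a factorization is determined by its endpoints. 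The one point requiring real work is that this flag ends at the \emph{prescribed} $\mu_{d-1}$ rather than at some other point of $\Trop\mu_0$: expanding $h_{d-1}=\tfrac1{(d-1)!}\partial_a^{d-1}g$, the coefficient of $w_i$ is $\sum_{B\ni i}t^{\mu_0(B)-\bw\cdot\be_{B-i}}$, of valuation $\bw(i)+\min_{B\ni i}\{\mu_0(B)-\bw\cdot\be_B\}$; since $[\bw]\in\Trop\mu_0$, \Cref{lem:point-membership} makes the inner minimum a constant $c$ independent of $i$, whence $\mu_{d-1}(\{i\})=\bw(i)+c$ and $[\mu_{d-1}]=[\bw]$. (Unwinding valuations shows $\mu_k=\Tr^{(k)}_\nu\mu_0$ for $\nu([n]-i)=-\bw(i)$, so the $\mu_k$ are literally the iterated truncations of $\mu_0$ by the hyperplane centred at $[\bw]$; the Lorentzian packaging just supplies the quotient relations and the Higgs property at no extra cost.) This establishes (1).

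For (2), I would polarize the auxiliary variable: $f_{[0,d-1]}$ has degree $d-1$ in $z$, and its polarization $\pol(f_{[0,d-1]})$ splitting $z$ into $d-1$ fresh variables indexed by a set $I$ (leaving the $w_i$ untouched, as $f_{[0,d-1]}$ is already multi-affine in them) is a multi-affine Lorentzian polynomial over $\K$ in $w_1,\dots,w_n$ and $I$, by \Cref{prop:polarization-projection}. Let $\ol\mu:=\trop\big(\pol(f_{[0,d-1]})\big)$, a rank-$d$ valuated matroid on $[n]\sqcup I$. Setting the $I$-variables to $0$ leaves only $h_0=g$, so $\ol\mu\backslash I=\mu_0$; the coefficient of $\prod_{i\in I}z_i$ is $h_{d-1}$, so $\ol\mu/I=\mu_{d-1}$ up to scalar addition, and $I$ is independent in $\ol\mu$ because $h_{d-1}\neq0$. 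This is the required extension. (Alternatively, (2) follows from (1) together with the Higgs-factorization criterion for M-convex functions of \cite{brandenburg2024quotients}.)

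The main obstacle is the case where $\mu_{d-1}$ has loops, i.e.\ a representative $\bw$ with $\bw(i)=\infty$ for $i$ outside a flat $F$ of $M_0$: then $a_i=t^{-\bw(i)}$ is undefined, \Cref{lem:point-membership} no longer applies verbatim, and — when $F$ fails to span $M_0$ — the Higgs factorization of $M_0\onto M_{d-1}$ is no longer iterated truncation. I would handle this by first completing the flag inside the face of $\PT{[n]}$ cut out by $F$, working with the appropriate minor of $\mu_0$ on which $[\bw|_F]$ is an interior point, and then grafting $[n]\setminus F$ back in level by level; I expect the bookkeeping, not any new idea, to be the real cost. Everything else is routine: that nonnegative substitutions, segments, and polarization preserve both the Lorentzian property and multi-affineness is recorded in \Cref{prop:interval-preserves-lorentzian}, \Cref{prop:polarization-projection}, and \cite{branden2020lorentzian}.
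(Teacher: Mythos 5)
Your route is genuinely different from the paper's: the paper builds the flag by hand as iterated truncations and principal truncations, verifying the tropical incidence relations directly, and then assembles $\ol{\mu}$ from compatible principal extensions, whereas you package everything into one Lorentzian polynomial over $\K$ and read off the flag from its $z$-graded pieces, with polarization of $z$ giving part (2) for free. In the case where $\mu_{d-1}$ is loopless (a representative $\bw\in\R^n$) your argument is essentially complete and rather slick: the substitution $g(w+az)$ with $a_i=t^{-\bw(i)}$, \Cref{prop:interval-preserves-lorentzian}, \Cref{cor:proper-position-from-single-poly}, \Cref{prop:quotient-characterization-puiseux}, and your valuation computation via \Cref{lem:point-membership} do produce an elementary-quotient chain ending at $[\bw]$ whose underlying flag is the (unique) Higgs factorization, and polarizing $z$ gives the extension. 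Two routine repairs are needed: the real statements (\Cref{prop:interval-preserves-lorentzian}, \Cref{cor:proper-position-from-single-poly}, \Cref{prop:polarization-projection}) must be transferred to $\K$ (``Lorentzian for all small $t$''/Tarski, as the paper does for \Cref{thm:Lorentzian-convexity-puiseux}), and $\ol{\mu}/I$ comes out equal to $\mu_{d-1}$ only up to an additive constant, which you fix by adding to $\ol{\mu}$ a linear functional supported on $I$.

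The genuine gap is the boundary case, where $\mu_{d-1}$ has loops, i.e.\ $\bw(i)=\infty$ on a nonempty flat $F$ of $M_0$. This is not bookkeeping: it is where most of the paper's proof lives (its Cases II and III), and your construction degenerates there in an essential way. Taking $a_i=0$ for $i\in F$ still gives a Lorentzian $f$, but the coefficient of $w_i$ in $h_{d-1}$ for $i\in F$ is $\sum t^{\mu_0(B)-\bw\cdot\be_{B-i}}$ over bases $B$ with $B\cap F=\{i\}$, which is typically nonzero (e.g.\ whenever $[n]\setminus F$ spans $M_0$), so $\trop(h_{d-1})$ is \emph{not} the prescribed $\mu_{d-1}$, and the chain $\supp h_0\onto\cdots\onto\supp h_{d-1}$ consists of plain truncations rather than the Higgs factorization of $M_0\onto M_{d-1}$, which in this regime must end in principal truncations by $F$. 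Your proposed repair --- complete the flag in a minor where the point is interior and ``graft $[n]\setminus F$ back level by level'' --- is only a slogan: you would need an actual mechanism converting a flag of quotients of $\mu_0/F$ (ground set $[n]\setminus F$, top rank $d-\rk_{M_0}(F)$) into quotients of $\mu_0$ on all of $[n]$ of the correct ranks, with valuations satisfying the incidence relations and with the Higgs property; that lifting is precisely what the paper's explicit principal-truncation computation, and for part (2) the compatibility of principal extensions, supplies, and nothing in the Lorentzian packaging produces it automatically --- in particular, in this case there is no polynomial available to polarize for part (2). Your parenthetical alternative for (2), deducing it from (1) plus the Higgs criterion of \cite{brandenburg2024quotients}, is also not available: that criterion is stated for M-convex \emph{sets}, and the valuated ``flag $\Rightarrow$ one extension'' implication is exactly \Cref{qst:completion-by-extension}, which the paper leaves open in general. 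As written, your proof covers only the paper's Case I ($F=\emptyset$).
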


 \begin{proof}
 We will construct the valuated matroids $\mu_1,...,\mu_{d-2}$ as successive truncations of $\mu_0$. Suppose $\cLL(M_{d-1})=\{F,[n]\}$. Then $M_{d-1}=U_{0,F}\oplus U_{1,[n]\backslash F}$. By translation, we may assume that $\mu_{d-1}$ is the trivial valuation on $M_{d-1}$. There are three cases depending on $\rk_{M_0}(F)$.
 \begin{itemize}
     \item Case I: $\rk_{M_0}(F)=0$. Since we assume $M$ is simple, this means $F=\emptyset$, so $M_{d-1}=U_{1,n}$. Let $\nu$ be the trivial valuation on $U_{n-1,n}$ and $\mu_1=\Tr_{\nu}\mu_0$. By \eqref{eqn:valuation-under-truncation}, $\mu_1$ is given by
     \begin{equation}
    \mu_1(B) =\min_{B\subset D} \{\mu(D)\}.
 \end{equation}
 By \Cref{lem:point-membership}, the minimizers of $\min_{|D|=d}\{\mu(D)\}$ cover $[n]$. This means the minimizers of
 \begin{equation}
     \min_{|B|=d-1}\{\mu_1(B)\}
 \end{equation} 
 also cover $[n]$, so $\mu_1\onto\mu_{d-1}$. By induction, we can construct $\mu_2,...,\mu_{d-1}$ where $\mu_{k+1}=\Tr_{\nu}\mu_k$ and conclude that 
 \begin{equation}
     \mu_0\onto \mu_1\onto\cdots \mu_{d-2}\onto \mu_{d-1}.
 \end{equation}
 \item Case II: $\rk_{M_0}(F)=d-1$. Let $M_F=U_{|F|-1,F}\oplus U_{n-|F|,[n]\backslash F}$. Let $\nu$ be the trivial valuation on $M_F$ and $\mu_1=\Tr_{\nu}\mu_0$. Then
 \begin{equation}
     \nu([n]-i) = \begin{cases}
         0,& \text{ if }i\in F \\
         \infty, & \text{ if }i\notin F.
     \end{cases}
 \end{equation}
 By \eqref{eqn:valuation-under-truncation},
 \begin{equation}
     \mu_1(B) = \min_{i\in F}\{\mu_0(B+i)\}.
 \end{equation}
 Let $D$ be any $d$-set. We show that the incidence relation for $\mu_1\onto\mu_{d-1}$ indexed by $(\emptyset, D)$ is satisfied, i.e., 
 \begin{equation}\label{eqn:proof-temp}
     \min_{i\in D}\{\mu_1(D-i)+\mu_{d-1}(i)\} = \min_{i\in D\backslash F}\{\mu_1(D-i)\}
 \end{equation}
 vanishes tropically. Suppose $i_1$ is a minimizer of \eqref{eqn:proof-temp} and $\mu_1(D-i_1)=\mu_0(D-i_1+j_1)$ for some $j_1\in F$. The incidence relation for $\mu_0\onto \mu_{d-1}$ indexed by $(\emptyset,D+j_1)$ implies that there is some $i_2\in D\backslash F$, $i_2\neq i_1$, such that $\mu_0(D-i_2+j_1)\leq \mu_0(D-i_1+j_1)$. Now we have 
 \begin{equation}
     \mu_1(D-i_1)\geq \mu_0(D-i_2+j_1) \geq \mu_1(D-i_2),
 \end{equation}
so $\mu_1(D-i_1)=\mu_1(D-i_2)$ and \eqref{eqn:proof-temp} vanishes tropically. Let $M_1$ be the underlying matroid of $\mu_1$. Then $M_1$ is the principal truncation of $M_0$ by $F$, so $\rk_{M_1}(F)=d-2$. Repeating this process, we can construct $\mu_2,...,\mu_{d-1}$ where $\mu_{k+1}=\Tr_{\nu}\mu_k$ that complete the flag.
  \item Case III: $0<\rk_{M_0}(F)=s<d-1$. Let $\nu$ be the trivial valuation on $U_{n-1,n}$ and $\nu'$ be the trivial valuation on $U_{|F|-1,F}\oplus U_{n-|F|,[n]\backslash F}$. Suppose we have constructed $\mu_i$ with underlying matroid $M_i$. Put
  \begin{equation}
      \mu_{i+1} = \begin{cases}
          \Tr_{\nu}\mu_i,& \text{ if }\rk_{M_i}(F) < d-i-1 \\
          \Tr_{\nu'}\mu_i,& \text{ if }\rk_{M_i}(F)=d-i-1.
      \end{cases}
  \end{equation}
  It is then routine to verify that $\mu_0\onto \mu_1\onto\cdots \onto \mu_{d-1}$, which we have done in the previous two cases.
 \end{itemize}
In the first case, the underlying matroids are successive truncations of $M_0$; in the second case, the underlying matroids are successive principal truncations of $M_0$ by $F$; in the third case, the underlying matroids are successive truncations of $M_0$ until $F$ becomes a hyperplane, followed by successive principal truncations by $F$. In all three cases, the flag of underlying matroids is a Higgs factorization of $M_0\onto M_{d-1}$. 

It remains to construct the matroid $\ol{\mu}$. The key observation is that the completed flag is constructed by principal truncations. Each principal truncation $\mu_i\onto \mu_{i+1}$ comes from a principal extension $\ol{\mu_i}$, in the sense that $\mu_i$ is a deletion of $\ol{\mu_i}$ and $\mu_{i+1}$ is a contraction of $\ol{\mu_i}$.

Let $\mu_{d-2,1}$ be the principal extension on the ground set $[n+1]$ corresponding to $\mu_{d-2}\onto \mu_{d-1}$, such that $\mu_{d-2,1}/\{n+1\} =\mu_{d-1}$ and $\mu_{d-2,1}\backslash\{n+1\}=\mu_{d-2}$. Since principal extensions are compatible with each other, we can simultaneously extend $\mu_{d-3},...,\mu_0$ and get valuated matroids $\mu_{d-3,1},...,\mu_{0,1}$ on $[n+1]$, such that $\mu_{i,1}\backslash \{n+1\} = \mu_i$ for all $i=0,...,d-2$, and 
\begin{equation}
    \mu_{0,1} \onto \mu_{1,1} \onto \cdots \onto \mu_{d-2,1}.
\end{equation}
Moreover, each quotient remains a principal truncation. This reduces the steps of the flag. By induction, we get the desired valuated matroid $\ol{\mu}$.
\end{proof}

We rephrase the above proof in terms of tropical linear spaces. If the point $\Trop\mu_{d-1}$ is in $T_{[n]}$, then we complete the flag $\Trop\mu_{d-1}\subset\Trop\mu_0$ by taking successive sections by the hyperplane centered at $\Trop\mu_{d-1}$. The other two cases are dealt with likewise by choosing appropriate hyperplanes. Principal extensions correspond to tropical modifications by a tropical linear function, and we construct the space $\Trop\ol{\mu}$ by a sequence of such tropical modifications. 

\subsection{When is $\Qt(M)$ upper semimodular?}\label{subsec:sQ-modularity}
Recall that for the poset $\Qt(U_{n,n})$, upper semimodularity is equivalent to modularity, so one disproves the upper semimodularity of $\Qt(U_{n,n})$ by producing a matroid $M$ with two elementary quotients $Q_1$ and $Q_2$ such that $Q_1$ and $Q_2$ do not have a common elementary quotient. The following is such an example.
 \begin{ex}\label{ex:counter-to-submodular} Let $V_8^-$ be a rank-4 matroid on $[8]$ whose 4-element circuits are $1234,3456,1256$, and $3478$. The notation suggests that it is a relaxation of the V\'{a}mos matroid $V_8$. The upward closure of $\{127,568,3478\}$ in $\cLL(V_8^-)$ and the upward closure of $\{12,34,56\}$ are both modular cuts. Let $Q_1$ and $Q_2$ be the elementary quotients of $V_8^-$ given by these modular cuts, respectively. The affine diagrams of $V_8^-,Q_1$ and $Q_2$ are given in \Cref{fig:counter-ex}. The common flats of $Q_1$ and $Q_2$ are $\{\emptyset,7,8,127,568,3478,[8]\}$, which do not contain the lattice of flats of any rank-2 matroid on $[8]$.

    \begin{figure}[H]
        \centering
        \includegraphics[width=4in]{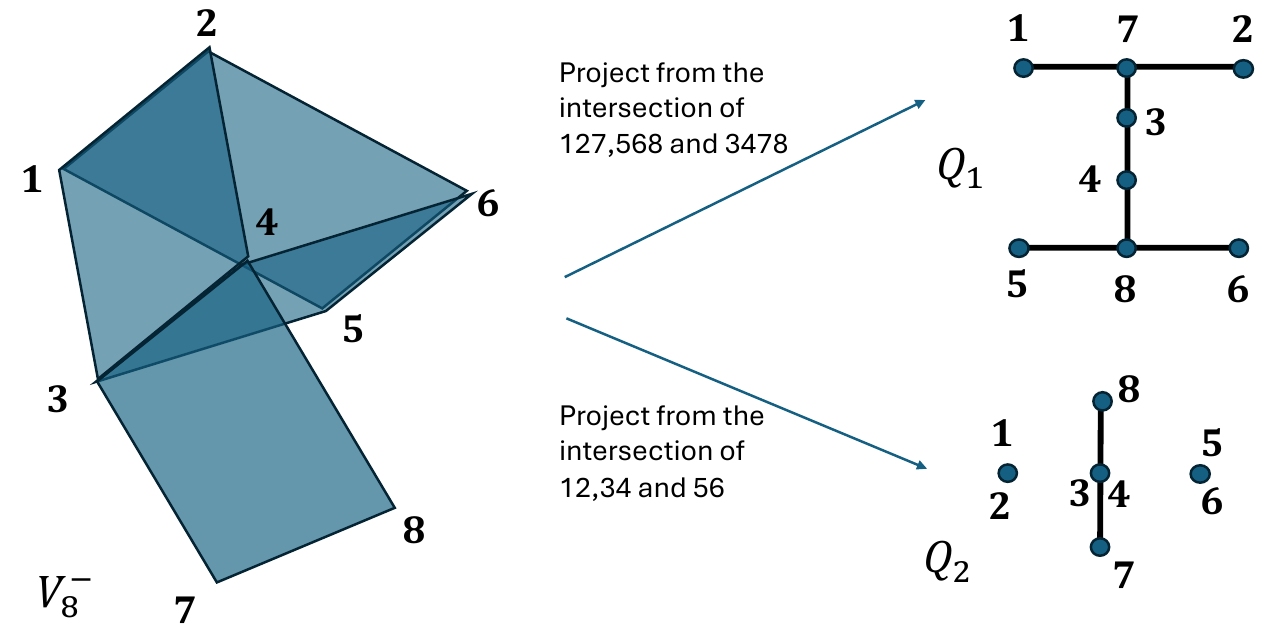}
        \caption{A realizable matroid with two elementary quotients not having expected intersection.}
        \label{fig:counter-ex}
    \end{figure}
 \end{ex}

Together with the following structural result about $\widehat{\Qt}^1(M)$, \Cref{ex:counter-to-submodular} implies that the upper semimodularity of $\Qt(U_{n,n})$ fails for every $n\geq 8$; moreover, the modularity of $\Qt(M)$, and thus the tropical analog of \ref{itm:IG-intersection}, fail for every $d\geq 4$.

\begin{prop}\label{prop:extension-lattice-direct-sum}
    Let $M$ be a matroid on $E$ and $N$ a matroid on $E'$. There is a canonical isomorphism of lattices
    \begin{equation}
        \kappa: \widehat{\Qt}^1(M)\times \widehat{\Qt}^1(N) \xrightarrow{\sim} \widehat{\Qt}^1(M\oplus N),\quad (Q_1,Q_2) \mapsto Q, 
    \end{equation}
    with inverse
    \begin{equation}
        \kappa^{-1}: \widehat{\Qt}^1(M\oplus N) \xrightarrow{\sim} \widehat{\Qt}^1(M)\times \widehat{\Qt}^1(N) ,\quad Q \mapsto (Q/E',Q/E).
    \end{equation}
\end{prop}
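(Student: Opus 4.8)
The plan is to exploit the characterization of $\widehat{\cQ}^1$ via modular cuts reviewed in \Cref{subsec:prelim-matroid-quotient}, since the lattice of flats of a direct sum is the product lattice: $\cLL(M\oplus N)\cong \cLL(M)\times \cLL(N)$, with rank function $\rk_{M\oplus N}(F_1\times F_2)=\rk_M(F_1)+\rk_N(F_2)$, and meet/join taken coordinatewise. First I would show that every modular cut of $M\oplus N$ is a ``product'' in an appropriate sense. Let $\cF$ be a nontrivial modular cut of $M\oplus N$. Set $\cF_1=\{F_1\in \cLL(M)\mid F_1\times E'\in \cF\}$ and $\cF_2=\{F_2\in \cLL(N)\mid E\times F_2\in \cF\}$; these are order filters because $\cF$ is. I would verify that $\cF_1$ and $\cF_2$ are modular cuts of $M$ and $N$ respectively: if $F_1,F_1'\in\cF_1$ form a modular pair in $\cLL(M)$, then $F_1\times E'$ and $F_1'\times E'$ form a modular pair in $\cLL(M\oplus N)$ (the rank identity \eqref{eqn:modular-pair} adds the same constant $\rk N$ to both sides), hence $F_1\wedge F_1'\times E'\in\cF$, giving $F_1\wedge F_1'\in\cF_1$. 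The key structural claim is then: $F_1\times F_2\in\cF$ if and only if $F_1\in\cF_1$ \emph{or} $F_2\in\cF_2$. The ``if'' direction is upward closure since $F_1\times F_2\supseteq F_1\times E'$ (using $F_2\subseteq E'$... wait, rather $F_1\times F_2 \le F_1\times E'$, so I need the reverse: $F_1\times E' \ge F_1\times F_2$ fails). Let me instead note $F_1\times F_2 \le F_1\times E'$ so membership of the smaller element $F_1\times F_2$ does \emph{not} follow; instead one argues: if $F_1\in\cF_1$, i.e. $F_1\times E'\in \cF$, and also we want $F_1\times F_2\in\cF$ — this requires going \emph{down}, which modular cuts don't generally allow. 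So the correct statement must be the opposite inclusion, and I expect $\cF$ corresponds to $\cF_1\times\cLL(N) \cup \cLL(M)\times\cF_2$ only after taking closures appropriately; I would determine the precise form by testing small cases ($M=N=U_{1,1}$, where $M\oplus N=U_{2,2}$ and $\widehat{\cQ}^1(U_{2,2})$ has three elements) and then formulating the bijection on modular cuts accordingly.

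Granting the correspondence on modular cuts, the plan is to translate it to elementary quotients. Recall $\cLL(M\oplus N)\setminus\cR_{\cF}$ is the lattice of flats of the quotient, where $\cR_\cF$ is the collar. I would show that the quotient $Q$ attached to $\cF$ decomposes as $Q=Q_1\oplus Q_2$ where $Q_i$ is the elementary quotient of $M$ (resp. $N$) attached to $\cF_i$, with the convention that the trivial quotient ($M$ itself) is attached to the trivial modular cut; and conversely $Q/E' = Q_1$, $Q/E = Q_2$ recovers the factors since contraction of a direct sum by a ground-set block gives the other summand's quotient. This identifies $\kappa$ and $\kappa^{-1}$ as mutually inverse bijections. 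To see $\kappa$ is a lattice isomorphism, it suffices to check it is order-preserving in both directions: $Q_1\onto Q_1'$ and $Q_2\onto Q_2'$ (i.e. $\cLL(Q_1')\subset\cLL(Q_1)$ and $\cLL(Q_2')\subset\cLL(Q_2)$) iff $\cLL(Q_1'\oplus Q_2')\subset\cLL(Q_1\oplus Q_2)$, which is immediate from $\cLL(A\oplus B)=\cLL(A)\times\cLL(B)$ and the product order. An order-preserving bijection between posets whose inverse is also order-preserving is automatically a lattice isomorphism, so no separate verification of meet/join preservation is needed. One can alternatively phrase everything through linear subclasses: a linear subclass of $M\oplus N$ is a subset of $\cLL^1(M\oplus N)=\cLL^1(M)\times E' \sqcup E\times\cLL^1(N)$, and I would check the linear-subclass closure condition decouples across the two blocks because a coline of $M\oplus N$ is either $C_1\times E'$ with $C_1$ a coline of $M$, or $E\times C_2$, and hyperplanes covering it all lie in the corresponding block.

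The main obstacle I anticipate is pinning down the exact combinatorial form of the correspondence between modular cuts (equivalently, linear subclasses) of $M\oplus N$ and pairs of modular cuts of $M$ and $N$ — in particular, handling the interaction at elements of $\widehat{\cQ}^1$ that are \emph{not} expressible as external direct sums, and making sure the trivial/nontrivial bookkeeping is consistent (the element $M\oplus N$ of $\widehat{\cQ}^1(M\oplus N)$ must map to $(M,N)$, and the top element, the truncation $\Tr(M\oplus N)$, must map to $(\Tr M,\Tr N)$ — but $\Tr(M\oplus N)\neq \Tr M\oplus \Tr N$ in general, so $\kappa$ is emphatically \emph{not} $Q_1\oplus Q_2$ on the nose at the top, and I must be careful that the formula $\kappa(Q_1,Q_2)=Q_1\oplus Q_2$ is only claimed when at least one of $Q_1,Q_2$ is the trivial quotient, with $\kappa$ defined via modular cuts otherwise). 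Resolving this cleanly is where I would spend the bulk of the argument; once the modular-cut dictionary is correct, the lattice isomorphism follows formally.
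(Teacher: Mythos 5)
Your plan stalls precisely at the step you yourself flag as unresolved, and both of the routes you sketch for closing it are currently wrong. On your main (modular-cut) route, the correct dictionary is the \emph{product}: a nonempty modular cut $\cF$ of $M\oplus N$ equals $\cF_1\times\cF_2$ with $\cF_1,\cF_2$ exactly as you define them. One inclusion is upward closure; the other uses the fact that for any proper flats $F_1\in\cLL(M)$, $F_2\in\cLL(N)$, the pair $F_1\sqcup E'$ and $E\sqcup F_2$ \emph{always} satisfies \eqref{eqn:modular-pair} (both sides equal $\rk_M F_1+\rk_N F_2+\rk M+\rk N$), so whenever both lie in $\cF$ their meet $F_1\sqcup F_2$ must too. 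Your guessed form $\cF_1\times\cLL(N)\cup\cLL(M)\times\cF_2$ is not the answer and is in general not even a modular cut (take $F_1\in\cF_1$, $G_1\notin\cF_1$, $G_2\in\cF_2$, $F_2\notin\cF_2$ with coordinatewise modular pairs: the union contains $(F_1,F_2)$ and $(G_1,G_2)$ but not their meet). A concrete check: for $M=N=U_{2,2}$, the element of $\widehat{\cQ}^1(U_{2,2}\oplus U_{2,2})$ attached to the pair of quotients with linear subclasses $\{\{1\}\}$ and $\{\{3\}\}$ is the rank-$3$ matroid in which $1$ and $3$ become parallel; its modular cut is the principal filter of $\{1,3\}$, i.e.\ the product of the two principal cuts, and it contains the mixed flat $\{1\}\sqcup\{3\}$, which neither your original biconditional nor your corrected guess produces. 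So the heart of the proposition is left undetermined in your writeup, and is the part that genuinely needs an argument.

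Your fallback route through linear subclasses is the paper's actual route, but the justification you give contains a false statement: it is not true that every coline of $M\oplus N$ has the form $C_1\sqcup E'$ or $E\sqcup C_2$. There are mixed colines $H\sqcup G$ with $H\in\cLL^1(M)$ and $G\in\cLL^1(N)$ (e.g.\ $\{1,3\}$ in the example above), and these are exactly the only place where the two blocks could interact. The paper's proof of the key verification splits into two cases: for two hyperplanes from the same block, the covering hyperplanes of their intersection stay in that block, so the condition reduces to the linear-subclass condition in $M$ (resp.\ $N$); for one hyperplane from each block, the intersection is a mixed coline whose only covering hyperplanes are $H\sqcup E'$ and $E\sqcup G$ themselves, so the condition is vacuous. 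With that, $\cL\subset\cLL^1(M\oplus N)$ is a linear subclass if and only if $\cL=\cHH\times\{E'\}\cup\{E\}\times\cG$ with $\cHH,\cG$ linear subclasses of $M,N$, and the remaining bookkeeping (order preservation in both directions, surjectivity, and that the inverse is contraction by $E'$ and by $E$) is formal, as you say. So the skeleton of your argument is salvageable, but as written the decisive combinatorial step is either deferred (modular cuts) or justified by an incorrect description of the colines (linear subclasses).
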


\begin{proof}
To define this map $\kappa$, we identify the elementary quotient lattices with lattices of linear subclasses, and put
\begin{equation}
        \kappa: \widehat{\Qt}^1(M)\times \widehat{\Qt}^1(N) \xrightarrow{\sim} \widehat{\Qt}^1(M\oplus N),\quad (\cHH,\cG) \mapsto \cHH\times \{E'\}\cup \{E\}\times \cG, 
    \end{equation}
We first show that $\cHH\times \{E'\}\cup \{E\}\times \cG$ is a linear subclass of $M\oplus N$, so $\kappa$ is well-defined. Take any two hyperplanes $L_1,L_2$ in $\cHH\times \{E'\}\cup  \{E\}\times\cG$. Up to relabeling, there are two possible cases.
    \begin{itemize}
        \item Case I: $L_1 = H_1 \sqcup E'$ and $L_2= H_2\sqcup E'$ for some $H_1,H_2\in \cHH$. Then any hyperplane covering $L_1\cap L_2$ is in $\cHH\times \{E'\}$, because $\cHH$ is a linear subclass of $M$.
        \item Case II: $L_1 = H\sqcup E'$ for some $H\in \cLL^1(M)$ and $L_2=E\sqcup G$ for some $G\in \cLL^1(N)$. Then $L_1\cap L_2 = H\sqcup G$. The only hyperplanes covering $L_1\cap L_2$ are $L_1$ and $L_2$.
    \end{itemize}
    Therefore, $\cHH\times \{E'\}\cup \{E\}\times\cG$ is a linear subclass. Clearly, $\kappa$ is injective and order-preserving. We show that it is surjective. If $\mathcal{L}$ is a linear subclass of $M\oplus N$, then $\mathcal{L}=\cHH\times \{E'\}\cup \{E\}\times\cG$ for some subset $\cHH\subset \cLL^1(M)$ and $\cG\subset \cLL^1(N)$. If $\{H_1,H_2,H_3\}\subset\cLL^1(M)$ is a concurrent triple and $H_1,H_2\in \cHH$ then $\{H_1\sqcup E',H_2\sqcup E',H_3\sqcup E'\}\subset\cLL^1(M\oplus N)$ is a concurrent triple in $\mathcal{L}$. Since $\mathcal{L}$ is a linear subclass, $H_3\sqcup E'\in \mathcal{L}$. Hence, $H_3\in \cHH$. For the same reason, $\cG$ is a linear subclass of $N$. Surjectivity is proved.

    If $Q$ is a quotient given by the linear subclass $\cHH\times \{E'\}\cup \{E\}\times \cG$, the contraction $Q/E'$ is the elementary quotient of $M$ given by the linear subclass $\cHH$. The full statement follows.
\end{proof}

\begin{prop}\label{prop:intersection-fail-at-every-d-larger-than-4}
    Let $M_1$ and $M_2$ be matroids on $E$ of the same positive rank and $N$ a matroid on $E'$. Then $M_1$ and $M_2$ have a common elementary quotient if and only if $M_1\oplus N$ and $M_2\oplus N$ have a common elementary quotient.
\end{prop}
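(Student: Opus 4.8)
The plan is to reduce everything to the canonical lattice isomorphism of \Cref{prop:extension-lattice-direct-sum}. Write $E'$ for the ground set of $N$, and let $\kappa_i\colon \widehat{\cQ}^1(M_i)\times\widehat{\cQ}^1(N)\xrightarrow{\sim}\widehat{\cQ}^1(M_i\oplus N)$ be the isomorphism attached to $M_i$ and $N$, so that $\kappa_i^{-1}(Q)=(Q/E',\,Q/E)$. The forward direction I would handle by a direct check: if $Q$ is a common elementary quotient of $M_1$ and $M_2$, then $Q\oplus N$ is a quotient of $M_i\oplus N$ for $i=1,2$, since every flat of $Q\oplus N$ has the form $F\sqcup G$ with $F\in\cLL(Q)\subseteq\cLL(M_i)$ and $G\in\cLL(N)$, hence is a flat of $M_i\oplus N$; and $\rk(Q\oplus N)=\rk Q+\rk N=\rk(M_i\oplus N)-1$, so $Q\oplus N\in\cQ^1(M_1\oplus N)\cap\cQ^1(M_2\oplus N)$ is a common elementary quotient. (Equivalently, $Q\oplus N=\kappa_i(Q,N)$ with $N$ the bottom element of $\widehat{\cQ}^1(N)$.)

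For the reverse direction, I would let $Q'$ be a common elementary quotient of $M_1\oplus N$ and $M_2\oplus N$, and set $Q_0:=Q'/E'$, a matroid on $E$. Since $Q'\in\cQ^1(M_i\oplus N)\subseteq\widehat{\cQ}^1(M_i\oplus N)$ for both $i$, applying $\kappa_i^{-1}$ (recall $(M_i\oplus N)/E'=M_i$) shows $Q_0\in\widehat{\cQ}^1(M_i)$ for $i=1,2$; that is, for each $i$, $Q_0$ is either $M_i$ itself or an elementary quotient of $M_i$. Now I split on whether $Q_0=M_1$. If $Q_0\ne M_1$, then $Q_0\in\cQ^1(M_1)$, so $\rk Q_0=\rk M_1-1=\rk M_2-1<\rk M_2$; this forces $Q_0\ne M_2$ and hence $Q_0\in\cQ^1(M_2)$, so $Q_0$ is a common elementary quotient of $M_1$ and $M_2$. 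If instead $Q_0=M_1$, then $Q_0\in\widehat{\cQ}^1(M_2)$ together with $\rk Q_0=\rk M_1=\rk M_2$ forces $Q_0=M_2$ (an elementary quotient of $M_2$ would have strictly smaller rank), so $M_1=M_2$; since their common rank is positive, the truncation $\Tr M_1=\Tr M_2$ is an elementary quotient of each, completing the argument.

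The only substantive ingredient is \Cref{prop:extension-lattice-direct-sum}, which is precisely what lets me locate $Q'/E'$ in $\widehat{\cQ}^1(M_i)$ for both indices at once; everything after that is rank bookkeeping using the fact that $\widehat{\cQ}^1(M)=\cQ^1(M)\sqcup\{M\}$ with an elementary quotient having rank exactly one less than $M$. I do not anticipate a genuine obstacle here; the one point to watch is the positive-rank hypothesis, which is used exactly to guarantee that a truncation exists as an elementary quotient in the degenerate case $M_1=M_2$.
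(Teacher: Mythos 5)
Your proof is correct and follows essentially the same route as the paper: both rest entirely on the canonical isomorphism of \Cref{prop:extension-lattice-direct-sum}, applied via the contraction formula $\kappa^{-1}(Q)=(Q/E',Q/E)$. The paper compresses the argument into the single identity $\widehat{\cQ}^1(M_1\oplus N)\cap\widehat{\cQ}^1(M_2\oplus N)=\bigl(\widehat{\cQ}^1(M_1)\cap\widehat{\cQ}^1(M_2)\bigr)\times\widehat{\cQ}^1(N)$, whereas you unpack the same mechanism and make explicit the rank bookkeeping and the degenerate case $Q'/E'=M_1=M_2$ (handled by truncation, which is where the positive-rank hypothesis enters) — details the paper leaves implicit.
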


\begin{proof}
    By \Cref{prop:extension-lattice-direct-sum},
    \begin{equation}
        \widehat{\Qt}^1(M_1\oplus N)\cap \widehat{\Qt}^1(M_2\oplus N) = \left(\widehat{\Qt}^1(M_1)\cap \widehat{\Qt}^1(M_2)\right) \times \widehat{\Qt}^1(N).
    \end{equation}
    Therefore, $\widehat{\Qt}^1(M_1)\cap \widehat{\Qt}^1(M_2)\neq\emptyset$ if and only if $\widehat{\Qt}^1(M_1\oplus N)\cap \widehat{\Qt}^1(M_2\oplus N)\neq\emptyset$.
\end{proof}

On the other hand, we show that $\Qt(M)$ is modular if $M$ has rank at most three. 
\begin{lemma}\label{lem:rank-3-matroid-construction}
    Let $\cS_1,\cS_2\subset 2^{[n]}\backslash\{\emptyset\}$ satisfy the following four conditions:
    \begin{enumerate}
        \item Elements in $\cS_1$ are pairwise disjoint; elements in $\cS_2$ are incomparable.
        \item For any $A\in \cS_1$ and $B\in\cS_2$, either $A\subsetneq B$, or $A\cap B = \emptyset$.
        \item If $A\in \cS_1$, $B_1$ and $B_2$ are distinct elements in $\cS_2$, and $A\subset B_1\cap B_2$, then $A=B_1\cap B_2$. 
        \item If $C\neq \emptyset$ and $C=\cap_{i=1}^k B_i$ for some $B_1,...,B_k\in \cS_2$, then $C=B_i\cap B_j$ for any $1\leq i<j\leq k$.
        \end{enumerate}
        Then there is a loopless rank-3 matroid $M$ such that $\cS_1\cup \cS_2\subset \cLL(M)$.
\end{lemma}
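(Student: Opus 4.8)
The plan is to realize $M$ as the rank-$3$ matroid attached to a combinatorial linear space: its points will be parallel classes partitioning $[n]$, its lines will be sets of points that include the members of $\cS_2$, every two points will lie on a unique line, and every line will contain at least two points. Concretely, I will exhibit a family $\cF\subseteq 2^{[n]}$ and check the flat axioms for a loopless rank-$3$ matroid. Call an element of $[n]$ \emph{free} if it lies in no member of $\cS_1$; by condition (1) the members of $\cS_1$ are pairwise disjoint, so the only thing to arrange is a partition of the free elements. Declare free elements $i,j$ directly related if some distinct $B_1,B_2\in\cS_2$ satisfy $i,j\in B_1\cap B_2$, take the transitive closure, and let the equivalence classes of free elements together with the members of $\cS_1$ be the set of \emph{points} $\cP$; this partitions $[n]$, so no element is a loop.

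The technical heart is to show that every $B\in\cS_2$ is a union of points, i.e. $B\cap p\in\{\emptyset,p\}$ for each $p\in\cP$. For $p\in\cS_1$ this is exactly condition (2). For a class $p$ of free elements one uses the following consequence of condition (4) applied with $k=3$: if $i,j\in B_1\cap B_2$ for distinct $B_1,B_2\in\cS_2$ and $B\in\cS_2$ contains $i$ but not $j$, then $B\notin\{B_1,B_2\}$, so $B_1,B_2,B$ are three distinct members of $\cS_2$; the set $C:=B_1\cap B_2\cap B$ is nonempty (it contains $i$), so condition (4) forces $C=B_1\cap B_2\ni j$, contradicting $j\notin B\supseteq C$. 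Chaining this along an $\approx$-path shows that any $B\in\cS_2$ containing one element of a free point-class contains the whole class. Condition (3) keeps the $\cS_1$-points separate: if $A\in\cS_1$ and $A\subseteq B_1\cap B_2$ for distinct $B_1,B_2\in\cS_2$ — which happens as soon as $B_1\cap B_2$ meets $A$, by condition (2) — then (3) gives $B_1\cap B_2=A$. Combining these, one deduces the structural claim: for distinct $B_1,B_2\in\cS_2$, the set $B_1\cap B_2$ is empty or equals a single point of $\cP$; consequently any two distinct points lie in at most one member of $\cS_2$.

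Given this, I define the \emph{lines} of $M$ to be the members of $\cS_2$ that are not themselves points, together with $p\cup p'$ for every pair of distinct points $p,p'$ not jointly contained in any member of $\cS_2$; and I set $\cF=\{\emptyset\}\cup\cP\cup\{\text{lines}\}\cup\{[n]\}$. Then $\cF$ is closed under intersection — two points meet in $\emptyset$; a point meets a line in itself or $\emptyset$ (by condition (2) for $\cS_1$-points and by the union-of-points structure otherwise); two lines meet in a single point or $\emptyset$ (by the structural claim, and for the added two-point lines by the obvious pairwise description) — and for every $F\in\cF$ and $e\notin F$ there is a unique member of $\cF$ covering $F$ that contains $e$: the point of $e$ when $F=\emptyset$, the unique line through the point of $e$ and a point of $F$ when $F$ is a point (existence and uniqueness being the "at most one, hence exactly one" statement above, using that lines are unions of points), and $[n]$ when $F$ is a line. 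Hence $\cF=\cLL(M)$ for a loopless matroid $M$ of rank $3$ — the rank is $3$ in the relevant case where $\cS_2$ contains two incomparable genuine lines, which then span all of $[n]$; if not, one pads with a bounded number of coloops to reach rank $3$ without disturbing the flats. By construction $\cS_1\subseteq\cP\subseteq\cF$ and $\cS_2\subseteq\cP\cup\{\text{lines}\}\subseteq\cF$, as required.

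The main obstacle is the second paragraph: verifying that the transitive closure used to build the free points is genuinely compatible with $\cS_2$, so that no member of $\cS_2$ "cuts through" a point and so that two a priori distinct members of $\cS_2$ are never forced to coincide or to overlap in more than a point. This is precisely where condition (4) with $k\ge 3$ is indispensable — conditions (1)–(3) constrain only pairwise intersections — and it is the only place where any real care is needed; once that compatibility and the resulting structural claim are in hand, the remaining flat-axiom checks are the routine bookkeeping sketched above.
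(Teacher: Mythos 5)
Your construction is essentially the paper's: it likewise takes the points to be the members of $\cS_1$ together with the pairwise intersections of $\cS_2$-members (the paper contracts each such class to a singleton rather than carrying it along as a parallel class), takes the flats to be $\emptyset$, these points, the members of $\cS_2$ plus the pairs of points not contained in any member of $\cS_2$, and $[n]$, and then verifies the lattice-of-flats axioms. Your use of condition (4) with $k=3$ to show each $B\in\cS_2$ is a union of classes is precisely the justification the paper compresses into ``we can impose the equivalence relation \dots without changing the validity of the four conditions,'' so the proposal is correct and follows the same route.
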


\begin{proof}
    Let $\cS_3$ be the set of non-empty pairwise intersections of the elements of $\cS_2$. By Condition (2) and (3), any element in $\cS_1$ and any element in $\cS_3$ are either equal or disjoint. By Condition (4), if $C\in \cS_3$ and $C\subset B_i\cap B_j$ for some $B_i,B_j\in \cS_2$,  then $C=B_i\cap B_j$. Note that elements in $\cS_3$ are pairwise disjoint. Moreover, an element of $\cS_3$ and an element of $\cS_1$ are either equal or disjoint.
    
    Note that we can impose the equivalence relation on $[n]$ defined by $\cS_1$ and $\cS_3$ without changing the validity of the four conditions. Therefore, we may assume that $\cS_1$ and $\cS_3$ consist of singletons. Consider the poset $\cP\subset 2^{[n]}$ under inclusion consisting of the following elements:
    \begin{itemize}
        \item $[n],\emptyset$, and elements in $[n]$;
        \item elements in $\cS_2$;
        \item elements in $\nk{n}{2}$ not contained in any element of $\cS_2$.
    \end{itemize}
    There are exactly two kinds of maximal chains in $\cP$. One kind is $\emptyset \subsetneq \{i\}\subsetneq S\subsetneq [n]$ for some $i\in n$, and for some $S\in \cS_2$ which is not a singleton; the other is $\emptyset \subsetneq \{i\}\subsetneq \{i,j\}\subsetneq [n]$ for some $\{i,j\}\subset [n]$ which is not contained in any element of $\cS_2$. Hence, $\cP$ is graded.

    We check that $\cP$ is a lattice.
    \begin{itemize}
        \item Closure under intersection: let $L_1$ and $L_2$ be two distinct rank-2 elements. If $L_1$ and $L_2$ both have size 2, then $|L_1\cap L_2|\leq 1$; if $L_1\in \cS_2$ and $L_2\notin \cS_2$, then by construction, $L_1$ is not contained in $L_2$, so $|L_1\cap L_2|\leq 1$; if $L_1,L_2\in \cS_2$, then $L_1\cap L_2\in \cS_3$, which by assumption is in $\cP$. This checks the closure under intersection.
        \item Well-definedness of join: since the intersection of any two rank-2 elements is either empty or a singleton, we see that for any two distinct elements $i,j\in [n]$, there is a unique rank-2 element containing $\{i,j\}$. This shows the join operation is well-defined.
        \end{itemize}
Submodularity and atomisticity are clear, so $\cP$ is a geometric lattice.
\end{proof}

Recall that the lattice of flats of a rank-2 matroid consists of the set of loops $A$, the ground set $[n]$, and subsets $A\sqcup B_1,...,A\sqcup B_k$, where $k\geq 2$, the $B_i$'s are nonempty and they partition $[n]\backslash A$. See \Cref{fig:cases-rank-2-perspectivity} for an illustration.

\begin{thm}\label{thm:rank-2-perspectivity}
Two rank 2 matroids $M_1,M_2$ are perspective if and only if they are coperspective.
\end{thm}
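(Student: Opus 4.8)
The plan is to translate both notions into explicit combinatorics of set partitions and then prove the two implications by a short case analysis. A rank-$2$ matroid $M$ on $[n]$ is the same datum as a pair $(A,\pi)$, where $A$ is its set of loops and $\pi$ is a partition of $[n]\setminus A$ into $k\ge 2$ blocks (the parallel classes); then $\cLL(M)=\{A\}\cup\{A\cup B:B\in\pi\}\cup\{[n]\}$. An elementary quotient of $M$ has rank $1$, hence is determined by its loop set, which must be a flat of $M$ other than $[n]$; so the rank-$1$ quotients of $M=(A,\pi)$ correspond bijectively to $\mathcal F(M):=\{A\}\cup\{A\cup B:B\in\pi\}$. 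Writing $M_i=(A_i,\pi_i)$, this shows $M_1,M_2$ are perspective precisely when $\mathcal F(M_1)\cap\mathcal F(M_2)\neq\emptyset$, while $M_1,M_2$ are coperspective precisely when some rank-$3$ matroid $L$ on $[n]$ satisfies $\cLL(M_1)\cup\cLL(M_2)\subseteq\cLL(L)$ (the corank from each $M_i$ is then automatically $1$). I would then record the reduction that peels off common loops: with $A:=A_1\cap A_2$, every flat in $\cLL(M_1)\cup\cLL(M_2)$ contains $A$, so writing $M_i\cong(M_i\setminus A)\oplus U_{0,A}$ one checks that $M_1,M_2$ are perspective (resp.\ coperspective) iff $M_1\setminus A,M_2\setminus A$ are; hence one may assume $A_1\cap A_2=\emptyset$. (For the coperspective direction of this reduction one first observes that if the witness $L$ has $\rk_L(A)\ge 1$ then necessarily $A_1=A_2=A$, which is itself a common flat and hence gives perspectivity outright.)

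For the implication perspective $\Rightarrow$ coperspective, fix a common flat $F\in\mathcal F(M_1)\cap\mathcal F(M_2)$, so $A_1\cup A_2\subseteq F\subsetneq[n]$. I would build $L$ by prescribing its rank-$\le 1$ flats to be $A_1$, $A_2$ together with the nonempty blocks of the common refinement of $\pi_1$ and $\pi_2$, and its rank-$2$ flats to be the sets $A_i\cup B$ (for $B\in\pi_i$, $i=1,2$) together with $F$, and then certify that this point/line data actually defines a rank-$3$ matroid using \Cref{lem:rank-3-matroid-construction} (which applies because $A_1\cap A_2=\emptyset$ forces the desired $L$ to be loopless). Its four hypotheses come down to: blocks within one $\pi_i$ are disjoint; a prospective line $A_1\cup B$ and a prospective line $A_2\cup B'$ meet either inside $F$ or in a single refinement block; and any two prospective lines meet in at most one point. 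Then $\cLL(M_i)\subseteq\cLL(L)$ is immediate and $\rk L=3$ since $|\pi_i|\ge 2$ prevents any line from exhausting $[n]$. The precise bookkeeping of which sets are declared rank-$1$ versus rank-$2$ depends on whether $F=A_i$ or $F=A_i\cup B$, and this is what the figure illustrates.

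For the converse I would argue by contradiction from a rank-$3$ matroid $L$ with $\cLL(M_1)\cup\cLL(M_2)\subseteq\cLL(L)$ but $\mathcal F(M_1)\cap\mathcal F(M_2)=\emptyset$. The key computation is that for each $i$ and each $B\in\pi_i$ one has $A_i\subsetneq A_i\cup B\subsetneq[n]$ inside $\cLL(L)$, forcing $\rk_L(A_i)\in\{0,1\}$, $\rk_L(A_i\cup B)=\rk_L(A_i)+1$, and — when $\rk_L(A_i)=1$ — $A_i$ to be a rank-$1$ flat of $L$ with each $A_i\cup B$ a line of $L$ through it. I would then split on $(\rk_L(A_1),\rk_L(A_2))\in\{0,1\}^2$, using $A_1\cap A_2=\emptyset$: if both are $0$ then $A_1=A_2=\emptyset$ is a common flat; if some $\rk_L(A_i)=1$, then, using that two distinct lines of a rank-$3$ matroid share at most one point and that a flat meeting a parallel class contains all of it, one shows that some line $A_i\cup B$ must equal, as a subset of $[n]$, a member of $\mathcal F(M_{3-i})$ — otherwise two of these constraint lines of $L$ would be forced through two common points. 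That common flat is the desired contradiction.

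I expect the converse, and within it the case $\rk_L(A_1)=\rk_L(A_2)=1$, to be the main obstacle: there one has two pencils of lines in $L$, one through $A_1$ and one through $A_2$, and deducing that incompatibility of $\pi_1$ and $\pi_2$ makes two flats of $L$ meet in two points requires careful tracking of how each pencil cuts the common refinement. The forward direction is by comparison a verification, whose only delicate point is the correct assignment of rank-$1$ versus rank-$2$ flats when blocks of $\pi_1$ and $\pi_2$ are comparable across the two partitions, so that \Cref{lem:rank-3-matroid-construction} applies.
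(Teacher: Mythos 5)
Your proposal is correct in substance, and it splits naturally into a half that mirrors the paper and a half that does not. For perspective $\Rightarrow$ coperspective you follow essentially the paper's route: the same reduction to $A_1\cap A_2=\emptyset$, the same certification via \Cref{lem:rank-3-matroid-construction}, and the ``bookkeeping'' you defer (what to do when a block of one partition sits inside a block of the other, and whether the shared flat is $A_i$ or $A_i\cup B$) is precisely the content of the paper's three-case analysis; the paper resolves it by moving cross-contained blocks into the point class $\cS_1$ rather than taking all common-refinement blocks as points, and by treating $F=A_i$ as its own case, so your flagged adjustments are exactly what is needed and nothing more. For coperspective $\Rightarrow$ perspective you take a genuinely different route: the paper assumes the witness $L$ simple, invokes Cheung's compatible extension theorem \cite{cheung1974compatibility} when one $M_i$ is a principal truncation, and otherwise observes both quotients are loopless so $U_{1,n}$ is a common quotient; you instead argue directly from $\rk_L(A_i)\in\{0,1\}$ and rank-3 intersection facts, which is more elementary and self-contained. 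Your sketch does close, and in fact the case you single out as the main obstacle is the easiest once you add one observation you are missing: if $\rk_L(A_i)=1$, then the flats $A_i\cup B$ are lines of $L$ through the point $A_i$ whose traces $B$ partition $[n]\setminus A_i$, and since the full pencil of lines of $L$ through $A_i$ already partitions $[n]\setminus A_i$, \emph{every} line of $L$ through $A_i$ is a flat of $M_i$. With that, in the $(1,1)$ case the line $\cl_L(A_1\cup A_2)$ is a common flat, in the $(1,0)$ case the block of the loopless matroid's partition meeting $A_1$ must contain $A_1$ and hence is either $A_1$ or a line through $A_1$, again a common flat, and $(0,0)$ forces $A_1=A_2$; no tracking of how the two pencils cut the common refinement is required. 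So your converse buys independence from Cheung's theorem at the cost of a short structural argument, while the paper's is shorter but leans on an external result.
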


\begin{proof}
Suppose $M_1$ and $M_2$ are elementary quotients of $L$.  We may assume that $L$ is simple. If either of $M_1$ and $M_2$ is a principal truncation, then they are perspective by Cheung's compatible extension theorem \cite{cheung1974compatibility}. Assume neither of $M_1$ or $M_2$ is a principal truncation. Then necessarily they are both simple so $U_{1,n}$ is their common quotient. This proves the `if' direction.

\begin{figure}
    \centering
    \begin{adjustbox}{width=\columnwidth,center}
    \begin{tabular}{c|c}
       \begin{tikzcd}[row sep = tiny, column sep = 0.2em]
    & {[n]} & \\
        B_1\ar[ur, dash]  & \cdots & B_k\ar[ul, dash] \\
        & \ar[ul, dash] \ar[ur, dash] \boxed{\emptyset} &  \\
        & \cLL(M_1) &
    \end{tikzcd} \begin{tikzcd}[row sep = tiny, column sep = 0.2em]
    & {[n]} & \\
        D_1\ar[ur, dash]  & \cdots & D_l\ar[ul, dash] \\
        & \ar[ul, dash] \ar[ur, dash] \boxed{\emptyset} & \\
        & \cLL(M_2) & 
    \end{tikzcd} & \begin{tikzcd}[row sep = tiny, column sep = 0.2em]
    & {[n]} & \\
        \boxed{B_1}\ar[ur, dash]  & \cdots & B_k\ar[ul, dash] \\
        & \ar[ul, dash] \ar[ur, dash] \emptyset &  \\
        & \cLL(M_1) &
    \end{tikzcd} \begin{tikzcd}[row sep = tiny, column sep = 0.2em]
    & {[n]} & \\
        \boxed{C\sqcup D_1}\ar[ur, dash]  & \cdots & C\sqcup D_l\ar[ul, dash] \\
        & \ar[ul, dash] \ar[ur, dash] C & \\
        & \cLL(M_2) & 
    \end{tikzcd} \\ 
  Case I  &  Case II(a) \\\hline
    \begin{tikzcd}[row sep = tiny, column sep = 0.2em]
    & {[n]} & \\
        \boxed{A\sqcup B_1}\ar[ur, dash]  & \cdots & A\sqcup B_k\ar[ul, dash] \\
        & \ar[ul, dash] \ar[ur, dash] A &  \\
        & \cLL(M_1) &
    \end{tikzcd} \begin{tikzcd}[row sep = tiny, column sep = 0.2em]
    & {[n]} & \\
        \boxed{C\sqcup D_1}\ar[ur, dash]  & \cdots & C\sqcup D_l\ar[ul, dash] \\
        & \ar[ul, dash] \ar[ur, dash] C & \\
        & \cLL(M_2) & 
    \end{tikzcd} & \begin{tikzcd}[row sep = tiny, column sep = 0.2em]
    & {[n]} & \\
        \boxed{A\sqcup B_1}\ar[ur, dash]  & \cdots & A\sqcup B_k\ar[ul, dash] \\
        & \ar[ul, dash] \ar[ur, dash] A &  \\
        & \cLL(M_1) &
    \end{tikzcd} \begin{tikzcd}[row sep = tiny, column sep = 0.2em]
    & {[n]} & \\
        C\sqcup D_1\ar[ur, dash]  & \cdots & C\sqcup D_l\ar[ul, dash] \\
        & \ar[ul, dash] \ar[ur, dash] \boxed{C} & \\
        & \cLL(M_2) & 
    \end{tikzcd} \\
    Case II(b) & Case III
    \end{tabular}
    \end{adjustbox}
    \caption{The cases in the proof of \Cref{thm:rank-2-perspectivity}. The common proper flat of $M_1$ and $M_2$ is boxed.}
    \label{fig:cases-rank-2-perspectivity}
\end{figure}
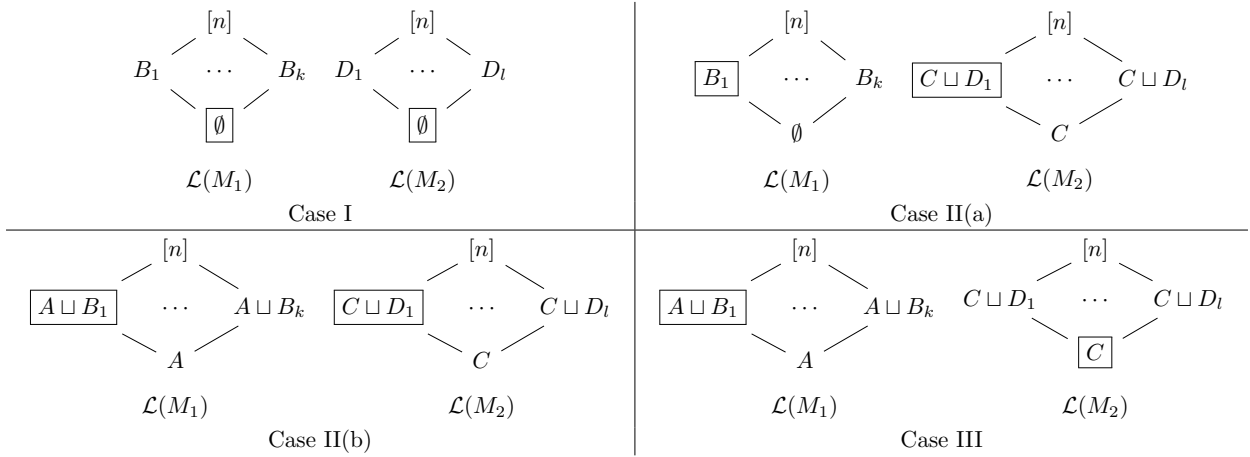 

Suppose $N$ is a common elementary quotient of $M_1$ and $M_2$. We need to construct a rank-3 matroid $M$ such that $\cLL(M)$ contains $\cLL(M_1)\cup\cLL(M_2)$. Since the rank of $N$ is 1, this simply means $M_1$ and $M_2$ share a proper flat. Let $M_1$ be given by $A\sqcup B_1,...,A\sqcup B_l$ and $M_2$ be given by $C \sqcup D_1,...,C\sqcup D_k$. Since all the subsets contain $A\cap C$, we may assume that $A\cap C=\emptyset$. There are three cases depending on which proper flat they share. In each case we construct collections $\cS_1$ and $\cS_2$. It is routine to verify that $\cS_1$ and $\cS_2$ satisfy the four conditions as in \Cref{lem:rank-3-matroid-construction} and that that $\cLL(M_1)\cup \cLL(M_2)\subset \cS_1\cup\cS_2\cup \{[n]\}$, so the conclusion follows from \Cref{lem:rank-3-matroid-construction}.
    \begin{itemize}
        \item Case I: $A=C=\emptyset$. Since we assume $A\cap C=\emptyset$, this implies $A=C=\emptyset$. Put
        \begin{align}
            & \cS_1=\{B_i\mid B_i\subset D_j\text{ for some }j\}\cup\{D_i\mid D_i\subset B_j\text{ for some }j\},\\
            & \cS_2 = \{B_1,...,B_k\}\cup \{D_1,...,D_l\}\backslash \cS_1.
        \end{align}        
        \item Case II: $A\neq C$ and $A\sqcup B_1 = C\sqcup D_1$. There are two subcases:
        \begin{itemize}
            \item Case II(a): $A=\emptyset$ and $C\neq \emptyset$. Put 
            \begin{equation}
                \begin{split}
                    \cS_1 =\{B_i\mid  B_i\subset D_j\text{ for some }j\}\cup\{C\},\quad 
                    \cS_2 = \{B_2,...,B_k,C\sqcup D_1,...,C\sqcup D_l\}\backslash \cS_1. 
                \end{split}
            \end{equation}
            \item Case II(b): Both $A$ and $C$ are non-empty. Note that $B_i\cap D_j=\emptyset$ if $i=1$ and $j\neq 1$ or $j=1$ and $i\neq 1$. Therefore,
            \begin{equation}
                (A\sqcup B_i)\cap (C\sqcup D_j)=\begin{cases}
                   A\sqcup B_1,& \text{ if }i=j=1 \\
                   A,&\text{ if }j=1,i\neq 1 \\
                   C,&\text{ if }i=1,j\neq 1 \\
                   B_i\cap D_j,&\text{ if }i\neq 1,j\neq 1.
                \end{cases}
            \end{equation}
            We can put
            \begin{equation}
                \cS_1=\{A,C\},\quad \cS_2=\{A\sqcup B_1,...,A\sqcup B_k,C\sqcup D_2,...,C\sqcup D_l\}.
            \end{equation}
            \end{itemize}

        \item Case III: $A\sqcup B_1=C$. Since we assumed $A\cap C=\emptyset$, this means $A=\emptyset$ and $C=B_1$. Put
        \begin{equation}
            \cS_1 = \{B_i\mid B_i\subset C\sqcup D_j\text{ for some }j\},\quad \cS_2 = \{B_2,...,B_k,C\sqcup D_1,...,C\sqcup D_l\}\backslash \cS_1.
        \end{equation}
        \end{itemize}
\end{proof}

\begin{cor}\label{cor:rank-3-modular}
    Let $M$ be a matroid of rank at most 3. Then  $\Qt(M)$ is a modular poset.
\end{cor}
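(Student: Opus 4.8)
The plan is to verify by hand that $\cQ(M)$ and its order dual are both submodular, using that $\cQ(M)$ is graded --- by the matroid rank, with unique minimum $\hat 0 = U_{0,n}$ (every matroid quotients onto the all-loops matroid) and unique maximum $\hat 1 = M$ --- so that its covering relations are precisely the elementary quotients. Since $\cQ(M)$, as a poset, depends only on the lattice $\cLL(M) \cong \cLL(\mathrm{si}(M))$, and the rank is preserved by simplification, I would first reduce to the case $M$ simple; and I would dispose of $\rk M \le 1$ immediately (then $\cQ(M)$ is a chain), so that $\rk M \in \{2, 3\}$. The key observation is that a graded poset of height at most $3$ leaves only a handful of configurations to inspect.

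For submodularity, suppose $F_1 \neq F_2$ cover $G$ in $\cQ(M)$ and set $r = \rk G$, so $\rk F_i = r + 1$. If $r + 1 = \rk M$, then both $F_i$ equal the maximum $M$, contradicting $F_1 \neq F_2$, so this case is vacuous. If $r = \rk M - 2$, then $F_1$ and $F_2$ have rank $\rk M - 1$, hence are elementary quotients of $M$, so $H := M$ covers both and we are done. The remaining case is $\rk M = 3$ with $r = 0$, where $G = U_{0,n}$ and $F_1, F_2$ are two distinct rank-$1$ quotients of $M$; here I would invoke that every rank-$3$ matroid has an adjoint, so by \Cref{thm:adjoint-discrete-interpolation} $M$ has the discrete interpolation property, and applying this to the two rank-$1$ quotients $F_1, F_2$ produces an elementary (hence rank-$2$) quotient $N$ of $M$ having $F_1$ and $F_2$ as quotients --- necessarily as elementary quotients, by rank --- so $H := N$ covers both.

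For co-submodularity, suppose $G_1 \neq G_2$ are both covered by $H$ and set $s = \rk H$, so $\rk G_i = s - 1$. The cases $s \le 1$ are vacuous (for $s = 0$, $H = \hat 0$ covers nothing; for $s = 1$, $G_1 = G_2 = U_{0,n}$). If $s = 2$, then $G_1, G_2$ are rank-$1$ quotients of $M$ and $K := U_{0,n}$ --- a quotient of $M$ --- is covered by each $G_i$, so it is the desired element. The substantial case is $\rk M = 3$ with $s = 3$: then $H = M$ and $G_1, G_2$ are two rank-$2$ elementary quotients of $M$, so in particular they are coperspective; by \Cref{thm:rank-2-perspectivity} they are then perspective, hence admit a common elementary quotient $K$. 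Since $\cLL(K) \subseteq \cLL(G_1) \subseteq \cLL(M)$ we get $K \in \cQ(M)$, and $K$ is covered by each $G_i$ because the ranks differ by $1$; this finishes co-submodularity and so the proof that $\cQ(M)$ is modular.

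The main obstacle is the $r = 0$, $\rk M = 3$ case of submodularity: it is exactly the discrete interpolation property, and it is the only place where the rank bound is genuinely needed, via the existence of adjoints for rank-$3$ matroids together with \Cref{thm:adjoint-discrete-interpolation}. Everything else amounts to bookkeeping against the grading of $\cQ(M)$, supplemented by the already-proved \Cref{thm:rank-2-perspectivity} for the top case of co-submodularity. In writing this out I would also want to pin down carefully the reduction to simple $M$ and the identification of coverings in $\cQ(M)$ with elementary quotients, both of which rest on gradedness (and, for the reduction, on $\cQ(M)$ being determined by $\cLL(M)$).
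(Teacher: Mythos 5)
Your proof is correct and follows essentially the same route as the paper, whose entire argument is the one-liner "all rank-3 matroids have adjoints, so the corollary follows from \Cref{thm:rank-2-perspectivity} and \Cref{thm:adjoint-discrete-interpolation}." Your case analysis by rank simply makes explicit the bookkeeping the paper leaves implicit, with the two substantive cases handled by exactly the two results the paper cites.
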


\begin{proof}
    
All rank-3 matroids have adjoints, so the corollary follows from \Cref{thm:rank-2-perspectivity} and \Cref{thm:adjoint-discrete-interpolation}.
\end{proof}

The remainder of this section is devoted to proving \Cref{main:C}. As a preparation, we recall some facts on how tropical linear spaces change under deletion and contraction of the valuated matroids. For each nonempty $E\subset [n]$, put
\begin{equation}
    \PT{E} = \{[\bx]\in \PT{[n]}\mid \supp(\bx)\subset E\},\quad T_{E} = \{[\bx]\in \PT{[n]}\mid \supp(\bx)=E\}.
\end{equation}
These are the coordinate subspaces and subtori. Consider the projection map:
\begin{equation}
    \pi_1\colon \PT{[n]}\backslash \{[(0,\infty,...,\infty))]\} \onto \PT{[n]-1},\quad [(\bx(1),\bx(2),...,\bx(n))]\mapsto [(\infty,\bx(2),...,\bx(n))]
\end{equation}
Then for any valuated matroid $\mu$ on $[n]$, 
\begin{center} \vspace{5pt}
    $\pi_1(\Trop \mu)=\Trop(\mu\backslash 1)$ \quad and\quad  $\Trop\mu\cap H = \Trop (\mu/1)$.
\vspace{5pt} \end{center} 

\Cref{lem:injective-outside-of-contraction} refines the above observations. In particular, it says $\pi_1$ is injective away from the preimage of $\Trop(\mu/1)$. 

\begin{lemma}\label{lem:injective-outside-of-contraction}
Let $\mu$ be a loopless valuated matroid on $[n]$, $P=\Trop\mu$ and $P_\infty = \Trop (\mu/1)$. Let $[\bw]\in T_{[n]-1}$. If $[\bw]\in \pi_1(P)$, then there is some $[\bw']\in T_{[n]}\cap P$ such that $\pi_1([\bw'])=[\bw]$. Moreover, the following are equivalent.
\begin{enumerate} 
    \item 
    $[\bw]\in P_\infty$

    \item there is some $[\bw']\in \pi_1^{-1}([\bw])$ such that for all $x \geq 0$, $[\bw'+x \be_1]\in P$ and 1 is a coloop of the initial matroid $\mu^{\bw'+x\be_1}$.
    
    \item $\pi_1^{-1}([\bw])\cap P$ contains more than one point.
    \item there is some $[\bw']\in \pi_1^{-1}([\bw])\cap P$ such that 1 is a coloop of the initial matroid $\mu^{\bw'}$.
\end{enumerate}

\end{lemma}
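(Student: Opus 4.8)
I will work fibrewise over the fixed point $[\bw]\in T_{[n]-1}$, where all coordinates $\bw(2),\dots,\bw(n)$ are finite. The fibre $\pi_1^{-1}([\bw])$ consists of the finite points $[\bw_s]$ with $\bw_s=(s,\bw(2),\dots,\bw(n))$, $s\in\R$, together with the limit point $p_\infty=[(\infty,\bw(2),\dots,\bw(n))]\in H$. By \Cref{lem:point-membership}, $[\bw_s]\in P$ iff the minimizers of $B\mapsto\mu(B)-\bw_s\cdot\be_B$ cover $[n]$, and splitting the minimization according to whether $1\in B$ gives $\min_B\{\mu(B)-\bw_s\cdot\be_B\}=\min(g_0,\;c_1-s)$, where $g_0=\min_{1\notin B}\{\mu(B)-\sum_{i\in B}\bw(i)\}\in\R\cup\{\infty\}$ is independent of $s$ and $c_1=\min_{1\in B}\{\mu(B)-\sum_{i\in B\setminus 1}\bw(i)\}\in\R$ (finite, as $1$ is not a loop). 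Set $s^*=c_1-g_0\in[-\infty,\infty)$ and let $(\star)$ be the condition that $\cB_1:=\{B\ni 1:\mu(B)-\sum_{i\in B\setminus 1}\bw(i)=c_1\}$ covers $[n]$. Using the contraction formula $(\mu/1)(B')=\mu(B'\cup 1)$ together with $\Trop\mu\cap H=\Trop(\mu/1)$ and \Cref{lem:point-membership} applied to $\mu/1$, one sees $(\star)$ is exactly the statement $p_\infty\in P$, i.e.\ $[\bw]\in P_\infty$; this already gives $(\star)\Leftrightarrow(1)$.

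Next I would record the structure of $S_{[\bw]}:=\{s\in(-\infty,\infty]:[\bw_s]\in P\}$ (with $s=\infty$ standing for $p_\infty$) by inspecting the three regimes of the piecewise-linear function above. For $s<s^*$ every minimizer avoids $1$, so the minimizers cannot cover $[n]$ and $[\bw_s]\notin P$. For $s>s^*$ the set of minimizers is exactly $\cB_1$; hence $[\bw_s]\in P\Leftrightarrow(\star)$, independently of $s$, and in that case $1$ is a coloop of $\mu^{\bw_s}$ since it lies in every basis of this initial matroid. When $s^*$ is finite, at $s=s^*$ the minimizers are $\cB_0\cup\cB_1$ with $\cB_0\neq\emptyset$ and disjoint from $1$, so $1$ is \emph{not} a coloop of $\mu^{\bw_{s^*}}$. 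Consequently: if $(\star)$ holds then $S_{[\bw]}\supseteq(s^*,\infty]$, an infinite set on which $1$ is a coloop at every point; if $(\star)$ fails then $S_{[\bw]}\subseteq\{s^*\}$ and $1$ is a coloop at no point of $S_{[\bw]}$.

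The lemma then follows quickly. If $[\bw]\in\pi_1(P)$ then $S_{[\bw]}\neq\emptyset$, so by the dichotomy $S_{[\bw]}$ contains a finite $s$, and $[\bw_s]\in T_{[n]}\cap P$ maps to $[\bw]$; this is the first assertion. For the equivalence I will show $(1),(2),(3),(4)$ are each equivalent to $(\star)$. We have $(1)\Leftrightarrow(\star)$ already; the structure statement gives $(3)\Leftrightarrow(\star)$ (since $|S_{[\bw]}|>1$ iff $(\star)$) and $(4)\Leftrightarrow(\star)$ (a point of $S_{[\bw]}$ at which $1$ is a coloop exists iff $(\star)$, and it may always be taken finite — any $s\in(s^*,\infty)$ works, this interval being nonempty). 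Finally $(2)\Rightarrow(1)$ because $[\bw'+x\be_1]\to p_\infty$ as $x\to\infty$ and $P$ is closed, while $(1)\Rightarrow(2)$ by choosing $\bw'=\bw_s$ for a fixed $s\in(s^*,\infty)$: for all $x\ge 0$ we then have $s+x\in(s^*,\infty)\subseteq S_{[\bw]}$, so $[\bw'+x\be_1]\in P$, and $s+x>s^*$ makes $1$ a coloop of $\mu^{\bw'+x\be_1}$.

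The genuine input, beyond elementary manipulation of $s\mapsto\min(g_0,c_1-s)$, is the point‑membership criterion \Cref{lem:point-membership} and the standard identities $\pi_1(\Trop\mu)=\Trop(\mu\setminus 1)$, $\Trop\mu\cap H=\Trop(\mu/1)$. The delicate parts are purely bookkeeping: getting the coloop statement right at the boundary value $s=s^*$; treating the degenerate case where $1$ is a coloop of the underlying matroid $M$ (so $g_0=\infty$, $s^*=-\infty$, and $S_{[\bw]}$ is either all of $(-\infty,\infty]$ or empty); and checking that the quantifier ``there is some $[\bw']\in\pi_1^{-1}([\bw])$'' in (2) and (4), which a priori allows $[\bw']=p_\infty$, causes no trouble — in the forward directions $p_\infty\in P$ already yields $(\star)$, and in the backward directions a finite witness suffices, so one never needs to make sense of an initial matroid at the point at infinity.
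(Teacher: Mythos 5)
Your argument is correct and is essentially the paper's own proof: both rest on the point-membership criterion (\Cref{lem:point-membership}) and the observation that along the fibre $\pi_1^{-1}([\bw])$ the minimum splits as $\min(g_0,\,c_1-s)$, with all minimizers containing $1$ exactly when $s$ exceeds the critical value $s^*$. The only difference is organizational --- the paper runs the cycle $(1)\Rightarrow(2)\Rightarrow(3)\Rightarrow(4)\Rightarrow(1)$ whereas you route all four statements through the single condition $(\star)$ via a full description of the fibre, and your explicit handling of the degenerate case $g_0=\infty$ and of a possible witness at the point at infinity is, if anything, slightly more careful than the paper's.
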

\begin{proof}
    WLOG, assume
    \begin{equation}
        \bw = (\infty, 0, ..., 0 ).
    \end{equation}
    If $[\bw]\in \pi_1(P)$, then the minimizers of
    \begin{equation}\label{eqn:minimum-deletion}
        \min_{1\notin B}\{\mu(B)\}
    \end{equation}
    cover $[n]-1$. Let $a$ be the minimum value of \eqref{eqn:minimum-deletion}. Consider $\bw'=x\be_1$. Then for $x\in \R$, $\min_{1\in B}\{\mu(B)-\bw'\cdot \be_B\}$ is a continuous function in $x$ with range $\R$. Hence, we can choose $x$ such that $\min_{1\in B}\{\mu(B)-\bw'\cdot \be_B\}=a$. Then the minimizers of 
    \begin{equation}
        \min_{B}\{\mu(B)\}
    \end{equation}
    cover $[n]$. We have $[\bw']\in T_{[n]}\cap P$ and $\pi_1([\bw'])=[\bw]$.

    If $[\bw]\in P_\infty$, then the minimizers of 
    \begin{equation}
        \min_{1\in B} \{\mu(B)\}
    \end{equation}
    cover $[n]$. This means for all sufficiently large $x$, the minimizers of $\min_B\{\mu(B)-x\be_1\}$ cover $[n]$ and they all contain 1. This proves $(1)\Rightarrow (2)$. $(2)\Rightarrow (3)$ is clear. Suppose $\bw_1=x_1 \be_1$ and $\bw_2=x_2\be_1$ are two distinct points such that $x_1>x_2$ and $[\bw_1],[\bw_2]\in \pi_1^{-1}([\bw])$. Note that for all $x>x_2$, the minimizers of $\min\{\mu(B)-x\be_1\cdot\be_B\}$ are exactly the minimizers of $\min\{\mu(B)-\bw_2\cdot\be_B\}$ that contain 1. Since $[\bw_1]\in P$, the minimizers of $\min\{\mu(B)-x\be_1\cdot\be_B\}$ cover $[n]$ for all $x>x_2$, so $(3)\Rightarrow (4)$. The same argument shows that under the hypothesis of $(4)$, the minimizers of $\min_{1\in B}\{\mu(B)\}$ cover $[n]$, which means $[\bw]\in P_\infty$.
\end{proof}

The whole proof to \Cref{main:C} is notation-heavy, as it involves keeping track of many pieces of geometric data. However, the idea is simple, so we give a preview first. Let $\Trop\mu$ be a tropical plane and $\Trop\theta_1$ and $\Trop\theta_2$ two tropical lines in $\Trop\mu$. If either $\theta_1$ or $\theta_2$ has a loop $i$, then $\Trop(\theta_1/i)\cap \Trop(\theta_2/i)$ is nonempty. From here to the end of this section, we assume $\theta_1$ and $\theta_2$ are loopless and fix notations
\begin{center} \vspace{5pt}
    $P=\Trop\mu$ a tropical plane;\quad $L_1=\Trop\theta_1$ and $L_2=\Trop\theta_2$ two tropical lines in $P$;\\
    $P_\infty=\Trop(\mu/1)$;\quad $[\ba_\infty] = \Trop(\theta_1/1)$;\quad $[\bb_\infty] = \Trop(\theta_2/1)$;
\vspace{5pt} \end{center} 
Under the assumption that $\mu$ is simple and $\theta_1$ and $\theta_2$ are loopless, $P_\infty$ is a tropical line in $\PT{[n]}$ and $[\ba_\infty]$ and $[\bb_\infty]$ are points on $P_\infty$. A tropical line can be topologically represented by a tree. If the flats of $M$ containing 1 are $\{1\}\sqcup F_1,...,\{1\}\sqcup F_k$, then each $F_i$ labels an endpoint of a leaf of the tree. These endpoints are the valuated cocircuits of $P$; $[\ba_\infty]$ and $[\bb_\infty]$ are points on $P_\infty$, which may be the valuated cocircuits of $P$ or in the interior of $P_\infty$. This configuration is illustrated by \Cref{fig:tree-pic} and \Cref{subfig:t-infty}.

\begin{figure}
    \centering
    \begin{subfigure}[c]{0.3\textwidth}
        \includegraphics[width=1.8in]{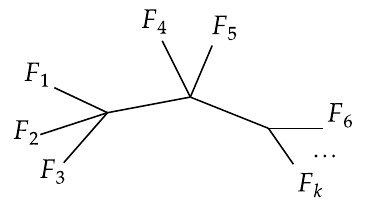} 
    \caption{The tropical line $P_\infty$.}\end{subfigure}\quad\quad 
    \begin{subfigure}[c]{0.4\textwidth}
        \centering
        \includegraphics[width=1.6in]{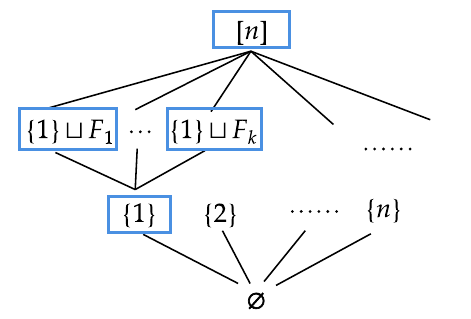}
        \caption{$\cLL(M)$. The flats containing 1 are boxed.}
    \end{subfigure}
    \caption{Representing the tropical line $P_\infty$ by a tree.}
    \label{fig:tree-pic}
\end{figure}

For each $t\in (-\infty,\infty]$, let $H_t$ be the hyperplane in $\PT{[n]}$ given by the equation
\begin{equation}
        \bx(1) = \bx(2) + t.
    \end{equation}
    The hyperplane $H_\infty$ is the coordinate hyperplane $\PT{[n]-1}$. As $t$ decreases from a sufficiently large number, $H_t$ sweeps through $L_1,L_2$ and $P$. We keep track of the stable intersections
\begin{equation}
        P_t =  P\stcap H_t,\quad [\ba_t] = L_1\stcap H_t,\quad [\bb_t] = L_2\stcap H_t,
    \end{equation}
    as well as the set-theoretic intersections\begin{equation}
        Q_t =  P\cap H_t,\quad X_t = L_1\cap H_t,\quad Y_t = L_2\cap H_t.
    \end{equation} 
    The sweeping hyperplane $H_t$ will witness the intersection of $L_1$ and $L_2$: either the two stable intersections $[\ba_t]=[\bb_t]$ for some $t$, or the set-theoretic intersections $X_t$ and $Y_t$ meet. This is most apparent when $1$ is a coloop of $\mu$. In this case, the hyperplane $H_t$ always meets $P$ transversely, so $P_t=Q_t$. The movements of $[\ba_t]$, $[\bb_t]$, $X_t$ and $Y_t$ are then confined in $P_t$. That $H_t$ will witness the intersection of $L_1$ and $L_2$ follows essentially from the balancing condition for tropical lines. To clarify this idea, we address this special case first. See \Cref{fig:sweeping-hyperplane} and \Cref{fig:proof-coloop-case} for illustrations accompanying the proof.

    \begin{figure}
        \centering
        \includegraphics[width=0.4\linewidth]{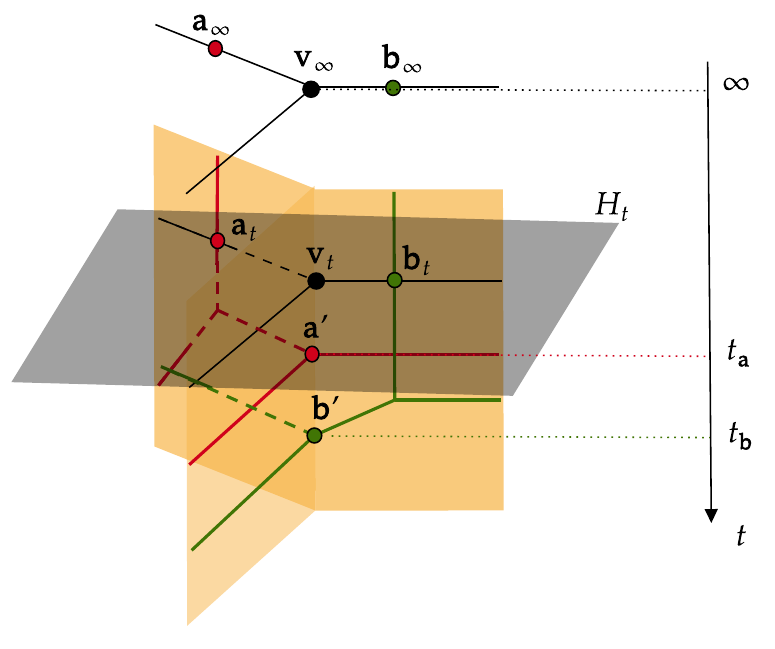}
        \caption{As $t$ decreases, the hyperplane will witness the intersection of $L_1$ and $L_2$ at some point.}
        \label{fig:sweeping-hyperplane}
    \end{figure}

\begin{prop}\label{prop:coloop-case}
    If $\mu$ has a coloop, then any two lines $L_1,L_2$ on $P=\Trop\mu$ intersect.
\end{prop}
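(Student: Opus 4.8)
The plan is to use the cylinder structure that a coloop forces on $P$, and then run the sweeping--hyperplane argument previewed above, letting the balancing condition force the sweep to witness the intersection of $L_1$ and $L_2$.

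\textbf{Step 1 (cylinder structure).} After relabeling, assume $1$ is a coloop of $\mu$. Then every valuated circuit of $\mu$ takes the value $\infty$ in coordinate $1$, so the tropical hyperplanes cutting out $\Trop\mu$ do not involve $\bx(1)$; since moreover $\mu\backslash 1=\mu/1$, we get $\pi_1(P)=P_\infty$ and the projection $\pi_1|_P$ identifies $P\cap T_{[n]}$ with $(P_\infty\cap T_{[n]-1})\times\R$, the $\R$-factor being the affine coordinate $s=\bx(1)-\bx(2)$; thus $P$ is a cylinder (topologically a cone) over the tropical line $P_\infty$. In particular each classical hyperplane $H_t=\{s=t\}$ meets $P$ transversally, so $P_t=Q_t$ is a tropical line carried isomorphically onto $P_\infty$ by $\pi_1$, and all of $L_1,L_2,P_t$ can be transported into the single metric tree $P_\infty$.

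\textbf{Step 2 (the two sweeping paths).} Since $L_1,L_2\subset P$, the stable intersections $[\ba_t]=L_1\stcap H_t$ and $[\bb_t]=L_2\stcap H_t$ lie in $Q_t$, and as $t$ varies they trace continuous piecewise-linear paths $\alpha(t)=\pi_1([\ba_t])$, $\beta(t)=\pi_1([\bb_t])$ in $P_\infty$, defined on the closed parameter interval over which $H_t$ still meets the corresponding line. At the top of the sweep $\lim_{t\to\infty}Q_t=P\cap H_\infty=P_\infty$ and $\alpha(\infty)=\Trop(\theta_1/1)=[\ba_\infty]$, $\beta(\infty)=[\bb_\infty]$; at the bottom, renormalizing coordinates shows $H_t$ converges to the coordinate hyperplane $\{\bx(2)=\infty\}$, so $\alpha,\beta$ limit to the points $\Trop(\theta_1/2),\Trop(\theta_2/2)$ on $P\cap\{\bx(2)=\infty\}=\Trop(\mu/2)$, which I again view inside (a compactification of) $P_\infty$. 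When $H_t$ fails to be transverse to $L_i$, i.e.\ contains an entire bounded edge or ray of $L_i$, the fibre $X_t$ (resp.\ $Y_t$) is that whole segment rather than a point, a degeneracy that must be carried along in the bookkeeping.

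\textbf{Step 3 (the sweep detects the intersection).} This is the heart of the matter. Using the balancing condition at each vertex of the degree-$1$ tropical lines $L_1$ and $L_2$ --- which pins down how the slopes of $s|_{L_1}$ and $s|_{L_2}$, and hence the paths $\alpha,\beta$, may turn --- I will show that some parameter $t_0$ satisfies either $\alpha(t_0)=\beta(t_0)$ or, in the non-transverse case, $X_{t_0}\cap Y_{t_0}\neq\emptyset$ inside $Q_{t_0}$. In the first case one checks at a transverse $t_0$ that the common point $[\ba_{t_0}]=[\bb_{t_0}]$ genuinely lies on both $L_1$ and $L_2$; in the second case the overlap of $X_{t_0}$ and $Y_{t_0}$ is directly a common point of $L_1$ and $L_2$. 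Either way $L_1\cap L_2\neq\emptyset$.

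The main obstacle is Step~3: converting the balancing of $L_1$ and $L_2$ into a rigorous ``two piecewise-linear paths in the tree $P_\infty$ must meet'' statement, and organizing the endpoint analysis of Step~2 (including the segment-valued, non-transverse fibres) so that an intermediate-value argument applies cleanly. A useful consistency check is the intersection-theoretic shadow of the claim: a tropical line in a tropical plane represents the hyperplane class, whose self-intersection is the degree $1$ of $P$, so the stable intersection of $L_1$ and $L_2$ inside $P$ is a nonempty $0$-cycle supported on $L_1\cap L_2$; the sweeping argument is the elementary coordinate-level incarnation of this fact in the coloop case.
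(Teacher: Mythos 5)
Your Steps 1--2 correctly reproduce the paper's setup: with $1$ a coloop, every valuated circuit of $\mu$ is $\infty$ in coordinate $1$, so $P$ is a cylinder in the $\be_1$-direction, $H_t$ meets $P$ transversely, $P_t=Q_t$ is a shifted copy of $P_\infty$, and the sweep confines $[\ba_t],[\bb_t]$ to $P_t$. But the proposal stops exactly where the proof has to start. Step 3 is only announced (``I will show that some parameter $t_0$ satisfies\dots''), and the mechanism you suggest for it --- an intermediate-value argument for the two projected paths $\alpha,\beta$ in the tree $P_\infty$ --- does not work as stated: at $t\to+\infty$ the paths limit to $[\ba_\infty]\neq[\bb_\infty]$ in general, and at the bottom of the sweep they limit to $\Trop(\theta_1/2)$ and $\Trop(\theta_2/2)$, which are also generally distinct, so the endpoints of the two paths need not interleave and no IVT-type statement forces $\alpha(t_0)=\beta(t_0)$ at a common time. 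The paper's proof does not argue this way; instead it first dispatches the degenerate case where $1$ is a coloop of both $\theta_1$ and $\theta_2$ (then both lines contain the cocircuit of $\mu$ supported on $[n]-1$, a case your cylinder/projection setup does not cover since there $\pi_1(L_i)$ is a point rather than all of $P_\infty$), and then, when say $1$ is not a coloop of $\theta_1$, it exploits $\pi_1(L_1)=\pi_1(L_2)=P_\infty$ to pick a common projected point $[\bv_\infty]$ on the tropical segment $J_\infty$ joining $[\ba_\infty]$ and $[\bb_\infty]$, lifts it to $[\ba']\in L_1$ and $[\bb']\in L_2$ at heights $t_\ba\geq t_\bb$, and runs a two-case analysis at $t=t_\ba$ (whether $[\ba_{t_\ba}]=[\ba']$ or not), tracking whether $[\ba_t]$ stays in the segment $D_t$ down to $t_\bb$ or exits at some $t_0$; in each branch the degree-$1$ balancing condition forces $L_1$ (or $L_2$) to contain the relevant segment of $P_t$, hence a common point.

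So the gap is concrete: the entire content of the proposition --- converting balancing of degree-$1$ cycles into a witnessed intersection during the sweep --- is missing, and the placeholder strategy you propose for it (two paths in a tree must meet) fails without the finer bookkeeping via a lifted common point $[\bv_\infty]$ and the comparison of the heights $t_\ba,t_\bb$ that the paper carries out. To repair the proposal you would essentially have to supply that case analysis (or an equivalent argument), and separately treat the case where $1$ is a coloop of both underlying lines.
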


\begin{proof}
    WLOG, suppose 1 is a coloop of $\mu$. If 1 is a coloop of both $\theta_1$ and $\theta_2$, then since we assume $\theta_1$ and $\theta_2$ are loopless, their lattice of flats are both $\{\emptyset, 1, [n]-1,[n]\}$. This means $L_1$ and $L_2$ both contain the valuated cocircuit of $\mu$ supported at $[n]-1$. This is where they intersect.

    Suppose now 1 is not a coloop of $\theta_1$. Recall that $H_t$ and $P$ intersect transversely for all $t$ and we have $P_t = Q_t$. Moreover, for each $t$, $P_t$ is simply a copy of $P_\infty$ shifted to $H_t$. Note that $\pi_1(L_1)=\pi_2(L_2)=P_\infty$. If $[\ba_\infty]=[\bb_\infty]$, we are done. Suppose $[\ba_\infty]\neq [\bb_\infty]$ and let 
    \begin{center} \vspace{5pt}
       $J_\infty:=$ the tropical convex hull of $[\ba_\infty]$ and $[\bb_\infty]$,
    \vspace{5pt} \end{center} 
     which is a tropical line segment in $P_\infty$. Let $[\bv_\infty]$ be any point in $J_\infty\cap T_{[n]-1}$ that is distinct from $[\ba_\infty]$ and $[\bb_\infty]$. Then there is $[\ba']\in L_1$ and $[\bb']\in L_2$ such that $\pi_1([\ba'])=\pi_1([\bb'])=[\bv_\infty]$. Working in the coordinate chart given by $\bx(2)=0$, we write
    \begin{equation}
        \bv_\infty = (\infty,0,\bv(3),...,\bv(n)),\quad  \ba' = (t_\ba,0,\bv(3),...,\bv(n)),\quad \bb' = (t_\bb,0,\bv(3),...,\bv(n)).
    \end{equation}
    Let $\bv_t=(t,0,\bv(3),...,\bv(n))$ and
            \begin{center} \vspace{5pt}
            $C_t:=$ the tropical convex hull of $[\ba_t]$ and $[\bv_t]$, \\
            $D_t:=$ the tropical convex hull of $[\bb_t]$ and $[\bv_t]$, \\
            $C'_t:=$ the tropical convex hull of $[\ba_t]$ and $[\ba']$, \\
            $D'_t:=$ the tropical convex hull of $[\bb_t]$ and $[\bb']$.
        \vspace{5pt} \end{center} 
        Note that
        \begin{itemize}
        \item $C'_\infty\subset L_1$;
            \item $[\ba_t]\in C'_\infty$ for all $t\geq t_\ba$;
            \item for $t_\ba\leq t_1<t_2<\infty$, $C'_{t_1}\subset  C'_{t_2}\subset C'_\infty$;
            \item $C_t$ is the projection of $C'_t$ onto $H_t$.
        \end{itemize}
          Similar observations hold for $[\bb_t],D_t$, and $D'_t$. If $t_\ba=t_\bb$, we are done. WLOG, suppose $t_\ba>t_\bb$. There are two possible situations at $t=t_\ba$, 

    \begin{itemize}
            \item $[\ba_{t_\ba}]\neq [\ba']$. Since $L_1$ contains both $[\ba_{t_\ba}]$ and $[\ba']=[\bv_{t_\ba}]$, it contains the whole tropical line segment $C_{t_\ba}$. Since $L_1$ is a tropical line, i.e., a degree-1 tropical cycle, the balancing condition and the degree-1 condition means that $L_1$ must contain $D_{t_\ba}$ as illustrated by \Cref{subfig:coloop-case-1}. In particular, $[\bb_{t_\ba}]\in  L_1$. 
            
            \item $[\ba_{t_\ba}] = [\ba']$. In this case, as $t$ decreases to $t_\bb$, either of the following happens,
            \begin{itemize}
                \item $[\ba_t]\in D_t$ for all $t$ such that $ t_\bb\leq  t \leq t_\ba$. In particular, $[\ba_t]\in D_{t_\bb}\subset L_2$. 
                \item for some $t_0 $ such that $ t_\bb <t_0\leq t_\ba$, $[\ba_{t_0}]\in D_{t_0}$ whereas $\ba_{t_0-\epsilon}\notin D_{t_0-\epsilon}$ for all $\epsilon>0$ small. In other words, $[\ba_t]$ exits $D_t$ at $t_0$. If $[\ba_{t_0}]=[\bb_{t_0}]$, we are done. Otherwise, the balancing condition at $[\ba_{t_0}]$ implies that $X_{t_0}=L_1\cap H_{t_0}$ must contain the tropical line segment connecting $[\ba_{t_0}]$ and $[\bb_{t_0}]$, as indicated by \Cref{subfig:coloop-case-2b}.
            \end{itemize}

        \end{itemize}
\end{proof}

\begin{figure}
    \centering
    \begin{subfigure}[c]{0.4\textwidth}
        \centering
        \includegraphics[width=2in]{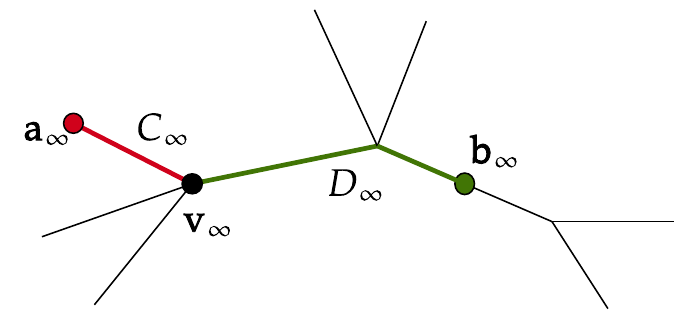}\caption{$t=\infty$}
        \label{subfig:t-infty}
    \end{subfigure}
    \quad
    \begin{subfigure}[c]{0.4\textwidth}
        \centering
        \includegraphics[width=2in]{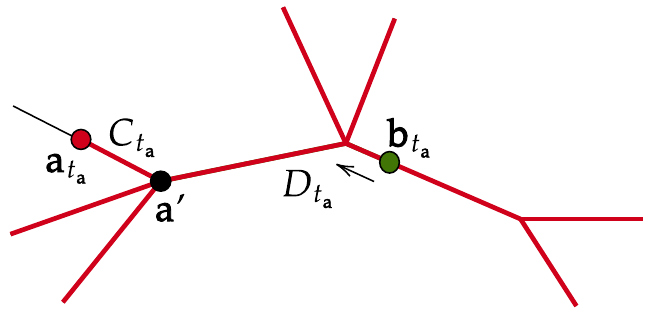}
        \caption{$t=t_\ba$ and $[\ba_{t_\ba}]\neq[\ba']$. Since $L_1$ contains $C_{t_\ba}$, $L_1$ must contain the rest of $P_{t_\ba}$ which is colored red.}\label{subfig:coloop-case-1}
    \end{subfigure}

    \begin{subfigure}[c]{\textwidth}
        \centering
        \begin{subfigure}[c]{0.4\textwidth}
        \centering
        \includegraphics[width=2in]{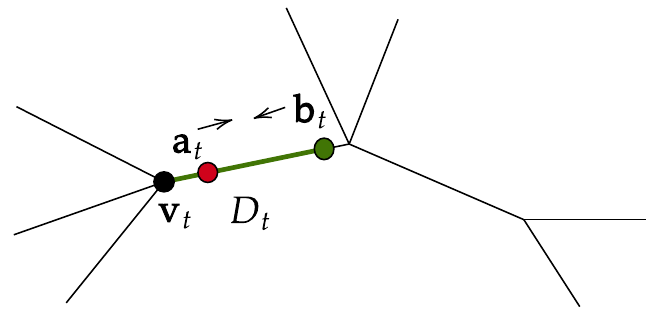}
        \caption{$[\ba_{t_\ba}]=[\ba']$ and $[\ba_t]\in D_{t}$ for all $t\in [t_\bb,t_\ba]$. Then we have $\ba\in D_{t_\bb}$.}
        \label{subfig:coloop-case-2a}
    \end{subfigure} \quad
    \begin{subfigure}[c]{0.4\textwidth}
        \centering
        \includegraphics[width=2in]{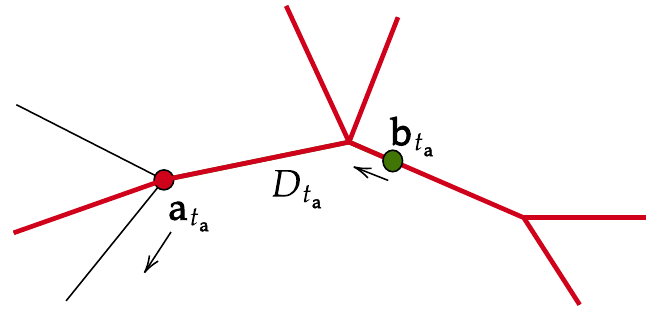}
        
        \caption{$[\ba_{t_\ba}]=[\ba']$ and $[\ba_t]$ exits $D_t$ after $t=t_0$. If $[\ba_{t_0}]\neq [\bb_{t_0}]$, $[\bb_{t_0}]$ must be in $L_1\cap H_{t_0}$, which is colored red.}
        \label{subfig:coloop-case-2b}
    \end{subfigure}
    \end{subfigure}
    
    \caption{An illustration of the dynamics of $[\ba_t]$ and $[\bb_t]$ on $P_t$ as $t$ decreases. Under the assumption that 1 is a coloop of $\mu$, $P_t$ remains unchanged as $t$ changes. The arrows indicate where the points $[\ba_t]$ and $[\bb_t]$ move.}
    \label{fig:proof-coloop-case}
\end{figure}

If $\mu$ does not have a coloop, then the set-theoretic intersection $Q_t$ may not be transverse, which means $L_1\cap H_t$ may not be contained in the tropical line $P_t$. To work around this complication, we use \Cref{lem:injective-outside-of-contraction} and induct on $n$. For the base case $n=3$, it is obvious that any two tropical lines in $\PT{[n]}$ intersect. When $n>3$, by the induction hypothesis, $\pi_1(L_1)$ and $\pi_1(L_2)$ intersect in $\pi_1(P)$. If there is some point $[\bv_\infty]\in \pi_1(L_1)\cap\pi_1(L_2)$ that is not in $P_\infty$, then by \Cref{lem:injective-outside-of-contraction}, $L_1$ and $L_2$ intersect. Therefore, we only need to show that $L_1$ and $L_2$ intersect if $\pi_1(L_1)\cap\pi_1(L_2)\subset P_\infty$. Let
\begin{equation}
    U = \pi_1^{-1}(P_\infty).
\end{equation}
Then $U$ is tropically convex, so the tropical line segments $C'_t$ and $D'_t$, as defined in the proof to \Cref{prop:coloop-case}, are contained in $U$ before $t$ reaches $t_\ba$ and $t_\bb$, respectively. Moreover, by \Cref{lem:injective-outside-of-contraction}, for every point $[\bw]$ in the interior of $U$, the initial matroid $\mu^{\bw}$ has a coloop 1, so $H_t$ intersects with $P$ transversely near $[\bw]$. This transverslity means that the portion of $L_1\cap H_t$ we care about is contained in $P_t$, and this is enough for our purpose.

\begin{repthm}{main:C}
    Two coperspective tropical lines are perspective. In other words, two tropical lines on a tropical plane always have nonempty  intersection.
\end{repthm}

\begin{proof}
    Recall the notations
     \begin{center} \vspace{5pt}
       $J_\infty:=$ the tropical convex hull of $[\ba_\infty]$ and $[\bb_\infty]$,\\
            $C_\infty:=$ the tropical convex hull of $[\ba_\infty]$ and $[\bv_\infty]$, \\
            $D_\infty:=$ the tropical convex hull of $[\bb_\infty]$ and $[\bv_\infty]$.
        \vspace{5pt} \end{center} 
      Note that the triple intersection of $C_\infty,D_\infty$ and $J_\infty$ must be non-empty, so we may assume that $[\bv_\infty]\in J_\infty$.
    
    Suppose first that there is a point $[\bv_\infty]\in \pi_1(L_1)\cap \pi_1(L_2)\cap T_{[n]-1}$. By \Cref{lem:injective-outside-of-contraction}, there is some $[\ba']\in L_1\cap T_{[n]}$ and some $[\bb']\in L_2\cap T_{[n]}$ such that $\pi_1([\ba'])=\pi_1([\bb']) = [\bv_\infty]$. Working in the coordinate chart given by $\bx(2)=0$, we write
    \begin{equation}
        \bv_\infty = (\infty,0,\bv(3),...,\bv(n)),\quad  \ba' = (t_\ba,0,\bv(3),...,\bv(n)),\quad \bb' = (t_\bb,0,\bv(3),...,\bv(n)).
    \end{equation}
    We may assume that the values $t_\ba$ and $t_\bb$ are maximal and are finite. Otherwise, 1 is a coloop of $\theta_1$ or $\theta_2$, which means 1 is a coloop of $\mu$. 
    
    If $t_\ba=t_\bb$, we are done. Suppose $t_\ba>t_\bb$. Recall the following notations from the proof of \Cref{prop:coloop-case}
            \begin{center} \vspace{5pt}
            $\bv_t=(t,0,\bv(3),...,\bv(n))$, \\
            $C_t:=$ the tropical convex hull of $[\ba_t]$ and $[\bv_t]$, \\
            $D_t:=$ the tropical convex hull of $[\bb_t]$ and $[\bv_t]$, \\
            $C'_t:=$ the tropical convex hull of $[\ba_t]$ and $[\ba']$, \\
            $D'_t:=$ the tropical convex hull of $[\bb_t]$ and $[\bb']$.
        \vspace{5pt} \end{center} 
       We still have the observations
        \begin{itemize}
        \item $C'_\infty \subset U\cap L_1$;
            \item $[\ba_t]\in C'_\infty$ for all $t\geq t_\ba$;
            \item for $t_\ba\leq t_1<t_2<\infty$, $C'_{t_1}\subset  C'_{t_2}\subset C'_\infty$;
            \item $C_t$ is the projection of $C'_t$ onto $H_t$.
        \end{itemize}
Moreover, by \Cref{lem:injective-outside-of-contraction}, if $t>t_\ba$, then for every point $[\bw]\in C_t\backslash [\ba_t]$, 1 is a coloop of the initial matroid $\mu^{\bw}$, so $H_t$ intersects with $P$ transversely near $[\bw]$. Similar observations hold for $[\bb_t],D_t$, and $D'_t$. 

There are two possible situations at $t=t_\ba$, 

    \begin{itemize}
            \item $[\ba_{t_\ba}]\neq [\ba']$. Then $C_{t_\ba}\subset L_1\cap H_{t_\ba} \subset Q_{t_\ba}$. Since $t_\ba>t_\bb$, at every $[\bw]\in D_{t_\ba}\backslash [\bb_{t_\ba}]$, $H_{t_\ba}$ intersects $P$ transversely, so $L_1$ must contain $D_{t_\ba}$. In particular, $[\bb_{t_\ba}]\in L_1$. 
            
            \item $[\ba_{t_\ba}] = [\ba']$. In this case, as $t$ decreases to $t_\bb$, either of the following happens,
            \begin{itemize}
                \item $[\ba_t]\in D_t$ for all $t$ such that $ t_\bb\leq  t \leq t_\ba$. In particular, $[\ba_{t_\bb}]\in D_{t_\bb}\subset L_2$. 
                \item for some $t_0 $ such that $ t_\bb <t_0\leq t_\ba$, $[\ba_{t_0}]\in D_{t_0}$ whereas $\ba_{t_0-\epsilon}\notin D_{t_0-\epsilon}$ for all $\epsilon>0$ small. If $[\ba_{t_0}]=[\bb_{t_0}]$, we are done. Otherwise, since for every $[\bw]\in D_{t_0}$, $H_{t_0}$ intersects with $P$ transversely, the balancing condition at $[\ba_{t_0}]$ implies that $L_1\cap H_{t_0}$ must contain the tropical line segment connecting $[\ba_{t_0}]$ and $[\bb_{t_0}]$.
            \end{itemize}
        \end{itemize}
         If $\pi_1(L_1)\cap \pi_1(L_2)$ is contained in the boundary of $P_\infty$, then a point $[\bv_\infty]\in \pi_1(L_1)\cap \pi_1(L_2)$ is a valuated cocircuit of $P$. We have $\supp (\bv_\infty) = [n]\backslash F$ for some hyperplane $F\in \cLL^1(M)$ containing 1. Moreover, either $[\ba_\infty]=[\bv_\infty]$ or $[\bb_\infty]=[\bv_\infty]$. Otherwise, as $[\bv_\infty]$ is chosen from $J_\infty$, we would have $[\bv_\infty]\in T_{[n]-1}$ if $[\bv_\infty]\notin \{[\ba_\infty],[\bb_\infty]\}$. Assume $[\ba_\infty]=[\bv_\infty]$.
    \begin{itemize}
        \item If $F=\{1,i\}$ for some $i$, 
        \begin{itemize}
            \item if $F$ is a flat of both $N_1$ and $N_2$, then $[\ba_\infty]=[\bb_\infty]=[\bv_\infty]\in L_1\cap L_2$;
            \item if $F$ is not a flat of $N_2$, then $[\bb_\infty]\neq [\ba_\infty]$. Since we assume $N_1$ is loopless, we may choose $[\bw]\in L_1\cap T_{[n]}$. Any tropical convex combination $[\bw']$ of $[\ba_\infty]$ and $[\bw]$ is in $L$. We can choose $[\bw']$ where
            \begin{equation}
                \bw' = (x,y,\ba_\infty(3),...,\ba_\infty(n))
            \end{equation}
            for some $x<\infty$ sufficiently large. Note that the minimizers of $\min_B\{\mu(B)-x\be_1-y\be_2\}$ must contain 1 for $x$ sufficiently large. This means $\pi_1([\bw'])\in P_\infty$. Moreover, for $x$ sufficiently large, $\pi_1([\bw])$ is on the leaf labeled by 2. Therefore, $\pi_1([\bw])\in \pi_1(L_2)$ and we are back in the situation where $[\bv_\infty]$ can be chosen from $T_{[n]-1}$.
        \end{itemize}

        \item If $F-1$ is not a singleton, since we assume that $\mu$ is simple, $F-1$ is not a flat of $M$. This means the only preimage of $[\bv_\infty]$ under $\pi_1$ is $[\bv_\infty]$. Hence, $L_1\cap L_2 = [\bv_\infty]$.
    \end{itemize}
\end{proof}

\section{From incidence geometry to Lorentzian polynomials}\label{sec:from-incidence-to-lorentzian}

We use the results obtained so far to study Lorentzian polynomials, leading to a proof of \Cref{main:B}. This relies on a result about interlacing polynomials. Let $f$ and $h$ be two real-rooted univariate polynomials of degree $d$ and $d-1$, respectively. We say $h$ \textit{interlaces} $f$ if in-between any two roots of $f$ there is a root of $h$. We say $h$ is a \textit{common interlacer} of a family of polynomials $\{f_1,...,f_k\}$ if $h$ interlaces $f_i$ for $i=1,...,k$.

\begin{prop}\label{prop:interlacing} \cite[Theorem 3.6]{chudnovsky2007roots} Let $f_1(x),...,f_k(x)$ be degree $d$ real-rooted univariate polynomials with positive leading coefficients. The following four statements are equivalent.
    \begin{enumerate}
        \item $c_1f_i+c_2f_j$ is real-rooted for all $1\leq i,j\leq k$ and all $c_1,c_2\geq 0$; 
        \item $f_i$ and $f_j$ have a common interlacer for all $1\leq i,j\leq k$;
        \item any convex combination of $f_1,...,f_k$ is real-rooted;
        \item $f_1,...,f_k$ have a common interlacer.
    \end{enumerate}
\end{prop}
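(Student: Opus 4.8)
The plan is to follow \cite{chudnovsky2007roots}: the equivalences among $(1)$, $(2)$, and $(3)$ \emph{for a pair} of polynomials are classical (the Hermite--Kakeya--Obreschkoff theorem), so the only genuinely new content is upgrading pairwise common interlacers to a single common interlacer of the whole family, which I would obtain from a Helly-type argument for axis-parallel boxes. Throughout I normalize every $f_i$ to be monic, which affects none of the four conditions, and I use the convention that an interlacer of a degree-$d$ polynomial has degree exactly $d-1$.

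\emph{The easy implications.} Recall the classical dictionary: for monic $g\in\R[x]$ of degree $d$ and monic $h\in\R[x]$ of degree $d-1$, the polynomial $g$ is real-rooted with $h$ an interlacer if and only if the Wronskian $W(g,h):=gh'-g'h$ has constant sign on $\R$ (necessarily $\le 0$, since $W(g,h)$ has leading coefficient $-1$). Granting this: $(4)\Rightarrow(2)$ is immediate; $(4)\Rightarrow(3)$ holds because $W\!\big(\sum_i\lambda_i f_i,\,h\big)=\sum_i\lambda_i\,W(f_i,h)\le 0$ whenever $h$ is a common interlacer and $\lambda_i\ge 0$, so $\sum_i\lambda_i f_i$ is real-rooted; $(3)\Rightarrow(1)$ holds because $c_1f_i+c_2f_j$ is a nonnegative scalar multiple of the convex combination $\tfrac{c_1}{c_1+c_2}f_i+\tfrac{c_2}{c_1+c_2}f_j$; and $(1)\Rightarrow(2)$ is exactly the two-polynomial Hermite--Kakeya--Obreschkoff theorem (if every nonnegative combination of two polynomials of degree $d$ with positive leading coefficients is real-rooted, they have a common interlacer) applied to each pair $\{f_i,f_j\}$.

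\emph{The main step $(2)\Rightarrow(4)$.} For each $i$ let $\alpha^{(i)}_1\le\cdots\le\alpha^{(i)}_d$ be the roots of $f_i$. A monic degree-$(d-1)$ polynomial with roots $\beta_1\le\cdots\le\beta_{d-1}$ interlaces $f_i$ exactly when $\alpha^{(i)}_j\le\beta_j\le\alpha^{(i)}_{j+1}$ for all $j$. Identifying a monic real-rooted polynomial of degree $d-1$ with the ordered tuple of its roots in $\R^{d-1}$, the set $B_i$ of monic interlacers of $f_i$ becomes the axis-parallel box $\prod_{j=1}^{d-1}\big[\alpha^{(i)}_j,\alpha^{(i)}_{j+1}\big]$, which automatically lies in the region $\{\beta_1\le\cdots\le\beta_{d-1}\}$. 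Hypothesis $(2)$ says $B_i\cap B_j\neq\emptyset$ for all $i,j$ (take a pairwise common interlacer and make it monic). Axis-parallel boxes have Helly number $2$: for each coordinate $j$ the intervals $\big\{[\alpha^{(i)}_j,\alpha^{(i)}_{j+1}]\big\}_i$ are pairwise intersecting, hence $\max_i\alpha^{(i)}_j\le\min_i\alpha^{(i)}_{j+1}$; choosing $\gamma_j$ in this interval for each $j$ yields a point $(\gamma_1,\dots,\gamma_{d-1})$ lying in every $B_i$ and in the ordered region. It corresponds to a monic degree-$(d-1)$ polynomial that is a common interlacer of $f_1,\dots,f_k$, which is $(4)$.

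\emph{Expected obstacle.} Nothing here is deep, but the bookkeeping requires care in two places. First, one must set up the two-polynomial dictionary precisely --- interlacing $\Leftrightarrow$ constant sign of the Wronskian $\Leftrightarrow$ stability of $g\pm\mathbf{i}h$ --- and check the sign conventions so that the additivity $W(\sum_i\lambda_i f_i,h)=\sum_i\lambda_i W(f_i,h)$ genuinely produces an interlacing (this is automatic once all $f_i$ and $h$ are monic, since then every interlacing has the same orientation and every $W(f_i,h)$ has the same sign). Second, repeated roots must be handled: if $f_i$ has a repeated root then some factor $[\alpha^{(i)}_j,\alpha^{(i)}_{j+1}]$ degenerates to a point and $B_i$ is a lower-dimensional box, but it is still an axis-parallel box and the Helly step is unaffected; note also that it is precisely the strict degree-$(d-1)$ convention for interlacers that makes the box description exact.
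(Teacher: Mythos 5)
The paper does not prove this proposition at all: it is quoted verbatim from \cite[Theorem 3.6]{chudnovsky2007roots}, so there is no internal proof to compare against. Your argument is essentially the standard one from that source, and its overall structure is sound: the cycle $(4)\Rightarrow(3)\Rightarrow(1)\Rightarrow(2)\Rightarrow(4)$ closes all equivalences, and the main step $(2)\Rightarrow(4)$ via the box description of monic interlacers and the Helly property of intervals is correct, including the point you flag implicitly --- the chosen tuple is automatically ordered, since $\gamma_j\le\min_i\alpha^{(i)}_{j+1}\le\max_i\alpha^{(i)}_{j+1}\le\gamma_{j+1}$, and degenerate boxes from repeated roots cause no trouble.

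The one place where you have swept real content under ``bookkeeping'' is the dictionary used in $(4)\Rightarrow(3)$. As literally stated (for monic $g$ of degree $d$ and monic $h$ of degree $d-1$, without requiring $h$ real-rooted) the biconditional is false: $g=x^3+3x$ and $h=x^2+1$ give $W(g,h)=-x^4-3<0$ with neither real-rootedness nor interlacing. With the hypothesis that $h$ is real-rooted (which holds in your application) the statement is true, but the direction you actually need --- constant sign of $W(g,h)$ against a real-rooted $h$ forces $g$ to be real-rooted --- is precisely the nontrivial content of the implication, not a sign-convention check; it requires either a citation (it is the univariate case of the cone property in Proposition \ref{prop:convexity-stable-proper-position}, i.e.\ \cite[Lemma 2.6]{borcea2010multivariate}, at which point the Wronskian detour is unnecessary) or a genuine argument, e.g.\ the sign conditions $g(\beta_j)h'(\beta_j)\le 0$ at the roots $\beta_j$ of $h$ give weak sign alternation at $d-1$ points, which by a root-counting/parity argument forces $g$ to have $d$ real roots, with a separate reduction (factoring out common roots or a limiting argument) when $h$ has repeated roots. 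This is exactly the sign-alternation proof in \cite{chudnovsky2007roots}, so the gap is fillable, but as written the step is asserted rather than proved. A minor further point: the pairwise input you use for $(1)\Rightarrow(2)$ (nonnegative combinations real-rooted $\Rightarrow$ common interlacer) is the Fell/Chudnovsky--Seymour two-polynomial statement rather than Hermite--Kakeya--Obreschkoff proper, which concerns all real combinations and mutual interlacing; the statement you invoke is the correct one, only the attribution is off.
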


\begin{lemma}\label{lem:stable-poly-cone}
    Let $f_1,...,f_k$ be a family of degree-$d$ stable polynomials with nonnegative coefficients such that $c_1f_i+c_2f_j$ is stable for any $c_1,c_2\geq 0$, then any convex combination of $f_1,...,f_k$ is stable.
\end{lemma}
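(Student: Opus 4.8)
The plan is to reduce multivariate stability to univariate real-rootedness along lines and then quote \Cref{prop:interlacing}. The point of departure is the characterization: a polynomial $f\in\R[w_1,\dots,w_n]$ is stable if and only if the univariate polynomial $f(t\bu+\bv)$ is real-rooted for every $\bu\in\R^n_{>0}$ and every $\bv\in\R^n$. One direction is immediate, since $\Im(tu_j+v_j)=\Im(t)\,u_j>0$ whenever $\Im t>0$ and $u_j>0$, so a real root in $t$ of $f(t\bu+\bv)$ is the only obstruction to $f$ being nonvanishing on the product of open upper half-planes; the converse follows by writing a hypothetical zero $z$ with all $\Im z_j>0$ as $z=i\bu+\bv$ with $\bu=(\Im z_1,\dots,\Im z_n)\in\R^n_{>0}$ and $\bv=(\Re z_1,\dots,\Re z_n)$. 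Taking the restriction directions strictly inside the positive orthant is what will make the degree/sign bookkeeping below clean.

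First I would fix $\bu\in\R^n_{>0}$ and $\bv\in\R^n$ and set $g_i(t)=f_i(t\bu+\bv)$ for $i=1,\dots,k$. Each $g_i$ is real-rooted because $f_i$ is stable. Moreover each $g_i$ has degree exactly $d$ with \emph{positive} leading coefficient: the coefficient of $t^d$ in $g_i$ equals $f_i^{(d)}(\bu)$, where $f_i^{(d)}$ is the degree-$d$ homogeneous part of $f_i$, and since $\deg f_i=d$ this is a nonzero polynomial with nonnegative coefficients, hence strictly positive at the positive point $\bu$. The hypothesis that $c_1f_i+c_2f_j$ is stable for all $c_1,c_2\ge0$ then gives that $c_1g_i+c_2g_j=(c_1f_i+c_2f_j)(t\bu+\bv)$ is real-rooted for all $c_1,c_2\ge0$; in other words, the family $g_1,\dots,g_k$ satisfies condition (1) of \Cref{prop:interlacing}.

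By \Cref{prop:interlacing}, condition (1) is equivalent to condition (3), so every convex combination of $g_1,\dots,g_k$ is real-rooted. Hence for any convex combination $g=\sum_i\lambda_if_i$ (with $\lambda_i\ge0$, $\sum_i\lambda_i=1$) the polynomial $g(t\bu+\bv)=\sum_i\lambda_ig_i(t)$ is real-rooted; as $\bu\in\R^n_{>0}$ and $\bv\in\R^n$ were arbitrary, $g$ is stable, and it has nonnegative coefficients as a convex combination of such polynomials. The cases $k\le 2$ are trivial (for $k=2$ this is literally the hypothesis), so the content is $k\ge 3$, and the only real substance is the passage from pairwise nonnegative-combination real-rootedness to simultaneous convex-combination real-rootedness — which is exactly the common-interlacer input supplied by \Cref{prop:interlacing}. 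I do not expect a genuine obstacle here; the one thing to state carefully is precisely that all the restricted polynomials $g_i$ share the common degree $d$ and have positive leading coefficients, which is what legitimizes the invocation of \Cref{prop:interlacing} and is the reason for restricting to $\bu$ in the open positive orthant.
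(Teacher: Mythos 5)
Your proposal is correct and follows essentially the same route as the paper: restrict along lines $t\mapsto t\bu+\bv$ and apply \Cref{prop:interlacing} (the equivalence of (1) and (3)) to the restricted univariate polynomials. The only difference is that you are more careful than the paper's one-line proof, taking $\bu\in\R^n_{>0}$ so that each $f_i(t\bu+\bv)$ has degree exactly $d$ with positive leading coefficient (as \Cref{prop:interlacing} requires) and noting that stability can be checked on such directions — a worthwhile clarification, but not a different argument.
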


\begin{proof}
   Let $f$ be any convex combination of $f_1,...,f_k$. The univariate polynomial $f(t\bu+\bv)$ is real-rooted for any $\bu\in \R_{\geq0}^n$ and any $\bv\in \R^n$ by applying \Cref{prop:interlacing} to $f_1(t\bu+\bv),...,f_k(t\bu+\bv)$.
\end{proof}

\begin{repthm}{main:B}
Let $\mu$ be a valuated matroid. Then for each $0<q\leq 1$, $\rmL[\ll_Lf^\mu_q]$ contains the convex cone spanned by 
\begin{equation}
    \rmL[\Dr^1(\mu);q] : = \big\{f^\theta_q\mid [\theta]\in \Dr^1(\mu)\big\} .
\end{equation}
\end{repthm}

\begin{proof}
    Let $f_q$ be any convex combination of $\{f_q^{\theta_i}\}$ for $[\theta_1],...,[\theta_k]\in \Dr^1(\mu)$. The M-convexity of $\supp (f_q^\mu+w_{n+1}f_q)$ follows from the tropical convexity of $\Dr^1(\mu)$. To show the signature condition of $f_q^\mu+w_{n+1}f_q$, the only non-obvious part is showing the signature condition for $f_q$. 

    Put $f_{ij}=c_1f_q^{\theta_i}+c_2f_q^{\theta_j}$ for $c_1,c_2\geq 0$. Take any $B\subset [n]$ such that $|B|=d-3$. We have
    \begin{equation}
        \partial^Bf_{ij} = c_1 \partial^B f_q^{\theta_i} + c_2 \partial^B f_q^{\theta_j} = c_1 f_q^{\theta_i/B} + c_2f_q^{\theta_j/B},
    \end{equation}
    if $B$ is independent in both $\theta_i$ and $\theta_j$, then $\theta_i/B$ and $\theta_j/B$ are coperspective rank-2 valuated matroids; otherwise, either $\partial^B f_q^{\theta_i}$ or $\partial^B f_q^{\theta_j}$ is zero. In the former case, by \Cref{main:C}, there is a rank-1 valuated matroid $\nu$ that is a quotient of both $\theta_i/B$ and $\theta_j/B$. This means $f_q^\nu\ll f_q^{\theta_i/B}$ and $f_q^\nu\ll f_q^{\theta_j/B}$. By \Cref{prop:convexity-stable-proper-position}, $\partial^B f_{ij}$ is stable. By \Cref{lem:stable-poly-cone}, this means $\partial^Bf_q$ is stable. Hence, $f_q$ is Lorentzian and the proof is completed.    
\end{proof}

 \Cref{main:B} specializes to basis generating polynomials of ordinary matroids.
\begin{cor}\label{cor:basis-generating-poly-cone}
    Let $M$ be a matroid. Then $\rmL[\ll_Lf_M]$ contains the convex cone spanned by $\{f_Q\mid Q\in \Qt^1(M)\}$.
\end{cor}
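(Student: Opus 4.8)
The plan is to deduce Corollary~\ref{cor:basis-generating-poly-cone} directly from \Cref{main:B} by specializing to trivial valuations. Let $\mu=\delta_{\cB(M)}$ be the trivial valuation on $M$; this is a valuated matroid of rank $d$ with underlying matroid $M$, so $\Trop\mu=\Trop M$ and $\cD^1(\mu)=\cD^1(M)$. Fix any parameter $0<q<1$. Since $\mu(B)=0$ for every basis $B$ of $M$ and $\supp(\mu)$ consists of $0$--$1$ vectors (so $\ba!=1$ on the support), the basis generating polynomial of \eqref{eqn:basis-generating} specializes to $f^\mu_q=\sum_{B\in\cB(M)}w^B=f_M$, independently of $q$, exactly as recorded just after \eqref{eqn:basis-generating}.

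Next I would identify $\cQ^1(M)$ as a subset of $\cD^1(\mu)$. For each elementary quotient $Q\in\cQ^1(M)$ let $\theta_Q=\delta_{\cB(Q)}$ be the trivial valuation on $Q$, a valuated matroid of rank $d-1$. Because $Q$ is an elementary matroid quotient of $M$, passing to trivial valuations yields an elementary quotient of valuated matroids $\mu\onto\theta_Q$ — this is precisely the commutativity, in the diagram of \Cref{sec:dressian-structure}, of the ``trivial valuation'' maps with the quotient relations. Hence $[\theta_Q]\in\cD^1(M)\subseteq\cD^1(\mu)$, and again $f^{\theta_Q}_q=\sum_{B\in\cB(Q)}w^B=f_Q$. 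Consequently $\{f_Q\mid Q\in\cQ^1(M)\}\subseteq \rmL[\cD^1(\mu);q]$.

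Finally, since the convex cone spanned by a set is contained in the convex cone spanned by any larger set, \Cref{main:B} gives
\begin{equation*}
\rmL[\ll_Lf_M]=\rmL[\ll_Lf^\mu_q]\ \supseteq\ \cone\big(\rmL[\cD^1(\mu);q]\big)\ \supseteq\ \cone\big(\{f_Q\mid Q\in\cQ^1(M)\}\big),
\end{equation*}
which is the assertion. There is no real obstacle here beyond bookkeeping: the only step requiring attention is the compatibility of the quotient relation with trivial valuations, so that each $Q\in\cQ^1(M)$ genuinely produces a point of $\cD^1(\mu)$; this is part of the preliminaries and of the commutative diagram in \Cref{sec:dressian-structure}. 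Everything else is the elementary observation that trivial valuations turn $f^\vp_q$ into the ordinary basis generating polynomial.
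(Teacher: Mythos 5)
Your proposal is correct and matches the paper's intended argument: Corollary~\ref{cor:basis-generating-poly-cone} is obtained exactly by specializing \Cref{main:B} to trivial valuations, using that $f^\mu_q=f_M$, $f^{\theta_Q}_q=f_Q$, and that trivial valuations on elementary quotients of $M$ give points of $\cD^1(M)=\cD^1(\mu)$. No gaps; the bookkeeping you carry out (compatibility of matroid quotients with trivial valuations) is precisely what the paper relies on via its commutative diagram in \Cref{sec:dressian-structure}.
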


Put
\begin{equation}
\cone\hspace{-0.2em}\left\{\rmL[\Dr^1(\mu);q]\right\}:= \Big\{\text{ all nonnegative linear combinations of elements in }\rmL[\Dr^1(\mu);q]\Big\}.
\end{equation}
We have the containment of the three sets of Lorentzian polynomials: \begin{center} \vspace{5pt}
    $\rmL[\Dr^1(\mu);q]\subset\cone\hspace{-0.2em}\left\{\rmL[\Dr^1(\mu);q]\right\}\subset \rmL[\ll_Lf_q^\mu]$.
\vspace{5pt} \end{center} 
By \Cref{ex:L1-not-convex}, the second containment can be strict. Now we show that $\rmL[\Dr^1(\mu);q]$ are polyhedral fans defined by the exponentiation of the three-term incidence relations of $\Dr^1(\mu)$. As a consequence, $\rmL[\Dr^1(\mu);q]$ may not be convex in the usual sense, so the first containment can also be strict.

\begin{prop}\label{prop:image-of-dressian-in-lorentzian}
For each $0<q\leq 1$, the set of  polynomials $\rmL[\Dr^1(\mu);q]$, identified as tuples of coefficients $(a_B)\in \R_{\geq 0}^{\nk{n}{d-1}}$, is a polyhedral fan defined by the tropical vanishing of\footnote{Here tropical vanishing means the maximum is achieved at least twice.}
    \begin{equation}\label{eqn:multiplicative-incidence-relation}
        \max\left\{\quad q^{\mu(Djk)}a_{Di},\quad  q^{\mu(Dik)}a_{Dj},\quad q^{\mu(Dij)}a_{Dk} \quad \right\}
    \end{equation}
    for each $(d-2)$-set $D$ and $i,j,k\notin D$.
\end{prop}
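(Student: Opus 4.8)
The plan is to read the statement off from \Cref{thm:dressian-cut-out-by-three-term} via a ``$q$-exponential dictionary''. Fix $0<q<1$ and let $\Phi_q\colon\Rbar\to\R_{\geq 0}$ be $x\mapsto q^x$ with $q^\infty:=0$; this is a strictly decreasing bijection. Applied coordinatewise over $\nk{n}{d-1}$ it identifies $\PT{\nk{n}{d-1}}$ (including its coordinate-at-infinity strata) with $\big(\R_{\geq 0}^{\nk{n}{d-1}}\setminus\{0\}\big)/\R_{>0}$, sending a class $[\theta]$ to the coefficient tuple of $f^\theta_q=\sum_{B}q^{\theta(B)}w^B$ (the weights $\ba!$ in \eqref{eqn:basis-generating} are $1$ on $0$-$1$ exponent vectors). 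Since $\Phi_q$ is strictly decreasing, for any finite tuple of values it interchanges ``the minimum is attained at least twice'' with ``the maximum is attained at least twice''. Hence, for $I\in\nk{n}{d-2}$ and $J\in\nk{n}{d+1}$, the incidence relation indexed by $(I,J)$ becomes, under $\Phi_q$, precisely the assertion that $\max_{i\in J\setminus I}\{q^{\mu(J-i)}a_{I+i}\}$ is attained at least twice; specializing to $I\subseteq J$, i.e.\ $J=D\cup\{i,j,k\}$ with $|D|=d-2$, yields exactly the relations \eqref{eqn:multiplicative-incidence-relation}.

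With this dictionary, the inclusion $\rmL[\cD^1(\mu);q]\subseteq\{(a_B)\colon\text{\eqref{eqn:multiplicative-incidence-relation} holds}\}$ is immediate, since every $[\theta]\in\cD^1(\mu)$ satisfies all incidence relations. For the reverse inclusion, given $(a_B)\in\R_{\geq 0}^{\nk{n}{d-1}}$ satisfying \eqref{eqn:multiplicative-incidence-relation} I would set $\theta(B):=\log_q a_B\in\Rbar$, so that $f^\theta_q$ has coefficient tuple $(a_B)$ once $[\theta]\in\cD^1(\mu)$ is established; by \Cref{thm:dressian-cut-out-by-three-term} it suffices to check the degenerate, parallel, and concurrent three-term incidence relations for $\theta$. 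The concurrent relations are just \eqref{eqn:multiplicative-incidence-relation} read through $\Phi_q^{-1}$, hence hold. A parallel relation \eqref{eqn:paralle-hyperplane-relation} for $A=D+i$, $B=D+j$ with $\cl_M(A)=\cl_M(B)$ comes from \eqref{eqn:multiplicative-incidence-relation} indexed by $(D,D\cup\{i,j,k\})$ for any $k$ with $D\cup\{i,k\}$ a basis: then $\mu(D\cup\{i,j\})=\infty$, the third monomial vanishes, and ``maximum attained twice'' in a two-term $\max$ forces $q^{\mu(D\cup\{j,k\})}a_A=q^{\mu(D\cup\{i,k\})}a_B$, i.e.\ \eqref{eqn:paralle-hyperplane-relation}. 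For a dependent $A$ with $\rk_M(A)=d-2$, writing $A=D+i$ with $D=A-i$ independent and choosing $j\notin\cl_M(D)$, $k\notin\cl_M(D+j)$, the relation \eqref{eqn:multiplicative-incidence-relation} indexed by $(D,D\cup\{i,j,k\})$ has $D\cup\{j,k\}$ a basis but $D\cup\{i,j\},D\cup\{i,k\}$ dependent, forcing $a_A=0$. The degenerate relations for dependent sets of corank $\geq 2$ should then follow by an exchange/closure argument in the style of \Cref{lem:support-lemma}, reducing to the corank-$1$ case.

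Finally, $\cD^1(\mu)$ is a linear tropical prevariety by \Cref{cor:first-incidence-dressian-linear}, hence a polyhedral fan, and $\Phi_q$ is a piecewise-linear isomorphism (the identity in logarithmic coordinates), so $\rmL[\cD^1(\mu);q]=\Phi_q(\cD^1(\mu))$ is a polyhedral fan, equal by the two inclusions to the solution set of \eqref{eqn:multiplicative-incidence-relation}. The part I expect to be the main obstacle is precisely the tail of the ``$\supseteq$'' argument: verifying that \eqref{eqn:multiplicative-incidence-relation} already forces $\theta$ to vanish on \emph{all} dependent $(d-1)$-sets (the corank-$1$ case is the clean computation above, but the higher-corank cases are not visible from a single relation and need the matroidal structure of $\supp\theta$, as in \Cref{lem:support-lemma}).
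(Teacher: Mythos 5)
Your dictionary argument is exactly the paper's proof: the entire published argument is the observation that $\log_q$ is strictly decreasing, so a tuple $(a_B)$ satisfies the tropical vanishing of \eqref{eqn:multiplicative-incidence-relation} if and only if $\theta(B)=\log_q a_B$ satisfies the three-term incidence relations, and then \Cref{thm:dressian-cut-out-by-three-term} is invoked. Your verification of the concurrent relations, the parallel relations (via the vanishing third term), and the corank-$1$ degenerate relations (choosing $j\notin\cl_M(D)$, $k\notin\cl_M(D+j)$) is correct and is more careful than what the paper writes.

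The step you flagged as the expected obstacle is, however, a genuine gap, and the repair you propose cannot work. If $A\in\nk{n}{d-1}$ has $\rk_M(A)\leq d-3$, then every relation in \eqref{eqn:multiplicative-incidence-relation} in which $a_A$ appears is indexed by some $D\subset A$ with $|D|=d-2$, and the coefficient of $a_A$ there is $q^{\mu(D\cup\{j,k\})}$ with $\rk_M(D\cup\{j,k\})\leq\rk_M(A)+2\leq d-1$, hence the coefficient is $0$. So the value of $a_A$ is completely unconstrained by the relation set \eqref{eqn:multiplicative-incidence-relation}, and no exchange or closure argument can force $a_A=0$; note also that \Cref{lem:support-lemma} is not available here, since it presupposes \emph{all} incidence relations \eqref{eqn:incidence-plucker}, including those indexed by $(I,J)$ with $I\not\subset J$, which is precisely where the paper's degenerate-hyperplane relations for higher corank come from. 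Since every $[\theta]\in\cD^1(\mu)$ does satisfy $\theta(A)=\infty$ for all $A$ dependent in $M$, the solution set of \eqref{eqn:multiplicative-incidence-relation} strictly contains $\rmL[\cD^1(\mu);q]$ whenever $M$ has a $(d-1)$-set of rank at most $d-3$ (e.g.\ $d\geq 5$ with a rank-$2$ flat of size $\geq 4$): one may perturb such a coordinate $a_A$ away from $0$ without violating any listed relation. So your careful bookkeeping has in fact exposed an imprecision shared by the statement and the paper's one-line proof; the clean fix is either to add the equations $a_A=0$ for all $A$ dependent in $M$ to the defining system (equivalently, work in the simplified coordinates indexed by $\cLL^1(M)$), or to note that the statement as written is correct exactly when every $(d-1)$-subset of $[n]$ has rank at least $d-2$ in $M$.
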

\begin{proof}
    
    The base-$q$ logarithm is strictly monotone decreasing, so $\{a_{B}\}_{|B|=d-1}$ satisfies the tropical vanishing of \eqref{eqn:multiplicative-incidence-relation} if and only if $\{\log_q a_B\}_{|B|=d-1}$ satisfies the three-term incidence relations, i.e., if and only if $\theta\colon\nk{n}{d-1}\to\Rbar$ given by 
    \begin{equation}
        \theta(B) = \log_q(a_B)
    \end{equation}
    is in $\Dr^1(\mu)$.
\end{proof}
The case $\mu=\Trop M$ is special. The space $\rmL[\Dr^1(M);q]$ does not depend on $q$, because $\Dr^1_s(M)^\circ$ is a fan. We may write $\rmL[\Dr^1(M)]$ for $\rmL[\Dr^1(M);q]$. Moreover, by \Cref{prop:image-of-dressian-in-lorentzian}, up to flipping min and max, the set $\rmL[\Dr^1(M)]$ is cut out in $\Rbar_{\geq 0}^{\nk{n}{d-1}}$ by the same tropical equations that cut out $\Dr^1(M)$.

\begin{rmk}\label{rmk:extend-to-M-convex-fn}
There are \textit{polarization} and \textit{projection} operators on M-convex functions \cite{kobayashi2007operations}, analogous to those on Lorentzian polynomials, that preserve M-convexity. The polarization of an M-convex function turns an M-convex function into a valuated matroid, and it can be chosen to preserve elementary quotients. Since polarization and projection on Lorentzian polynomials preserve the Lorentzian property, one shows that \Cref{main:B} holds for arbitrary M-convex functions.
\end{rmk}

Motivated by the phenomenon for univariate real-rooted polynomials (\Cref{prop:interlacing} $(1)\Leftrightarrow(2)$), one may ask the following question: if $f_1$ and $f_2$ are Lorentzian polynomials of the same degree and all convex combinations of $f_1$ and $f_2$ are Lorentzian, is there a Lorentzian polynomial $g$ such that $g\ll_L f_1$ and $g\ll_L f_2$? The following example shows this is not true.

\begin{ex}\label{ex:converse-cone-property-false}
Recall the matroids $V_8^-$, $Q_1$ and $Q_2$ in \Cref{ex:counter-to-submodular}. By \Cref{main:B}, any convex combination of $f_{Q_1}$ and $f_{Q_2}$ is Lorentzian. However, if there were some Lorentzian polynomial $h$ such that $h\ll_L f_{Q_1}$ and $h\ll_Lf_{Q_2}$, then $\supp h$ would be an elementary quotient of both $Q_1$ and $Q_2$, which is impossible.
\end{ex}

\begin{appendices}

\section{Plethystic maps of Grassmannians}\label{subsec:cofactor-grassman}

In this appendix, we study the classical geometry behind adjoints of valuated matroids. More precisely, we study the embedding
\begin{equation}
   \Gamma: \Gr(d,E) \to \Gr(d,\wedge^{d-1}E), \quad V\mapsto \wedge^{d-1} V.
\end{equation}
In general, one can consider the embedding
\begin{equation}
    \Gr(d,E) \to \Gr\left(\binom{d}{k},\wedge^kE\right), \quad V\mapsto \wedge^k V,
\end{equation}
for each $1\leq k\leq d$. When $k=d$, this is the usual Pl\"{u}cker embedding; when $k=d-1$, this is $\Gamma$. Such constructions fall into the notion of \textit{geometric plethysms} \cite[Section 11]{fulton2013representation}. We thus call such maps \textit{plethystic maps}.
 
 The main result is \Cref{prop:relation-two-plucker-embedding}, which relates the usual Pl\"{u}cker embedding of $\Gr(d,E)$ and the Pl\"{u}cker embedding of the image of  $\Gr(d,E)$ under $\Gamma$. Recall a $d$-by-$d$ matrix $A$ and its cofactor matrix $B$ satisfy the following relation.
\begin{equation}\label{eqn:usual-cofactor}
 \det(B)=\det(A)^{d-1}.
\end{equation}
We generalize this relation to $d$-by-$n$ matrices where $d\leq n$. Let $A$ by a $d$-by-$n$ matrix where $d\leq n$. We use the following temporary notations: for each $i\in [d]$ and each $J\in \nk{n}{d-1}$, put
\begin{center} \vspace{5pt}
    $A_{i,J}:=$ the $(d-1)$-by-$(d-1)$ minor of $A$ consisting of the columns of $A$ labeled by $J$ \\ and all but the $i$-th row.
\vspace{5pt} \end{center} 
For each $I\in \nk{n}{d}$, put
\begin{center} \vspace{5pt}
    $A_I:=$ the $d$-by-$d$ minor of $A$ consisting of the columns of $A$ labeled by $I$.
\vspace{5pt} \end{center} 

\begin{definition}
    Let $A$ be any $d$-by-$n$ matrix where $d\leq n$. Let $\prec$ be any total order on $\nk{n}{d-1}$. The \textit{generalized cofactor} matrix of $A$ with respect to the total order $\prec$ is the $d$-by-$\binom{n}{d-1}$ matrix $B$ defined as follows:
    \begin{itemize}
        \item The $j$-th column of $B$ is labeled by the $j$-th element in $\nk{n}{d-1}$ in the total order $\prec$;
        \item  the $i$-th row of $B$ is labeled by $[d]-i$;
        \item The $(i,J)$-th entry of $B$ is $A_{i,J}$.  
    \end{itemize}
\end{definition}

\begin{prop}\label{prop:cofactor-identity}
    Let $B$ be the generalized cofactor matrix of $A$ with respect to the total order $\prec$. Let $\cJ=\{J_1,...,J_d\}\subset \nk{n}{d-1}$ where $J_1=\{i_1,...,i_{d-1}\}$ and $J_1\prec J_2\prec\cdots \prec J_d$. Then
    \begin{equation}\label{eqn:determinantal-identity}
\begin{split}
    B_{\cJ} & = \det \begin{bmatrix}    (-1)^{\chi_{J_2,i_1}+1}A_{J_2+i_1} & (-1)^{\chi_{J_3,i_1}+1}A_{J_3+i_1} & \cdots & (-1)^{\chi_{J_d,i_1}+1}A_{J_d+i_1} \\
    (-1)^{\chi_{J_2,i_2}+2}A_{J_2+i_2} & (-1)^{\chi_{J_3,i_2}+2}A_{J_3
+i_2} & \cdots & (-1)^{\chi_{J_d,i_2}+2}A_{J_d+i_2} \\
\vdots & \vdots & \ddots & \vdots \\
(-1)^{\chi_{J_2,i_{d-1}}+d-1}A_{J_2+i_{d-1}} & (-1)^{\chi_{J_3,i_{d-1}}+d-1}A_{J_3+i_{d-1}} & \cdots & (-1)^{\chi_{J_d,i_{d-1}}+d-1}A_{J_d+i_{d-1}}
\end{bmatrix} \\
    & = \det\begin{bmatrix}
         (-1)^{\chi_{J_k,i_l}+l}A_{J_k+i_l}
     \end{bmatrix}_{1\leq l\leq d-1,2\leq k\leq d}
\end{split}
\end{equation}
where 
\begin{equation}
    \chi_{J_k,i_l} = \text{ the cardinality of }\{j\in J_k\mid j<i_l\}.
\end{equation}
\end{prop}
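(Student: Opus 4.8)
\textbf{Proof plan for \Cref{prop:cofactor-identity}.}

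The plan is to identify the $d\times d$ minor $B_\cJ$ of the generalized cofactor matrix with the Plücker coordinate of $\wedge^{d-1}V$ (where $V$ is the row space of $A$) and then re-express that quantity back in terms of the Plücker coordinates $A_I$ of $V$. First I would set up bases: let $v_1,\dots,v_d$ be the rows of $A$, so that $V=\Span(v_1,\dots,v_d)\subset E=\mathbb{F}^n$ with standard basis $\be_1,\dots,\be_n$. For $J=\{j_1<\cdots<j_{d-1}\}$ write $\be_J=\be_{j_1}\wedge\cdots\wedge\be_{j_{d-1}}\in\wedge^{d-1}E$; these form the standard basis of $\wedge^{d-1}E$, and the total order $\prec$ just fixes a labeling of columns. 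The key computation is the expansion of $v_1\wedge\cdots\wedge\widehat{v_i}\wedge\cdots\wedge v_d$ (omitting $v_i$) in the basis $\{\be_J\}$: its $\be_J$-coefficient is, up to the sign $(-1)^{?}$ coming from the position of $i$, exactly the minor $A_{i,J}$. So the $i$-th row of $B$ is (a fixed sign times) the coordinate vector of $\wedge^{d-1}$ of the $i$-th "deleted" wedge, and these $d$ vectors span $\wedge^{d-1}V$. Hence $B_\cJ$ is, up to an overall nonzero sign independent of $\cJ$, the Plücker coordinate $p_\cJ(\wedge^{d-1}V)$ of the $d$-plane $\wedge^{d-1}V$ in $\wedge^{d-1}E$.

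Next I would compute $p_\cJ(\wedge^{d-1}V)$ directly. The point is that $\wedge^{d-1}V$ has another natural spanning set: pick the element $J_1=\{i_1,\dots,i_{d-1}\}$, and for each of the remaining $J_k$ ($k=2,\dots,d$) and each $l$ use the identity in $\wedge^{d-1}E$ relating $\be_{J_1}$-type terms; more precisely, contract with the dual basis. I would express the matrix whose rows are the $\be_{J_k}$-coordinates of the generators in terms of the $\be_{J_1}$-coordinates, producing exactly the $(d-1)\times(d-1)$ matrix $\big[(-1)^{\chi_{J_k,i_l}+l}A_{J_k+i_l}\big]$: the entry $A_{J_k+i_l}$ arises because adding $i_l$ to $J_k$ gives a $d$-subset whose associated $d\times d$ minor of $A$ is the relevant Plücker coordinate, and the sign $(-1)^{\chi_{J_k,i_l}+l}$ records the permutation needed to sort $J_k+i_l$ and the position of $i_l$ within $J_1$. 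Taking the determinant of the change-of-basis identity gives $B_\cJ$ as the displayed $(d-1)\times(d-1)$ determinant.

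The bookkeeping of signs is the main obstacle. Concretely, I must track three sources of sign: (a) the sign $(-1)^{i+1}$ (or similar) from deleting $v_i$ in $v_1\wedge\cdots\wedge\widehat{v_i}\wedge\cdots\wedge v_d$, which is the row-label sign in $B$; (b) the sign from reordering the tuple $(j_1,\dots,j_{m},i,j_{m+1},\dots)$ into increasing order when $i=i_l$ is inserted into $J_k$ — this is precisely where $\chi_{J_k,i_l}$ (the number of elements of $J_k$ below $i_l$) enters; (c) a global sign depending only on $J_1$ and the fixed ordering, which does not affect the claimed formula since it can be absorbed (the proposition asserts equality of two specific determinants, so I must be careful that this global sign is genuinely $+1$ under the stated conventions, or else the statement implicitly normalizes it away). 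I would handle this by first proving the $n=d$ case, where the formula degenerates to the classical cofactor identity $\det B=(\det A)^{d-1}$ via \eqref{eqn:usual-cofactor}, using that as a sanity check on the sign conventions, and then extending to general $n\geq d$ by noting that every $d\times d$ minor $A_I$ only involves $d$ columns of $A$, so each instance of \eqref{eqn:determinantal-identity} reduces to a computation inside a $d\times d$ (or at most $(2d-1)\times(2d-1)$) submatrix. Once the signs in (a) and (b) are pinned down and (c) is verified to vanish, expanding the Plücker coordinate of $\wedge^{d-1}V$ along the $J_1$-coordinate direction yields the determinant formula and completes the proof.
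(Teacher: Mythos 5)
Your overall strategy---reading $B_\cJ$ as the Pl\"{u}cker coordinate of $\wedge^{d-1}V$ with respect to the basis of deleted wedges $v_1\wedge\cdots\wedge\widehat{v_i}\wedge\cdots\wedge v_d$---is viable and genuinely different from the paper's proof (the paper perturbs $A$ so that all maximal minors are nonzero, duplicates columns so that the $J_k$ become pairwise disjoint, uses that both sides of \eqref{eqn:determinantal-identity} scale by $(\det C)^{d-1}$ under $A\mapsto CA$ to normalize the $J_1$-columns, and finishes with a direct computation). Incidentally, your sign (a) is vacuous: the $\be_J$-coefficient of $v_1\wedge\cdots\wedge\widehat{v_i}\wedge\cdots\wedge v_d$ is exactly $A_{i,J}$, so $B_\cJ=\det[A_{i,J_k}]$ on the nose. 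The genuine gap sits at the step that carries the entire content of the proposition: the passage from the $d\times d$ determinant $B_\cJ$ to the $(d-1)\times(d-1)$ determinant in the $A_I$'s. ``Expressing the $\be_{J_k}$-coordinates in terms of the $\be_{J_1}$-coordinates and taking the determinant of the change-of-basis identity'' is dimensionally incoherent---no change of basis turns a $d\times d$ minor into a $(d-1)\times(d-1)$ one---and you never state the identity that would make the comparison work. What is needed is the column expansion of $A_{J_k+i_l}$ along the inserted column $i_l$,
\begin{equation*}
(-1)^{\chi_{J_k,i_l}+1}A_{J_k+i_l}=\sum_{i=1}^{d}(-1)^{i}a_{i,i_l}A_{i,J_k}
\end{equation*}
(with $A_{J_k+i_l}:=0$ if $i_l\in J_k$), which exhibits the right-hand matrix of \eqref{eqn:determinantal-identity}, up to row signs $(-1)^{l-1}$, as the product of a $(d-1)\times d$ matrix built from the $J_1$-columns of $A$ with the $d\times(d-1)$ block of $B$ on the columns $J_2,\dots,J_d$. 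One must then apply Cauchy--Binet to this product and match it, term by term and sign by sign, against the Laplace expansion of $B_\cJ$ along its $J_1$-column (whose cofactors are $\pm A_{i,J_1}$). That matching, including the discrepancy between your $(-1)^{\chi+1}$ and the stated $(-1)^{\chi+l}$, is precisely the bookkeeping you defer, and it is the whole point of the statement.

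Your fallback for the signs also does not work as a reduction: proving the case $n=d$ and then ``extending'' to general $n$ by restricting to a $d\times d$ or $(2d-1)\times(2d-1)$ submatrix is impossible, because a single instance of \eqref{eqn:determinantal-identity} involves the minors $A_{J_k+i_l}$ for all $k,l$, whose columns range over $\bigcup_k J_k$---up to $d(d-1)$ distinct columns (already $6>5$ for $d=3$). So the square case is only a sanity check, not a base case. To complete your route you must either carry out the multilinear computation above in full, or adopt a normalization in the spirit of the paper (duplicate columns to make the $J_k$ disjoint and row-reduce the $J_1$-columns), which is exactly where the paper absorbs the hard sign work.
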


\begin{proof}
    Since $B_{\cJ}$ is a continuous function of entries of $A$, we may assume that every $d$-by-$d$ minor of $A$ is nonzero. By adding repeated columns to $A$, we may also assume that the $J_k$'s are pairwise disjoint.

    Assume now that $J_1=[d-1]$. If $C$ is a $d$-by-$d$ invertible matrix, then the action of $C$ on $A$ induces the same action on the two sides of \eqref{eqn:determinantal-identity}. That is, multiplication by $(\det C)^{d-1}$. Hence, we may assume that the $d$-by-$d$ submatrix of $A$ with columns $J_1$ is an identity matrix. In this case, the equality follows from a direct computation.
\end{proof}

\begin{ex}\label{ex:cofactor-identity}
    When $n=d$, \Cref{prop:cofactor-identity} recovers the familiar Equation \eqref{eqn:usual-cofactor}. We illustrate \Cref{prop:cofactor-identity} and its proof using a 3-by-4 matrix:    
    \begin{equation}
        A = \begin{bmatrix}
            1 & 0 & 0 & a_{14} & a_{15} & a_{16} \\
             0 & 1 & 0 & a_{24} & a_{25} & a_{26}\\
              0 & 0 & 1 & a_{34} & a_{35} & a_{36}
        \end{bmatrix}
    \end{equation}
    Choose a total order $\prec$ on $\nk{n}{2}$ such that $12\prec 34\prec 56$. The submatrix with columns $\{12,34,56\}$ of the generalized cofactor matrix $B$ of $A$ is 
    \begin{equation}
        \begin{blockarray}{cccccc}
 & 12 & \cdots & 34 & \cdots & 56 \\
\begin{block}{c[ccccc]}
  23 & 0 & \cdots & \begin{vmatrix}
      0 & a_{24} \\
      1 & a_{34}
  \end{vmatrix} & \cdots & \begin{vmatrix}
      a_{25} & a_{26} \\
      a_{35} & a_{36}
  \end{vmatrix} \\
  & \\
  13 & 0 & \cdots & \begin{vmatrix}
      0 & a_{14} \\
      1 & a_{34}
  \end{vmatrix} & \cdots & \begin{vmatrix}
      a_{15} & a_{16} \\
      a_{35} & a_{36}
  \end{vmatrix} \\
  & \\
  12 & 1 & \cdots & 0 & \cdots & \begin{vmatrix}
      a_{15} & a_{16} \\
      a_{25} & a_{26}
  \end{vmatrix} \\
\end{block} 
\end{blockarray}.
\end{equation}
Indeed, one has
\begin{equation}
    B_{12,34,56} = \det \begin{bmatrix}
        \begin{vmatrix}
      0 & a_{24} \\
      1 & a_{34}
  \end{vmatrix} & \begin{vmatrix}
      a_{25} & a_{26} \\
      a_{35} & a_{36}
  \end{vmatrix} \\ & \\
  \begin{vmatrix}
      0 & a_{14} \\
      1 & a_{34}
  \end{vmatrix} & \begin{vmatrix}
      a_{15} & a_{16} \\
      a_{35} & a_{36}
  \end{vmatrix}
    \end{bmatrix} = \det\begin{bmatrix}
        A_{134} & A_{156} \\
        -A_{234} & -A_{256}
    \end{bmatrix}.
\end{equation}
\end{ex}

Let $V$ be a $d$-dimensional vector subspace of an $n$-dimensional vector space $E$. Fix an ordered basis $\{e_1,...,e_n\}$ of $E$. Choose a total order $\prec$ on the induced bases $\{e_{i_1}\wedge \cdots \wedge e_{i_{d-1}}\}_{i_1<\cdots <i_{d-1}}$ of $\wedge^{d-1}E$. The homogeneous coordinates of $\PP(\wedge^d(\wedge^{d-1} E))$ are then labeled by $\cJ=\{J_1,J_2,...,J_d\}\subset\nk{n}{d-1}$, where $J_1\prec J_2\prec\cdots\prec J_d$, and $J_1=\{i_1,...,i_{d-1}\}$ where $i_1<\cdots < i_{d-1}$.

We construct embeddings 
 \begin{equation}
     \Phi_\prec\colon \mathbb{P}(\wedge^dE) \hookrightarrow \mathbb{P}(\wedge^d(\wedge^{d-1}E))
 \end{equation}
such that the diagram
\begin{equation}\label{diagram:cofactor}
      \begin{tikzcd}
     \Gr(d,E) \ar[r,hook,"\Gamma"] \ar[d,hook,"\text{Pl\"{u}cker}"] & \Gr(d,\wedge^{d-1} E) \ar[d,hook,"\text{Pl\"{u}cker}"] \\
     \mathbb{P}(\bigwedge^dE) \ar[r,hook,"\Phi_\prec"] & \mathbb{P}(\bigwedge^d(\wedge^{d-1}E))
 \end{tikzcd}
 \end{equation}
 commutes. For each point $[\bx]\in \PP(\wedge^dE)$, define the $\cJ$-th homogeneous coordinate of $\Phi_\prec([\bx])$ to be 
 \begin{equation}
     \det\begin{bmatrix}
         (-1)^{\chi_{J_k,i_l}+l}\bx(J_k+i_l)
     \end{bmatrix}_{1\leq l\leq d-1,2\leq k\leq d},
 \end{equation}
where we set \begin{equation}
    \bx(I) = 0
\end{equation}
if the multi-set $I$ contains repeated elements. Namely, $\Phi_\prec$ is defined using the same formula as in \eqref{eqn:determinantal-identity}. 
If $\cJ$ is the set of $d$ subsets of size $d-1$ of some $d$-set $A$, then the $\cJ$-th coordinate of $\Phi_\prec([\bx])$ is $\bx(A)^{d-1}$ modulo a sign. Hence, $\Phi_\prec$ is an embedding.

\begin{prop}\label{prop:relation-two-plucker-embedding}
    For any choice of the total order $\prec$, the diagram \eqref{diagram:cofactor} commutes. 
\end{prop}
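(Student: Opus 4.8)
The statement to prove is that the diagram \eqref{diagram:cofactor} commutes, i.e.\ that for $V\in\Gr(d,E)$ the Pl\"ucker coordinates of $\wedge^{d-1}V\subset\wedge^{d-1}E$ agree (up to a common scalar, and in the coordinates labelled by $\cJ$) with $\Phi_\prec$ applied to the Pl\"ucker coordinates of $V$. The plan is to reduce this to the purely matrix-theoretic identity already established in \Cref{prop:cofactor-identity}. First I would fix an ordered basis $\{e_1,\dots,e_n\}$ of $E$ and represent a point $V\in\Gr(d,E)$ by a $d$-by-$n$ matrix $A$ whose rows span $V$; then $A_I=\bx(I)$ are the Pl\"ucker coordinates of $V$, where $\bx$ is the image of $V$ under the Pl\"ucker embedding. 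Next I would identify, for the chosen total order $\prec$ on $\nk{n}{d-1}$, the generalized cofactor matrix $B$ of $A$ with a matrix whose rows span $\wedge^{d-1}V$ inside $\wedge^{d-1}E$: the $i$-th row of $B$, with entries $A_{i,J}$ as $J$ ranges over $\nk{n}{d-1}$, is exactly the coordinate vector of $(\text{row }1)\wedge\cdots\wedge\widehat{(\text{row }i)}\wedge\cdots\wedge(\text{row }d)$ expanded in the induced basis $\{e_{j_1}\wedge\cdots\wedge e_{j_{d-1}}\}$, up to the standard sign conventions. Since the $d$ wedge products obtained by deleting one row form a basis of $\wedge^{d-1}V$, the rows of $B$ span $\wedge^{d-1}V$, so $B$ represents $\Gamma(V)$ and its maximal minors $B_\cJ$ are the Pl\"ucker coordinates of $\Gamma(V)$.

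With this identification in place, the content of \Cref{prop:cofactor-identity} is precisely that $B_\cJ=\det\big[(-1)^{\chi_{J_k,i_l}+l}A_{J_k+i_l}\big]_{1\le l\le d-1,\,2\le k\le d}$, which is by definition the $\cJ$-th coordinate of $\Phi_\prec([\bx])$ (recalling the convention $\bx(I)=0$ when $I$ has a repeated index, which matches the fact that $A_{J_k+i_l}=0$ when $i_l\in J_k$). Chasing the diagram: starting from $V$, going right then down gives $(B_\cJ)_\cJ=$ Pl\"ucker coordinates of $\wedge^{d-1}V$; going down then right gives $\Phi_\prec\big((A_I)_I\big)=\big(\det[(-1)^{\chi_{J_k,i_l}+l}A_{J_k+i_l}]\big)_\cJ$. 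These are equal coordinatewise by \Cref{prop:cofactor-identity}, hence equal as points of $\PP(\wedge^d(\wedge^{d-1}E))$, and the diagram commutes. I would also note that changing the representing matrix $A$ by left multiplication by $C\in\mathrm{GL}_d$ rescales every $A_I$ by $\det C$, hence rescales every $B_\cJ$ and every $\Phi_\prec$-coordinate by $(\det C)^{d-1}$, so the verification is independent of the chosen basis of $V$ and well-defined on $\Gr(d,E)$.

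The only genuinely delicate point is the sign bookkeeping in the middle step: one must check that the $(i,J)$-entry $A_{i,J}$ of the generalized cofactor matrix, together with the labelling of its rows by $[d]-i$ and its columns by $J$ under the order $\prec$, really does reproduce the wedge $\bigwedge_{k\ne i}(\text{row }k)$ in the ordered basis of $\wedge^{d-1}E$ with the same sign normalization $(-1)^{\chi_{J_k,i_l}+l}$ that appears in \Cref{prop:cofactor-identity} and in the definition of $\Phi_\prec$. This is a routine but error-prone Laplace-expansion computation; since \Cref{prop:cofactor-identity} was proved with exactly these conventions (and \Cref{ex:cofactor-identity} pins them down concretely), I expect it to go through verbatim, and I would phrase the argument so that the sign conventions are inherited from \Cref{prop:cofactor-identity} rather than rederived. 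In short, the proof is essentially the observation that \Cref{prop:cofactor-identity} \emph{is} the commutativity of \eqref{diagram:cofactor} once one recognizes the generalized cofactor matrix as a representative of $\wedge^{d-1}V$.
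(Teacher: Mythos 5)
Your proposal is correct and follows the same route as the paper: the paper's proof is exactly the observation that, with $V$ represented by a matrix $A$, the rows of the generalized cofactor matrix span $\wedge^{d-1}V$, so commutativity of \eqref{diagram:cofactor} is the coordinate description of the Pl\"ucker embedding combined with \Cref{prop:cofactor-identity}. Your write-up just spells out the identification and sign bookkeeping that the paper leaves implicit.
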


\begin{proof}
    This follows from the coordinate description of the Pl\"{u}cker embedding and \Cref{prop:cofactor-identity}.
\end{proof}
\Cref{prop:relation-two-plucker-embedding} relates the Pl\"{u}cker coordinate of a projective linear subspace $\PP V$ with the Pl\"{u}cker coordinate of the projective linear subspace of all hyperplanes in $\PP V$. \Cref{lem:Sigma-basis-valuation} is a tropical analog of this relation.

\end{appendices}

\printbibliography

\end{document}